\newtheorem{theorem}{Theorem}[section]
\newtheorem{lemma}[theorem]{Lemma}
\newtheorem{proposition}[theorem]{Proposition}
\newtheorem{corollary}[theorem]{Corollary}
\newtheorem{algorithm}[theorem]{Algorithm}
\theoremstyle{definition}
\newtheorem{definition}[theorem]{Definition}
\newtheorem{example}[theorem]{Example}
\newtheorem{remark}[theorem]{Remark}
\newcommand{\Z}{\mathbb{Z}}
\newcommand{\Q}{\mathbb{Q}}
\newcommand{\ff}{f}
\newcommand{\spec}{\operatorname{\textnormal{Spec}}}
\newcommand{\genlegendre}[4]{%
  \genfrac{(}{)}{}{#1}{#3}{#4}%
  \if\relax\detokenize{#2}\relax\else_{\!#2}\fi
}
\newcommand{\legendre}[3][]{\genlegendre{}{#1}{#2}{#3}}
\newcommand{\dlegendre}[3][]{\genlegendre{0}{#1}{#2}{#3}}
\date{\today}
\author{Frederick Saia}
\title{CM points on Shimura curves via QM-equivariant isogeny volcanoes}
\begin{document}

\begin{abstract}
We study CM points on the Shimura curves $X_0^D(N)_{/\mathbb{Q}}$ and $X_1^D(N)_{/\mathbb{Q}}$, parametrizing abelian surfaces with quaternionic multiplication and extra level structure. A description of the locus of points with CM by a specified order is obtained for general level, via an isogeny-volcano approach in analogy to work of Clark and Clark--Saia in the $D=1$ case of modular curves. This allows for a count of all points with CM by a specified order on such a curve, and a determination of all primitive residue fields and primitive degrees of such points on $X_0^D(N)_{/\mathbb{Q}}$. We leverage computations of least degrees towards the existence of sporadic CM points on $X_0^D(N)_{/\mathbb{Q}}$.
\end{abstract}

\maketitle
\setcounter{tocdepth}{1}
\tableofcontents


\section{Introduction}

The restriction of the study of torsion of elliptic curves over number fields to the case of complex multiplication (CM) has seen considerable recent progress. In particular, work of Clark and Clark--Saia \cite{Cl22,CS22}, continuing a program of research in this area from the perspective of CM points on modular curves (see, e.g., \cite{CCS13, BC20, CGPS22}), approaches the study of the CM locus on the modular curves $X_0(M,N)_{/\mathbb{Q}}$ and $X_1(M,N)_{/\mathbb{Q}}$ via a study of CM components of isogeny graphs of elliptic curves over $\overline{\mathbb{Q}}$. For $K$ an imaginary quadratic field and $\Delta = f^2\Delta_K$ the discriminant of the order $\mathfrak{o}(f)$ of conductor $f$ in $K$, let $j_{\Delta} \in X(1)_{/\mathbb{Q}}$ denote the closed point corresponding to elliptic curves with CM by the order of discriminant $\Delta$.  The work of \cite{Cl22, CS22} results, for instance, in a description of all points in the fiber of the natural map $X_0(M,N)_{/\mathbb{Q}} \rightarrow X(1)_{/\mathbb{Q}}$ over $j_{\Delta}$. This description provides the list of residue fields of $\Delta$-CM points on the first curve, along with a count of closed points in this fiber with each specified residue field. 

In this paper, we study the Shimura curves $X_0^D(N)_{/\mathbb{Q}}$ and $X_1^D(N)_{/\mathbb{Q}}$ parametrizing abelian surfaces with quaternionic multiplication (QM) by the indefinite quaternion algebra $B$ over $\mathbb{Q}$ of discriminant $D$, along with certain specified level structure. Our main result allows for a similar description of the CM loci on these curves. 

In particular, we show that if $x \in X_0^D(N)_{/\mathbb{Q}}$ has CM by the order $\mathfrak{o}(\ff) $ of conductor $\ff$ in the imaginary quadratic field $K$, then the residue field $\mathbb{Q}(x)$ is either a ring class field $K(\ff')$ for some $\ff'$ with $\ff \mid \ff' \mid Nf$, or is isomorphic to an index $2$ subfield of such a field $K(\ff')$. The ramification index of $x$ with respect to the natural map from $X_0^D(N)$ to $X_0^D(1)$ is always $1$ when the CM order has discriminant $\ff^2\Delta_K = \Delta < -4$. In general, this index is at most $3$. The work of this paper culminates in a determination of the residue fields and ramification indices of all CM points on $X_0^D(N)$, and putting together the casework based on the quaternion discriminant, level and CM order gives a result of the following form.

\begin{theorem}\label{algorithm_thm}
There exists an algorithm which, given as input
\begin{itemize}
\item an indefinite quaternion discriminant $D$ over $\mathbb{Q}$, 
\item a positive integer $N$ coprime to $D$ and 
\item an imaginary quadratic discriminant $\Delta = f^2\Delta_K$, 
\end{itemize}
returns as output the complete list of tuples $(\texttt{is{\_}fixed},f',e,c)$, consisting of 
\begin{itemize}
\item a boolean $\texttt{is{\_}fixed}$, 
\item a positive integer $f'$ (necessarily with $f \mid f'$), 
\item an integer $e \in \{1,2,3\}$ and
\item a positive integer $c$
\end{itemize}
such that there exist exactly $c$ closed $\mathfrak{o}$-CM points $x$ on $X_0^D(N)_{/\mathbb{Q}}$ with the following properties:
\begin{itemize} 
\item the residue field of $x$ over $K$ is $K(x) \cong K(f')$, the ring class field of conductor $f'$ associated to $K$,
\item $\mathbb{Q}(x) \cong K(f')$ if $\texttt{is{\_}fixed}$ is False,
\item $[K(f') : \mathbb{Q}(x)] = 2$ if $\texttt{is{\_}fixed}$ is True and
\item $x$ has ramification index $e$ with respect to the natural map to $X_0^D(1)_{/\mathbb{Q}}$. 
\end{itemize}
\end{theorem}

This algorithm, outlined in Algorithm \ref{algorithm}, has been implemented, and is publicly available at \cite{Rep} along with \cite{Magma} code for all other computations described in this paper. 

The outline towards developing this algorithm is as follows: in \S \ref{background_section} we provide relevant background and prior results on CM points on the Shimura curves of interest. This includes results on concrete decompositions of QM abelian surfaces with CM as products of CM elliptic curves. The main result here is Theorem \ref{fake-ell-curves-ell-curves-correspondence}. In \S \ref{isogenies_section} and \S \ref{volcanoes_section}, we then consider QM-equivariant isogenies and the QM-equivariant $\ell$-isogeny graph $\mathcal{G}^D_{\ell}$. We prove in Theorem \ref{volcano_thm} that a CM component of this graph for a prime $\ell$ and quaternion discriminant $D$ has the structure of an $\ell$-volcano for CM discriminant $\Delta < -4$. We handle the slight deviation from the structure of an $\ell$-volcano in the $\Delta \in \{-3,-4\}$ case in Proposition \ref{fom-prime-power}.

We study the action of $\text{Gal}\left(\overline{\mathbb{Q}}/\mathbb{Q}\right)$ on such components in \S \ref{Galois_action_section}, allowing for an enumeration of closed point equivalence classes of paths in these graphs and hence a description the CM locus on a prime-power level Shimura curve $X_0^D(\ell^a)_{/\mathbb{Q}}$ as provided in \S \ref{CM_points_prime_power_section}. The algebraic results of \S \ref{algebraic-results-section} then feed into a description of the CM locus on $X_0^D(N)_{/\mathbb{Q}}$ for general level $N$ coprime to $D$ provided in \S \ref{CM_points_level_N}, which provides the algorithm mentioned in Theorem \ref{algorithm_thm}. 

The ability to transition to information about the $\mathfrak{o}$-CM locus on $X_1^D(N)_{/\mathbb{Q}}$ is explained in \S \ref{X_1_section}, in which we prove the following result. While this does not determine the list of residue fields of CM points on $X_1^D(N)$ in the vein of Theorem \ref{algorithm_thm}, it allows us to count all CM points on $X_1^D(N)$ of specified degree and list their corresponding CM orders. Otherwise put, this is enough data to determine, for a fixed discriminant $D$ and degree $d$, all levels $N$ such that there exists a QM abelian surface $(A,\iota)$ and a torsion point $P \in A(\overline{\Q})$ of order $N$ such that the induced point $[A,\iota,P] \in X_1^D(N)$ has residue field of degree $d$. 

\begin{theorem}\label{inertness_thm_general}
Suppose that $x \in X_0^D(N)_{/\mathbb{Q}}$ is a point with CM by the imaginary quadratic order of discriminant $\Delta$. Let $\pi_1: X_1^D(N)_{/\mathbb{Q}} \rightarrow X_0^D(N)_{/\mathbb{Q}}$ and $\pi_0 : X_0^D(N)_{/\mathbb{Q}}$ denote the natural morphisms. The following hold:
\begin{enumerate}
\item The scheme-theoretic fiber of $\pi_1$ over $x$ consists of a single closed point. 
\item If any of the following hold:
\begin{itemize} 
\item $\Delta < -4$,
\item  $x$ is ramified with respect to $\pi_0$ or
\item  $N \leq 3$, 
\end{itemize}
then $\pi_1$ is unramified over $x$. 
\item If $N \geq 4$ and $x$ is unramified with respect to $\pi_0$, then in the $\Delta \in \{-3,-4\}$ case we have 
\[ e_{\pi_1}(x) = \begin{cases} 2 \quad &\textnormal{ if } \Delta = -4, \\ 3 \quad &\textnormal{ if } \Delta = -3, \end{cases} \quad \textnormal{ and } \quad f_{\pi_1}(x) = \begin{cases} \phi(N)/4 \quad &\textnormal{ if } \Delta = -4, \\ \phi(N)/6 \quad &\textnormal{ if } \Delta = -3, \end{cases} \]
for the ramification index and residue degree of $x$, respectively, with respect to $\pi_1$.
\end{enumerate}
\end{theorem}

We define a \textbf{primitive residue field} (respectively, a \textbf{primitive degree}) \textbf{of an $\mathfrak{o}$-CM point on $X_0^D(N)_{/\mathbb{Q}}$} to be one that does not properly contain (respectively, does not properly divide) that of another $\mathfrak{o}$-CM point on the same curve. Our work allows for a determination of all primitive residue fields and primitive degrees of $\mathfrak{o}$-CM points on $X_0^D(N)_{/\mathbb{Q}}$, as discussed in \S \ref{prim_res_flds_N}. An abridged version of our main result on primitive residue fields and degrees is as follows, with Theorem \ref{prim_res_flds_thm} providing the complete result:

\begin{theorem}
Suppose that $K$ splits $B$, let $\ff$ be a positive integer, and let $N$ be a positive integer relatively prime to $D$ with prime-power factorization $N = \ell_1^{a_1}\cdots \ell_r^{a_r}$. One of the following occurs:

\begin{enumerate}
\item There is a unique primitive residue field $L$ of $\mathfrak{o}(\ff) $-CM points on $X^D_0(N)_{/\mathbb{Q}}$, with $L$ an index $2$, totally complex subfield of a ring class field $K(Hf)$ for some $H \mid N$. 
\item There are exactly $2$ primitive residue fields of such points, with one of the same form as $L$ in part (1) and the other being a ring class field of the form $K(Cf)$ with $C < H$ and $C \mid N$. 
\end{enumerate}
\end{theorem}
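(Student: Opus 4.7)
The plan is to build on the complete residue field enumeration produced by Algorithm \ref{algorithm}---using the prime-power level analysis together with the algebraic patching to composite level $N$ from the preceding sections---and then analyze the containment lattice among the residue fields that actually occur.

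First, I would organize the candidate residue fields into two families: the ring class fields $K(B\ff)$ for various $B \mid N$ (so that $\ff' = B\ff$ ranges over the allowed conductors dividing $N\ff$), and the totally complex index-$2$ subfields $L \subseteq K(B\ff)$ that arise when the Atkin--Lehner-type action on $X_0^D(N)_{/\mathbb{Q}}$ cuts the residue field degree in half. The crucial observation is that any such index-$2$ subfield $L$ of $K(B\ff)$ satisfies $L \cap K = \mathbb{Q}$: one has $[K(B\ff):K] = [K(B\ff):\mathbb{Q}]/2 = [L:\mathbb{Q}]$, so $L$ and $K$ are linearly disjoint over $\mathbb{Q}$ with compositum $K(B\ff)$. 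This has two consequences which drive the entire argument: (i) no ring class field (which must contain $K$) is contained in any such $L$, and (ii) if $L \subseteq K(C\ff)$ for some ring class field, then $K(B\ff) = L \cdot K \subseteq K(C\ff)$, forcing $B \mid C$.

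With these containment rules in hand, the primitive residue fields can be identified directly. Among the ring class fields that occur as residue fields, divisibility of conductors gives a total order, so there is a unique minimum $K(C_0\ff)$. Among the index-$2$ subfields that occur, let $B_1$ denote the minimal value of $B$; the Galois-action analysis of Section \ref{Galois_action_section} should identify a single index-$2$ subfield $L_{B_1} \subseteq K(B_1\ff)$ actually realized. Observation (i) says no ring class field can lie below $L_{B_1}$, while observation (ii) says the only ring class fields lying above $L_{B_1}$ are the $K(C\ff)$ with $B_1 \mid C$. Thus $L_{B_1}$ is always primitive (when the index-$2$ family is nonempty), and a primitive ring class field in addition to $L_{B_1}$ exists precisely when some occurring $K(C\ff)$ has $B_1 \nmid C$; in that case the minimum such, which one shows satisfies $C \mid B_1$ and $C < B_1$, is the second primitive field and is unique by the total order on the ring class field family.

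The main obstacle will be the local-to-global assembly needed to pin down both the set of $B$'s supporting index-$2$ subfield residue fields and the specific subfield $L_{B_1}$: at composite $N$, residue fields are assembled as composita over the prime-power factors of $N$ via the algebraic results of Section \ref{algebraic-results-section}, and one must verify that the ``index-$2$ phenomenon'' either propagates or is absorbed predictably across primes, so that $B_1$ and $L_{B_1}$ are unambiguous and Galois-stable, and moreover that the set of occurring ring class field conductors is structured compatibly with $B_1$ (guaranteeing the ``$C < B$'' divisibility relation in case (2) rather than mere incomparability of conductors). Once these structural inputs from the preceding sections are in place, the dichotomy between case (1) and case (2) of the theorem reads off immediately from the enumeration.
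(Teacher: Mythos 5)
Your two-family classification and the linear-disjointness observation ($L \cap K = \mathbb{Q}$, so no ring class field embeds in such an $L$, and any ring class field containing $L$ contains $K(B\ff)$) are correct, and this is indeed what underlies the count of at most two primitive residue fields. But the substance of the theorem is exactly the part you defer as ``the main obstacle,'' and without it the dichotomy does not follow. The paper's proof (\S\ref{prim_res_flds_N}) consists of: (i) the prime-power classification of \S\ref{CM_points_prime_power_section}, which for each $i$ produces the unique $B_i \mid \ell_i^{a_i}$ whose index-$2$ subfield is primitive on $X_0^D(\ell_i^{a_i})_{/\mathbb{Q}}$, together with the minimal occurring conductor $C_i$, and $C_i \mid B_i$; and (ii) the compilation results of \S\ref{compiling_section1} (Proposition \ref{red_to_prime_powers} plus the composita and tensor-product computations of \S\ref{algebraic-results-section}), which show that a residue field at level $N$ is a totally complex index-$2$ subfield exactly when every prime-power tensor factor is, and is otherwise the full ring class field of the product conductor. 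This multiplicativity is what makes $B = B_1\cdots B_r$ and $C = C_1\cdots C_r$ well defined, shows that the index-$2$ subfield $L \subseteq K(B\ff)$ is primitive at level $N$, shows that $K(C\ff)$ is primitive exactly when a ring class field is also primitive at some prime-power level (equivalently $C_i < B_i$ for some $i$), and yields the divisibility $C \mid B$, hence $C < B$ in case (2). None of this can be extracted from the containment lattice alone; it is precisely the incomparability issue you raise but leave unresolved.

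Two assertions in your sketch are also incorrect or unsupported as stated. The occurring ring class field conductors are not totally ordered by divisibility once $r \geq 2$ (conductors $\ell_1^{j_1}\cdots\ell_r^{j_r}\ff$ for distinct exponent tuples are typically incomparable); what is true, and again comes only from the compilation, is that there is a unique minimal occurring conductor, namely $C = C_1\cdots C_r$, and that $K(C\ff)$ is primitive precisely when its own index-$2$ subfield does not occur. Likewise the existence, uniqueness, and minimality of your $B_1$ and of the realized subfield $L_{B_1}$ are not automatic: a priori two incomparable conductors could each support an occurring index-$2$ subfield, giving two primitive fields of that shape, and ruling this out requires knowing that the index-$2$ phenomenon at level $N$ is exactly the product of the prime-power phenomena. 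Finally, note that the dichotomy as stated presupposes $D(K)=1$ (hence $\Delta < -4$): when some prime dividing $D$ is inert in $K$, no index-$2$ subfields occur at all and the unique primitive residue field is a ring class field, so your hedge ``when the index-$2$ family is nonempty'' needs to be made precise in these terms.
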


Knowledge of all primitive degrees provides the ability to compute the \emph{least} degree $d_{\mathfrak{o},\text{CM}}(X_0^D(N))$ of an $\mathfrak{o}$-CM point on $X_0^D(N)_{/\mathbb{Q}}$ for any imaginary quadratic order $\mathfrak{o}$. In \S \ref{least-degrees-section}, we discuss minimizing over orders $\mathfrak{o}$ to compute the least degree $d_{\text{CM}}(X_0^D(N))$ of a CM point on $X_0^D(N)_{/\mathbb{Q}}$, and Proposition \ref{least_deg_X1_prop} allows one to transition from this to computations of least degrees of CM points on $X_1^D(N)_{/\mathbb{Q}}$. 

A closed point $x$ on a curve $X_{/\mathbb{Q}}$ is said to be \textbf{sporadic} if there are finitely many points $y$ on $X_{/\mathbb{Q}}$ with $\text{deg}(y) \leq \text{deg}(x)$. We apply our least degree computations towards the existence of sporadic CM points on $X_0^D(N)_{/\mathbb{Q}}$ with the following end result (see Theorem \ref{sp_CM_pts_thm}). 

\begin{theorem}
Let $\mathcal{F}$ be the set of all $393$ pairs $(D,N)$ appearing in Table \ref{table:no_sporadic_pts} or Table \ref{table:unknowns_table}. If $(D,N) \not \in \mathcal{F}$ consists of a quaternion discriminant $D>1$ over $\mathbb{Q}$ and a positive integer $N$ which is relatively prime to $D$, then $X_0^D(N)_{/\mathbb{Q}}$ has a sporadic CM point. If $(D,N)$ is such a pair with
\[ (D,N) \not \in \mathcal{F} \cup \{ (91, 5) \},  \]
then $X_1^D(N)_{/\mathbb{Q}}$ has a sporadic CM point. 
\end{theorem}

The appearance of the pair $(91,5)$ in this result comes down to the fact that while $X_0^{91}(5)_{/\mathbb{Q}}$ has a sporadic CM point of degree $2$, the curve $X_1^{91}(5)_{/\mathbb{Q}}$ has $4$ as the least degree of a CM point. See Theorem \ref{sp_CM_pts_thm} (4) for details. 

Our work determining residue fields of CM points on $X_0^D(N)_{/\mathbb{Q}}$ can be viewed as a generalization of prior work on the Diophantine arithmetic of Shimura curves via an alternate approach (specifically work of Jordan \cite{Jor81} and Gonz\'alez--Rotger \cite{GR06} -- see Theorem \ref{JGR}). Of course, our results are aimed towards better understanding the torsion of low-dimensional abelian varieties over number fields, via restriction to a case with extra structure. On this point, the question of which number fields admit abelian surfaces with certain specified rational torsion subgroups is closely related to our results, just as in the classical modular curve case. A result of Jordan (see Theorem \ref{Jordan_curves_to_surfaces}) clarifies this relationship. 

Unlike the modular curves $X_0(N)_{/\mathbb{Q}}$, the curves $X_0^D(N)_{/\mathbb{Q}}$ for $D>1$ have no cusps. For this reason, understanding the CM points on Shimura curves may be of even greater interest, as they provide the most accessible examples of low-degree points and could afford techniques (see, e.g., \cite{BT07}) for computing models in the absence of techniques involving expansions around cusps. 

Additionally, while our approach is in analogy to that of \cite{Cl22} and 
\cite{CS22} in the modular curve case, there are interesting deviations arising in this work due to technical differences in the $D>1$ case. Namely, while the field of moduli $\mathbb{Q}(x)$ of any CM point $x \in X(1)_{/\mathbb{Q}}$ has a real embedding, a result of Shimura \cite[Thm. 0]{Sh75} states that $X^D(1)_{/\mathbb{Q}}$ has no real points for $D>1$. This fact also opens the door for the potential of Hasse principle violations by Shimura curves, which has been a subject of significant study (see, e.g., \cite{Cl09, CSt18, RSY05, SS16}). If one aims to study the Hasse principle for Shimura curves over some fixed number field (respectively, over a fixed degree), then studying the CM points rational over that field (respectively, over number fields of that degree) seems to be a natural initial point of investigation, and so our results may be of interest in that direction. 

\subsection*{Acknowledgments}
We thank Pete L. Clark for initially suggesting this project, which served as the author's main Ph.D. thesis work. We are very grateful for Clark's excellent advising, and for innumerable helpful conversations. We also thank Dino Lorenzini, Oana Padurariu, Ciaran Schembri and John Voight for helpful comments and encouragement. Much gratitude goes as well to the anonymous referee for a careful reading of, and detailed feedback on, earlier versions of this work. This feedback led to many expository and mathematical improvements, including a strengthening of our results on sporadic points. 

We would also like to thank the Simons Foundation for access to the \cite{Magma} computational algebra system, with which all computations described in this paper were performed, as well as the Georgia Advanced Computing Resource Center at the University of Georgia for access to the high-performance computing cluster, which we used for various computations described in \S \ref{least-degrees-section}. Partial support for the author was provided by the Research and Training Group grant DMS-1344994 funded by the National Science Foundation.


\section{Background}\label{background_section}

\subsection{Shimura curves}

The main source here is the foundational work of Shimura \cite{Sh67}, while for the background material on quaternion algebras and quaternion orders we recommend the classic \cite{Vig80} as well as the modern treatment in \cite{Voi21}. Throughout, we let $B/\mathbb{Q}$ denote the indefinite quaternion algebra of discriminant $D$ over $\mathbb{Q}$. We denote by $\Psi$ an isomorphism 
\[ \Psi : B \otimes_{\mathbb{Q}} \mathbb{R} \xrightarrow{\sim} M_2(\mathbb{R}). \]
As $B$ is indefinite, the discriminant $D$ is the product of an even number of distinct rational primes, namely those at which $B$ is ramified. We will let $\mathcal{O}$ denote a maximal order in $B$, which is unique up to conjugation. We will also fix, following \cite[\S 43.1]{Voi21}, an element $\mu \in \mathcal{O}$, satisfying $\mu^2 = -D$, which induces the involution
\[ \alpha \mapsto \alpha^* := \mu^{-1} \alpha \mu \]
on $\mathcal{O}$. We refer to $\mu$ as a \textbf{principal polarization} on $\mathcal{O}$.

We start by defining the moduli spaces we are considering and discussing the moduli interpretations of those families of particular interest to us in this study. Let $\mathcal{O}^1$ denote the units of reduced norm $1$ in $\mathcal{O}$, which we realize as embedded in $\text{SL}_2(\mathbb{R})$ via $\Psi$. The subgroup $\Gamma^D(1) := \Psi(\mathcal{O}^1) \subset \text{SL}_2(\mathbb{R})$ is discrete, and it is cocompact if and only if $D>1$. Via the action of this subgroup on the upper-half plane $\mathbb{H}$ we define over $\mathbb{C}$ the Shimura curve
\[ X^D(1) := \Gamma^D(1)\backslash \mathbb{H}. \] 
For $D=1$ we have $B \cong M_2(\mathbb{Q})$, which recovers the familiar modular curve setting. We are interested in the $D>1$ case, and so moving forward we make this assumption on $D$. This impies that $X^D(1)$ is a compact Riemann surface. For any $z \in \mathbb{H}$, we get a rank $4$ lattice $\Lambda_z$ via the action of $\mathcal{O}$ on $(z,1) \in \mathbb{C}^2$ via the embedding $\Psi$ above:
\[ \Lambda_z := \mathcal{O} \cdot \begin{pmatrix} z \\ 1 \end{pmatrix} \subseteq \mathbb{C}^2. \]
From this we obtain a complex torus
\[ A_z := \mathbb{C}^2/\Lambda_z  \]
of dimension $2$, which comes equipped with an $\mathcal{O}$-action $\iota_z : \mathcal{O} \hookrightarrow \text{End}(A_z)$. We require some rigidification data, namely a Riemann form, in order to recognize $A_z$ as an abelian surface. It turns out that we always obtain such data in this setting (\cite[Lemma 43.6.23]{Voi21}); there is a \emph{unique} principal polarization $\lambda_{z,\mu}$ on $A_z$ such that the Rosati involution on $\text{End}^0(A) := \text{End}(A) \otimes \mathbb{Q}$ agrees with the involution induced by the polarization $\mu$ on $\Psi(\mathcal{O})$. 
\begin{definition}
An \textbf{$(\mathcal{O},\mu)$-QM abelian surface} over $F$ is a triple $(A,\iota,\lambda)$ consisting of an abelian surface $A$ over $F$, an embedding $\iota : \mathcal{O} \hookrightarrow \text{End}(A)$ which we will refer to as the \textbf{quaternionic multiplication (QM) structure}, and a polarization $\lambda$ on $A$ such that the following diagram is commutative 
\[
\begin{tikzcd}
B \arrow[r, "\iota"] \arrow[d,"^*"]  & \text{End}^0(A) \arrow[d,"\dag"] \\
B \arrow[r,"\iota"] & \text{End}^0(A)
\end{tikzcd}
\]
where $\dag$ denotes the Rosati involution corresponding to $\lambda$. An \textbf{isomorphism of QM-abelian surfaces} $(A,\iota,\lambda)$ and $(A',\iota',\lambda')$ is an isomorphism $f: A \rightarrow A'$ of abelian surfaces such that $f \circ \iota = \iota' \circ f$ and such that $f^* \lambda' = \lambda$. 
\end{definition}
\noindent With this definition, we have moreover (\cite[Main Thm. 43.6.14]{Voi21}) that $X^D(1)$ is the coarse moduli space of $(\mathcal{O},\mu)$-QM abelian surfaces over $\mathbb{C}$, with the association $z \mapsto [(A_z, \iota_z, \lambda_{z,\mu})].$ 

\begin{remark}
For an abelian variety $A$ over a field $F$, by $\text{End}(A)$ we mean the ring of endomorphisms defined over $F$. For an extension $F \subseteq L$, we will write $A_L$ for the base change of $A$ to $L$ and $\text{End}(A_L)$ for the ring of endomorphisms rational over $L$. 
\end{remark}

More generally, if $\Gamma \leq \Gamma^D(1) \subseteq \text{SL}_2(\mathbb{R})$ is an  arithmetic Fuchsian group, we can consider the curve $\Gamma \backslash \mathbb{H}$, and for $\Gamma' \leq \Gamma$ there is a corresponding covering of curves $\Gamma' \backslash \mathbb{H} \rightarrow \Gamma \backslash \mathbb{H}$. 
Our focus will be on the families of Shimura curves $X_0^D(N)$ and $X_1^D(N)$, for $N$ a positive integer with $\text{gcd}(D,N) = 1$, with $X^D(1) = X^D_0(1) = X^D_1(1)$ being a special case of each. 

With setup following the careful exposition of \cite[\S 1]{Buz97}, let
\[ R := \varprojlim_{\text{gcd}(m,D)=1} \mathbb{Z}/m\mathbb{Z} \]
and fix an isomorphism $\kappa : B \otimes_{\mathbb{Z}} R \rightarrow M_2(R)$. This map $\kappa$ induces, for $m$ relatively prime to $D$, a map
\[ \mathcal{O} \otimes \widehat{\mathbb{Z}} \rightarrow M_2\left(\mathbb{Z}_m\right). \]
We get from here a map 
\[ u_m : \mathcal{O}^1 \rightarrow \text{GL}_2(\mathbb{Z}_m). \]
The curve $X_0^D(N)$ can then be described as the Shimura curve corresponding to the compact, open subgroup
\[ \Gamma_0^D(N) := \Psi \left( u_N^{-1} \left( \left\{ \begin{pmatrix} a & b \\ c & d \end{pmatrix} \in \text{GL}_2(\mathbb{Z}_N) \mid c \equiv 0 \pmod{N} \right\} \right) \right) \leq \Gamma^D(1) . \]
That is, $X_0^D(N)(\mathbb{C}) = \Gamma^D_0(N) \backslash \mathbb{H}.$ Equivalently, fixing a level $N$ Eichler order $\mathcal{O}_N$ in $B$, the curve $X_0(N)$ can be described, in the manner mentioned above, as that associated to the arithmetic group of units of reduced norm $1$ in $\mathcal{O}_N$. The Shimura curve $X_1^D(N)$ corresponds to the compact, open subgroup
\[ \Gamma_1^D(N) := \Psi \left( u_N^{-1} \left( \left\{ \begin{pmatrix} a & b \\ c & d \end{pmatrix} \in \text{GL}_2(\mathbb{Z}_N) \mid c \equiv 0 \text{ and } d \equiv 1 \pmod{N} \right\} \right) \right) \leq \Gamma^D(1). \]

It follows from a celebrated result of Shimura \cite[Main Thm. I]{Sh67} that the curve $X^D_0(N)$ has a canonical model ${X^D_0(N)}_{/\mathbb{Q}}$, i.e., such that 
\[ {X^D_0(N)}_{/\mathbb{Q}} \otimes_{\mathbb{Q}} \mathbb{C} \cong X^D_0(N), \]
and similarly for the curve $X^D_1(N)$. 

Because we are assuming that $N$ is relatively prime to $D$, the notion of ``level $N$-structure'' is group-theoretically just as in the modular curve case. In particular, the natural modular map $X_1^D(N)_{/\mathbb{Q}} \rightarrow X_0^D(N)_{/\mathbb{Q}}$ is a $(\Z/N\Z)^\times/\{\pm 1\}$-cover. Hence, it is an isomorphism for $N \leq 2$ and it has degree $\phi(N)/2$ for $N \geq 3$, where $\phi$ denotes the Euler totient function. 
We now recall moduli interpretations for these families of Shimura curves as provided in, for example, \cite[\S 3]{Buz97}. 

\begin{definition}
Suppose that $(A,\iota,\lambda)$ and $(A',\iota',\lambda')$ are $(\mathcal{O},\mu)$-QM abelian surfaces over $F$, we will call an isogeny $\varphi : A \rightarrow A'$ of the underlying abelian surfaces a \textbf{QM-cyclic $N$-isogeny} if $\varphi^*(\lambda') = \lambda$ and both of the following hold:
\begin{itemize}
\item The isogeny $\varphi$ is QM-equivariant. That is, for all $\alpha \in \mathcal{O}$ we have
\[  \iota'(\alpha) \circ \varphi = \varphi \circ \iota(\alpha). \]
\item The kernel $\text{ker}(\varphi)$ is a cyclic $\mathcal{O}$-module with 
\[ \text{ker}(\varphi) \cong \mathbb{Z}/N\mathbb{Z} \times \mathbb{Z}/N\mathbb{Z}. 
\] 
\end{itemize}
\end{definition}

For example, a QM-cyclic $1$-isogeny is the same as an isomorphism of QM abelian surfaces. 

\begin{proposition}\label{moduli-interpretation}
The Shimura curve $X_0^D(N)_{/\mathbb{Q}}$ is isomorphic to the coarse moduli scheme associated to any of the following moduli problems:

\begin{enumerate}
    \item Tuples $(A,\iota,\lambda,Q)$, where $(A,\iota,\lambda)$ is an $(\mathcal{O},\mu)$-QM abelian surface 
    and $Q \leq A[N]$ is an order $N^2$ subgroup of the $N$-torsion subgroup of $A$ which is also a cyclic $\mathcal{O}$-module. 
    \item QM-cyclic $N$-isogenies $\varphi : (A,\iota,\lambda) \rightarrow (A,\iota',\lambda')$ of $(\mathcal{O},\mu)$-QM abelian surfaces. 
\end{enumerate}
The curve $X_1^D(N)_{/\mathbb{Q}}$ has the following moduli interpretation: triples $(A,\iota,\lambda,P)$, where $(A,\iota,\lambda)$ is a QM abelian surface and $P \in A[N]$ is a point of order $N$. 
\end{proposition}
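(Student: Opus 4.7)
The plan is to bootstrap from the adelic description of $X_0^D(N)$ and $X_1^D(N)$ as Shimura curves attached to specific compact open subgroups of $\mathcal{O} \otimes \widehat{\mathbb{Z}}$, already laid out in the excerpt, and then translate the group-theoretic level structure into the geometric data in (1), (2), and the $X_1^D(N)$ statement. The core input is the complex-analytic parametrization $z \mapsto (A_z, \iota_z, \lambda_{z,\mu})$ underlying the moduli interpretation of $X^D(1)$ from \cite[Main Theorem 43.6.14]{Voi21}: adding a $\Gamma$-level structure for $\Gamma \leq \Gamma^D(1)$ amounts to rigidifying this family by a section of the local system $A_z[N]$ compatible with the $\mathcal{O}$-action, and the compact open subgroup picks out exactly which rigidification.

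First I would fix, via the isomorphism $\kappa : \mathcal{O} \otimes_{\mathbb{Z}} R \to M_2(R)$ from \cite{Buz97}, an $\mathcal{O}$-module identification $A_z[N] \cong (\mathbb{Z}/N\mathbb{Z})^2$ where $\mathcal{O}$ acts through $u_N : \mathcal{O} \otimes \widehat{\mathbb{Z}} \to \mathrm{GL}_2(\mathbb{Z}/N\mathbb{Z})$. Under this identification, the order $N^2$ $\mathcal{O}$-stable subgroups $Q \leq A[N]$ correspond bijectively to lines in $(\mathbb{Z}/N\mathbb{Z})^2$ stable under $M_2(\mathbb{Z}/N\mathbb{Z})$, i.e. to cosets of the stabilizer of the line $\{c=0\}$, which is precisely the upper-triangular Borel whose preimage under $u_N$ defines $X_0^D(N)$. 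Points of order $N$ in $A[N]$ correspond to cosets of the stabilizer of a basis vector, which is the unipotent-plus-diagonal-trivial group defining $X_1^D(N)$. This matches interpretation (1) and the $X_1^D(N)$ claim; care is needed at the finitely many $z$ with nontrivial stabilizer in $\Gamma^D(1)$ (the elliptic points), where the coarse moduli statement must be checked by verifying that two such rigidified tuples are $\Gamma_0^D(N)$- (resp. $\Gamma_1^D(N)$-) equivalent iff they are isomorphic as QM abelian surfaces with the specified extra data.

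Next I would establish the equivalence (1) $\Leftrightarrow$ (2). Given $(A,\iota,\lambda,Q)$ as in (1), form $A' := A/Q$ with the quotient isogeny $\varphi : A \to A'$. Because $Q$ is $\mathcal{O}$-stable, $\iota$ descends to $\iota' : \mathcal{O} \hookrightarrow \mathrm{End}(A')$ making $\varphi$ QM-equivariant. The polarization $\lambda$ descends to $\lambda'$ because $Q$ is isotropic for the Weil pairing attached to $\lambda$ (this is where I would use that $Q$ has order $N^2$ inside the rank-$4$ module $A[N]$ and that its $\mathcal{O}$-stability forces isotropy with respect to the $\mu$-twisted Weil pairing; equivalently, $\lambda'$ is the unique principal polarization on the QM surface $A'$, by the uniqueness statement recalled from \cite[Lemma 43.6.23]{Voi21}). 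Since $\ker\varphi = Q \cong \mathbb{Z}/N\mathbb{Z} \times \mathbb{Z}/N\mathbb{Z}$ and is cyclic as an $\mathcal{O}$-module (equivalent to spanning a line under $M_2(\mathbb{Z}/N\mathbb{Z})$), $\varphi$ is a QM-cyclic $N$-isogeny. Conversely, from a QM-cyclic $N$-isogeny $\varphi$, take $Q = \ker\varphi$: the two conditions in the definition of QM-cyclic $N$-isogeny are precisely the conditions on $Q$ in (1).

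The step I expect to be the main technical nuisance is checking that the polarization descends, and that the bijection in the coarse moduli statement holds at the elliptic points, where automorphisms of $(A,\iota,\lambda)$ must be shown to permute the level-$N$ structures in a way compatible with the $\Gamma_0^D(N)$- (resp. $\Gamma_1^D(N)$-) action. The polarization descent reduces, after the choice of $\mu$, to verifying that an $\mathcal{O}$-stable order $N^2$ subgroup of $A[N]$ equals its own annihilator under the $\mu$-modified Weil pairing; this is a purely linear-algebra statement over $\mathcal{O}/N\mathcal{O} \cong M_2(\mathbb{Z}/N\mathbb{Z})$ that I would settle by splitting into prime-power components (using $\gcd(N,D)=1$, so $\mathcal{O}/N\mathcal{O}$ really is a matrix ring) and then invoking the classification of $M_2(\mathbb{Z}/\ell^a\mathbb{Z})$-stable subgroups of $(\mathbb{Z}/\ell^a\mathbb{Z})^2$.
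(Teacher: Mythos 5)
The paper never actually proves this proposition: it is asserted with a pointer to \cite[\S 0.3.2]{Cl03}, and the only argumentative content the paper supplies is Remark \ref{kernel-to-cyclic-subgroup-remark}, which records the Morita-equivalence dictionary $Q = e_1Q \oplus e_2Q$, $Q = \mathcal{O}\cdot e_iQ$ relating interpretations (1) and (2). Your proposal reconstructs the standard argument the paper outsources, and it proceeds by the same mechanism: Buzzard's adelic description of the level subgroups, Morita equivalence to turn $\mathcal{O}$-stable level data into cosets of the Borel (resp.\ the subgroup with $c=0$, $d=1$), and the quotient-by-$Q$ construction, with the polarization handled by the uniqueness of the $\mu$-compatible principal polarization from \cite[Lemma 43.6.23]{Voi21} (which is indeed cleaner than checking isotropy by hand). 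So in approach you and the paper agree.

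Two points in your write-up need repair. First, $A_z[N]$ is not $(\mathbb{Z}/N\mathbb{Z})^2$: it is free of rank one over $\mathcal{O}/N\mathcal{O} \cong M_2(\mathbb{Z}/N\mathbb{Z})$, hence $(\mathbb{Z}/N\mathbb{Z})^4$ as a group, and no line of the Morita-reduced module is ``stable under $M_2(\mathbb{Z}/N\mathbb{Z})$''; the correct dictionary is $Q \leftrightarrow e_1Q$, with the relevant subgroups corresponding to cyclic order-$N$ subgroups of $e_1A[N] \cong (\mathbb{Z}/N\mathbb{Z})^2$, i.e.\ to points of $\mathbb{P}^1(\mathbb{Z}/N\mathbb{Z})$, which are exactly the Borel cosets. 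Second, and more substantively, in your (1)$\Rightarrow$(2) step you assert that an order-$N^2$ $\mathcal{O}$-stable subgroup is automatically $\mathcal{O}$-cyclic with $Q \cong \mathbb{Z}/N\mathbb{Z} \times \mathbb{Z}/N\mathbb{Z}$; this is false when $N$ is not squarefree (for $N=\ell^2$, the subgroup $Q = A[\ell]$ is $\mathcal{O}$-stable of order $\ell^4 = N^2$ but is not of that shape), so to obtain the equivalence with (2) you must read (1) as requiring $e_1Q$ cyclic of order $N$, equivalently $Q = \mathcal{O}\cdot P$ for a point $P$ of order $N$. This is the reading the paper itself implicitly adopts (Remark \ref{kernel-to-cyclic-subgroup-remark} is loose on the same point), but your proof should state the cyclicity hypothesis rather than derive it from the order count.
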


These interpretations hold for any choice of principal polarization $\mu$ of $\mathcal{O}$. That is, if $\mu$ and $\mu'$ are two such polarizations then they both induce the same coarse moduli scheme $X_0^D(N)_{/\mathbb{Q}}$ up to isomorphism (as discussed, for example, in \cite[\S 6]{Rot04}). Of course, the exact moduli interpretation \emph{does} depend on $\mu$, and we refer to \cite[Prop. 4.3]{Rot04} for more on how the corresponding spaces fit into the moduli space of principally polarized abelian surfaces. Because a principal polarization $\lambda$ on a pair $(A,\iota)$ is canonically determined from a fixed $\mu$, moving forward we will suppress polarizations and refer simply to QM abelian surfaces $(A,\iota)$. By the same point, the condition on the polarizations in the definition of a QM-cyclic $N$-isogeny is redundant; it follows from the QM-equivariant condition. 

Letting $\mathcal{O}_N$ denote an Eichler order of level $N$ in $B$, the curve $X_0^D(N)_{/\mathbb{Q}}$ has the equivalent interpretation of parametrizing pairs $(A,\iota)$ where $A/\mathbb{C}$ is a QM abelian surface and $\iota : \mathcal{O}_N \hookrightarrow \text{End}(A)$. (We just stated that we would no longer remark on polarizations, but we note that the polarization corresponding to such an $\iota$ will not be principally polarized, but $(1,N)$-polarized in general.) That said, interpretations $(1)$ and $(2)$ in Proposition \ref{moduli-interpretation} will be the primary ones used in our study -- see Remark \ref{GR06_remark} for related comments. Thus, we will mainly speak of QM by maximal quaternion orders, and it will benefit us to spell out the connection between interpretations $(1)$ and $(2)$ here. Let $(A,\iota)$ be a QM abelian surface. The $N$-torsion of $A$ is acted on by $\iota(\mathcal{O})$, and the corresponding representation factors through $\mathcal{O} \otimes_{\mathbb{Z}} \mathbb{Z}/N\mathbb{Z} \cong M_2(\mathbb{Z}/N\mathbb{Z})$. The resulting map must then be equivalent to
\[ \begin{tikzcd}
    M_2(\mathbb{Z}/N\mathbb{Z}) \arrow[r] &  \textnormal{End}(A[N]) \cong  M_4(\mathbb{Z}/N\mathbb{Z}) \\
    \begin{pmatrix} a & b \\ c & d \end{pmatrix} \arrow[r,mapsto] &  \begin{pmatrix} a & 0 & b & 0 \\ 0 & a & 0 & b \\ c & 0 & d & 0 \\ 0 & c & 0 & d \end{pmatrix}.
\end{tikzcd} \]
\noindent This can be viewed as a case of Morita equivalence, but it is worth being explicit here: let $e_1$ and $e_2$ denote the standard idempotents in $M_2(\mathbb{Z}/N\mathbb{Z})$,
\[ e_1 = \begin{pmatrix} 1 & 0 \\ 0 & 0 \end{pmatrix}, \qquad e_2 = \begin{pmatrix} 0 & 0 \\ 0 & 1 \end{pmatrix}.\]
We then have $A[N] = e_1 \cdot A[N] \oplus e_2 \cdot A[N]$, and $M_2(\mathbb{Z}/N\mathbb{Z})$ acts on this direct sum in precisely the way noted by the above map.

Any proper, nontrivial, $\mathcal{O}$-stable subgroup $Q \leq A[N]$ must then have order $N^2$ (this justifies our definition of QM-cyclic isogenies, along with the equivalence of the moduli interpretations presented above). Further, such a subgroup $Q$ is determined by a cyclic order $N$ subgroup of $A[N]$: we have $Q = e_1(Q) \oplus e_2(Q)$ where each summand is cyclic of order $N$, and conversely $Q = \mathcal{O} \cdot e_i (Q)$ for $i = 1, 2$. 

For our applications in \S \ref{least-degrees-section}, the genera of our Shimura curves of interest will be of use. Let $\psi$ denote the Dedekind psi function. The derivations are standard -- for example, the formula for $X_0^D(N)$ can be found in \cite[Thm. 39.4.20]{Voi21}:

\begin{proposition}\label{genus_formula}
\begin{align*} 
g(X_0^D(N)) = 1 + \frac{\phi(D)\psi(N)}{12} - \frac{\epsilon_1(D,N)}{4} - \frac{\epsilon_3(D,N)}{3},
\end{align*}
where 
\begin{align*}
\epsilon_1(D,N) &= \begin{cases} \displaystyle{\prod_{p \mid D} \left(1 - \dlegendre{-4}{p}\right) \prod_{p \mid N} \left(1 + \dlegendre{-4}{p}\right)} \quad &\text{ if } 4 \nmid N \\ 
0 &\text{ if } 4 \mid N \end{cases} \\
\epsilon_3(D,N) &= \begin{cases} \displaystyle{\prod_{p \mid D} \left(1 - \dlegendre{-3}{p}\right) \prod_{p \mid N} \left(1 + \dlegendre{-3}{p}\right)} \quad &\text{ if } 9 \nmid N \\ 0 &\text{ if } 9 \mid N \end{cases}
\end{align*} 
are the numbers of elliptic $\mathbb{Z}[\sqrt{-1}]$-CM and elliptic $\mathbb{Z}\left[\frac{1+\sqrt{-3}}{2}\right]$-CM points on $X_0^D(N)$, respectively. For $N \leq 2$ we have $X_1^D(N) \cong X_0^D(N)$, and for $N \geq 3$ we have
\[ g(X^D_1(N)) = 1 + \dfrac{\phi(N)\phi(D)\psi(N)}{24}. \]
\end{proposition}

\subsection{CM points}\label{CM_points_section}
Let $(A,\iota)$ be a QM abelian surface over a number field $F$, such that
\[ \textnormal{End}^0(A) \cong B. \]
If $A$ is nonsimple, such that $A \sim E_1 \times E_2$ is geometrically isogenous (i.e., isogenous over $\overline{\mathbb{Q}}$) to a product of elliptic curves, then it must be the case that $E_1$ and $E_2$ are isogenous elliptic curves with complex multiplication (CM). In this case, $A \sim E^2$ where $E$ is a CM elliptic curve, say with corresponding imaginary quadratic CM field $K$. Here it is forced that $K$ splits the quaternion algebra $B$:
\[ B \otimes_{\mathbb{Q}} K \cong M_2(K). \]
In this case in which $A$ is nonsimple, we refer to $(A,\iota)$ as a \textbf{QM abelian surface with CM} and we call the induced point $[(A,\iota)] \in X^D(1)_{/\mathbb{Q}}(F)$ a \textbf{CM point}. We call a point $x$ on $X_0^D(D)_{/\mathbb{Q}}$ or $X_1^D(N)_{/\mathbb{Q}}$ a CM point if it lies over a CM point on $X^D(1)_{/\mathbb{Q}}$. 

Generalizing our definition for isogenies, we call an endomorphism $\alpha \in \text{End}(A)$ \textbf{QM-equivariant} if $\alpha \circ \iota(\gamma) = \iota(\gamma) \circ \alpha$ for all $\gamma \in \mathcal{O}$. If $(A, \iota)$ has $K$-CM, then the ring $\text{End}_{\text{QM}}(A)$ of QM-equivariant endomorphisms of $A$ is an imaginary quadratic order in $K$. This means that we have some $\ff \in \mathbb{Z}^+$ such that 
\[ \textnormal{End}_{\text{QM}}(A) \cong \mathfrak{o}(\ff) , \]
where $\mathfrak{o}(\ff) $ denotes the unique order of conductor $\ff$ in $K$. In other words, $\mathfrak{o}(\ff) $ is the unique imaginary quadratic order of discriminant $\ff^2\Delta_K$, where $\Delta_K$ denotes the discriminant of $K$, i.e., that of the maximal order $\mathfrak{o}_K = \mathfrak{o}(1)$. We will call this $\ff$ the \textbf{central conductor} of $(A,\iota)$. We will refer to $[(A,\iota)] \in X^D(1)$, or to any point in the fiber over $[A,\iota]$ under some covering of Shimura curves $X \rightarrow X^D(1)$, as an \textbf{$\mathfrak{o}(\ff) $-CM point} when we wish to make the CM order clear. Note that the QM on $A$ is by definition defined over $F$, so if $A$ is isogenous to $E^2$ over an extension $L/F$ then $E$ necessarily has its CM defined over $L$. 

\subsection{The field of moduli of a QM-cyclic isogeny}

\subsubsection{The field of moduli}
The \textbf{field of moduli} of a QM abelian surface $(A,\iota)$ defined over $\overline{\mathbb{Q}}$ is the fixed field of those automorphisms $\sigma \in \text{Gal}\left(\overline{\mathbb{Q}}/\mathbb{Q}\right)$ such that $(A,\iota)^\sigma := (A^\sigma,\iota^\sigma)$ is isomorphic to $(A,\iota)$ over $\overline{\mathbb{Q}}$. The conjugate abelian surface $A^\sigma$ is defined as the fiber product $A \otimes_{\spec  \overline{\mathbb{Q}}} \spec \overline{\mathbb{Q}}$ over $\sigma$:
\[ \begin{tikzcd}
A^{\sigma} \arrow[r] \arrow[d] & A \arrow[d] \\
\spec \overline{\mathbb{Q}} \arrow[r,"\sigma"] & \spec \overline{\mathbb{Q}} \; ,
\end{tikzcd} \]
and $\iota^{\sigma}$ is defined via the action of $\sigma$ on endomorphisms of $A$. (We are suppressing polarizations at this point, but recall this is justified as there is a unique principal polarization on $A^\sigma$ compatible with $\iota^\sigma$.) Equivalently, the field of moduli of $(A,\iota)$ is the residue field $\mathbb{Q}(x)$ of the corresponding point $x = [(A,\iota)]$ on $X^D(1)_{/\mathbb{Q}}$. 

More generally, for a QM-cyclic isogeny $\varphi : (A,\iota) \rightarrow (A',\iota')$ defined over $\overline{\mathbb{Q}}$, the \textbf{field of moduli} of $\varphi$ is the fixed field of the group
\[ H(\varphi) := \left\{ \sigma \in \text{Gal}\left(\overline{\mathbb{Q}}/\mathbb{Q}\right) \; \Bigg| \; \begin{tikzcd} (A^\sigma,\iota^\sigma) \arrow[r,"\varphi^\sigma"] \arrow[d] & ((A')^\sigma,(\iota')^\sigma) \arrow[d] \\ (A,\iota) \arrow[r,"\varphi"] & (A',\iota') \end{tikzcd} \substack{\text{ commutes, and the } \\ \text{vertical maps are} \\ \text{isomorphisms}} \right\}. \]

For clarity: the vertical maps above are those induced by $\sigma$, membership of $\sigma$ in $H(\varphi)$ means that both $(A,\iota)$ and $(A',\iota')$ are isomorphic to their conjugates by $\sigma$. In other words, the field of moduli of $\varphi$ is the minimal field over which $\varphi$ is isomorphic to all of its $\text{Gal}(\overline{\mathbb{Q}}/\mathbb{Q})$-conjugates. Equivalently, it is the residue field of the corresponding point $[\varphi]$ on $X_0^D(N)_{/\mathbb{Q}}$ (which follows from the much more general theory of \cite[Thm. 5.1]{Sh66}, as exposited more specifically towards our case in \cite[p. 60]{Sh67}). 

We call a field $F$ a \textbf{field of definition} for a QM-cyclic isogeny $\varphi$ as above, or say that $\varphi$ is defined or rational over $F$, if $\varphi$ and both $(A, \iota)$ and $(A', \iota')$ can be given by equations defined over $F$. We then have a model $\varphi'$ over $F$ so that $\varphi' \otimes_{F} \overline{\mathbb{Q}} = \varphi$. It follows that if $x \in X_0^D(N)_{/\mathbb{Q}}$ is induced by $\varphi$, then any field of definition for $\varphi$ contains the field of moduli $\mathbb{Q}(x)$.

It is not generally the case that fields of moduli are fields of definition for (polarized) abelian varieties of dimension bigger than $1$, and this is a source of difficulty and interest in the study of their arithmetic. For instance, Shimura proved that the generic principally polarized even-dimension abelian variety \emph{does not} have a model defined over its field of moduli \cite{Sh72}. Particular towards our interests here, a QM abelian surface (or, more generally, a QM-cyclic isogeny) need not have a model over its field of moduli. However, we have the following result of Jordan \cite[Thm. 2.1.3]{Jor81}:

\begin{theorem}[Jordan]\label{Jordan_curves_to_surfaces}
Suppose that $(A,\iota)/\overline{\mathbb{Q}}$ is a QM abelian surface with QM by $B$ and with $\textnormal{Aut}_{\textnormal{QM}}(A) = \{\pm 1\}$ (equivalently, $(A,\iota)$ does not have CM by $\Delta \in \{-3,-4\}$). Let $x = [(A,\iota)] \in X_0^D(1)_{/\mathbb{Q}}$ be the corresponding point. Then a field $L$ containing $\mathbb{Q}(x)$ is a field of definition for $(A,\iota)$ if and only if $L$ splits $B$. 
\end{theorem}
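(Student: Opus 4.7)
The plan is to phrase the problem as a question about descent of $(A,\iota)$ from $\overline{\mathbb{Q}}$ to $L$ and to analyze the resulting Galois cohomological obstruction. Because $L$ contains the field of moduli $\mathbb{Q}(x)$, for each $\sigma \in \Gal(\overline{\mathbb{Q}}/L)$ we may choose a QM-equivariant isomorphism $\phi_\sigma : (A,\iota) \xrightarrow{\sim} (A^\sigma, \iota^\sigma)$, unique up to postcomposition with an element of $\Aut_{\text{QM}}(A,\iota)$. The failure of the family $\{\phi_\sigma\}$ to define a Galois descent datum is measured by a $2$-cocycle $c_{\sigma,\tau} \in \Aut_{\text{QM}}(A,\iota)$, and by the standard descent theory for polarized abelian varieties equipped with extra endomorphism structure, $(A,\iota)$ admits a model over $L$ if and only if the class $[c] \in H^2(\Gal(\overline{\mathbb{Q}}/L), \Aut_{\text{QM}}(A,\iota))$ is trivial.

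Using the standing hypothesis $\Aut_{\text{QM}}(A,\iota) = \{\pm 1\} \cong \mu_2$, this obstruction lives in $H^2(\Gal(\overline{\mathbb{Q}}/L), \mu_2)$, which embeds via the Kummer sequence into $\operatorname{Br}(L)[2]$, so $[c]$ corresponds to some quaternion algebra class over $L$. The crux of the argument is to identify this class with the Brauer class $[B\otimes_\mathbb{Q} L]$. For this, I would examine a natural $2$-dimensional $\overline{\mathbb{Q}}$-vector space attached functorially to $A$ and carrying a $B$-action — the tangent space $\operatorname{Lie}(A)$ being the most convenient — which is forced to be a rank-$1$ free module over the split algebra $B \otimes_\mathbb{Q} \overline{\mathbb{Q}} \cong M_2(\overline{\mathbb{Q}})$. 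The chosen isomorphisms $\phi_\sigma$ endow $\operatorname{Lie}(A)$ with a twisted semilinear $\Gal(\overline{\mathbb{Q}}/L)$-action commuting with the $B$-action up to the cocycle $c$. An $L$-descent of $(A,\iota)$ is then equivalent to promoting this to a genuine $B \otimes_\mathbb{Q} L$-module structure on an $L$-form of $\operatorname{Lie}(A)$, which by Morita theory is possible precisely when $B \otimes_\mathbb{Q} L$ is a matrix algebra; unwinding the cocycles then identifies $[c]$ with $[B \otimes_\mathbb{Q} L]$ in $\operatorname{Br}(L)$, giving the desired equivalence.

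The main technical obstacle is this last identification: producing explicit $2$-cocycle representatives for $[c]$ and for $[B \otimes_\mathbb{Q} L]$ and verifying their equality, while carefully tracking how the principal polarization (and the Rosati involution induced by $\mu$) interact with the twisting data. The hypothesis $\Aut_{\text{QM}}(A,\iota) = \{\pm 1\}$ is essential throughout: without it — i.e.\ precisely when $(A,\iota)$ has CM by discriminant $-3$ or $-4$ — the QM-automorphism group is strictly larger and generally nonabelian, so the descent obstruction lives in a nonabelian $H^2$ whose relationship with $\operatorname{Br}(L)$ is considerably more subtle and no longer affords a clean splitting-field criterion.
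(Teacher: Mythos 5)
The paper does not actually reprove this statement: it quotes Jordan, remarking only that necessity follows from the QM action on the holomorphic $1$-forms of a model $A/L$ (a two-dimensional faithful $B\otimes_{\mathbb{Q}}L$-module forces $B\otimes_{\mathbb{Q}}L\cong M_2(L)$) and that sufficiency is a descent argument via Weil's criterion. Your plan is the same strategy in substance, and your setup is sound: since $\operatorname{Aut}_{\mathrm{QM}}(A,\iota)=\{\pm 1\}$ is central and Galois-stable, the failure of the $\phi_\sigma$ to be a descent datum is a class $[c]\in H^2(\Gal(\overline{\mathbb{Q}}/L),\mu_2)$, which injects into $\operatorname{Br}(L)[2]$, and triviality of $[c]$ gives a model by Weil descent (effectivity being unproblematic because the $\phi_\sigma$ respect the canonical principal polarization, so one descends a polarized, hence projective, object). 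Your necessity direction is also fine.

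The gap is in the sufficiency, precisely at the step you flag as the "main technical obstacle," and the mechanism you propose for it does not work as stated. First, a small slip: $\operatorname{Lie}(A)$ is not a free rank-one module over $B\otimes_{\mathbb{Q}}\overline{\mathbb{Q}}\cong M_2(\overline{\mathbb{Q}})$ (such a module is $4$-dimensional); it is the $2$-dimensional simple module. More seriously, descent of $(A,\iota)$ to $L$ is \emph{not} equivalent to descending $\operatorname{Lie}(A)$ with its $B$-action: only the forward implication is clear, and assuming the converse begs the question. The correct bridge is cohomological. The $\phi_\sigma$ induce $\sigma$-semilinear maps $\lambda_\sigma$ on $V=\operatorname{Lie}(A)$ commuting with the $B$-action and satisfying $\lambda_\sigma\lambda_\tau=c_{\sigma,\tau}\lambda_{\sigma\tau}$. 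Conjugation by the $\lambda_\sigma$ (which is independent of the sign ambiguity) is an honest semilinear Galois action on $\operatorname{End}_{\overline{\mathbb{Q}}}(V)\cong M_2(\overline{\mathbb{Q}})$, whose invariants form a degree-$2$ central simple $L$-algebra of Brauer class $[c]$ (the classical factor-set construction attaching a central simple algebra to a projective semilinear representation); since each $\iota(b)$, $b\in B$, commutes with the $\lambda_\sigma$, this invariant algebra receives an embedding of $B\otimes_{\mathbb{Q}}L$, and equality of degrees forces it to \emph{be} $B\otimes_{\mathbb{Q}}L$. Hence $[c]=[B\otimes_{\mathbb{Q}}L]$ in $\operatorname{Br}(L)$. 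Now if $L$ splits $B$, the injectivity of $H^2(L,\mu_2)\rightarrow\operatorname{Br}(L)$ (Kummer sequence plus Hilbert 90) kills $[c]$ as a $\mu_2$-class, the signs of the $\phi_\sigma$ can be corrected to a genuine descent datum, and Weil's criterion produces the model. With this replacement for your "equivalence" step, your outline closes up and is essentially Jordan's argument.
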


\subsubsection{The field of moduli in the CM case}

Our attention in this study will primarily be aimed at determining fields of moduli, particularly in the presence of CM. We now recall prior work determining the field of moduli of a CM point on $X^D(1)_{/\mathbb{Q}}$. 

The answer begins with a fundamental theorem of Shimura \cite[Main Thm. 1]{Sh67}. Fixing an imaginary quadratic field $K$ and a positive integer $\ff$, we let $\mathfrak{o}(\ff) $ denote the order in $K$ of conductor $\ff$ and $K(\ff)$ denote the ring class field corresponding to $\mathfrak{o}(\ff) $. 

\begin{theorem}[Shimura]\label{Shimura}
Let $x \in X^D(1)_{/\mathbb{Q}}$ be an $\mathfrak{o}(\ff) $-CM point with residue field $\mathbb{Q}(x)$. Then
\[ K \cdot \mathbb{Q}(x) = K(\ff)\] 
\end{theorem}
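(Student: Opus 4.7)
This is Shimura's main theorem of complex multiplication applied to the canonical model of $X^D(1)_{/\mathbb{Q}}$. The plan is to exhibit a simply transitive action of $\text{Pic}(\mathfrak{o}(\ff))$ on the $\mathfrak{o}(\ff)$-CM locus of $X^D(1)(\overline{\mathbb{Q}})$ and to identify this action, via Artin reciprocity, with the natural action of $\text{Gal}(K(\ff)/K)$. Granted this, the Galois orbit of an $\mathfrak{o}(\ff)$-CM point $x$ under $\text{Gal}(\overline{K}/K)$ is the full $\mathfrak{o}(\ff)$-CM locus, of size $h(\ff) = [K(\ff):K]$, and it is contained in $K(\ff)$; this forces $K \cdot \mathbb{Q}(x) = K(\ff)$.

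The first step is to set up the parameterization. Since $K$ splits $B$ (which is forced by the existence of $\mathfrak{o}(\ff) $-CM, as noted earlier in \S\ref{background_section}), there is an embedding $\phi : K \hookrightarrow B$ with $\phi^{-1}(\mathcal{O}) = \mathfrak{o}(\ff)$, and an Eichler-style correspondence places $\mathcal{O}^\times$-conjugacy classes of such optimal embeddings in bijection with $\text{Pic}(\mathfrak{o}(\ff))$. This both shows that the $\mathfrak{o}(\ff)$-CM locus on $X^D(1)(\overline{\mathbb{Q}})$ has cardinality $h(\ff)$ and provides, via the Serre tensor construction $(A, \iota) \mapsto (A \otimes_{\mathfrak{o}(\ff)} \mathfrak{a}^{-1}, \iota')$ with the induced QM structure, a simply transitive action of $\text{Pic}(\mathfrak{o}(\ff))$ on this locus.

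The second step is to apply Shimura's reciprocity law. For the Shimura datum attached to $X^D(1)$ the reflex field is $\mathbb{Q}$, and a $K$-CM torus has reflex field $K$ itself; reciprocity then asserts that for a fractional $\mathfrak{o}(\ff)$-ideal $\mathfrak{a}$ the Artin image of $\mathfrak{a}$ in $\text{Gal}(K^{\text{ab}}/K)$ acts on $[(A, \iota)]$ exactly by the Serre tensor action above. I would verify this compatibility by reducing to the classical elliptic case via the decomposition $A \sim E^2$ for $\mathfrak{o}(\ff)$-CM QM abelian surfaces (as to be established around Theorem \ref{fake-ell-curves-ell-curves-correspondence}) and invoking the standard main theorem of CM for elliptic curves, giving $K(j(E)) = K(\ff)$ with $\text{Pic}(\mathfrak{o}(\ff))$ acting on $j(E)$ via the Artin map. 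The main obstacle is carefully tracking the QM and polarization data through both the Serre tensor construction and the decomposition $A \sim E^2$, so that the Galois action identified is that on the point $x = [(A,\iota)]$ itself rather than merely on the underlying abelian variety or its elliptic factors; this is precisely the PEL-rigidity that distinguishes the QM setting from the classical one.
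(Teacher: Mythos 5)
First, a point of comparison: the paper does not prove this statement at all --- it is imported as a theorem of Shimura, cited from \cite{Sh67}, so there is no internal argument to measure your plan against; what you propose is a re-derivation via the standard CM-theoretic route. Within that plan there are two genuine problems. The first is a counting error: for $D>1$ the $\mathcal{O}^\times$-conjugacy classes of optimal embeddings of $\mathfrak{o}(\ff)$ into a maximal order $\mathcal{O}$ are \emph{not} in bijection with $\textnormal{Pic}(\mathfrak{o}(\ff))$; Eichler's formula gives $2^b\,h(\mathfrak{o}(\ff))$ classes, where $b$ is the number of primes $p\mid D$ inert in $K$, exactly as recorded in Proposition \ref{number-of-CM-points} of this paper. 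Consequently $\textnormal{Pic}(\mathfrak{o}(\ff))$ acts freely but not transitively on the $\mathfrak{o}(\ff)$-CM locus: there are $2^b$ orbits, permuted by the Atkin--Lehner involutions at the inert primes dividing $D$ (cf.\ the proof of Theorem \ref{fake-ell-curves-ell-curves-correspondence}). So the Galois orbit of $x$ over $K$ is at most a single $\textnormal{Pic}$-orbit, not ``the full $\mathfrak{o}(\ff)$-CM locus,'' and your cardinality claim is off by the factor $2^b$. The conclusion survives --- freeness of the $\textnormal{Pic}$-action together with the reciprocity law still forces $[K\cdot\mathbb{Q}(x):K]=h(\mathfrak{o}(\ff))$ --- but the argument as written fails whenever some prime dividing $D$ is inert in $K$, which is the typical case.

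The second and more serious gap is that the crucial compatibility is assumed rather than established. The containment $K\cdot\mathbb{Q}(x)\subseteq K(\ff)$ cannot be extracted from the elliptic factor alone: if $(A,\iota)^\sigma\cong(A,\iota)$ then $A^\sigma\cong A$, so the field of moduli of the QM pair $(A,\iota)$ \emph{contains}, and could a priori strictly contain, that of the underlying surface $A\cong E\times E_A$; the classical main theorem of CM, giving $K(j(E))=K(\ff)$ with $\textnormal{Pic}(\mathfrak{o}(\ff))$ acting through the Artin map, therefore bounds the wrong side. What is needed is precisely that $\textnormal{Gal}(\overline{K}/K)$ acts on the moduli point $x=[(A,\iota)]$ --- quaternionic structure and polarization included --- through $\textnormal{Gal}(K(\ff)/K)$ via the Artin map, i.e.\ Shimura's reciprocity law for the canonical model of $X^D(1)$. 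That is the content of the theorem being proved, and it is why the paper cites \cite{Sh67} instead of reducing to elliptic curves. Your plan flags ``tracking the QM and polarization data'' as the main obstacle but does not resolve it; to close the gap one would have to either invoke Shimura's canonical-model reciprocity (making the argument a citation, as in the paper) or exhibit a $K(\ff)$-rational model of the pair $(A,\iota)$ directly, e.g.\ by descending the QM structure on $E\times E_A$ using the $K(\ff)$-rationality of all homomorphisms between $\mathfrak{o}(\ff)$-CM elliptic curves --- a step that is not sketched in your proposal and cannot be replaced by Theorem \ref{Jordan_curves_to_surfaces}, which presupposes knowledge of $\mathbb{Q}(x)$.
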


\noindent This tells us that in this setting there are two possibilities: either $\mathbb{Q}(x)$ is the ring class field $K(\ff)$, or it is an index 2 subfield thereof. In his thesis \cite[\S 3]{Jor81}, Jordan proved when each possibility occurs in the case where $x$ has CM by the maximal order of $K$ (the $\ff=1$ case). Work of Gonz\'{a}lez--Rotger allows for a generalization of Jordan's result to arbitrary CM orders \cite[\S 5]{GR06}. 

To state their result, we first set the following notation: for $D$ a quaternion discriminant over $\mathbb{Q}$ and $K$ an imaginary quadratic field splitting the quaternion algebra $B$ of discriminant $D$ over $\mathbb{Q}$, let
\[ D(K) := \prod_{p \mid D, \; \legendre{K}{p}=-1} p . \]
The assumption that $K$ splits $B$ is exactly the assumption that no prime divisor of $D$ splits in $K$. From this we see that $D(K) = 1$ if and only if all primes dividing $D$ ramify in $K$, while $D(K) > 1$ exactly when some prime dividing $D$ is inert in $K$. 

\begin{theorem}[Jordan, Gonz\'{a}lez--Rotger]\label{JGR}
Let $x \in X^D(1)_{/\mathbb{Q}}$ be an $\mathfrak{o}(\ff) $-CM point. 
\begin{enumerate}
    \item If $D(K) = 1$, then we have $\mathbb{Q}(x) = K(\ff)$.
    \item Otherwise, $[K(\ff) : \mathbb{Q}(x)] = 2$. In this case, $\mathbb{Q}(x) \subsetneq K(\ff)$ is the subfield fixed by 
    \[ \sigma = \tau \circ \sigma_\mathfrak{a} \in \textnormal{Gal}(K(\ff)/\mathbb{Q}), \]
    where $\tau$ denotes complex conjugation and $\sigma_\mathfrak{a} \in \textnormal{Gal}(K(\ff)/K)$ is the automorphism associated via the Artin map to a certain fractional ideal $\mathfrak{a}$ of $\mathfrak{o}(\ff) $ with the property that
    \[ B \cong \dlegendre{\Delta_K,N_{K/\mathbb{Q}}(\mathfrak{a})}{\mathbb{Q}} . \]
    More specifically, $\mathfrak{a}$ is such that
    \[ \omega_{D(K)}(x^{\sigma_\mathfrak{a}}) = \tau(x), \]
    where $\omega_{D(K)}$ denotes the Atkin--Lehner involution on $X^D(1)_{/\mathbb{Q}}$ corresponding to $D(K)$.
\end{enumerate}
\end{theorem}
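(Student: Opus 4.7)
The plan is to combine Shimura's Theorem with the main theorem of complex multiplication adapted to QM abelian surfaces. By Shimura's Theorem, $K\cdot\mathbb{Q}(x)=K(\ff)$, forcing $[K(\ff):\mathbb{Q}(x)] \in \{1,2\}$ and the stabilizer $H:=\Gal(K(\ff)/\mathbb{Q}(x))$ to meet $\Gal(K(\ff)/K)$ trivially. Any nontrivial element of $H$ thus takes the form $\tau\circ\sigma_\mathfrak{a}$ for some $[\mathfrak{a}]\in\Pic(\mathfrak{o}(\ff))$ via the Artin identification, and it remains to decide whether such an $\mathfrak{a}$ exists (distinguishing case (2) from case (1)), and when so, to pin it down.

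The main computation is to identify $\tau(x)$ via the main theorem of CM, extended by Shimura to the QM setting. Writing $(A,\iota)=\mathbb{C}^2/\Lambda$, the complex conjugate $(\bar A,\bar\iota)$ should be isomorphic as a QM abelian surface to an ideal twist $(A^{\mathfrak{a}},\iota^{\mathfrak{a}})$ of $(A,\iota)$ composed with a modification of the QM structure by an element of $\OO$ of some reduced norm $m\mid D$. On the moduli space this modification descends to the Atkin--Lehner involution $\omega_m$, producing
\[ \tau(x)=\omega_m\bigl(\sigma_{\mathfrak{a}}(x)\bigr). \]
Compatibility of the CM embedding with the Rosati involution under complex conjugation forces the Hilbert-symbol relation $B\cong\left(\frac{\Delta_K,\,N_{K/\mathbb{Q}}(\mathfrak{a})}{\mathbb{Q}}\right)$, and a prime-by-prime analysis using that $K$ splits $B$ pins down $m=D(K)$: at $p\mid D$ ramified in $K$ the Hilbert-symbol condition imposes only local-unit data absorbable into the choice of $\mathfrak{a}$, while at inert $p\mid D$ it forces $v_p(N_{K/\mathbb{Q}}(\mathfrak{a}))$ to be odd.

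The dichotomy then follows by analyzing the Atkin--Lehner fixed-point locus. In case (2), $D(K)>1$, and one verifies (via the action of $\omega_{D(K)}$ on the local orientations $\mathfrak{o}(\ff)\hookrightarrow\OO$ at the inert primes of $D$) that $\omega_{D(K)}$ fixes $x$. The formula then collapses to $\sigma_\mathfrak{a}(x)=\tau(x)$, so $\tau\sigma_\mathfrak{a}\in H$ gives the claimed index-$2$ fixed field. In case (1), $D(K)=1$ and $\omega_{D(K)}=\mathrm{id}$, but the ideal emerging from the computation does not descend to a class in $\Pic(\mathfrak{o}(\ff))$ realizing $\sigma_\mathfrak{a}(x)=\tau(x)$ — reflecting that the QM structures on $(A,\iota)$ and $(\bar A,\bar\iota)$ differ in a way not captured by the class-group action alone — so $H$ is trivial and $\mathbb{Q}(x)=K(\ff)$.

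The principal difficulty is carrying out the main-theorem-of-CM calculation in the QM setting, which requires careful bookkeeping of local orientations at primes of $D$, the interaction of ideal twists with the noncommutative QM structure, and the identification of a QM-structure modification by an element of $\OO$ with an Atkin--Lehner involution on $X^D(1)_{/\mathbb{Q}}$. Jordan handled the case $\ff=1$ in his thesis, combining Weil's descent criterion with a careful study of optimal embeddings, and Gonz\'alez--Rotger's work extends the argument to arbitrary conductor by additionally tracking the behavior at primes dividing $\ff$.
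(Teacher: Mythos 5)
Your overall route --- Shimura's theorem to reduce the question to whether some element of $\Gal(K(\ff)/\mathbb{Q})$ restricting to $\tau$ on $K$ fixes $x$, then a main-theorem-of-CM computation in the QM setting giving $\tau(x)=\omega_{D(K)}\bigl(x^{\sigma_{\mathfrak{a}}}\bigr)$ together with the Hilbert-symbol constraint on $N_{K/\mathbb{Q}}(\mathfrak{a})$ --- is indeed the Jordan/Gonz\'alez--Rotger strategy; note that the paper itself does not prove this theorem but quotes it from \cite{Jor81} and \cite{GR06}, so there is no internal proof to compare against. The decisive problem is your final dichotomy. Your claim that for $D(K)>1$ the involution $\omega_{D(K)}$ fixes $x$ is false: $\omega_{D(K)}=\prod_{p\mid D(K)}\omega_p$ reverses the local orientation at every inert prime, and since the group $W\times\Pic(\mathfrak{o}(\ff))$ (with $W$ generated by the $\omega_p$ for $p\mid D$ inert in $K$) acts simply transitively on the $\mathfrak{o}(\ff)$-CM locus (Proposition \ref{number-of-CM-points}, proof of Theorem \ref{fake-ell-curves-ell-curves-correspondence}), $\omega_{D(K)}$ is fixed-point free there and in fact moves the $\Pic(\mathfrak{o}(\ff))$-orbit of $x$. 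Since the Atkin--Lehner involutions are $\mathbb{Q}$-rational and hence commute with the Galois action, any candidate fixing element satisfies $x^{\tau\sigma_{\mathfrak{b}}}=\omega_{D(K)}\bigl(x^{\sigma_{\mathfrak{a}\mathfrak{b}}}\bigr)$, which by simple transitivity can never equal $x$ when $\omega_{D(K)}\neq\mathrm{id}$; so $D(K)>1$ forces $\mathbb{Q}(x)=K(\ff)$. Symmetrically, when $D(K)=1$ the formula reads $\tau(x)=x^{\sigma_{\mathfrak{a}}}$ with $\mathfrak{a}$ an honest class in $\Pic(\mathfrak{o}(\ff))$ (Shimura reciprocity gives the $\Gal(\overline{\mathbb{Q}}/K)$-action exactly through $\Pic(\mathfrak{o}(\ff))$), so taking $\mathfrak{b}=\mathfrak{a}^{-1}$ produces an involution lifting $\tau$ that fixes $x$, and $[K(\ff):\mathbb{Q}(x)]=2$; your assertion that ``the ideal emerging from the computation does not descend to a class in $\Pic(\mathfrak{o}(\ff))$'' is unjustified and incorrect.

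So your argument establishes the dichotomy in the wrong direction. A sanity check: for $\Delta_K\in\{-3,-4\}$ one always has $D(K)>1$, and such points have residue field $K=K(1)$ itself (as noted before Proposition \ref{fom-prime-power}); under your dichotomy the residue field would be the index-$2$, hence real, subfield $\mathbb{Q}$, contradicting Shimura's theorem that $X^D(1)(\mathbb{R})=\emptyset$. Be aware that the printed statement of Theorem \ref{JGR} appears to have its two cases interchanged relative to how it is used throughout the rest of the paper --- Proposition \ref{fom-edge}, the freeness claim in \S\ref{Galois_action_section}, and the path-type counts all use ``$D(K)\neq 1\Rightarrow\mathbb{Q}(x)=K(\ff)$, $D(K)=1\Rightarrow$ index $2$'' --- and forcing your argument to match that misprint is what led you astray. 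Finally, even setting this aside, the heart of the theorem (deriving $\tau(x)=\omega_{D(K)}\bigl(x^{\sigma_{\mathfrak{a}}}\bigr)$, the norm condition, and the identification of the Atkin--Lehner factor as exactly $D(K)$ via the orientation bookkeeping) is only described, not carried out, so as written the proposal defers precisely the step that the cited arguments of Jordan and Gonz\'alez--Rotger exist to supply.
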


\begin{remark}\label{GR06_remark}
In fact, Gonz\'{a}lez--Rotger provide a generalization of Jordan's result to all CM points on $X_0^D(N)_{/\mathbb{Q}}$ for squarefree $N$. We state their result only for trivial level $N=1$ in part because it is all we will need, but moreover because some translation would be needed for the statement of their result as in their work to the conventions of this work. In comparing our work to \cite{GR06}, the definition of an $\mathfrak{o}$-CM point on $X_0^D(N)_{/\mathbb{Q}}$ that they work with is different from ours; whereas our definition is that a CM point has $\mathfrak{o}$-CM for an imaginary quadratic order $\mathfrak{o}$ if it lies over an $\mathfrak{o}$-CM point on $X^D(1)_{/\mathbb{Q}}$, their definition is that $x \in X_0^D(N)_{/\mathbb{Q}}$ has $\mathfrak{o}$-CM if it corresponds to a normalized optimal embedding of $\mathfrak{o}$ into an Eichler order of level $N$ in $B$. The definition used in \cite{GR06} provides a pleasantly uniform result similar to Jordan's $N=1$ case, with every $\mathfrak{o}(\ff) $-CM point $x \in X_0^D(N)_{/\mathbb{Q}}$ having field of moduli $\mathbb{Q}(x)$ with $K \cdot \mathbb{Q}(x) \cong K(\ff)$. It will not be the case in our work, for level $N>1$, that all $\mathfrak{o}$-CM points have the same residue field. While our set of $K$-CM points on $X_0^D(N)$ is the same as that as defined in \cite{GR06}, the specific orders we attach may not agree. 

The convention used in  Gonz\'{a}lez--Rotger is common in the literature, appearing in the work of Rotger and his collaborators and also in recent work of Padurariu--Schembri \cite{PS23} in which the authors compute rational points on all Atkin--Lehner quotients of geometrically hyperelliptic Shimura curves. The difference in convention one takes is motivated by which moduli problem one chooses for the course moduli scheme $X_0^D(N)$: our choice of working with maximal orders results in having natural modular maps from $X_0^D(N)$ to $X_0^D(1)$ for all $N$, while working with Eichler orders of level $N$ naturally situates $X_0^D(N)$ as the base Shimura curve. Because we want to work with general level, we work with maximal orders.  A main difference between our work and that of \cite{GR06}, beyond the generalization from squarefree $N$ to all positive integers $N$, is that we consider not just the CM points on a fixed curve $X_0^D(N)$ but the \emph{fiber} of the covering $X_0^D(N)_{/\mathbb{Q}} \rightarrow X^D(1)_{/\mathbb{Q}}$ over any CM point. 
\end{remark}

\subsection{Decompositions of QM abelian surfaces with CM}

Restricting to the case of a QM abelian surface $(A,\iota)$ with CM over $\mathbb{C}$, we have seen that in fact $A$ is isogenous to a square of an elliptic curve with CM. Through a correspondence between QM abelian surfaces with CM and equivalance classes of certain binary quadratic forms, Shioda--Mitani \cite[Thm. 4.1]{SM74} proved the following strengthening of this fact: 

\begin{theorem}[Shioda--Mitani]\label{Shioda-Mitani}
If $(A,\iota)/\mathbb{C}$ is a QM abelian surface with $K$-CM for an imaginary quadratic field $K$, then there exist $K$-CM elliptic curves $E_1, E_2$ over $\mathbb{C}$ such that
\[ A \cong E_1 \times E_2. \]
\end{theorem}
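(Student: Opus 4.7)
The plan is to reduce the statement to the classical Shioda--Mitani classification of singular abelian surfaces. The first step is to observe that the endomorphism algebra $\text{End}^0(A)$ contains both $\iota(B)$ (from the QM structure) and a copy of $K$ (from the assumed CM, commuting with the QM action). Since $K$ splits $B$, we obtain
\[ \text{End}^0(A) \; \supseteq \; B \otimes_{\mathbb{Q}} K \; \cong \; M_2(K), \]
and comparing dimensions forces equality. Hence $A$ is a singular abelian surface, and by Poincar\'e complete reducibility it is already isogenous to $E^2$ for some $K$-CM elliptic curve $E$.

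The substantive step is to upgrade this isogeny to an actual isomorphism $A \cong E_1 \times E_2$ with two (not necessarily isomorphic) $K$-CM elliptic curves. I would proceed analytically: write $A = \mathbb{C}^2 / \Lambda$, and using the identification $\text{End}^0(A) \cong M_2(K)$ choose a pair of orthogonal primitive idempotents $e_1, e_2 \in M_2(K)$ with $e_1 + e_2 = 1$. These act on the tangent space $\mathbb{C}^2$ as projections onto two eigenlines $V_1, V_2$ for the $K$-action, and the projected lattices $\Lambda_i := e_i \Lambda \subseteq V_i$ are rank-$2$ lattices on which an order of $K$ acts. The quotients $E_i = V_i / \Lambda_i$ are then $K$-CM elliptic curves, and the natural map $E_1 \times E_2 \to A$ is an isogeny by construction.

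The main obstacle is to verify that in fact $\Lambda = \Lambda_1 \oplus \Lambda_2$ as lattices inside $V_1 \oplus V_2 = \mathbb{C}^2$, so that this isogeny becomes an isomorphism rather than merely a finite-kernel cover. The original Shioda--Mitani argument accomplishes this via reduction theory for the integral positive-definite binary quadratic form associated to $H_1(A,\mathbb{Z})$ together with the transcendental lattice structure. In the QM setting at hand a more direct route is available: using Shimura's uniformization, write $A = A_z = \mathbb{C}^2 / \mathcal{O}(1,z)$ for some $z \in \mathbb{H}$, and argue that the $K$-CM condition forces $z$ to lie in (a suitable embedding of) $K$ into $\mathbb{C}$; the Morita decomposition $\mathcal{O} \otimes_{\mathbb{Z}} K \cong M_2(K)$ then makes the splitting of $\Lambda$ into its eigencomponents an immediate computation. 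Either approach yields the claimed isomorphism $A \cong E_1 \times E_2$ with each $E_i$ having $K$-CM.
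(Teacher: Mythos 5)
Your first paragraph is fine: the subalgebra of $\operatorname{End}^0(A)$ generated by $\iota(B)$ and the commuting copy of $K$ is an isomorphic image of $B\otimes_{\mathbb{Q}}K\cong M_2(K)$, and a dimension count (or simply the definition of a QM surface ``with CM'' used in the paper, namely $A\sim E^2$) identifies $\operatorname{End}^0(A)$ with $M_2(K)$. Note, though, that the paper itself gives no proof of this theorem: it is quoted from Shioda--Mitani, whose argument runs through the classification of singular abelian surfaces by binary quadratic forms. So the only part of your write-up that goes beyond that citation is your claimed ``more direct route,'' and that is where there is a genuine gap. The idempotents $e_1,e_2$ you choose live in $\operatorname{End}^0(A)=M_2(K)$, not in $\operatorname{End}(A)$; they need not stabilize $\Lambda$, and for an arbitrary choice one only gets an isogeny between $A$ and $E_1\times E_2$ with kernel $(e_1\Lambda\oplus e_2\Lambda)/\Lambda$. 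Producing a choice for which $\Lambda=e_1\Lambda\oplus e_2\Lambda$ (equivalently, a nontrivial idempotent in the endomorphism \emph{ring}, which exists for some orders in $M_2(K)$ and not for others) is not a formality --- it is the entire content of the theorem, and it is exactly what Shioda--Mitani's reduction-theory argument supplies.

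Your proposed shortcut does not close this gap. First, under a fixed uniformization $\Psi:B\otimes_{\mathbb{Q}}\mathbb{R}\cong M_2(\mathbb{R})$ with $D>1$, the CM point $z$ is the fixed point in $\mathbb{H}$ of $\Psi(q(K^\times))$ for an embedding $q:K\hookrightarrow B$; since $\Psi(\mathcal{O})$ has irrational entries, this fixed point is in general \emph{not} an element of $K$ (that is a special feature of the split case $B\cong M_2(\mathbb{Q})$). One can of course conjugate $\Psi$ point by point to force $z\in K$, but then $\Psi(\mathcal{O})$ changes and nothing about the lattice $\Psi(\mathcal{O})\cdot(1,z)$ becomes transparent. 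Second, Morita equivalence is a statement about $B\otimes_{\mathbb{Q}}K\cong M_2(K)$, i.e.\ a rational statement: integrally, $\mathcal{O}\otimes_{\mathbb{Z}}\mathfrak{o}(\ff)$ need not be isomorphic to $M_2(\mathfrak{o}(\ff))$, and the standard idempotents of $M_2(K)$ need not lie in the order $\operatorname{End}(A)$, so the asserted ``immediate computation'' splitting $\Lambda$ into eigencomponents is precisely the unproved step. To repair the argument you must either reproduce the Shioda--Mitani quadratic-form argument (or one of its later treatments, e.g.\ Lange, Schoen, or Kani's \emph{Products of CM elliptic curves}), or prove directly that $\operatorname{End}(A)$ contains a nontrivial idempotent; as written, the proof only re-derives the isogeny $A\sim E_1\times E_2$, which was already the starting point.
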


The number of distinct decompositions of a given $A$ as above is finite, resulting from finiteness of the class number of any imaginary quadratic order in $K$. This theorem was generalized to higher dimensional complex abelian varieties isogenous to a power of a CM elliptic curve independently by Katsura \cite[Thm.]{Kat75} and Lange \cite{La75}, and Schoen later provided a simple proof as well \cite[Satz 2.4]{Sc92}. A generalization from $\mathbb{C}$ to an arbitrary field of definition $F$ is a result of Kani \cite[Thm. 2]{Kan11}:

\begin{theorem}[Kani]\label{Kani}
If $A/F$ is an abelian variety which is isogenous to $E^n$ over $F$, where $E/F$ is an elliptic curve with CM over $F$, then there exist CM elliptic curves $E_1/F,\ldots,E_n/F$ such that we have an isomorphism
\[ A \cong E_1 \times \cdots E_n \]
over the base field $F$. 
\end{theorem}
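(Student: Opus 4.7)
I would prove this by first establishing a correspondence between $F$-isomorphism classes of abelian varieties $F$-isogenous to a power of $E$ and isomorphism classes of finitely generated projective modules over $\mathfrak{o} := \text{End}(E)$, then applying a structure theorem to decompose the $\mathfrak{o}$-module corresponding to $A$ into rank-$1$ summands.

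Set $K := \text{End}^0(E)$, an imaginary quadratic field, and $\mathfrak{o} := \text{End}(E)$, an order in $K$. For any $A/F$ which is $F$-isogenous to $E^n$, form the right $\mathfrak{o}$-module
\[ M(A) := \text{Hom}(E, A), \]
which after tensoring with $\mathbb{Q}$ becomes $K^n$ and is therefore finitely generated and torsion-free of rank $n$ over $\mathfrak{o}$. The key technical input is the natural evaluation map
\[ \text{ev}_A \colon M(A) \otimes_{\mathfrak{o}} E \longrightarrow A, \qquad f \otimes P \mapsto f(P), \]
where the left-hand side denotes the \emph{Serre tensor construction} that produces an abelian variety over $F$ of dimension $\text{rk}_{\mathfrak{o}}(M(A))$. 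First I would verify that this construction yields a well-defined abelian variety over $F$, via Galois descent from the $F$-rational $\mathfrak{o}$-module structure (concretely, by realizing $M(A)$ as a direct summand of some $\mathfrak{o}^N$ cut out by an $F$-rational projector in $M_N(\mathfrak{o}) \subseteq \text{End}(E^N)$), and then that $\text{ev}_A$ is an $F$-rational isomorphism for every $A$ in the relevant isogeny class.

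Granting this, the problem reduces to the purely algebraic question of decomposing $M(A)$ into rank-$1$ projective summands. Since any order in an imaginary quadratic field is a one-dimensional Gorenstein (in fact, Bass) ring, a classical structure theorem for torsion-free modules over such rings provides a decomposition
\[ M(A) \cong I_1 \oplus \cdots \oplus I_n \]
with each $I_j$ a proper fractional $\mathfrak{o}$-ideal. Setting $E_j := I_j \otimes_{\mathfrak{o}} E$, each $E_j$ is a one-dimensional abelian variety over $F$ that is $F$-isogenous to $E$; such an isogeny induces $\text{End}^0(E_j) \cong K$, so each $E_j$ is a $K$-CM elliptic curve over $F$. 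The inverse of $\text{ev}_A$ then yields an $F$-isomorphism
\[ A \cong E_1 \times \cdots \times E_n. \]

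The main obstacle is the descent to $F$ of the Serre tensor construction and of $\text{ev}_A$. Over $\mathbb{C}$, the analogous statement reduces to straightforward manipulation of period lattices and gives the theorem of Shioda--Mitani; but over a general base field one must combine Galois descent for projective $\mathfrak{o}$-modules with the CM structure on $E$ to ensure the resulting objects and maps are simultaneously defined over $F$. This careful descent is the technical heart of \cite{Ka11}, and is exactly what distinguishes Theorem \ref{Kani} from the $\mathbb{C}$-only statement of Theorem \ref{Shioda-Mitani}.
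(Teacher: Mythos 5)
The paper itself offers no proof of Theorem \ref{Kani}: it is imported verbatim from \cite[Thm.\ 2]{Ka11}, so there is nothing internal to compare your sketch against, and the question is only whether your outline would actually go through. Its overall shape --- pass to the module $M(A)=\Hom_F(E,A)$ over $\mathfrak{o}=\End_F(E)$, decompose it by the structure theory of torsion-free modules over quadratic orders, and recover $A$ through the Serre tensor construction and the evaluation map --- is indeed the standard modern strategy for statements of this kind. But as written there are genuine gaps at exactly the load-bearing steps.

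First, your proposed justification that the Serre tensor construction behaves well, namely realizing $M(A)$ as a direct summand of $\mathfrak{o}^N$ cut out by a projector, asserts that $M(A)$ is projective over $\mathfrak{o}$, and this is false in general: the $F$-isogeny class of $E$ typically contains curves with different endomorphism orders, and already for $A=E'$ with $\End(E')$ strictly containing $\mathfrak{o}$ the rank-one module $\Hom_F(E,E')$ has multiplier ring strictly larger than $\mathfrak{o}$, hence is not invertible (projective) over $\mathfrak{o}$. For the same reason your appeal to the Bass--Borevich--Faddeev theorem is misstated: it yields $M(A)\cong I_1\oplus\cdots\oplus I_n$ with each $I_j$ invertible only over its own multiplier order $\mathfrak{o}_j\supseteq\mathfrak{o}$, not a sum of proper fractional $\mathfrak{o}$-ideals; and for a non-projective (non-flat) module the functor $M\otimes_{\mathfrak{o}}E$ is not automatically an abelian variety with the properties you use, so the construction of the $E_j$ and of $\mathrm{ev}_A$ must be rerouted through the orders $\mathfrak{o}_j$. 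Second, the assertion that $\mathrm{ev}_A$ is an isomorphism is not a verification to be granted: it is essentially the content of the theorem (for $E$ without $F$-rational CM one has $\End_F(E)=\mathbb{Z}$, $M(A)$ free, and $\mathrm{ev}_A$ merely an isogeny, e.g.\ a cyclic isogeny $E\to A$ when $n=1$), and proving it in the CM case runs through the same order-theoretic subtleties just described. Relatedly, the ``technical heart'' is not Galois descent --- $\Hom_F(E,A)$ is intrinsically an $F$-rational object, so no descent of the module or of $\mathrm{ev}_A$ is needed once they are set up correctly --- but rather the non-maximality of $\mathfrak{o}$, together with the case, allowed by the statement though not needed in this paper's application, where the CM of $E$ is not defined over $F$ and your whole module-theoretic setup degenerates. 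So the skeleton is reasonable, but the proof as proposed would not compile without substantial repair at precisely these points.
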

\noindent Kani in fact says more, which is relevant in the case of QM abelian surfaces with CM \cite[Thm. 67]{Kan11}: fixing a $K$-CM elliptic curve $E/F$ with endomorphism ring of conductor $\ff_E$, there is a bijection between the set of $F$-isomorphism classes $[E']$ of elliptic curves $E'$ isogenous to $E$ with CM conductor $f_{E'} \mid f_E$, and the set of $F$-isomorphism classes of abelian surfaces $A/F$ isogenous to $E^2$ with corresponding central conductor $f_A = f_E$. Explicitly, this bijection sends an $F$-isomorphism class $[E']$ to the $F$-isomorphism class $[E \times E']$. 

In order to obtain concrete decompositions of QM abelian surfaces with CM, the remaining task is to identify \emph{which} such products of CM elliptic curves have potential quaternionic multiplication (that is, which can be given QM structures), and to further describe the classes of QM abelian surfaces with CM. The following result provides the number of such classes (\cite[Thm. 6.13]{AB04} interprets this count as a certain class number, or equivalently as an embedding number, and \cite[Cor. 5.12]{Vig80} provides a formula for these class numbers which we use in the $N=1$ case). 

\begin{proposition}\label{number-of-CM-points}
Let $K$ be an imaginary quadratic field splitting $B$, and let $\ff \in \mathbb{Z}^+$. Let $b$ denote the number of primes dividing $D$ that are inert in $K$. The number of geometric $\mathfrak{o}(\ff) $-CM points on $X^D(1)$ is then $2^b \cdot h(\mathfrak{o}(\ff) )$, where $h(\mathfrak{o}(\ff) )$ denotes the class number of the order $\mathfrak{o}(\ff) $. 
\end{proposition}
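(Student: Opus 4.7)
The plan is to translate counting $\mathfrak{o}(\ff) $-CM points on $X^D(1)$ into counting conjugacy classes of optimal embeddings of $\mathfrak{o}(\ff) $ into the maximal order $\mathcal{O}$, and then to apply the classical Eichler embedding-number formula, evaluating the local factors via the splitting behavior of primes $p \mid D$ in $K$.

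First I would set up the moduli-to-embeddings dictionary. Given an $\mathfrak{o}(\ff) $-CM point $[(A,\iota)] \in X^D(1)$, the QM-equivariant endomorphism ring $\text{End}_{\text{QM}}(A)$ is isomorphic to $\mathfrak{o}(\ff) $, and pulling back through $\iota$ yields an embedding $\phi: \mathfrak{o}(\ff) \hookrightarrow \mathcal{O}$ which is \emph{optimal} in the sense that $\phi(K) \cap \mathcal{O} = \phi(\mathfrak{o}(\ff) )$ (otherwise $\text{End}_{\text{QM}}(A)$ would be strictly larger than $\mathfrak{o}(\ff) $). Two QM abelian surfaces are $\overline{\mathbb{Q}}$-isomorphic precisely when the corresponding optimal embeddings are conjugate under $\mathcal{O}^\times$, so one obtains a bijection between $\mathfrak{o}(\ff) $-CM points on $X^D(1)$ and $\mathcal{O}^\times$-conjugacy classes of optimal embeddings $\mathfrak{o}(\ff) \hookrightarrow \mathcal{O}$. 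This identification is classical for Shimura curves of quaternionic type and is precisely the viewpoint adopted in \cite[Ch.\ III]{Vig80} and \cite[Ch.\ 5--6]{AB04}.

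Next, invoke the Eichler--Vign\'eras local-global embedding formula \cite[Cor.\ 5.12]{Vig80}: the number of such $\mathcal{O}^\times$-conjugacy classes equals
\[ h(\mathfrak{o}(\ff) ) \cdot \prod_p m_p\bigl(\mathfrak{o}(\ff) _p, \mathcal{O}_p\bigr), \]
where $m_p$ denotes the number of local optimal embeddings of $\mathfrak{o}(\ff) \otimes_{\mathbb{Z}} \mathbb{Z}_p$ into $\mathcal{O}_p$ modulo $\mathcal{O}_p^\times$. At every prime $p \nmid D$, $\mathcal{O}_p \cong M_2(\mathbb{Z}_p)$ and $m_p = 1$ by a standard computation, so the product collapses to the primes $p \mid D$. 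At such a ramified prime, $\mathcal{O}_p$ is the unique maximal order in the local division algebra $B_p$, and the local embedding number is given by
\[ m_p = 1 - \legendre{\mathfrak{o}(\ff) }{p}, \]
using the Eichler symbol. The hypothesis that $K$ splits $B$ forces no prime dividing $D$ to split in $K$, so for $p \mid D$ the Eichler symbol only takes the values $0$ (when $p$ ramifies in $K$, giving $m_p = 1$) or $-1$ (when $p$ is inert in $K$, giving $m_p = 2$). Taking the product over $p \mid D$ contributes precisely $2^b$, and combining with the global class number factor yields the claimed count $2^b \cdot h(\mathfrak{o}(\ff) )$.

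The main obstacle is essentially a packaging issue rather than a computational one: both the moduli-theoretic translation and the Eichler embedding formula are deep classical results whose careful statements (particularly the handling of the case when $p \mid \ff$ for some $p \mid D$, where $m_p = 0$ and the count degenerates) require some care. One should verify in passing that the bijection between CM points and optimal embedding classes is compatible with the $\mathcal{O}^\times$-action versus the $\Gamma^D(1)$-action implicit in the complex uniformization, which follows from $\mathcal{O}^\times = \mathcal{O}^1 \cdot \{\pm 1\}$ together with the fact that $-1$ acts trivially by conjugation. Once these ingredients are in place, the remaining content is the short case analysis on splitting of $p \mid D$ in $K$ outlined above.
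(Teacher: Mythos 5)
Your overall route is the same one the paper relies on: the paper does not prove this proposition but cites exactly the ingredients you use, namely the optimal-embedding dictionary and the Eichler--Vign\'eras embedding-number formula (\cite[Cor.\ 5.12]{Vig80}, \cite[Thm.\ 6.13]{AB04}), and your local computation at the primes $p \mid D$ (split primes excluded by the splitting hypothesis, $m_p=1$ if $p$ ramifies in $K$, $m_p=2$ if $p$ is inert, $m_p=0$ if $p \mid \ff$) is the standard evaluation giving $2^b\cdot h(\mathfrak{o}(\ff))$.

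One step of your own reasoning is, however, wrong as stated: the identity $\mathcal{O}^\times = \mathcal{O}^1\cdot\{\pm 1\}$, which you use to reconcile $\mathcal{O}^\times$-conjugacy of embeddings with the $\Gamma^D(1)$-action in the uniformization, is false for an indefinite quaternion algebra. Since $B$ is unramified at infinity, Eichler's norm theorem gives $\mathrm{nrd}(\mathcal{O}^\times)=\{\pm 1\}$, so $\mathcal{O}^\times$ contains a unit of reduced norm $-1$ and $[\mathcal{O}^\times:\mathcal{O}^1]=2$, while $\{\pm 1\}\subseteq\mathcal{O}^1$. The correct bookkeeping is slightly more delicate: the natural parameter for $\mathfrak{o}(\ff)$-CM points on $\Gamma^D(1)\backslash\mathbb{H}$ is the set of $\mathcal{O}^1$-conjugacy classes of optimal embeddings taken modulo precomposition with the nontrivial automorphism of $K$ (an embedding $\varphi$ and $\varphi\circ\overline{\phantom{x}}$ have the same fixed point in $\mathbb{H}$, and conversely two embeddings with the same point in the quotient agree up to $\mathcal{O}^1$-conjugacy and this precomposition, since $K$ is its own centralizer in $B$ and optimality pins down the image order). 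For $D>1$ both this precomposition involution and conjugation by a norm $-1$ unit act freely on the set of $\mathcal{O}^1$-classes: a norm $1$ element of $B^\times$ inducing conjugation on $\varphi(K)$ would force $B\cong M_2(\mathbb{Q})$, and an element centralizing $\varphi(K)$ lies in $\varphi(K)$, whose nonzero elements have positive norm. Hence the number of CM points equals half the number of $\mathcal{O}^1$-classes, which equals the number of $\mathcal{O}^\times$-classes computed by Eichler's formula, and your count $2^b\cdot h(\mathfrak{o}(\ff))$ stands; but the equality of these two quotients is a cardinality statement obtained from the two free involutions, not a consequence of the (false) claim that $\mathcal{O}^\times$ and $\mathcal{O}^1$ differ only by $\pm 1$. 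Your remark about the degenerate case $p\mid\gcd(D,\ff)$ is apt: there $m_p=0$ and indeed no $\mathfrak{o}(\ff)$-CM points exist, so the proposition should be read with $\gcd(D,\ff)=1$ implicitly in force.
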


In his thesis, Ufer touches on this topic of taking QM structures into account. In particular, he proves the following \cite[Thm. 2.7.12]{Uf10}: with the notation of Proposition \ref{number-of-CM-points}, there exists a $2^b$-to-$1$ correspondence 
\[ \left\{K\text{-CM points in } X^D(1)(\mathbb{C} \right\} \longrightarrow \left\{K\text{-CM elliptic curves over } \mathbb{C} \right\}/\cong. \]
\noindent Based on the proof therein, it seems that Ufer could have said more, and so we do that here with reference to his argument. As above, let $b$ denote the number of primes dividing $D$ which are inert in $K$. 

\begin{theorem}\label{fake-ell-curves-ell-curves-correspondence}
Let $(A,\iota)/\mathbb{C}$ be a QM abelian surface with CM by $\mathfrak{o}(\ff) $. There is then a unique $\mathfrak{o}(\ff) $-CM curve $E_A/\mathbb{C}$, up to isomorphism, such that
\[ A \cong \mathbb{C}/\mathfrak{o}(\ff)  \times E_A. \]
Additionally, there is a $2^b$-to-$1$ correspondence 
\[ \left\{\mathfrak{o}(\ff) \text{-CM points on } X^D(1) \right\} \longrightarrow \left\{\mathfrak{o}(\ff) \text{-CM elliptic curves over } \mathbb{C} \right\}/\cong \]
sending a point $[(A, \iota)] \in X^D(1)$ to the class of $E_A$. 
\end{theorem}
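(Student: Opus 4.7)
The plan is to refine Ufer's $2^b$-to-$1$ correspondence at the level of the CM field $K$ (his Theorem~2.7.12) to the level of a fixed order $\mathfrak{o}(\ff) $, using Kani's refined bijection from Theorem~\ref{Kani} together with the count in Proposition~\ref{number-of-CM-points}. Concretely, I first fix $E := \C/\mathfrak{o}(\ff) $, a specific $\mathfrak{o}(\ff) $-CM elliptic curve with $\ff_E = \ff$. Given $(A,\iota)/\C$ with $\mathfrak{o}(\ff) $-CM, Theorem~\ref{Shioda-Mitani} provides an isogeny $A \sim E^2$, and Kani's refined result \cite[Thm.~67]{Ka11} applied with this choice of $E$ yields an elliptic curve $E_A$, uniquely determined up to isomorphism, with $A \cong E \times E_A$ and $\ff_{E_A} \mid \ff$. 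At this step it is important to match Kani's notion of the central conductor of $A$ (as an abelian surface) with the QM central conductor $\ff$ of $(A,\iota)$: this is immediate since $\End_{\mathrm{QM}}(A)$ is the centralizer of $\iota(\OO)$ in $\End(A)$, which after tensoring up to $\End^0(A) \cong M_2(K)$ is the center $K$ of the latter.

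The crux is then to verify that $\ff_{E_A} = \ff$, not a proper divisor. Suppose otherwise, so $\ff_{E_A} = \ff' \mid \ff$ with $\ff' < \ff$. Then under the identification $\End(A) \hookrightarrow M_2(K)$, the factor $E_A$ contributes a diagonal entry $\mathfrak{o}(\ff')$ strictly larger than $\mathfrak{o}(\ff)$. Using the splitting $B \otimes_{\Q} K \cong M_2(K)$ (since $K$ splits $B$), the image $\iota(\OO)$ is conjugate to an explicit matrix order, and a direct Morita-type computation of its centralizer inside $\End(A)$ produces an order strictly containing $\mathfrak{o}(\ff)$. This contradicts $\End_{\mathrm{QM}}(A) \cong \mathfrak{o}(\ff) $, and hence forces $E_A$ to be $\mathfrak{o}(\ff) $-CM. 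Combined with the uniqueness in Kani's bijection, this gives a well-defined map $\Phi \colon [(A,\iota)] \mapsto [E_A]$ from $\mathfrak{o}(\ff) $-CM points on $X^D(1)$ to isomorphism classes of $\mathfrak{o}(\ff) $-CM elliptic curves over $\C$.

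It remains to count. By Proposition~\ref{number-of-CM-points} the domain of $\Phi$ has cardinality $2^b \cdot h(\mathfrak{o}(\ff) )$, while the codomain has cardinality $h(\mathfrak{o}(\ff) )$, so for the $2^b$-to-$1$ conclusion it suffices to prove surjectivity of $\Phi$: given any $\mathfrak{o}(\ff) $-CM $E'/\C$, one must produce at least one QM structure on $\C/\mathfrak{o}(\ff)  \times E'$ with central conductor $\ff$. Since $K$ splits $B$, the algebra $\OO \otimes_{\Z} K \cong M_2(K)$ acts on a suitable rank-four lattice representing the product, and restricting to $\OO$ yields the desired embedding into $\End(\C/\mathfrak{o}(\ff)  \times E')$. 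Once surjectivity is established, the $2^b$-to-$1$ conclusion is forced by the cardinality match. The main obstacle I anticipate is the centralizer argument in the previous paragraph: controlling $\End_{\mathrm{QM}}(A)$ tightly enough to rule out proper divisors of $\ff$ as conductors of $E_A$ is the essential refinement beyond Ufer's field-level statement, and it is also where the careful choice of the fixed factor $\C/\mathfrak{o}(\ff) $ in Kani's bijection is used most crucially.
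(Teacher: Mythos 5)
Your set-up via Kani is fine as far as it goes: fixing $E=\C/\mathfrak{o}(\ff)$ and observing that $\End_{\mathrm{QM}}(A)=\End(A)\cap K$ is the center of $\End(A)$, so that the central conductor in Kani's sense is $\ff$, you do get a unique $E_A$ with $A\cong E\times E_A$ and $\ff_{E_A}\mid \ff$. The gap is the step where you try to force $\ff_{E_A}=\ff$. If $\ff_{E_A}=\ff'$ is a proper divisor of $\ff$, the centralizer of $\iota(\OO)$ in $\End(A)$ is exactly $\End(A)\cap K$, and for $A\cong \C/\mathfrak{o}(\ff)\times E_A$ this intersection is $\mathfrak{o}(\ff)\cap\mathfrak{o}(\ff')=\mathfrak{o}(\ff)$ (since $\ff'\mid\ff$), not an order strictly containing $\mathfrak{o}(\ff)$; so your Morita-type computation produces no contradiction with $\End_{\mathrm{QM}}(A)\cong\mathfrak{o}(\ff)$. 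Indeed, Kani's bijection itself says that the ``mixed'' surfaces $\C/\mathfrak{o}(\ff)\times E'$ with $\ff_{E'}$ a proper divisor of $\ff$ have central conductor exactly $\ff$, so no computation with the endomorphism ring of the underlying surface can rule them out. What must be shown is that such mixed products admit no $(\OO,\mu)$-QM structure at all, i.e.\ no embedding $\OO\hookrightarrow\End(A)$ compatible with a polarization exists, and that is not detected by the size of the centralizer. The paper gets this indirectly, by counting: Ufer's construction (part (2) of his proof) puts a QM structure with central conductor $\ff$ on every product of two $\mathfrak{o}(\ff)$-CM curves; part (3), together with Jordan and Gonz\'alez--Rotger, shows the group $W\times\Pic(\mathfrak{o}(\ff))$ acts simply transitively on the $\mathfrak{o}(\ff)$-CM points with the Atkin--Lehner factor $W$ leaving the underlying surface unchanged, so each such product carries exactly $2^b$ non-isomorphic structures; and since $\C/\mathfrak{o}(\ff)\times E\cong\C/\mathfrak{o}(\ff)\times E'$ forces $E\cong E'$, these already exhaust the $2^b\cdot h(\mathfrak{o}(\ff))$ points of Proposition \ref{number-of-CM-points}, leaving no room for mixed products.

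A second, related gap is your last step: surjectivity of $\Phi$ together with the cardinalities $2^b h(\mathfrak{o}(\ff))$ and $h(\mathfrak{o}(\ff))$ only gives that the fibers have \emph{average} size $2^b$, not that every fiber has size exactly $2^b$. To pin down the fiber size you need either an upper bound (each surface admits at most $2^b$ QM structures up to isomorphism) or the simply transitive $W$-action on each fiber; this is precisely the Jordan/Gonz\'alez--Rotger input the paper invokes, and it is also what repairs the first gap. Finally, your surjectivity sketch (``acts on a suitable rank-four lattice'') is essentially a placeholder for Ufer's explicit construction of an $\OO$-stable lattice isomorphic to $\mathfrak{o}(\ff)\oplus\mathfrak{a}$ with compatible polarization; citing that construction is legitimate, but as written the two points above are where the argument breaks.
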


\begin{proof}
Part (2) of the proof of \cite[Thm. 2.7.12]{Uf10} details the construction of a QM-structure by a maximal order $\mathcal{O}$ in $B$ on $E \times E'$ for $E$ and $E'$ both $\mathfrak{o}(\ff) $-CM elliptic curves. The product $E \times E'$ with the constructed QM structure then corresponds to a CM point on $X^D(1)$ with central conductor $\ff$. 

Let $E, E'$ be $K$-CM elliptic curves. Part (3) of Ufer's proof explains that if the abelian surface $E \times E'$ has potential quaternionic multiplication then in fact it has $2^b$ non-isomorphic QM structures. Put differently but equivalently to therein: let $W$ be the group generated by the Atkin--Lehner involutions $\omega_p$ on $X^D(1)$ for $p \mid D$ inert in $K$. The group $W \times \textnormal{Pic}(\mathfrak{o}(\ff) )$ then acts simply transitively on the set of $\mathfrak{o}(\ff) $-CM points on $X^D(1)$. If $[(A,\iota)] \in X^D(1)$ is such a point, then the action of any element $w \in W$ leaves $[A]$ unchanged, providing the claim (this is proved by Jordan \cite{Jor81} in the $\ff=1$ case, and extended to the general case by Gonz\'{a}lez--Rotger \cite[Proposition 5.6]{GR06}). By the count of Proposition \ref{number-of-CM-points}, Theorem \ref{Shioda-Mitani} and the fact that $\mathbb{C}/\mathfrak{o}(\ff)  \times E \cong \mathbb{C}/\mathfrak{o}(\ff)  \times E'$ implies $E \cong E'$, the claimed result follows. 
\end{proof}

\begin{corollary}
Let $(A,\iota)/F$ be a QM abelian surface with CM by $\mathfrak{o} \subseteq \mathfrak{o}_K$. Suppose that we have an $F$-rational isogeny $A \sim E^2$ to the square of an elliptic curve. Fix $E_1/F$ any elliptic curve with $\mathfrak{o}$-CM. There then exists an $\mathfrak{o}$-CM elliptic curve $E_2/F$, unique up to isomorphism over $F$, such that $A \cong E_1 \times E_2$ over $F$. 
\end{corollary}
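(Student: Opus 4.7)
The plan is to deduce this corollary from Kani's Theorem \ref{Kani} together with the refinement stated immediately afterwards, with Theorem \ref{fake-ell-curves-ell-curves-correspondence} used to pin down the CM orders of both factors. First I would apply Kani's theorem to the given $F$-rational isogeny $A\sim E^2$ to produce a decomposition $A\cong E_1'\times E_2'$ over $F$ for some CM elliptic curves $E_1',E_2'/F$. Base-changing to $\mathbb{C}$ and comparing with the decomposition $A_{\mathbb{C}}\cong \mathbb{C}/\mathfrak{o}\times E_A$ coming from Theorem \ref{fake-ell-curves-ell-curves-correspondence}, in which both factors have $\mathfrak{o}$-CM, the Krull--Schmidt-type uniqueness of factorization of an abelian surface as a product of two elliptic curves forces $\{[(E_1')_{\mathbb{C}}],[(E_2')_{\mathbb{C}}]\}=\{[\mathbb{C}/\mathfrak{o}],[E_A]\}$ and hence makes both $E_1'$ and $E_2'$ into $\mathfrak{o}$-CM elliptic curves (the CM being automatically rational over $K\subseteq F$).

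Next, since $E_1/F$ has $\mathfrak{o}$-CM we have $j(E_1)\in F$, and combined with $K\subseteq F$ this forces $K(\ff)=K(j(E_1))\subseteq F$. Consequently any two $\mathfrak{o}$-CM elliptic curves over $F$ are $F$-rationally isogenous (via the ideal-class action, which is realized over $K(\ff)$), so $E_1\sim E_1'$ over $F$ and therefore $A\sim E_1^2$ over $F$. Since $A$ has central conductor $\ff=\ff_{E_1}$, the bijection appearing just after Theorem \ref{Kani}, applied with $E=E_1$, then delivers an elliptic curve $E_2/F$, unique up to $F$-isomorphism and satisfying $\ff_{E_2}\mid\ff$, with $A\cong E_1\times E_2$ over $F$. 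Running the base-change and factorization-uniqueness argument once more on this new decomposition forces $\ff_{E_2}=\ff$, that is, $E_2$ has $\mathfrak{o}$-CM. The advertised uniqueness ``up to twist'' is weaker than the $F$-isomorphism uniqueness given by the bijection, so it follows automatically.

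The main subtle point will be the Krull--Schmidt uniqueness statement used to transfer the $\mathfrak{o}$-CM constraint from the distinguished Shioda--Mitani/Ufer-style decomposition of Theorem \ref{fake-ell-curves-ell-curves-correspondence} to the decomposition output by Kani's theorem; once this is in hand, the remainder of the argument is essentially bookkeeping with Kani's bijection. A secondary technical point will be justifying the $F$-rationality of an isogeny $E_1\sim E_1'$, and this is exactly where the hypothesis $K\subseteq F$ is used, just barely, to force $K(\ff)\subseteq F$ from $j(E_1)\in F$.
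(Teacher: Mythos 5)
Your two bridging claims are both false as stated, and each is load-bearing. First, there is no ``Krull--Schmidt-type uniqueness'' for decompositions of an abelian surface into elliptic curves: the isomorphism classes of the factors are \emph{not} determined by the surface. Already in the maximal-order case, Shioda--Mitani theory gives $\mathbb{C}/\mathfrak{a}\times\mathbb{C}/\mathfrak{b}\cong\mathbb{C}/\mathfrak{a}'\times\mathbb{C}/\mathfrak{b}'$ whenever $[\mathfrak{a}][\mathfrak{b}]=[\mathfrak{a}'][\mathfrak{b}']$ in $\operatorname{Pic}(\mathfrak{o}_K)$ (this multiplicity of decompositions is exactly what the remark after Theorem \ref{Shioda-Mitani} alludes to), so as soon as $h(\mathfrak{o})>1$ the multiset of factor classes is not an invariant and your asserted equality $\{[(E_1')_{\mathbb{C}}],[(E_2')_{\mathbb{C}}]\}=\{[\mathbb{C}/\mathfrak{o}],[E_A]\}$ fails in general. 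The weaker statement you actually need --- that in \emph{any} decomposition of $A_{\mathbb{C}}$ both factors have CM by exactly $\mathfrak{o}$ --- is true, but it is not a formal uniqueness-of-factorization fact; it needs a genuine input, e.g.\ the invariance of the pair of multiplier orders in the Borevich--Faddeev/Bass classification of torsion-free modules over quadratic orders, or (as the paper does) transporting the QM structure through the isomorphism and appealing to Theorem \ref{fake-ell-curves-ell-curves-correspondence}. You flag this as the ``main subtle point'' but then assert it rather than prove it, and the principle you cite is simply wrong.

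Second, it is not true that any two $\mathfrak{o}$-CM elliptic curves over $F$ are $F$-rationally isogenous: the ideal-class action only reaches the $F$-isogeny class of a fixed curve and misses twists. If $E_1'$ is the quadratic twist of $E_1$ by a nonsquare $d\in F^{\times}$, then $E_1'$ still has $\mathfrak{o}$-CM over $F$ (as $K\subseteq F$), but $G_F$ acts on $\operatorname{Hom}_{\overline{F}}(E_1,E_1')$ through the nontrivial character of $F(\sqrt{d})/F$, so $\operatorname{Hom}_F(E_1,E_1')=0$. Hence your inference ``$E_1\sim E_1'$ over $F$, therefore $A\sim E_1^{2}$ over $F$'' --- precisely the hypothesis you need to invoke Kani's bijection with $E=E_1$ --- is unjustified, and this matters because the corollary allows $E_1$ to be an arbitrary $\mathfrak{o}$-CM curve over $F$, possibly a twist not $F$-isogenous to the curve $E$ appearing in the hypothesis $A\sim E^{2}$. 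The paper's proof is organized to sidestep exactly this comparison: it first produces the complement of $(E_1)_{\mathbb{C}}$ over $\mathbb{C}$ via Theorem \ref{fake-ell-curves-ell-curves-correspondence}, descends it to $F$ using $K(\ff)\subseteq F$, transfers the QM structure, and only then uses the $F$-rational isogeny hypothesis together with Theorem \ref{Kani} to upgrade a twist relation to an $F$-isomorphism. (Your closing uniqueness remark is also off: the bijection's uniqueness only ranges over curves $F$-isogenous to $E_1$, so it does not by itself cover all $\mathfrak{o}$-CM candidates; uniqueness up to twist should instead be read off from the uniqueness over $\mathbb{C}$ in Theorem \ref{fake-ell-curves-ell-curves-correspondence}.)
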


\begin{proof}
Let $f$ be the central conductor of $A$ (i.e., such that $\mathfrak{o} = \mathfrak{o}(f)$). By Theorem \ref{Kani} and the discussion of Kani's results following this theorem statement, there exists a CM elliptic curve $E_2/F$, with endomorphism ring of conductor $f_{E_2}$ satisfying $f_{E_2} \mid f$, such that $A \cong E_1 \times E_2$ over $F$. This curve $E_2$ is unique up to isomorphism over $F$. Base changing this entire picture to $\mathbb{C}$, we have
\[ A_{/\mathbb{C}} \cong {E_1}_{/\mathbb{C}} \times {E_2}_{/\mathbb{C}}. \] 
Now because $A_{/\mathbb{C}}$ (and hence $ {E_1}_{/\mathbb{C}} \times {E_2}_{/\mathbb{C}}$, by transport of structure through our isomorphism) has QM and ${E_1}_{/\mathbb{C}}$ has CM conductor $f$, Theorem \ref{fake-ell-curves-ell-curves-correspondence} implies that $f_{E_2} = f$ as well. 
\end{proof}

\section{QM-equivariant isogenies}\label{isogenies_section}

Our goal in the following section will be to determine the residue field of a CM point on $X_0^D(N)_{/\mathbb{Q}}$ for any $N$ coprime to $D$, generalizing Theorem \ref{JGR}. A main component in accomplishing this is the study of the structure of, and the action of automorphisms on, components of certain isogeny graphs. Paths in these graphs of consideration will be in correspondence with isogenies of QM abelian surfaces which commute with their QM structures. 

In this section, we prove facts about QM-equivariant isogenies needed in the proceeding section. Much of what we do in both this section and the next is in strong analogy to the case of isogenies of elliptic curves over $\overline{\mathbb{Q}}$ studied in recent work of Clark and Clark--Saia \cite{Cl22, CS22}. We provide proofs here for completeness and for clarity of said analogy.

\begin{lemma}\label{edge_count}
Let $F$ be a field of characteristic zero, and let $(A,\iota)$ be a QM abelian surface over $F$ which does not have CM by an order of discriminant $\Delta \in \{-3,-4\}$. For $\ell$ a prime number, the number of QM-cyclic $\ell$-isogenies with domain $(A,\iota)$ which are $\text{Gal}(\overline{F}/F)$-stable, up to isomorphism, is either $0,1,2,$ or $\ell+1$. 
\end{lemma}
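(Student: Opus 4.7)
The plan is to reduce the enumeration of $F$-rational QM-equivariant $\ell$-isogenies emanating from $(A,\iota)$ to counting Galois-stable lines in a $2$-dimensional $\mathbb{F}_\ell$-vector space, in direct analogy with the elliptic modular case in \cite{Cl22, CS22}.

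First, I would identify, up to isomorphism over $\overline{F}$, the set of QM-equivariant $\ell$-isogenies emanating from $(A,\iota)$ with the set of proper, nontrivial $\mathcal{O}$-stable subgroups of $A[\ell]$. By Remark \ref{kernel-to-cyclic-subgroup-remark}, each such subgroup $Q$ has order $\ell^2$, decomposes as $Q = e_1 Q \oplus e_2 Q$, and is determined by the cyclic order-$\ell$ subgroup $C := e_1 Q \leq e_1 A[\ell]$ via $Q = \mathcal{O}\cdot C$. Since $e_1 A[\ell]$ is a $2$-dimensional $\mathbb{F}_\ell$-vector space, the collection of admissible kernels is naturally identified with $\mathbb{P}^1(\mathbb{F}_\ell)$, giving exactly $\ell+1$ geometric QM-equivariant $\ell$-isogenies up to isomorphism.

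Next, I would verify that this identification is $G_F := \operatorname{Gal}(\overline{F}/F)$-equivariant, so that an isogeny is $F$-rational precisely when the corresponding line in $e_1 A[\ell]$ is $G_F$-stable. The hypothesis that $(A,\iota)$ has no CM by an order of discriminant $-3$ or $-4$ guarantees $\operatorname{Aut}_{\operatorname{QM}}(A) = \{\pm 1\}$; since $-1$ acts trivially on subgroups of $A[\ell]$, distinct Galois-stable kernels give genuinely non-isomorphic $F$-rational isogenies, with no further identification produced by QM-automorphisms. The count then reduces to determining the number of $G_F$-fixed points on $\mathbb{P}^1(\mathbb{F}_\ell)$ under the image of $G_F$ in $\operatorname{Aut}(e_1 A[\ell]) \cong \operatorname{GL}_2(\mathbb{F}_\ell)$. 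The key combinatorial input is the standard fact that any subgroup of $\operatorname{PGL}_2(\mathbb{F}_\ell)$ fixes $0$, $1$, $2$, or all $\ell+1$ points of $\mathbb{P}^1(\mathbb{F}_\ell)$: a non-scalar element of $\operatorname{GL}_2(\mathbb{F}_\ell)$ fixes at most two lines (by rational canonical form), so if three or more lines are fixed by the whole image, every element must be scalar and therefore fixes all of $\mathbb{P}^1(\mathbb{F}_\ell)$.

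The main obstacle I anticipate is the bookkeeping around the QM-structure on the kernel: one must confirm that, after the Morita-equivalence reduction of Remark \ref{kernel-to-cyclic-subgroup-remark}, the $G_F$-action on the idempotent-cut subgroup $e_1 A[\ell]$ is the naive restriction of the Galois action on $A[\ell]$, so that the bijection between $\mathcal{O}$-stable kernels and lines in $\mathbb{P}^1(\mathbb{F}_\ell)$ is manifestly Galois-compatible. Once this is verified, together with the $\operatorname{Aut}_{\operatorname{QM}}(A) = \{\pm 1\}$ rigidification that prevents spurious identifications, the four cases $\{0, 1, 2, \ell+1\}$ drop out immediately from the projective-line dichotomy above.
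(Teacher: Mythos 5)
Your proposal is correct and follows essentially the same route as the paper: reduce via the idempotent/Morita decomposition of Remark \ref{kernel-to-cyclic-subgroup-remark} to counting Galois-stable cyclic order-$\ell$ subgroups of $e_1(A[\ell]) \cong (\mathbb{Z}/\ell\mathbb{Z})^2$, using $\operatorname{Aut}(A,\iota)=\{\pm 1\}$ to identify isomorphism classes of isogenies with kernels, and concluding with the fact that fixing more than two lines forces a scalar action, hence all $\ell+1$. No gaps.
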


\begin{proof}
Note that $\ell$ being prime means we are counting isomorphism classes of QM-cyclic $\ell$-isogenies. The hypotheses on $A$ are equivalent to $\textnormal{Aut}(A,\iota) = \{\pm 1\}.$ In this case, we have a bijective correspondence between isomorphism classes of QM-cyclic $\ell$-isogenies and non-trivial, proper cyclic $\mathcal{O}$-submodules of $A[\ell]$. Under this correspondence, the isogenies which are $\text{Gal}(\overline{F}/F)$-stable correspond to $\text{Gal}(\overline{F}/F)$-stable submodules.

Now we have that $e_1(Q) \leq e_1(A[\ell]) \cong \left(\mathbb{Z}/\ell\mathbb{Z}\right)^2$ is a cyclic subgroup of order $\ell$, and in this way we have a bijective correspondence between the non-trivial proper QM-stable subgroups of $A[\ell]$ and cyclic order $\ell$ subgroups of $e_1(A[\ell])$. This correspondence preserves the property of being $\text{Gal}(\overline{F}/F)$-stable. We have thus reduced to the situation of the elliptic curve case, and may proceed as such: we are counting $\text{Gal}(\overline{F}/F)$-stable cyclic order $\ell$ subgroups of $\left(\mathbb{Z}/\ell\mathbb{Z}\right)^2$. The total number of cyclic order $\ell$ subgroups is $\ell$+1, and if more than $2$ such subgroups are fixed then $\text{Gal}(\overline{F}/F)$ is forced to act by scalar matrices on $\left(\mathbb{Z}/\ell \mathbb{Z}\right)^2$. 
\end{proof}

\subsection{Compositions of QM-cyclic isogenies} 
The following result is in analogy with \cite[Prop. 3.2]{Cl22}.

\begin{proposition}\label{compositions}
Suppose that $\varphi = \varphi_2 \circ \varphi_1$ is a QM-cyclic isogeny, where $\varphi_i : (A_i,\iota_i) \rightarrow (A_{i+1},\iota_{i+1})$ is a QM-cyclic isogeny for $i=1,2$. 
\begin{enumerate}
\item We have 
\[ \mathbb{Q}(\varphi) \subseteq \mathbb{Q}(\varphi_1) \cdot \mathbb{Q}(\varphi_2). \]
\item If $(A_2,\iota_2)$ does not have \text{CM} by $\Delta \in \{-3,-4\}$, then 
\[ \mathbb{Q}(\varphi) = \mathbb{Q}(\varphi_1) \cdot \mathbb{Q}(\varphi_2). \]
\end{enumerate}
\end{proposition}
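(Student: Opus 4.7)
The statement is entirely about subgroups of $\mathfrak{g}_{\mathbb{Q}}$: if we let $H(\psi)$ denote the fixing group of a QM-cyclic isogeny $\psi$ (as in the excerpt), then $\mathbb{Q}(\varphi) \supseteq \mathbb{Q}(\varphi_1)\cdot\mathbb{Q}(\varphi_2)$ is equivalent to $H(\varphi) \subseteq H(\varphi_1) \cap H(\varphi_2)$, and the converse inclusion gives equality in (2). So the plan is to take $\sigma$ in the relevant group of each side and hand-build the QM-equivariant isomorphisms required to witness membership on the other side, using the rigidity supplied by Lemma \ref{edge_count}/the automorphism hypothesis.

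\textbf{Part (1).} Let $\sigma \in H(\varphi)$, witnessed by QM-equivariant isomorphisms $g_1:A_1 \isom A_1^\sigma$ and $g_3:A_3 \isom A_3^\sigma$ with $\varphi^\sigma \circ g_1 = g_3 \circ \varphi$. Writing $\deg \varphi_i = N_i^2$, the kernel $\ker \varphi \cong (\mathbb{Z}/N_1N_2\mathbb{Z})^2$ contains $\ker \varphi_1 \cong (\mathbb{Z}/N_1\mathbb{Z})^2$ as its full $N_1$-torsion subgroup; likewise on the $\sigma$-side. The equation $\varphi^\sigma \circ g_1 = g_3 \circ \varphi$ forces $g_1$ to restrict to a group isomorphism $\ker \varphi \isom \ker \varphi^\sigma$, and any such isomorphism sends $N_1$-torsion onto $N_1$-torsion, so $g_1(\ker \varphi_1) = \ker \varphi_1^\sigma$. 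Therefore $g_1$ descends through the quotients $A_i/\ker\varphi_i$ to a QM-equivariant isomorphism $g_2: A_2 \isom A_2^\sigma$ satisfying $g_2 \circ \varphi_1 = \varphi_1^\sigma \circ g_1$. Post-composing both sides of the original identity with $\varphi_2^\sigma \circ g_2$ and cancelling the surjection $\varphi_1$ yields $\varphi_2^\sigma \circ g_2 = g_3 \circ \varphi_2$. Hence $\sigma \in H(\varphi_1) \cap H(\varphi_2)$, proving the inclusion.

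\textbf{Part (2).} Conversely, suppose $\sigma \in H(\varphi_1) \cap H(\varphi_2)$, witnessed by QM-equivariant isomorphisms $g_1:A_1 \isom A_1^\sigma$, $g_2, g_2':A_2 \isom A_2^\sigma$, and $g_3:A_3 \isom A_3^\sigma$ with $\varphi_1^\sigma \circ g_1 = g_2 \circ \varphi_1$ and $\varphi_2^\sigma \circ g_2' = g_3 \circ \varphi_2$. The only obstruction to concatenating the two squares is that \emph{a priori} $g_2 \neq g_2'$. However, $g_2' \circ g_2^{-1} \in \Aut_{\mathrm{QM}}(A_2^\sigma)$, and the hypothesis on $(A_2,\iota_2)$ gives $\Aut_{\mathrm{QM}}(A_2^\sigma) = \{\pm 1\}$, so $g_2' = \varepsilon g_2$ with $\varepsilon \in \{\pm 1\}$. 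Replacing $g_3$ with $\varepsilon g_3$ (still a QM-equivariant isomorphism) reconciles the middle term, and composing the two commutative squares produces the outer square $\varphi^\sigma \circ g_1 = (\varepsilon g_3) \circ \varphi$, so $\sigma \in H(\varphi)$.

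\textbf{Main obstacle.} Conceptually nothing is deep here; the only real subtlety is making sure the constructed $g_2$ in (1) is an \emph{isomorphism} (not just a homomorphism from the quotient), which is why the kernel-preservation argument needs to be stated as a bijection on the full kernels of $\varphi, \varphi^\sigma$. The hypothesis in (2) enters precisely to control $\Aut_{\mathrm{QM}}(A_2)$, and without it the argument fails exactly as in the elliptic-modular analogue \cite[Prop 3.2]{CS22}, where the same $\Delta \in \{-3,-4\}$ exception appears.
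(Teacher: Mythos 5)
Your proof is correct and takes essentially the same route as the paper: part (1) is the containment the paper declares clear (your descent of $g_1$ through the kernels, using $\ker\varphi_1=(\ker\varphi)[N_1]$, just supplies the details), and part (2) is exactly the rigidity argument via $\Aut_{\mathrm{QM}}(A_2,\iota_2)=\{\pm 1\}$ to reconcile the two middle isomorphisms, which the paper imports by citing \cite[Prop.~3.1 b)]{Cl22}. No gaps.
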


\begin{proof}
The containment of part (1) is clear. The assumption that $(A_2,\iota_2)$ does not have $-3$ or $-4$ CM is equivalent to $\text{Aut}((A_2,\iota_2)) = \{\pm 1\}$, and in this case the reverse containment in part (2) follows by the same argument as in \cite[Prop. 3.2]{Cl22}. 
\end{proof}

\subsection{Reduction to prime power degrees}

First, let us say something about rationality. Let $\varphi : (A,\iota) \rightarrow (A',\iota')$ be a QM-cyclic $N$-isogeny which is rational over $F$, where $N$ has prime-power decomposition $N = \ell_1^{a_1}\cdots \ell_r^{a_r}$. Letting $Q = \text{ker}(\varphi)$ be the kernel of this isogeny, we have that $\varphi$ is isomorphic to the quotient $(A,\iota) \rightarrow (A/Q,\iota)$. (The latter pair indeed provides an $\mathcal{O}$-QM abelian surface, as $Q$ is stable under $\iota(\mathcal{O})$ and $\mathcal{O}$ is maximal, though
we are abusing notation by referring to the QM-structure on the quotient as $\iota$.) We have a decompositon $Q = C \oplus D$ with each of $C$ and $D$ cyclic of order $N$, such that $\mathcal{O}\cdot C = \mathcal{O}\cdot D = Q$. This cyclic subgroup $C$ then decomposes as 
\[ C = \bigoplus_{i=1}^r C_i \]
where $C_i \leq C$ is the unique subgroup of order $\ell_i^{a_i}$. Letting $Q_i = \mathcal{O}\cdot C_i$, each $Q_i$ is QM stable and isomorphic to $(\mathbb{Z}/{\ell_i}^{a_i}\mathbb{Z})^2$. 

From the uniqueness of $C_i \leq C$, and hence of the corresponding $\mathcal{O}$-cyclic subgroup $Q_i \leq Q$, we get that each $Q_i$ is $F$-rational, resulting in $F$-rational QM-cyclic $\ell_i^{a_i}$-isogenies $\varphi_i : (A,\iota) \rightarrow (A/Q_i,\iota)$ for each $i$. 
On the other hand, given a collection of $F$-rational QM-cyclic $\ell_i^{a_i}$-isogenies with kernels $Q_i$, we get an $F$-rational QM-cyclic $N$-isogeny $(A,\iota) \rightarrow (A/Q,\iota)$ where $Q = \bigoplus_{i=1}^{r} Q_i$. 

As for fields of moduli, more towards our needs for the following section, we have the following:

\begin{proposition}\label{red_to_prime_powers}
Let $N_1,\ldots, N_r \in \mathbb{Z}^+$ be pairwise coprime, let $k$ be a field of characteristic $0$, and let $x \in X^D(1)_{/k}$ be a closed point which does not have CM by discriminant $\Delta \in \{-3,-4\}$. For each $i$, let $\pi_i : X_0^D(N_i)_{/k} \rightarrow X^D(1)_{/k}$ be the natural map, let $F_i = \pi_i^{-1}(x)$, and let $F$ be the fiber over $x$ of $\pi: X_0^D(N)_{/k} \rightarrow X^D(1)_{/k}$ where $N = N_1\cdots N_r$. Then 
\[ F = F_1 \otimes_{\spec k(x)} \cdots \otimes_{\spec k(x)} F_r . \]
\end{proposition}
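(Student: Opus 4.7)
The plan is to exploit the first moduli interpretation in Proposition \ref{moduli-interpretation}, combined with the canonical Chinese Remainder decomposition of the $N$-torsion. Fix a representative $(A,\iota)/\overline{k(x)}$ of the point $x$. Since $x$ does not have CM by $\Delta\in\{-3,-4\}$, we have $\Aut_{\text{QM}}(A,\iota)=\{\pm 1\}$, so two QM-cyclic $N$-isogenies from $(A,\iota)$ are isomorphic if and only if they have the same kernel. Thus the geometric points of $F$ are in bijection with the set $\mathcal{Q}(N)$ of $\mathcal{O}$-stable order $N^2$ subgroups $Q\leq A[N]$, and similarly the geometric points of each $F_i$ correspond to the set $\mathcal{Q}(N_i)$ of $\mathcal{O}$-stable order $N_i^2$ subgroups of $A[N_i]$.

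The coprimality of the $N_i$ yields a canonical decomposition $A[N]=\bigoplus_{i=1}^r A[N_i]$. For $Q\leq A[N]$, set $Q_i := Q \cap A[N_i]$; then $Q = \bigoplus_i Q_i$, each $Q_i$ is $\mathcal{O}$-stable if and only if $Q$ is, and coprimality of the $|A[N_i]|$ forces $|Q|=N^2$ if and only if $|Q_i|=N_i^2$ for each $i$. This produces the bijection
\[
\Phi:\mathcal{Q}(N)\longrightarrow \prod_{i=1}^r\mathcal{Q}(N_i),\qquad Q\longmapsto (Q_i)_{i=1}^r,
\]
with inverse $(Q_i)_i \mapsto \bigoplus_i Q_i$. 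Since each $A[N_i]\subseteq A[N]$ is characterized intrinsically as the $N_i$-torsion, it is preserved by any $\sigma\in\Gal(\overline{k(x)}/k(x))$, and thus the formation of $Q_i$ from $Q$ commutes with Galois. The bijection $\Phi$ is therefore $\Gal(\overline{k(x)}/k(x))$-equivariant.

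The conclusion then follows from the standard equivalence between finite \'etale $k(x)$-schemes and finite discrete $\Gal(\overline{k(x)}/k(x))$-sets: under this equivalence, fibre products over $\text{Spec}\,k(x)$ correspond to products of Galois sets. The equivariant bijection $\Phi$ therefore yields an isomorphism
\[
F \;\cong\; F_1 \otimes_{\text{Spec}\,k(x)} \cdots \otimes_{\text{Spec}\,k(x)} F_r,
\]
as claimed.

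The main delicacy is the passage between kernels and isomorphism classes of QM-cyclic isogenies, which is exactly where the hypothesis $\Delta\notin\{-3,-4\}$ enters: it forces $\Aut_{\text{QM}}(A,\iota)=\{\pm 1\}$ so that Galois orbits of closed points in $F$ and each $F_i$ correspond directly to Galois orbits of $\mathcal{O}$-stable subgroups, allowing the coordinatewise CRT decomposition to descend cleanly to the level of closed points. If this hypothesis were dropped, one would have to quotient by the action of a larger $\Aut_{\text{QM}}(A,\iota)$, and orbits on the product side would in general be strictly smaller than the product of orbits.
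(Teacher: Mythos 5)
Your proof is correct in substance, but it takes a different route from the paper. The paper disposes of this proposition by reduction to the elliptic modular case: it invokes the $D=1$ statement \cite[Prop.~3.5]{Cl22} together with the observation that $X_0^D(N) \rightarrow X^D(1)$ is the covering attached to the same subgroup of $\GL_2(\mathbb{Z}/N\mathbb{Z})/\{\pm 1\}$ as $X_0(N) \rightarrow X(1)$, so that the fiber over a non-elliptic closed point is controlled by the corresponding coset space with its mod-$N$, up-to-sign Galois action, and the Chinese Remainder Theorem is applied at the group level. You instead argue directly on the moduli side, decomposing the level structures themselves; this makes the proof self-contained, at the price of having to handle yourself exactly the coarse-moduli subtleties that the citation hides.

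Two points in your write-up need tightening, though neither is fatal. First, the geometric fiber is not indexed by all $\mathcal{O}$-stable subgroups $Q \leq A[N]$ of order $N^2$, but by those with $Q \cong (\mathbb{Z}/N\mathbb{Z})^2$, equivalently with $e_1Q$ cyclic of order $N$: when $N$ is not squarefree there are strictly more $\mathcal{O}$-stable subgroups of order $N^2$ than the $\psi(N)$ geometric points of the fiber (for $N=\ell^2$, the subgroup $A[\ell]$ is $\mathcal{O}$-stable of order $N^2$ but is not the kernel of a QM-cyclic $N$-isogeny). Your CRT bijection restricts to the cyclic subgroups on both sides, since the condition $Q \cong (\mathbb{Z}/N\mathbb{Z})^2$ is checked one prime at a time, so the argument survives once this condition is inserted. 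Second, $k(x)$ is only a field of moduli, so $(A,\iota)$ need not admit a model over $k(x)$, and $\sigma \in \Gal(\overline{k(x)}/k(x))$ does not literally act on $A[N]$; the action on the set of $\mathcal{O}$-stable cyclic subgroups is the twisted one $Q \mapsto \phi_\sigma(\sigma(Q))$, where $\phi_\sigma : (A^\sigma,\iota^\sigma) \rightarrow (A,\iota)$ is any isomorphism, and it is well defined precisely because $\Aut_{\mathrm{QM}}(A,\iota)=\{\pm 1\}$ acts trivially on subgroups. This is a second, essential place where the hypothesis $\Delta \notin \{-3,-4\}$ enters, beyond the kernel--isogeny dictionary you mention. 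Since $\sigma$ and $\phi_\sigma$ both respect the primary decomposition $A[N] = \bigoplus_i A[N_i]$, your equivariance claim holds for this corrected action; and the appeal to the finite \'etale scheme / Galois set dictionary is justified because $\pi$ is unramified over the non-elliptic point $x$ and $\operatorname{char} k = 0$, so $F$ is indeed finite \'etale over $k(x)$.
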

\begin{proof}
This follows as in the $D=1$ case of \cite[Prop. 3.5]{Cl22}, using that $X_0^D(N)$ for $D>1$ is a cover of $X^D(1)$ with the same corresponding subgroup of $\text{GL}_2(\mathbb{Z}/N\mathbb{Z})/\{\pm 1\}$ as in the case of $X_0(N) \rightarrow X(1)$. 
\end{proof}

\noindent It follows that if $x \in X_0^D(N)_{/\mathbb{Q}}$ is a point which does not have $-3$ or $-4$-CM and $N = \prod_{i=1}^r \ell_i^{a_i}$, with $\pi_i : X_0^D(N)_{/\mathbb{Q}} \rightarrow X_0^D(\ell_i^{a_i})_{/\mathbb{Q}}$ the natural maps, then 
\[ \mathbb{Q}(x) = \mathbb{Q}(\pi_1(x)) \cdots \mathbb{Q}(\pi_r(x)). \]


\section{QM-equivariant isogeny volcanoes}\label{volcanoes_section}

Fixing a prime $\ell$, we describe in this section CM components of $\ell$-isogeny graphs of QM abelian surfaces over $\overline{\mathbb{Q}}$. We will use the work of this section to study CM points on the curves $X_0^D(\ell^a)_{/\mathbb{Q}}$ for $a \in \mathbb{Z}^+$ and $D>1$, in analogy to the $D=1$ modular curve case of \cite{Cl22, CS22}. 

This study, like that of \cite{Cl22, CS22}, is motivated by the foundational work on isogeny volcanoes over finite fields by Kohel in his PhD thesis \cite{Koh96} and by Fouquet \cite{Fou01} and Fouquet--Morain \cite{FM02}. We also recommend, and will refer to, a more recent, expository account of isogeny volcanoes in the finite field setting by Sutherland \cite{Sut13}.  

\subsection{The isogeny graph of QM abelian surfaces} 

Fix a prime number $\ell$ and an imaginary quadratic field $K$. In \cite{Cl22} and \cite{CS22}, the authors consider the multigraph with vertex set that of $j$-invariants of $K$-CM elliptic curves, and with edges corresponding to $\mathbb{C}$-isomorphism classes of cyclic $\ell$-isogenies.

Here, we seek an analogue for abelian surfaces with QM by a fixed maximal order $\mathcal{O}$ of the indefinite quaternion algebra $B$ of discriminant $D$ over $\mathbb{Q}$, with $\ell \nmid D$. We let $\mathcal{G}_{\ell}^D$ denote the directed multigraph with 
\begin{itemize}
\item vertex set consisting of $\mathbb{C}$-isomorphism classes of $\mathcal{O}$-QM abelian surfaces, and 
\item edges from $v_1 = [(A_1,\iota_1)]$ to $v_2 = [(A_2,\iota_2)]$ corresponding to $\mathbb{C}$-isomorphism classes of QM-cyclic $\ell$-isogenies $\varphi : (A_1,\iota_1) \rightarrow (A_2,\iota_2)$. 
\end{itemize}
A given vertex $v$ has $\ell+1$ edges emanating from it, via the correspondence of QM-stable subgroups of $A_1[\ell]$ with cyclic order $\ell$ subgroups of $e_1(A_1[\ell]) \cong \left(\mathbb{Z}/\ell\mathbb{Z}\right)^2$ discussed in Lemma \ref{edge_count}.

Because a QM structure $\iota$ determines a unique principal polarization, we have dual edges via dual isogenies as in the elliptic curve case. As long as the source vertex $v_1$ corresponds to an isomorphism class $[(A,\iota)]$ having only the single non-trivial automorphism $[-1]$, we obtain a bijection between the edges from $v_1$ to $v_2$ and those from $v_2$ to $v_1$; in this case, outward edges from $v_1$ are in bijective correspondence with QM-stable subgroups of $A_1[\ell]$ of order $\ell^2$. This occurs precisely when $[(A,\iota)]$ does not have CM by discriminant $\Delta = -3$ or $\Delta = -4$. 

Our attention will be to vertices in $\mathcal{G}^D_\ell$ corresponding to QM abelian surfaces with CM. For an abelian variety $(A,\iota)$ with QM by $\mathcal{O}$ and $K$-CM, recall from \S \ref{CM_points_section} that the central conductor of $(A,\iota)$ is defined to be the positive integer $\ff$ such that $\textnormal{End}_{\text{QM}}(A) \cong \mathfrak{o}(\ff)  \subseteq \mathfrak{o}_K$. 

\begin{lemma}
Suppose $\varphi : (A,\iota) \rightarrow (A',\iota')$ is a QM cyclic $N$-isogeny, with $(A,\iota)$ a QM abelian surface with $K$-CM. Then
\begin{enumerate}
\item The QM abelian surface $(A',\iota')$ also has $K$-CM.
\item Let $\ff$ and $\ff'$ denote the central conductors of $(A,\iota)$ and $(A',\iota')$, respectively. Then $\ff$ and $\ff'$ differ by at most a factor of $N$:
\[\ff\mid N\ff' \text{ and } \ff' \mid Nf. \]
\end{enumerate}
\end{lemma}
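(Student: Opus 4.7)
The plan is to treat part (1) as an essentially formal consequence of the existence of an isogeny, and to reduce part (2) to the familiar ``conductor moves by at most a factor of the isogeny degree'' statement from the elliptic CM setting, via a QM-equivariant dual isogeny.

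For part (1), the isogeny $\varphi$ becomes invertible in $\End^0(A) := \End(A) \otimes \mathbb{Q}$, so conjugation $\alpha \mapsto \varphi \circ \alpha \circ \varphi^{-1}$ gives a $\mathbb{Q}$-algebra isomorphism $\End^0(A) \xrightarrow{\sim} \End^0(A')$. The QM-equivariance of $\varphi$ (i.e. $\varphi \circ \iota(\beta) = \iota'(\beta) \circ \varphi$ for all $\beta \in \mathcal{O}$) implies that this isomorphism restricts to an isomorphism $\End^0_{\text{QM}}(A) \xrightarrow{\sim} \End^0_{\text{QM}}(A')$ on QM-equivariant parts. Since the source is $K$ by hypothesis, so is the target, which is to say $(A',\iota')$ has $K$-CM.

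For part (2), the key ingredient is a QM-equivariant dual isogeny $\hat{\varphi}:(A',\iota') \to (A,\iota)$ satisfying $\hat{\varphi} \circ \varphi = [N]_A$ and $\varphi \circ \hat{\varphi} = [N]_{A'}$. Existence of such a $\hat{\varphi}$ as an isogeny of abelian surfaces follows from the containment $\ker(\varphi) \cong (\mathbb{Z}/N\mathbb{Z})^2 \subseteq A[N]$: the map $[N]_A$ then factors through $\varphi$, producing $\hat{\varphi}$ with $\hat{\varphi}\circ\varphi = [N]_A$, and composing on the right by $\varphi$ and using surjectivity yields $\varphi \circ \hat{\varphi} = [N]_{A'}$. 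For QM-equivariance of $\hat{\varphi}$, for any $\beta \in \mathcal{O}$ we compute
\[
\hat{\varphi} \circ \iota'(\beta) \circ \varphi \;=\; \hat{\varphi} \circ \varphi \circ \iota(\beta) \;=\; [N]_A \circ \iota(\beta) \;=\; \iota(\beta) \circ [N]_A \;=\; \iota(\beta) \circ \hat{\varphi} \circ \varphi,
\]
and canceling $\varphi$ on the right gives $\hat{\varphi} \circ \iota'(\beta) = \iota(\beta) \circ \hat{\varphi}$.

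With $\hat{\varphi}$ in hand, for any $\alpha \in \End_{\text{QM}}(A') = \mathfrak{o}(\ff')$ the composition $\hat{\varphi} \circ \alpha \circ \varphi$ lies in $\End_{\text{QM}}(A) = \mathfrak{o}(\ff)$, since each factor is QM-equivariant. In $\End^0_{\text{QM}}(A) \cong K$ this composition equals $N$ times the image of $\alpha$ under the isomorphism from part (1), because $\hat{\varphi} = N\varphi^{-1}$ in $\End^0$. Identifying $\End_{\text{QM}}(A)$ and $\End_{\text{QM}}(A')$ with orders in $K$ via this isomorphism, we conclude $N \cdot \mathfrak{o}(\ff') \subseteq \mathfrak{o}(\ff)$, and writing $\mathfrak{o}(\ff) = \mathbb{Z} + \ff\,\mathfrak{o}_K$ this containment is equivalent to $\ff \mid N\ff'$. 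The reverse divisibility $\ff' \mid N\ff$ follows symmetrically from $\beta \mapsto \varphi \circ \beta \circ \hat{\varphi}$ for $\beta \in \End_{\text{QM}}(A)$, using $\varphi \circ \hat{\varphi} = [N]_{A'}$.

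The only mildly subtle step is the existence and QM-equivariance of a dual isogeny with the normalization $\hat{\varphi}\circ\varphi = [N]$ rather than $[N^2] = [\deg\varphi]$; this reflects the ``QM-cyclic'' nature of $\varphi$, whose order-$N^2$ kernel is generated as an $\mathcal{O}$-module by a cyclic subgroup of order $N$. Once this normalization is established, everything else is the standard argument transplanted from the CM elliptic curve setting.
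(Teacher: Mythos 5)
Your proposal is correct and follows essentially the same route as the paper: transport endomorphisms via $\psi \mapsto \hat{\varphi}\circ\psi\circ\varphi$ (and its reverse), note that QM-equivariance makes this land in the QM-commuting endomorphism ring, and deduce $N\cdot\mathfrak{o}(\ff')\subseteq\mathfrak{o}(\ff)$ and symmetrically, giving both divisibilities. The only difference is that you spell out the existence and QM-equivariance of the dual with the normalization $\hat{\varphi}\circ\varphi=[N]$, which the paper uses implicitly; that detail is correct and harmless.
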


\begin{proof}
The argument is similar to that of the elliptic curve case. In our context, we need only remember that we care specifically about those endomorphisms commuting with the QM. 

Consider the homomorphism
\begin{align*}
    F: \textnormal{End}(A,\iota) &\longrightarrow \textnormal{End}(A',\iota') \\
    \psi &\longmapsto \varphi \circ \psi \circ \hat{\varphi}. 
\end{align*}
Because $\varphi$ is assumed to be QM-equivariant, this restricts to a homomorphism
\[  \textnormal{End}_{\text{QM}}(A,\iota) \longrightarrow \textnormal{End}_{\text{QM}}(A',\iota') . \]
As in the argument in the elliptic curves case, the algebras of endomorphisms commuting with the quaternionic multiplication are isomorphic by the multiple $\frac{1}{N} F$ of the map above. That is,
\[ K \cong \textnormal{End}_{\text{QM}}(A,\iota) \otimes \mathbb{Q} \cong \textnormal{End}_{\text{QM}}(A',\iota') \otimes \mathbb{Q}. \]
This completes part (a). Moreover, that 
\[ \frac{1}{N} F : \textnormal{End}_{\text{QM}}(A,\iota) \otimes \mathbb{Q} \rightarrow \textnormal{End}_{\text{QM}}(A',\iota') \] 
is an isomorphism tells us that
\[ N \cdot \textnormal{End}_{\text{QM}}(A,\iota) \subseteq \textnormal{End}_{\text{QM}}(A',\iota'), \]
yielding $\ff' \mid N f$. Via the dual argument, we obtain $\ff \mid N \ff'$. 
\end{proof}

For an imaginary quadratic field $K$, we are therefore justified in defining $\mathcal{G}^D_{K,\ell}$ to be the subgraph of $\mathcal{G}^D_\ell$ consisting of vertices corresponding to QM abelian surfaces with $K$-CM. An edge in $\mathcal{G}^D_{K,\ell}$ corresponds to a class of QM-cyclic $\ell$-isogenies $[\varphi: (A,\iota) \rightarrow (A',\iota')]$ between QM abelian surfaces with $K$-CM, and the above lemma tells us that as we move along paths in $\mathcal{G}^D_{K,\ell}$, the central conductors of vertices met have the same prime-to-$\ell$ part. It follows that $\mathcal{G}^D_{K,\ell}$ has a decomposition
\[ \mathcal{G}^D_{K,\ell} = \bigsqcup_{(\ff_0,\ell) = 1} \mathcal{G}^D_{K,\ell,\ff_0}, \]
where $\mathcal{G}^D_{K,\ell,\ff_0}$ denotes the subgraph of $\mathcal{G}^D_{K,\ell}$ with vertices having corresponding central conductors of the form $\ff_0\ell^a$ for some $a \in \mathbb{N}$. 

Any edge in $\mathcal{G}_{K,\ell,\ff_0}$  has vertices with corresponding central conductors $\ff$ and $\ff'$ satisfying $\ff/\ff' \in \{1,\ell,\ell^{-1}\}$. Defining the \textbf{level} of a vertex in $\mathcal{G}^D_{K,\ell,\ff_0}$ having central conductor $\ff$ to be $\text{ord}_\ell(\ff)$, we note that a directed edge can do one of three things:
\begin{itemize}
    \item increase the level by one, in which case we will call the edge \textbf{ascending},
    \item decrease the level by one, in which case we will call the edge \textbf{descending}, or
    \item leave the level unchanged, in which case we will call the edge \textbf{horizontal}. 
\end{itemize}

We will refer to ascending and descending edges collectively as \textbf{vertical} edges. For a connected component of $\mathcal{G}^D_{K,\ell,\ff_0}$, we refer to the subgraph consisting of level $0$ vertices and horizontal edges between them as the \textbf{surface} of that component. In other words, the vertex set of the surface consists of vertices with corresponding central conductor $\ff_0$. This choice of terminology is reflective of the fact that we cannot have an ascending isogeny starting at level $0$, and of fact that horizontal edges can only occur between surface vertices, as the following lemma states. 

\begin{lemma}
Suppose that there is a horizontal edge in $\mathcal{G}^D_{K,\ell,\ff_0}$ connecting vertices $v_1$ and $v_2$. Letting $\ff_i$ denote the central conductor corresponding to $v_i$ for $i=1,2$, we then have $\ff_1=\ff_2=\ff_0$. Furthermore, the number of horizontal edges emanating from a given surface vertex in $\mathcal{G}^D_{K,\ell,\ff_0}$ is $1 + \legendre{\Delta_K}{\ell}$, hence is
\begin{itemize}
    \item $0$ if $\ell$ is inert in $K$, 
    \item $1$ if $\ell$ ramified in $K$, and
    \item $2$ if $\ell$ is split in $K$. 
\end{itemize}
\end{lemma}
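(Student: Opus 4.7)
The plan is to port the classical analysis of horizontal $\ell$-isogenies between CM elliptic curves, as in Kohel's thesis \cite{Koh96} or the exposition \cite{Sut13}, to the QM setting, leveraging the correspondence of Theorem \ref{fake-ell-curves-ell-curves-correspondence} together with the Morita-equivalent reduction described in Remark \ref{kernel-to-cyclic-subgroup-remark}.

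First I would translate the problem into one about the action of $\textnormal{End}_{\text{QM}}(A) \cong \mathfrak{o}(\ff)$ on $e_1 A[\ell]$. Since QM-equivariant endomorphisms commute with $\iota(\mathcal{O})$, the action of $\mathfrak{o}(\ff)$ preserves the decomposition $A[\ell] = e_1 A[\ell] \oplus e_2 A[\ell]$, so $\mathfrak{o}(\ff)/\ell\mathfrak{o}(\ff)$ acts $\mathbb{F}_\ell$-linearly on the two-dimensional space $e_1 A[\ell]$. By Remark \ref{kernel-to-cyclic-subgroup-remark} the $\ell+1$ edges emanating from $v_1 = [(A,\iota)]$ in $\mathcal{G}^D_{K,\ell,\ff_0}$ are in bijection with the cyclic order-$\ell$ subgroups $C \leq e_1 A[\ell]$, and the preceding lemma together with Theorem \ref{fake-ell-curves-ell-curves-correspondence} ensures that the behavior of $\textnormal{End}_{\text{QM}}$ along the resulting QM-cyclic $\ell$-isogeny mirrors the behavior of the endomorphism ring along the corresponding $\ell$-isogeny of the underlying CM elliptic curve $E_A$.

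Then I would invoke the classical analysis: a horizontal $\ell$-isogeny corresponds to $C$ being the kernel of multiplication by an invertible $\mathfrak{o}(\ff)$-ideal of norm $\ell$, i.e., an invertible prime of $\mathfrak{o}(\ff)$ above $\ell$. Such primes exist if and only if $\ell$ is coprime to the conductor $\ff$, which occurs exactly when $v_1$ sits at level $0$; this yields the first assertion that horizontal edges only connect surface vertices. When $\ell \nmid \ff$ the invertible primes of $\mathfrak{o}(\ff)$ above $\ell$ are in bijection with the primes of $\mathfrak{o}_K$ above $\ell$, of which there are exactly $1 + \legendre{\Delta_K}{\ell}$ by the standard splitting behavior in an imaginary quadratic field, giving the count.

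The main obstacle is rigorously justifying the compatibility claim in the second paragraph: namely, that under the Morita reduction a QM-cyclic $\ell$-isogeny is horizontal exactly when the corresponding $\ell$-isogeny of the underlying CM elliptic curve is horizontal, so that the classification of subgroups $C$ as horizontal, ascending, or descending transfers verbatim. The map $\psi \mapsto \tfrac{1}{\ell}\varphi \circ \psi \circ \hat{\varphi}$ from the previous lemma, combined with the standard test checking which elements of $\mathfrak{o}_K$ preserve a given order-$\ell$ subgroup, should make this translation rigorous; some care is needed regarding the $2^b$-to-$1$ ambiguity of Theorem \ref{fake-ell-curves-ell-curves-correspondence}, but this ambiguity does not affect the count of edges emanating from a fixed vertex.
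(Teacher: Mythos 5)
Your outline is sound and lands on the same underlying mechanism as the paper: reduce to the CM elliptic curve situation and invoke the classical fact that horizontal $\ell$-isogenies correspond to proper $\mathfrak{o}(\ff)$-ideals of norm $\ell$, which exist precisely when $\ell \nmid \ff$ and then number $1 + \legendre{\Delta_K}{\ell}$. The difference is in how the reduction is effected. You work at the level of the module $e_1A[\ell]$, using Morita equivalence at $\ell$ and Theorem \ref{fake-ell-curves-ell-curves-correspondence} to assert that the level-behavior of the QM-cyclic isogeny with kernel $\mathcal{O}\cdot C$ matches that of the elliptic $\ell$-isogeny with kernel $C$; the paper instead decomposes both the source and target surfaces as products $E\times E'$ of $\mathfrak{o}(\ff)$-CM elliptic curves via Theorem \ref{fake-ell-curves-ell-curves-correspondence}, restricts the given isogeny to the two factors, and does conductor bookkeeping (each factor of the target has conductor dividing $\ff$ with the same prime-to-$\ell$ part, and the two conductors have lcm $\ff$, so at least one factor realizes an $\ell$-isogeny between conductor-$\ff$ curves), at which point \cite[Lemma 4.1]{CS22} gives $\ell\nmid\ff$ and the count transfers back by generating QM-cyclic isogenies from elliptic ones via the $\mathcal{O}$-action. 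The compatibility you flag as the ``main obstacle'' is exactly the content that this restriction-to-factors argument supplies, and your proposed justification via $\psi \mapsto \tfrac{1}{\ell}\varphi\circ\psi\circ\hat{\varphi}$ is not yet a proof of it: that map only compares the QM-equivariant endomorphism rings of source and target, whereas you need to know that the central conductor of $A/(\mathcal{O}\cdot C)$ is computed from the $\mathfrak{o}(\ff)$-module structure of $e_1A[\ell]$ and $C$ in the same way as for an elliptic curve. This can be made rigorous either as in the paper, or more intrinsically along your lines by working with $\mathcal{O}$-stable lattices: since $\ell \nmid D$ one has $\mathcal{O}\otimes\mathbb{Z}_\ell \cong M_2(\mathbb{Z}_\ell)$, so $e_1(\Lambda\otimes\mathbb{Z}_\ell)$ is a rank-two $\mathbb{Z}_\ell$-lattice with $\mathfrak{o}(\ff)\otimes\mathbb{Z}_\ell$-action and the classical local volcano analysis applies verbatim; your approach buys a slightly more self-contained argument that avoids conductor bookkeeping on the target factors, while the paper's buys a quicker proof by quoting the elliptic-curve lemma directly. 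Either way, once that step is filled in, your count of surface edges via primes of $\mathfrak{o}_K$ above $\ell$ is correct, and the $2^b$-to-$1$ ambiguity indeed plays no role for edges out of a fixed vertex.
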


\begin{proof}
That $\ff_1=\ff_2$ is part of our definition of horizontal edges. What we must prove is that $\ell$ does not divide $\ff := \ff_1 = \ff_2$. 

The given edge corresponds to a QM-cyclic $\ell$ isogeny
\[ \varphi: (A_1,\iota_1) \rightarrow (A_2,\iota_2), \]
where $(A_i,\iota_i)$ has central conductor $\ff$ for $i=1,2$. By Theorem \ref{fake-ell-curves-ell-curves-correspondence}, we have a decomposition of these two QM abelian surfaces resulting in an isomorphic isogeny $\psi$ as below:
\[ \begin{tikzcd} 
(A_1,\iota_1) \arrow[r,"\varphi"] \arrow[d,"\cong"] & (A_2,\iota_2) \arrow[d,"\cong"] \\
(E_1 \times E_1', \iota_1) \arrow[r,"\psi"] & (E_2 \times E_2', \iota_2), 
\end{tikzcd} \]
where each $E_i$ and each $E_i'$ is an elliptic curve with $K$-CM by conductor $\ff$ for $i=1,2$. Restricting $\psi$ to $E_1$ and to $E_1'$, respectively, yields isogenies of $K$-CM elliptic curves
\begin{align}\label{ec-isogenies}
\begin{split}
E_1 &\longrightarrow \psi(E_1) =: E \subseteq E_2 \times E_2' \\ 
E_1' &\longrightarrow \psi(E_1') =: E' \subseteq E_2 \times E_2'. 
\end{split}
\end{align}
This provides the decomposition
\[ E_2 \times E_2' \cong E \times E' \] 
The conductors of the endomorphism rings of $E$ and $E'$, each of which must divide $\ff$ and have the same coprime to $\ell$ part as $\ff$, must then have least common multiple $\ff$. This provides that either $E$ or $E'$ must have CM conductor $\ff$. 

The conductors of the endomorphism rings of $E$ and $E'$ must each be in the set $\{\ff, \ell \ff, \frac{1}{\ell} \ff\}$, and must have least common multiple $f$. This provides that either $E$ or $E'$ must have CM conductor $\ff$. 

We now consider the corresponding isogeny of $K$-CM elliptic curves of conductor $\ff$ from (\ref{ec-isogenies}). In doing so, \cite[Lemma 4.1]{CS22} tells us that we must have $\ell \nmid f$. There, the result is reached using the correspondence between horizontal $\ell$-isogenies of $\mathfrak{o}(\ff_0)$-CM elliptic curves over $\mathbb{C}$ with proper $\mathfrak{o}(\ff_0)$-ideals of norm $\ell$. This also gives us the count of horizontal isogenies mentioned; we have the count in the elliptic curve case as in \cite{CS22}, and from a horizontal isogeny of elliptic curves as in (\ref{ec-isogenies}) we generate a QM-cyclic isogeny of our QM-abelian surfaces via the QM action. 
\end{proof}

Each surface vertex has $1 + \legendre{\Delta_K}{\ell}$ horizontal edges emanating from it, and therefore has $\ell - \legendre{\Delta_K}{\ell}$ descending edges to level $1$ vertices. For vertices away from the surface, we have the following:

\begin{lemma}
If $v$ is a vertex in $\mathcal{G}^D_{K,\ell,\ff_0}$ at level $L > 0$. then there is one ascending vertex from $v$ to a vertex in level $L-1$, and the remaining $\ell$ edges from $v$ are descending edges to distinct vertices in level $L+1$. 
\end{lemma}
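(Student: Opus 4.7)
The plan is to reduce this lemma to the analogous statement for the classical $\ell$-isogeny volcano of $K$-CM elliptic curves, exploiting the decomposition of Theorem~\ref{fake-ell-curves-ell-curves-correspondence} exactly as in the proof of the preceding horizontal-edge lemma. First, since $L > 0$, the previous lemma forces every edge emanating from $v$ to be vertical, so it suffices to show that exactly one of the $\ell+1$ outward edges is ascending, and that the remaining $\ell$ descending edges land at distinct vertices in level $L+1$.

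Next, let $(A,\iota)$ represent $v$, so $A$ has central conductor $\ff := \ff_0\ell^L$. By Theorem~\ref{fake-ell-curves-ell-curves-correspondence} we may write $A \cong \mathbb{C}/\mathfrak{o}(\ff) \times E_A$ for a unique $\mathfrak{o}(\ff)$-CM elliptic curve $E_A$, and note that both factors have CM conductor $\ff$. The idempotent decomposition in Remark~\ref{kernel-to-cyclic-subgroup-remark} then gives a bijection between QM-stable subgroups of $A[\ell]$ of order $\ell^2$ and cyclic order-$\ell$ subgroups of $E[\ell]$, where $E := e_1 A$ is a $\mathfrak{o}(\ff)$-CM elliptic curve; this induces a bijection between isomorphism classes of QM-cyclic $\ell$-isogenies out of $(A,\iota)$ and cyclic $\ell$-isogenies out of $E$. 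Following the same factor-wise restriction argument used in the preceding lemma's proof, a QM-cyclic $\ell$-isogeny $\varphi \colon (A,\iota) \to (A',\iota')$ is ascending (resp.\ descending) if and only if the corresponding cyclic $\ell$-isogeny $E \to e_1 A'$ of CM elliptic curves is, since the central conductor of $A'$ equals the CM conductor of its factor $e_1 A'$.

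With this bijection in hand, the conclusion reduces to the classical statement about $\ell$-isogeny volcanoes for $K$-CM elliptic curves (cf.\ \cite[Thm.~1.1]{Sut13}, transferred to our characteristic zero setting as in \cite[\S4]{CS22}): at a level-$L$ vertex with $L > 0$ in the volcano for $\mathfrak{o}(\ff)$-CM elliptic curves, exactly one outgoing cyclic $\ell$-isogeny is ascending and the remaining $\ell$ descending isogenies have pairwise non-isomorphic targets. Transferring back, exactly one QM-cyclic $\ell$-isogeny from $v$ is ascending, and if two descending QM-cyclic $\ell$-isogenies from $(A,\iota)$ had isomorphic QM abelian surface targets, then Theorem~\ref{fake-ell-curves-ell-curves-correspondence} would force the corresponding elliptic curve targets to be isomorphic, contradicting the elliptic curve volcano result. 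The main obstacle I anticipate is verifying the two compatibility properties of the bijection --- preservation of ascending/descending type and preservation of target distinctness --- since the QM abelian surface/CM elliptic curve correspondence is $2^b$-to-$1$ at the level of moduli; but because we work with the fixed factor $e_1 A$ of a chosen decomposition rather than with moduli points, the uniqueness clause in Theorem~\ref{fake-ell-curves-ell-curves-correspondence} together with the level-equals-conductor identification from the previous lemma's proof should be enough to push the argument through cleanly.
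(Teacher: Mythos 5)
Your reduction hinges on the two compatibility claims that you yourself flag as the main obstacle, and they are precisely where the difficulty lies; as written there is a genuine gap. First, ``$E=e_1A$'' is not an elliptic curve attached to $(A,\iota)$: the idempotent $e_1$ lives in $\mathcal{O}\otimes\mathbb{Z}/\ell\mathbb{Z}$ and acts only on $A[\ell]$, so to speak of an elliptic factor you must choose a product decomposition $A\cong E_1\times E_2$, and there is no reason the QM-stable kernel $Q=\mathcal{O}\cdot C$ should split as $C_1\oplus C_2$ with $C_i\leq E_i[\ell]$ for that decomposition, since the $\mathcal{O}$-action does not preserve the factors. Second, even granting a kernel-level correspondence $C\leftrightarrow Q$, your transfers of ``ascending/descending'' and of target-distinctness both require that the quotient $A'=A/(\mathcal{O}\cdot C)$ have canonical elliptic factor $E_{A'}\cong E/C$ in the sense of Theorem \ref{fake-ell-curves-ell-curves-correspondence}. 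That is not automatic: by Kani's theorem $A'$ admits several pairwise non-isomorphic product decompositions, the central conductor of $A'$ is the least common multiple of the conductors of the two factors in any such decomposition, and the assignment $[(A',\iota')]\mapsto[E_{A'}]$ is defined only through the uniqueness clause of Theorem \ref{fake-ell-curves-ell-curves-correspondence}, not through the quotient construction. So from $A/(\mathcal{O}\cdot C_1)\cong A/(\mathcal{O}\cdot C_2)$ you cannot conclude $E/C_1\cong E/C_2$, and from the conductor of $E/C$ alone you cannot read off the level of $A'$; controlling the second elliptic factor of the quotient is exactly the delicate point that the paper's proof of the horizontal-edge lemma handles by restricting the isogeny to both factors, and it does not transport for free to the vertical case.

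The paper's own proof avoids this entirely and is a counting argument: since the $\textnormal{Gal}(\overline{\mathbb{Q}}/\mathbb{Q})$-action preserves levels, every vertex at a given level has the same number of ascending and descending edges; combining this homogeneity with the vertex counts $2^b\,h(\mathfrak{o}(\ell^{L}\ff_0))$ from Proposition \ref{number-of-CM-points} and the class number identities
\[
\left(\ell-\legendre{\Delta_K}{\ell}\right)h(\mathfrak{o}(\ff_0))=h(\mathfrak{o}(\ell\ff_0)),\qquad h(\mathfrak{o}(\ell^{L}\ff_0))=\ell\, h(\mathfrak{o}(\ell^{L-1}\ff_0)),
\]
forces exactly one ascending edge at each non-surface vertex and distinct targets for the $\ell$ descending edges. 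If you want to salvage your approach, you would need to prove the compatibility $E_{A'}\cong E/C$ (equivalently, to control both factors of the quotient and hence its central conductor), at which point you have essentially reproduced the hard part of the paper's horizontal-edge argument rather than bypassed it; also note that the volcano statement you import should be cited in its characteristic-zero form from \cite{Cl22,CS22} rather than from the finite-field setting of \cite{Sut13}.
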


\begin{proof}
We will use the same type of counting argument one may use in the elliptic curve case, as in \cite[Lemma 6]{Sut13}. The action of $\text{Gal}(\overline{\mathbb{Q}}/\mathbb{Q})$ on $\mathcal{G}^D_{K,\ell,\ff_0}$ preserves the level of a given vertex, and hence preserves the notions of horizontal, ascending, and descending for edges. As a result, the number of ascending, respectively descending, edges out of $v$ must be the same as for any other vertex at level $L$ by transitivity of this action on vertices at each level. 

For $L=1$, there are 
\[ \left( \ell - \legendre{\Delta_K}{\ell} \right) 2^b h(\mathfrak{o}(\ff_0)) = 2^b h(\mathfrak{o}(\ell \ff_0)) \]
total descending vertices from surface vertices (where $b$ is as in Proposition \ref{number-of-CM-points}). The equality above states that this is equal to the total number of level $1$ vertices, and so the edges must all be to distinct level $1$ vertices. For $L > 1$, the result follows inductively using the same counting argument along with the fact that 
\[ h(\mathfrak{o}(\ell^{L}\ff_0)) = \ell \cdot h(\mathfrak{o}(\ell^{L-1}\ff_0)). \qedhere \] 
\end{proof}

\subsection{QM-equivariant isogeny volcanoes}

For a prime number $\ell$, we define here the notion of an $\ell$-volcano. This notion for the most part agrees with that in the existing literature, with the only caveat being that in the original context of isogeny volcanoes over a finite field one has volcanoes of finite depth. In our case, working over an algebraically closed field as in \cite{Cl22, CS22}, we adjust the definition to allow for infinite depth volcanoes. 

\begin{definition}
Let $V$ be a connected graph with vertices partitioned into levels 
\[ V = \bigsqcup_{i \geq 0} V_i ,\]
such that if $V_d = \emptyset$ for some $d$, then $V_i = \emptyset$ for all $i \geq d.$ If such a $d$ exists, we will refer to the smallest such $d$ as the \textbf{depth} of $V$ and to $V_d$ for $d$ the depth as the \textbf{floor} of $V$, and otherwise we will say that the depth of $V$ is infinite. 

Fixing a prime number $\ell$, the graph $V$ with its partioning is an \textbf{$\ell$-volcano} if the following properties hold:
\begin{enumerate}
\item Each vertex not in the floor of $V$ has degree $\ell + 1$, while any floor vertex has degree $1$. 
\item The subgraph $V_0$, which we call \textbf{the surface}, is regular of degree $0, 1$ or $2$. 
\item For $0 < i < d$ (colloquially: ``below the surface'' and ``above the floor''), a vertex in $V_i$ has one \textbf{ascending} edge to a vertex in $V_{i-1}$, and $\ell$ \textbf{descending} edges to distinct vertices in $V_{i+1}$. This accounts for all edges of $V$ which are not \textbf{horizontal}, by which we mean edges which are not between two surface vertices. 
\end{enumerate}
\end{definition}

The results of the previous section immediately imply the following theorem, declaring that in most cases connected components of the subgraphs $\mathcal{G}^D_{K,\ell,\ff_0}$ of $\mathcal{G}^D_{K,\ell}$ are isogeny volcanoes. In such a case, we will refer to this graph as a \textbf{QM-equivariant isogeny volcano}. This justifies our use of terminology regarding edges and vertices in these subgraphs.  

\begin{theorem}\label{volcano_thm}
Fix an imaginary quadratic field $K$, a prime $\ell$ and a natural number $\ff_0$ with $(\ell,\ff_0) = 1$ and $\ff_0^2\Delta_K < -4$. Consider the graph $\mathcal{G}^D_{K,\ell,\ff_0}$ as an undirected graph by identifying edges with their dual edges as described above. Each connected component of this graph has the structure of an $\ell$-volcano of infinite depth. 
\end{theorem}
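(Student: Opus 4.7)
The plan is to reduce the theorem to the combinatorial content already established in the two preceding lemmas, together with a short verification of the global shape of each connected component.

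First, I would justify viewing $\mathcal{G}^D_{K,\ell,\ff_0}$ as an undirected multigraph. The hypothesis $\ff_0^2\Delta_K<-4$ guarantees that every vertex in $\mathcal{G}^D_{K,\ell,\ff_0}$ corresponds to a QM abelian surface with CM discriminant at most $\ff_0^2\Delta_K<-4$, hence with automorphism group $\{\pm 1\}$. As noted in the preamble to this section, in that situation outward edges from $v_1$ to $v_2$ correspond bijectively to inward edges via dual isogeny, so identifying dual edges gives a well-defined undirected multigraph without any loss of structural information.

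Next, I would verify the three defining properties of an $\ell$-volcano for each connected component $C$. Property (2) (the surface is regular of degree in $\{0,1,2\}$) is precisely the content of the first of the two preceding lemmas: each level-$0$ vertex has $1+\left(\tfrac{\Delta_K}{\ell}\right)$ horizontal edges. Property (3) (one ascending and $\ell$ descending edges at every positive level, the descending edges going to distinct vertices in the next level) is exactly the second lemma. Property (1) (every vertex has degree $\ell+1$) is then obtained by tallying: a surface vertex has $1+\left(\tfrac{\Delta_K}{\ell}\right)$ horizontal plus $\ell-\left(\tfrac{\Delta_K}{\ell}\right)$ descending edges, and a vertex at level $L>0$ has $1$ ascending plus $\ell$ descending edges, so in both cases the total is $\ell+1$.

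Finally, I would establish that $C$ has infinite depth and vertices at every level $L\geq 0$. The global fact that level $L$ is non-empty for every $L$ is immediate: the order $\mathfrak{o}(\ell^L\ff_0)\subseteq K$ exists for all $L\geq 0$, and by Proposition \ref{number-of-CM-points} there are $2^b h(\mathfrak{o}(\ell^L\ff_0))>0$ vertices of $\mathcal{G}^D_{K,\ell,\ff_0}$ at that level. To transfer this to $C$, fix any $v\in C$; if $v$ lies at level $L>0$, the unique ascending edge guaranteed by the second lemma produces, after finitely many steps, a surface vertex $v_0\in C$. From $v_0$ the descending edges at each successive level ensure $C$ meets every level $L\geq 0$, so $C$ has no floor, i.e.\ infinite depth.

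The main conceptual step — really carried out already in the second preceding lemma rather than here — is the bridge between horizontal QM-cyclic $\ell$-isogenies of QM abelian surfaces and horizontal $\ell$-isogenies of the underlying CM elliptic curves provided by Theorem \ref{fake-ell-curves-ell-curves-correspondence}, which is what allows the classical count of horizontal isogenies by invertible $\mathfrak{o}(\ff)$-ideals of norm $\ell$ to be imported into our setting. Once that ingredient is in hand, everything in the present theorem is a formal assembly of the preceding lemmas; I would not expect a serious obstacle at this stage beyond being careful that the hypothesis $\ff_0^2\Delta_K<-4$ is invoked precisely where needed (to kill extra automorphisms and justify the undirected-graph viewpoint), which is exactly why the $\Delta\in\{-3,-4\}$ cases are deferred to Proposition \ref{fom-prime-power}.
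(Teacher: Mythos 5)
Your proposal is correct and follows essentially the same route as the paper, which presents the theorem as the assembly of the two preceding lemmas (surface regularity and the ascending/descending edge counts below the surface), with the undirected viewpoint justified by $\textnormal{Aut}(A,\iota)=\{\pm 1\}$ when $\ff_0^2\Delta_K<-4$. Your explicit tally giving degree $\ell+1$ at every vertex and the observation that each component meets every level (hence has infinite depth) are exactly the implicit final steps of the paper's argument.
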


A \textbf{path} in $\mathcal{G}^D_{K,\ell,\ff_0}$ refers to a finite sequence of directed edges, say $e_1, \ldots, e_r$, such that the terminal vertex of $e_i$ is the initial vertex of $e_{i+1}$ for all $1 \leq i \leq r-1$. In the $\ff_0^2\Delta_K < -4$ case, because the edges in $\mathcal{G}^D_{K,\ell,\ff_0}$ all have canonical inverse edges we are justified in using the following terminology: we call an edge \textbf{backtracking} if $e_{i+1}$ is inverse to $e_i$ for some edge $e_i$ in the path. Note that in the case of $\ell$ ramified in $K$, a path consisting of two surface edges always is backtracking. If $\ell$ is split in $K$, then there is a horizontal cycle at the surface. In this case, concatenation of this cycle with itself any number of times does not result in backtracking. 

Our definitions and the results of this section lead us to the following correspondence:

\begin{lemma}\label{paths-to-isogenies-correspondence}
Suppose that $\ff_0^2\Delta_K < -4$. We then have a bijective correspondence between the set of geometric isomorphism classes of QM-cyclic $\ell^a$-isogenies of QM abelian surfaces with $K$-CM and central conductor with prime-to-$\ell$ part $f_0$, and the set of length $a$ non-backtracking paths in $\mathcal{G}^D_{K,\ell,\ff_0}$. This associates to an isogeny its corresponding path in this isogeny graph. 
\end{lemma}
\begin{proof}
This result is in exact analogy to \cite[Lemma 4.2]{Cl22}, and the proof is as therein. 
\end{proof}

In \S \ref{CM_points_prime_power_section}, we will describe the Galois orbits of such paths in order to describe the $K$-CM locus on $X_0^D(\ell^a)$ via the above correspondence. For this, the following observation will be of use: any non-backtracking length $a$ path in $\mathcal{G}^D_{K,\ell,\ff_0}$ for $\ff_0^2\Delta_K < -4$ can be written as a concatenation of paths $P_1, P_2$ and $P_3$, where $P_1$ is strictly ascending, $P_2$ is strictly horizontal and hence consists entirely of surface edges, and $P_3$ is strictly descending, such that the lengths of these paths (which may be $0$) sum to $a$. 

\subsection{The field of moduli of a QM-cylic $\ell$ isogeny}

A QM-cyclic $\ell$ isogeny $\varphi$ of $K$-CM abelian surfaces with $\ell \nmid D$ corresponds to an edge $e$ in $\mathcal{G}^D_{K,\ell,\ff_0}$, say between vertices $v$ and $v'$ in levels $L$ and $L'$, respectively. Assume that the path is non-descending ($L \geq L'$), so either it is horizontal ($L = L'$) or ascending ($L = L'+1$). 

An automorphism fixing $e$ must fix both $v$ and $v'$, and so by Theorems \ref{Shimura} and \ref{JGR} we have that either $\mathbb{Q}(\varphi) = K(\ell^{L}\ff_0)$, or $[K(\ell^{L}\ff_0) : \mathbb{Q}(\varphi)] = 2$. In the latter case, there exists an involution $\sigma \in \text{Gal}(K(\ell^{L}\ff_0)/\mathbb{Q})$ fixing $v$, and we know precisely when this occurs by Theorem \ref{JGR} -- that is, when $D(K) = 1$. 

Assume that $\ff_0^2\Delta_K < -4$, such that $\mathcal{G}^D_{K,\ell,\ff_0}$ has the structure of an $\ell$-volcano. (We will deal with the case of $\ff_0^2\Delta_K \in \{-3,-4\}$ in the remarks leading up to Proposition \ref{fom-prime-power}.) If $e$ is the unique edge between $v$ and a vertex in level $L'$, then $e$ is fixed by $\sigma$ if and only if $v$ is. This is the case unless $L=L' = 0$ and $\ell$ splits in $K$, in which case there are two edges from $v$ to surface vertices (which are not necessarily unique, or distinct from $v$). In either of these cases, consider $[(E \times E', \iota)],$ with $E$ having CM by $\mathfrak{o}(\ff_0)$, a decomposition of our QM abelian surface corresponding to $v_1$. The two outward edges from $v$ then have corresponding kernels $\iota(\mathcal{O})\cdot E[\mathfrak{p}]$ and $\iota(\mathcal{O})\cdot E[\overline{\mathfrak{p}}]$, with $\mathfrak{p}$ a prime ideal in $\mathfrak{o}(\ff_0)$ of norm $\ell$.

We claim that, in this situation, the involution $\sigma \in \text{Gal}(K(\ff_0)/\mathbb{Q})$ fixing $v$ cannot fix $\mathfrak{p}$, and hence cannot fix our edge $e$. Indeed, the exact statement of Theorem $\ref{JGR}$ says that $\sigma = \tau \sigma_\mathfrak{a}$ for a certain ideal $\mathfrak{a}$ of $\mathfrak{o}(\ff_0)$, so to fix $e$ it would have to be the case that $\sigma_\mathfrak{a}$ acts on $e$ and hence on $v$ by complex conjugation. It follows from \cite[Lemma 5.10]{GR06} that this cannot be the case, as $\omega_{D(K)}$ acts non-trivially on $v$. From this discussion, we reach the following result regarding fields of moduli corresponding to our edges.

\begin{proposition}\label{fom-edge}
Let $\varphi$ be a QM cyclic $\ell$-isogeny corresponding to an edge $e$ from $v$ to $v'$ in $\mathcal{G}^D_{K,\ell,\ff_0}$ as above, with $\ff_0^2\Delta_K<-4$.
\begin{itemize}
    \item If $D(K) \neq 1$, i.e., if there is a prime $p \mid D$ which is inert in $K$, then $\mathbb{Q}(\varphi) = K(\ell^{L}\ff_0)$. 
    \item Suppose that $D(K) = 1$. 
    
    \begin{itemize}
        \item If $\varphi$ is a QM cyclic isogeny of QM abelian surfaces with CM by $\mathfrak{o}(\ff_0)$ and $\ell$ splits in $K$, then $\mathbb{Q}(\varphi) = K(\ff_0)$.
        
        \item Otherwise, $[K(\ell^{L}\ff_0) : \mathbb{Q}(\varphi)] = 2$, with $\mathbb{Q}(\varphi)$ equal to the field of moduli corresponding to $v$ as described in Theorem \ref{JGR}. 
    \end{itemize}
\end{itemize}
\end{proposition}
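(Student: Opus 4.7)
The strategy is to compute $H(\varphi)$, the subgroup of $\text{Gal}(\overline{\mathbb{Q}}/\mathbb{Q})$ fixing the isomorphism class of $\varphi$, by using the elementary containment $H(\varphi) \subseteq H(v) \cap H(v')$ (an automorphism fixing an edge must fix its endpoints) together with Theorem \ref{JGR} applied to the endpoints. The upper bound $\mathbb{Q}(\varphi) \subseteq K(\ell^L \ff_0)$ follows from Theorem \ref{Shimura} and the fact, guaranteed by $L \geq L'$, that $K(\ell^{L'}\ff_0) \subseteq K(\ell^L \ff_0)$, so the problem reduces to identifying which elements of $\text{Gal}(K(\ell^L \ff_0)/\mathbb{Q})$ lie in $H(\varphi)$.

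When $D(K) \neq 1$, Theorem \ref{JGR} makes $H(v)$ trivial, which immediately forces $H(\varphi) = 1$ and hence $\mathbb{Q}(\varphi) = K(\ell^L \ff_0)$. When $D(K) = 1$, Theorem \ref{JGR} produces a non-trivial involution $\sigma = \tau \sigma_{\mathfrak{a}} \in H(v)$, and the question becomes whether $\sigma$ additionally fixes $e$. Since $\sigma$ permutes the edges out of $v$ while preserving levels, whenever $e$ is the unique edge from $v$ to level $L'$ the equality $\sigma(e) = e$ is automatic; this yields $H(\varphi) = H(v)$ and hence $\mathbb{Q}(\varphi) = \mathbb{Q}(v)$, the index-two subfield of Theorem \ref{JGR}. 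By the $\ell$-volcano structure of Theorem \ref{volcano_thm}, uniqueness of $e$ fails only when $L = L' = 0$ and $\ell$ splits in $K$, where $v$ carries exactly two horizontal edges.

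The main obstacle is this exceptional subcase, where I must show that $\sigma$ genuinely swaps the two horizontal edges instead of fixing each. The plan is to use Theorem \ref{fake-ell-curves-ell-curves-correspondence} to present the QM abelian surface underlying $v$ as $\mathbb{C}/\mathfrak{o}(\ff_0) \times E_A$, under which the two horizontal edges correspond to the QM-closures $\iota(\mathcal{O}) \cdot E_A[\mathfrak{p}]$ and $\iota(\mathcal{O}) \cdot E_A[\overline{\mathfrak{p}}]$ for the two primes above $\ell$ in $\mathfrak{o}(\ff_0)$. Complex conjugation $\tau$ swaps $\mathfrak{p}$ and $\overline{\mathfrak{p}}$, so $\sigma = \tau \sigma_{\mathfrak{a}}$ fixes $e$ if and only if $\sigma_{\mathfrak{a}}$ induces the same swap on the ordered pair $(\mathfrak{p}, \overline{\mathfrak{p}})$; that is, $\sigma_{\mathfrak{a}}$ would have to act on the edges out of $v$ as complex conjugation does. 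Invoking \cite[Lemma 5.10]{GR06}, this is impossible precisely because $\omega_{D(K)}$ acts non-trivially on $v$, so $\sigma_{\mathfrak{a}}$ cannot coincide with $\tau$ on the edges. Consequently $\sigma$ swaps the two horizontal edges, $H(\varphi) = 1$, and $\mathbb{Q}(\varphi) = K(\ff_0)$, completing the case analysis promised by the proposition.
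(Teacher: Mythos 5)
Your proposal is correct and follows essentially the same route as the paper's own argument: reduce to asking whether the involution $\sigma=\tau\sigma_{\mathfrak a}$ from Theorem \ref{JGR} fixing $v$ also fixes the edge, observe that this is automatic by uniqueness of the edge except when $L=L'=0$ and $\ell$ splits, and in that exceptional case describe the two horizontal edges via the kernels $\iota(\mathcal{O})\cdot E[\mathfrak{p}]$ and $\iota(\mathcal{O})\cdot E[\overline{\mathfrak{p}}]$ and rule out fixing via \cite[Lemma 5.10]{GR06}, since $\omega_{D(K)}$ acts non-trivially on $v$. This matches the paper's proof step for step, including the key citations.
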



\section{The action of Galois on $\mathcal{G}^D_{K,\ell,\ff_0}$}\label{Galois_action_section}

\subsection{Action of $\textnormal{Gal}(\overline{\mathbb{Q}}/\mathbb{Q})$}
We have an action of $\textnormal{Aut}(\mathbb{C})$ on $\mathcal{G}^D_{K,\ell,\ff_0}$: an automorphism $\sigma$ maps a vertex $v$ corresponding to an isomorphism class of QM abelian surfaces $[(A,\iota)]$ to the vertex corresponding to $[(\sigma(A),\sigma(\iota))]$, and edges are mapped to edges via the action on the corresponding isomorphism classes of isogenies. This action factors through $\text{Gal}(\overline{\mathbb{Q}}/\mathbb{Q})$, and preserves the level of a vertex. It follows that it also preserves the notions of ascending, descending and horizontal for paths. 

For a fixed level $L \geq 0$, let $\mathcal{G}^D_{K,\ell,\ff_0,L}$ denote the portion of $\mathcal{G}^D_{K,\ell,\ff_0}$ from the surface (level $0$) to level $L$:
\[ \mathcal{G}^D_{K,\ell,\ff_0,L} := \bigsqcup_{i=0}^{L} V_i \subseteq \mathcal{G}^D_{K,\ell,\ff_0}. \]
By Theorem \ref{Shimura}, the action of $\text{Gal}(\overline{\mathbb{Q}}/\mathbb{Q})$ on $\mathcal{G}^D_{K,\ell,\ff_0,L}$ factors through $\text{Gal}(K(\ell^L \ff_0)/\mathbb{Q})$. If $D(K) \neq 1$, i.e., if there is some prime $p \mid D$ which is inert in $K$, then Theorem \ref{JGR} says that the action of this group on $V_L$ is free. Otherwise, each vertex $v$ in level $L$ is fixed by some involution $\sigma$, and the class of QM abelian surfaces corresponding to $v$ has field of moduli isomorphic to $K(\ell^L \ff_0)^\sigma$. 

We now fix a vertex $v$ in level $L$ in $\mathcal{G}^D_{K,\ell,\ff_0}$, and suppose that $\sigma \in \text{Gal}(K(\ell^L \ff_0)/\mathbb{Q})$ is an involution fixing $v$. (This forces $D(K) = 1$.) In the following two sections, we provide an explicit description of the action of $\sigma$ on $\mathcal{G}^D_{K,\ell,\ff_0,L}$ in all cases. First, we note here the number of vertices at each level fixed by $\sigma$. 

\begin{proposition}\label{fixed_count}
Let $x \in X^D(1)_{/\mathbb{Q}}$ be an $\mathfrak{o}(\ell^L\ff_0)$-CM point fixed by an involution $\sigma \in \textnormal{Gal}(K(\ell^L\ff_0)/\mathbb{Q})$. Let $b$ denote the number of prime divisors of $D$ which are inert in $K$. For $0 \leq L' \leq L$, the number of vertices of $\mathcal{G}^D_{K,\ell,\ff_0}$ in level $L'$ fixed by $\sigma$ is 

\[ 2^b \cdot \#\textnormal{Pic}(\mathfrak{o}(\ell^{L'}\ff_0))[2]. \]
\end{proposition}

\begin{proof}
By Theorem $\ref{JGR}$, the involution $\sigma$ is of the form $\sigma = \tau \circ \sigma_0$ for some $\sigma_0 \in \textnormal{Pic}(\mathfrak{o}(\ell^L\ff_0))$, where $\tau$ denotes complex conjugation. The set of vertices of $\mathcal{G}^D_{K,\ell,\ff_0}$ at level $L'$ has cardinality $2^b \cdot h(\mathfrak{o}(\ell^{L'} \ff_0))$, consisting of $2^b$ orbits under the action of $\textnormal{Pic}(\mathfrak{o}(\ell^{L'}\ff_0))$. Each orbit is a $\textnormal{Pic}(\mathfrak{o}(\ell^{L'}\ff_0))$-torsor, and $\sigma_0$ yields a bijection on each. 

As a result, we have that the number of level $L'$ vertices in a given orbit which are fixed by $\sigma$ is the same as the number of elements of $\textnormal{Pic}(\mathfrak{o}(\ell^{L'} \ff_0))$ fixed by $\tau$. As shown in \cite[Prop. 2.6]{Cl22}, this count is equal to $\#\textnormal{Pic}(\mathfrak{o}(\ell^{L'}\ff_0))[2]$, as $\tau$ acts on $\textnormal{Pic}(\mathfrak{o}(\ell^{L'}\ff_0))$ by inverting ideals. 
\end{proof}

Regarding this count, by \cite[Prop. 3.11]{Cox13} we have the following:

\begin{lemma}\label{2-torsion}
Let $r$ denote the number of distinct odd prime divisors of a fixed imaginary quadratic discriminant $\Delta$. Then $\textnormal{Pic}(\mathfrak{o}_\Delta)[2] \cong \left(\mathbb{Z}/2\mathbb{Z}\right)^\mu$, where
\[ \mu = \begin{cases} r-1 &\text{ if } \Delta \equiv 1 \phantom{, 12} \pmod{4} \phantom{6} \text{ or } \Delta \equiv 4 \phantom{1} \pmod{16}, \\
r &\text{ if } \Delta \equiv 8, 12 \pmod{16} \text{ or } \Delta \equiv 16 \pmod{32}, \\
r+1 &\text{ if } \Delta \equiv 0 \phantom{, 12} \pmod{32}.  
\end{cases} \]
\end{lemma}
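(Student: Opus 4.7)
\medskip
\noindent\textbf{Proof proposal.} The plan is to invoke Gauss's genus theory for binary quadratic forms via the classical identification of $\textnormal{Pic}(\mathfrak{o}_\Delta)$ with the form class group $C(\Delta)$ of discriminant $\Delta$. Under this identification, inversion of a proper ideal class corresponds to sending a primitive positive-definite form to its opposite, so an element of $\textnormal{Pic}(\mathfrak{o}_\Delta)$ lies in the 2-torsion precisely when the corresponding form is equivalent to its opposite, i.e.\ when its class is \emph{ambiguous}. One therefore reduces the problem to counting ambiguous form classes of discriminant $\Delta$.

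Next, I would invoke the standard duality
\[ |C(\Delta)[2]| = |C(\Delta)/C(\Delta)^2|, \]
so that the 2-torsion has order equal to the number of genera. Gauss's theory expresses this count as $2^{t-1}$, where $t$ is the number of \emph{assigned characters} attached to $\Delta$: one Legendre-type character
\[ \chi_p : (\mathbb{Z}/\Delta\mathbb{Z})^\times \longrightarrow \{\pm 1\}, \quad a \longmapsto \legendre{a}{p}, \]
for each odd prime $p \mid \Delta$, together with a contribution of $0$, $1$, or $2$ additional characters at the prime $2$ depending on the 2-adic behavior of $\Delta$. Since $r$ is the number of distinct odd prime divisors of $\Delta$, the work is entirely in identifying the contribution at $2$.

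The case analysis proceeds by computing, for each residue class of $\Delta$ modulo $32$, which of the auxiliary characters $\delta, \varepsilon : (\mathbb{Z}/8\mathbb{Z})^\times \to \{\pm 1\}$ (reductions mod $4$ and mod $8$, respectively, restricted appropriately) are well-defined on the relevant quotient. One finds: for $\Delta \equiv 1 \pmod 4$ or $\Delta \equiv 4 \pmod{16}$, no extra 2-adic character appears and $t = r$; for $\Delta \equiv 8, 12 \pmod{16}$ or $\Delta \equiv 16 \pmod{32}$, exactly one extra character appears and $t = r+1$; for $\Delta \equiv 0 \pmod{32}$, both $\delta$ and $\varepsilon$ appear and $t = r+2$. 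Subtracting $1$ in each case yields the stated exponent $\mu$.

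The main obstacle is the bookkeeping at the prime $2$: one must verify which combinations of $\delta$ and $\varepsilon$ descend to genuine, independent characters on the class of $\Delta$ in $(\mathbb{Z}/\Delta\mathbb{Z})^\times / \text{(values represented by the principal form)}$. This is a delicate but purely elementary computation; everything else follows from the general genus-theoretic framework, and the result can alternatively be deduced by a direct count of ambiguous forms, where one writes an ambiguous form as $(a,0,c)$ or $(a,a,c)$ with $ac = |\Delta|/4$ or $a(a-4c) = \Delta$, and then enumerates the resulting factorizations of $\Delta$ according to its 2-adic valuation.
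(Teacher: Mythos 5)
Your proposal is correct, and it is essentially the paper's route: the paper proves this lemma purely by citing \cite[Prop.~3.11]{Cox13}, and your genus-theoretic sketch — identifying $\textnormal{Pic}(\mathfrak{o}_\Delta)$ with the form class group $C(\Delta)$, noting that $2$-torsion classes are the ambiguous ones, equating $|C(\Delta)[2]|$ with the number of genera $2^{t-1}$, and counting one assigned character per odd prime divisor of $\Delta$ plus $0$, $1$, or $2$ characters at the prime $2$ — is exactly the standard proof of that cited proposition. Your case split at $2$ (no extra character for $\Delta \equiv 1 \pmod 4$ or $\Delta \equiv 4 \pmod{16}$, one for $\Delta \equiv 8, 12 \pmod{16}$ or $\Delta \equiv 16 \pmod{32}$, two for $\Delta \equiv 0 \pmod{32}$) matches Cox's table after writing $\Delta = -4n$, so the deferred bookkeeping is the known computation and the exponents $\mu = t-1$ come out as stated.
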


\subsection{The field of moduli of a QM-cyclic $\ell^a$ isogeny}

Let $\varphi$ be a QM cyclic $\ell^a$ isogeny of $K$-CM abelian surfaces inducing a $\Delta = \ff^2\Delta_K$-CM point on $X_0^D(\ell^a)_{/\mathbb{Q}}$, with $\ell \nmid D$ and $D>1$. Let $P$ be the length $a$ non-backtracking path in $\mathcal{G}^D_{K,\ell,\ff_0}$ corresponding to $\varphi$, via Lemma \ref{paths-to-isogenies-correspondence}, for the appropriate $\ff_0 \in \mathbb{Z}^+$. The ordered edges in $P$ correspond to a decomposition
\[ \varphi = \varphi_1 \circ \cdots \circ \varphi_a, \]
where each $\varphi_i$ is a QM-cyclic $\ell$-isogeny. If $\Delta < -4$, then Lemma \ref{compositions} provides
\[ \mathbb{Q}(\varphi) = \mathbb{Q}(\varphi_1)\cdots\mathbb{Q}(\varphi_a), \]
and for $\ff_0^2\Delta_K < -4$ Proposition \ref{fom-edge} determines $\mathbb{Q}(\varphi_i)$ for each $i$. Note that if $\mathbb{Q}(\varphi_i)$ is a ring class field for any $i$, then $\mathbb{Q}(\varphi)$ must contain $K$

For $\ff_0^2\Delta_K \in \{-3,-4\}$, it is impossible to have $D(K) = 1$, as $\Delta_K$ has only a single prime divisor while $D$ has at least $2$. This is of course consistent with, and can be seen from, the general fact that Shimura curves have no real points; the residue field of a $-3$-CM or $-4$-CM point on $X^D(1)_{/\mathbb{Q}}$ must be $K$ in this situation. By these observations and the discussion of the Galois action in the previous section, we have the following proposition. 

\begin{proposition}\label{fom-prime-power}
Let $\varphi: (A,\iota) \rightarrow (A',\iota')$ be a QM-cyclic $\ell^a$ isogeny. Suppose that $(A,\iota)$ has $K$-CM with central conductor $\ff_A = \ell^a \ff_0$ and that $(A',\iota')$ has central conductor $f_{A'} = \ell^{a'}\ff_0$. Let $L = \textnormal{max}\{ a,a'\}$. Let $P$ be the path corresponding to $\varphi$ in $\mathcal{G}^D_{K,\ell,\ff_0,L}$. 
\begin{itemize}
\item If $D(K) \neq 1$, i.e., if there is a prime $p \mid D$ which is inert in $K$, then $\mathbb{Q}(\varphi) = K(\ell^{L} \ff_0)$.
\item Suppose that $D(K) = 1$. 
\begin{itemize}
\item If $\ell$ splits in $K$ and $\varphi$ factors through an $\ell$-isogeny of QM abelian surfaces with $\ff_0^2\Delta_K$-CM, then $\mathbb{Q}(\varphi) = K(\ell^{L}\ff_0)$. 
\item Suppose that we are not in the previous case. Let $\sigma \in \textnormal{Gal}(K(\ell^{L}\ff_0)/\mathbb{Q})$ be an involution fixing the class of $(A,\iota)$ or $(A',\iota')$. If $\sigma$ fixes the path $P$, then $\mathbb{Q}(\varphi) = K(\ell^{L}\ff_0)^\sigma$. Otherwise, $\mathbb{Q}(\varphi) = K(\ell^{L}\ff_0)$. 
\end{itemize}
\end{itemize}
\end{proposition}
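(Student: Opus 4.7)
The plan is to decompose $\varphi$ along the edges of $P$ as $\varphi = \varphi_a \circ \cdots \circ \varphi_1$ with each $\varphi_i$ a QM-cyclic $\ell$-isogeny, and then match an upper and lower bound for $\mathbb{Q}(\varphi)$ inside $K(\ell^L \ff_0)$. For the upper bound, the Galois action on $\mathcal{G}^D_{K,\ell,\ff_0,L}$ preserves levels and, by Theorem \ref{Shimura}, factors through $\textnormal{Gal}(K(\ell^L \ff_0)/\mathbb{Q})$; the induced action on length-$a$ paths inherits this, so $\mathbb{Q}(\varphi) \subseteq K(\ell^L \ff_0)$. For the lower bound, the natural morphism $X_0^D(\ell^a) \to X^D(1)$ sending $[\varphi]$ to its source yields $\mathbb{Q}(\varphi) \supseteq \mathbb{Q}(v)$, where $v$ is the level-$L$ endpoint of $P$.

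If $D(K) \neq 1$, then $\mathbb{Q}(v) = K(\ell^L \ff_0)$ by Theorem \ref{JGR}, and the two bounds force $\mathbb{Q}(\varphi) = K(\ell^L \ff_0)$. This also subsumes the $\ff_0^2 \Delta_K \in \{-3, -4\}$ case, where $D(K) \neq 1$ holds automatically. Suppose now that $D(K) = 1$; necessarily $\ff_0^2 \Delta_K < -4$, so Theorem \ref{volcano_thm} furnishes an honest volcano, and Lemma \ref{compositions} gives the factorization $\mathbb{Q}(\varphi) = \mathbb{Q}(\varphi_1) \cdots \mathbb{Q}(\varphi_a)$ with each factor determined by Proposition \ref{fom-edge}. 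In the subcase that $\ell$ splits in $K$ and some $\varphi_i$ is a horizontal $\ell$-isogeny between surface vertices, Proposition \ref{fom-edge} gives $\mathbb{Q}(\varphi_i) = K(\ff_0)$, so $K \subseteq \mathbb{Q}(\varphi)$; combined with $\mathbb{Q}(\varphi) \supseteq \mathbb{Q}(v)$ this yields $\mathbb{Q}(\varphi) \supseteq K \cdot \mathbb{Q}(v) = K(\ell^L \ff_0)$, and equality follows.

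In the remaining subcase the argument is purely Galois-theoretic. Let $H \leq \textnormal{Gal}(K(\ell^L \ff_0)/\mathbb{Q})$ denote the pointwise stabilizer of $P$. Any element of $H$ fixes the level-$L$ endpoint $v$, so $H \leq \textnormal{Gal}(K(\ell^L \ff_0)/\mathbb{Q}(v)) = \langle \sigma \rangle$; thus $H$ is either trivial, giving $\mathbb{Q}(\varphi) = K(\ell^L \ff_0)$, or all of $\langle \sigma \rangle$, giving $\mathbb{Q}(\varphi) = K(\ell^L \ff_0)^\sigma$, and these alternatives are distinguished precisely by whether $\sigma$ itself fixes $P$. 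The main obstacle is justifying this dichotomy cleanly: one must know that in the $D(K) = 1$ setting the stabilizer of $v$ inside $\textnormal{Gal}(K(\ell^L \ff_0)/\mathbb{Q})$ really is the order-$2$ group $\langle \sigma \rangle$ (the Jordan--Gonz\'alez--Rotger description recorded in Theorem \ref{JGR}), and one must verify, in the horizontal-surface subcase, that $K \cdot \mathbb{Q}(v) = K(\ell^L \ff_0)$, which is immediate from Shimura's Theorem \ref{Shimura}.
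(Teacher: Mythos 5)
Your argument is correct, and it reorganizes the proof in a way that differs from the paper's. The paper's justification runs through the edge decomposition $\varphi = \varphi_1\circ\cdots\circ\varphi_a$, the multiplicativity $\mathbb{Q}(\varphi)=\mathbb{Q}(\varphi_1)\cdots\mathbb{Q}(\varphi_a)$ of Proposition \ref{compositions}(2), and Proposition \ref{fom-edge} for each factor, with the $\ff_0^2\Delta_K\in\{-3,-4\}$ case handled separately by observing $D(K)\neq 1$ there (since Proposition \ref{compositions}(2) is unavailable when intermediate vertices have extra automorphisms). You instead sandwich $\mathbb{Q}(\varphi)$ between the field of moduli of the level-$L$ endpoint and $K(\ell^L\ff_0)$, which disposes of the $D(K)\neq 1$ case --- including $\{-3,-4\}$ --- uniformly, and in the $D(K)=1$ case you replace the edge-by-edge composition with a stabilizer dichotomy: the path stabilizer sits inside the order-$2$ stabilizer $\langle\sigma\rangle$ of the level-$L$ vertex (Theorem \ref{JGR}), so it is trivial or all of $\langle\sigma\rangle$ according to whether $\sigma$ fixes $P$, with Proposition \ref{fom-edge} invoked only to force $K\subseteq\mathbb{Q}(\varphi)$ in the split horizontal subcase. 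What the paper's route buys is the finer edge-level bookkeeping that is reused verbatim in the path-type analysis of \S\ref{CM_points_prime_power_section}; what yours buys is brevity and a cleaner treatment of $\{-3,-4\}$. Note that your upper bound $\mathbb{Q}(\varphi)\subseteq K(\ell^L\ff_0)$ rests on the assertion that the Galois action on $\mathcal{G}^D_{K,\ell,\ff_0,L}$ --- edges as well as vertices, including the multi-edges between levels $0$ and $1$ in the $\{-3,-4\}$ case --- factors through $\textnormal{Gal}(K(\ell^L\ff_0)/\mathbb{Q})$; this is exactly what \S\ref{Galois_action_section} asserts via Theorem \ref{Shimura}, so you are at the same level of rigor as the paper, but be aware Theorem \ref{Shimura} literally only controls vertices. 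Two small phrasing points: the level-$L$ endpoint may be the target rather than the source, so use the appropriate one of the two $\mathbb{Q}$-rational maps to $X^D(1)$ (or note that fixing $[\varphi]$ fixes both endpoint classes); and your reading of $\sigma$ as the generator of the stabilizer of the level-$L$ endpoint is the right disambiguation of the statement's ``fixing the class of $(A,\iota)$ or $(A',\iota')$.''
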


We now explicitly analyze the Galois action in all cases as done in \cite[\S 5.3]{Cl22}  and \cite[\S 4.2]{CS22} in the $D=1$ case. Borrowing the notation therein, for a specified $K, \ff_0,$ and $\ell$ we let
\[ \tau_L := \#\textnormal{Pic}(\mathfrak{o}(\ell^L\ff_0)[2].\]
By Proposition \ref{fixed_count}, the number of vertices in level $L'$ in $\mathcal{G}^D_{K,\ell,\ff_0}$ which are fixed by an involution $\sigma \in \textnormal{Pic}(\ell^{L}\ff_0)$ of the type we are studying is $2^b \cdot \tau_{L'}$. 

\subsection{Explicit description: $\ff_0^2\Delta_K < -4$}

In the current section, we assume $\ff_0^2\Delta_K < -4$, such that each component of $\mathcal{G}^D_{K,\ell,\ff_0}$ has the structure of an $\ell$-volcano of infinite depth. This is in exact parallel to \cite[\S 5.3]{Cl22}, baring the same structure of results. 

Let $0 \leq L' \leq L$, and let $\sigma \in \textnormal{Pic}(\ell^L \ff_0)$ be an involution fixing a vertex $v$ in $\mathcal{G}_{K,\ell,\ff_0}^{D}$ in level $L$. In the following lemmas, we describe the action of $\sigma$ on $\mathcal{G}^D_{K,\ell,\ff_0,L}$. In each case, we provide example figures of a component of $\mathcal{G}_{K,\ell,\ff_0}^D$ (up to some finite level). In these graphs, vertices and edges colored purple are fixed by the action of the designated involution $\sigma$, while black edges and vertices are acted on non-trivially by $\sigma$. Without loss of generality based on the symmetry of our graph components, we will always take $v$ to be the left-most vertex in level $L$ in our figures. 

\begin{lemma}\label{l_odd_unram}
Let $\ell > 2$ be a prime which is unramified in $K$ and $\ff_0 \in \mathbb{Z}^+$ with $\ff_0^2\Delta_K < -4$. Let $v, L$ and $\sigma$ be as above with $L \geq 1$, and consider the action of $\sigma$ on 
\[\bigsqcup_{i=0}^{L} V_i \subseteq \mathcal{G}^D_{K,\ell,\ff_0}. \]
Each surface vertex has two descendants fixed by $\sigma$ in level $1$. For $1 \leq L' < L$, each fixed vertex in level $L'$ has a unique fixed descendent in level $L'+1$. 
\end{lemma}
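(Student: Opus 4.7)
My approach is to parametrize the $\sigma$-fixed vertices at each level by $2$-torsion elements of the relevant Picard group, and then convert the counting problem into one about the reduction map on such $2$-torsion subgroups. Fix a $\Pic(\mathfrak{o}(\ell^{L'}\ff_0))$-orbit at level $L'$ containing a chosen $\sigma$-fixed base point $v_0$. Since complex conjugation inverts ideal classes and the Galois group is abelian, one obtains the identity $\sigma\sigma_\mathfrak{d}\sigma^{-1} = \sigma_{\mathfrak{d}^{-1}}$. Combined with the simple transitivity of the Picard action on each orbit, this yields a bijection
\[ \Pic(\mathfrak{o}(\ell^{L'}\ff_0))[2] \xrightarrow{\sim} \{\sigma\text{-fixed vertices at level } L' \text{ in this orbit}\}, \qquad \mathfrak{d} \mapsto \sigma_\mathfrak{d}(v_0). \]
Choosing compatible base points across successive levels, the ascent map on $\sigma$-fixed vertices corresponds to the reduction homomorphism $\pi_{L'+1} : \Pic(\mathfrak{o}(\ell^{L'+1}\ff_0))[2] \to \Pic(\mathfrak{o}(\ell^{L'}\ff_0))[2]$. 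Hence the number of $\sigma$-fixed descendants of a given fixed level-$L'$ vertex is precisely the fiber size of $\pi_{L'+1}$ over the corresponding class.

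Next I will compute kernel and fiber sizes. Since the hypothesis $\ff_0^2\Delta_K < -4$ forces $\mathfrak{o}(\ff_0)^* = \{\pm 1\}$, the kernel $K_{L'+1}$ of $\Pic(\mathfrak{o}(\ell^{L'+1}\ff_0)) \to \Pic(\mathfrak{o}(\ell^{L'}\ff_0))$ is cyclic of order $\ell - \legendre{\Delta_K}{\ell}$ when $L' = 0$ and of order $\ell$ when $L' \geq 1$. Therefore $|K_{L'+1}[2]| = 2$ for $L' = 0$ (since $\ell \pm 1$ is even) and $|K_{L'+1}[2]| = 1$ for $L' \geq 1$. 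I then compare $\tau_{L'+1} := |\Pic(\mathfrak{o}(\ell^{L'+1}\ff_0))[2]|$ to $\tau_{L'}$ via Lemma \ref{2-torsion} applied to $\Delta_{L'} := \ell^{2L'}\ff_0^2\Delta_K$. Because $\ell$ is odd and coprime to $\ff_0\Delta_K$, the number of odd prime divisors of $\Delta_{L'+1}$ exceeds that of $\Delta_{L'}$ by one (namely $\ell$) when $L' = 0$ and is unchanged for $L' \geq 1$. Since $\ell^2 \equiv 1 \pmod{8}$, a short case check (the congruence class of $\Delta$ modulo $16$ and $32$ is preserved under multiplication by $\ell^2$ in each of the cases of Lemma \ref{2-torsion}) confirms that $\Delta_{L'+1}$ remains in the same residue case as $\Delta_{L'}$, yielding $\tau_1 = 2\tau_0$ and $\tau_{L'+1} = \tau_{L'}$ for $L' \geq 1$.

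Finally, the exact sequence
\[ 0 \to K_{L'+1}[2] \to \Pic(\mathfrak{o}(\ell^{L'+1}\ff_0))[2] \xrightarrow{\pi_{L'+1}} \Pic(\mathfrak{o}(\ell^{L'}\ff_0))[2] \]
together with the size equality $\tau_{L'+1} = |K_{L'+1}[2]| \cdot \tau_{L'}$ forces $\pi_{L'+1}$ to be surjective with all fibers of uniform size $|K_{L'+1}[2]|$. This delivers the claim: $2$ fixed descendants per fixed surface vertex and $1$ per fixed vertex at level $L' \geq 1$, uniformly across Picard orbits. The main obstacle is establishing the bijection between $\sigma$-fixed vertices and $\Pic[2]$ together with the compatibility of ascent with Picard reduction (where the crucial algebraic input is $\sigma \sigma_\mathfrak{d} \sigma^{-1} = \sigma_{\mathfrak{d}^{-1}}$); the congruence check for Lemma \ref{2-torsion} and the cyclic-group accounting are routine.
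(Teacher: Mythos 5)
Your argument is correct, and it takes a genuinely different route from the paper's. The paper proves the lemma by parity plus counting: below the surface a fixed vertex has an odd number $\ell$ of descendants, so $\sigma$ fixes at least one, while at the surface the even number $\ell-\legendre{\Delta_K}{\ell}$ of descending edges only shows the number of fixed descendants is even, and the needed ``at least one'' is imported from a translated version of the argument of \cite[Cor.\ 5.5]{CS22}; the exact values $2$ and $1$ then follow from $\tau_1=2\tau_0$ and $\tau_{L'+1}=\tau_{L'}$ (Lemma \ref{2-torsion}). You instead parametrize the $\sigma$-fixed vertices at each level by $\Pic(\mathfrak{o}(\ell^{L'}\ff_0))[2]$ via $\sigma\sigma_{\mathfrak{d}}\sigma^{-1}=\sigma_{\mathfrak{d}^{-1}}$, identify ascent with the reduction map on $2$-torsion, and read off uniform fiber sizes from the kernel of $\Pic(\mathfrak{o}(\ell^{L'+1}\ff_0))\to\Pic(\mathfrak{o}(\ell^{L'}\ff_0))$, which is cyclic of order $\ell-\legendre{\Delta_K}{\ell}$ at the first step and $\ell$ thereafter. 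This buys a self-contained treatment of the surface case (no appeal to the elliptic modular argument) and yields the exact counts in one stroke rather than ``at least one, then exactly one or two by counting''; the paper's route is shorter given that the CS22 ingredient is available. Two points should be made explicit in your write-up. First, to avoid circularity in the ``compatible base points'' step, choose the $\sigma$-fixed base vertex $w_0$ at level $L'+1$ first — its existence is supplied by Proposition \ref{fixed_count}, and note that the presence of $\sigma$ forces $D(K)=1$, hence $b=0$ and a single $\Pic$-orbit at each level, so there is no ambiguity about which orbit it lies in — and then take the level-$L'$ base point to be the terminus of the unique ascending edge from $w_0$, which is automatically $\sigma$-fixed because the Galois action preserves levels and so commutes with ascent; with that ordering your identification of ascent with $\Pic(\mathfrak{o}(\ell^{L'+1}\ff_0))[2]\to\Pic(\mathfrak{o}(\ell^{L'}\ff_0))[2]$ is exactly right. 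Second, as in the paper's own proof, the surface statement should be read as concerning $\sigma$-fixed surface vertices: a non-fixed vertex has no fixed descendant, again by equivariance of ascent.
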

\begin{proof}
By Lemma \ref{2-torsion} we have $\tau_{1} = 2\tau_{0}$, while $\tau_{L'} = \tau_{L'+1}$ for $1 \leq L' < L$. The number of edges descending from a given vertex in level $L' \geq 1$ is $\ell$, hence is odd, and so we immediately see that each fixed vertex in level $L'$ with $1 \leq L' \leq L$ must have at least one fixed descendant in level $L'+1$, hence exactly one by our count. 

The number of descending edges from a given surface vertex is either $\ell+1$ or $\ell-1$ depending on whether $\ell$ is inert or split in $K$, hence is even in both cases. With our involution being of the form $\sigma = \tau \sigma_0$, a translated version of the argument of \cite[Cor. 5.5]{CS22} gives that each fixed surface vertex has at least one fixed descendant in level $1$. Therefore, each fixed surface vertex must have at least two fixed descendants in level $1$ by parity, giving the result.
\end{proof}

\begin{figure}[H] 
\includegraphics[scale=0.7]{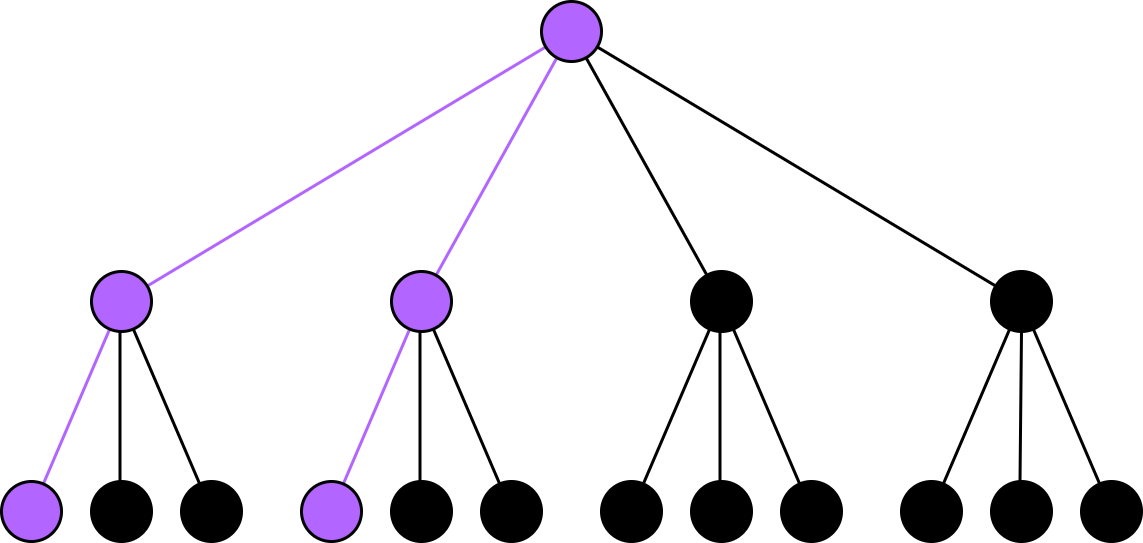}
\setlength{\abovecaptionskip}{10pt plus 0pt minus 0pt}
\caption{$\ell = 3$ inert in $K$ with $L = 2$}\label{l_odd_inert}
\end{figure}

\begin{figure}[H] 
\includegraphics[scale=0.7]{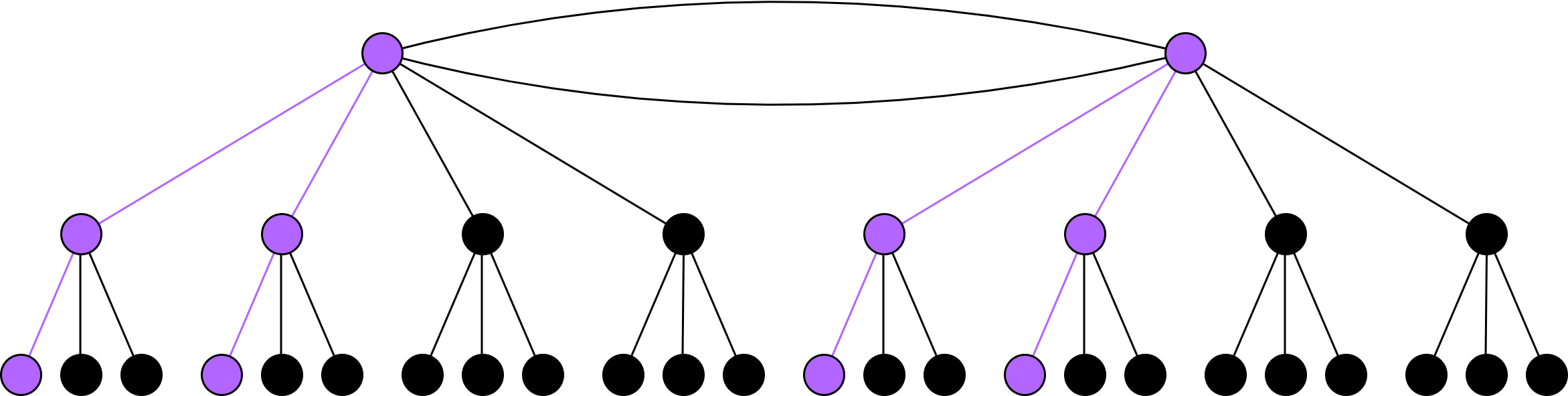}
\setlength{\abovecaptionskip}{10pt plus 0pt minus 0pt}
\caption{$\ell = 3$ split in $K$ with $L = 2$}\label{l_odd_split}
\end{figure}

\begin{lemma}
Let $\ell > 2$ be a prime that ramifies in $K$ and $\ff_0 \in \mathbb{Z}^+$ with $\ff_0^2\Delta_K < -4$. Let $v, L$ and $\sigma$ be as above, and consider the action of $\sigma$ on 
\[\bigsqcup_{i=0}^{L} V_i \subseteq \mathcal{G}^D_{K,\ell,\ff_0}. \]
Any vertex $v'$ in level $L'$ with $0 \leq L' < L$ which is fixed by $\sigma$ has exactly one descendant in level $L'+1$ fixed by $\sigma$. 
\end{lemma}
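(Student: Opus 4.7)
The plan is to adapt the strategy of Lemma~\ref{l_odd_unram}, with a simplification afforded by the ramified setting: since $\ell$ ramifies in $K$, every vertex of $\mathcal{G}^D_{K,\ell,\ff_0}$ --- including those on the surface --- has exactly $\ell$ descending edges, because $\legendre{\Delta_K}{\ell} = 0$. With $\ell$ odd, a clean parity argument combined with a uniform count of $\sigma$-fixed vertices at each level will force the conclusion.

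First I would verify that $\tau_{L'}$ is constant for $0 \leq L' \leq L$, using Lemma~\ref{2-torsion} applied to the discriminants $\Delta_{L'} := \ell^{2L'}\ff_0^2 \Delta_K$. Since $\ell$ is an odd prime dividing $\Delta_K$ by ramification, the set of odd prime divisors of $\Delta_{L'}$ is independent of $L'$. Moreover $\ell^2 \equiv 1 \pmod 8$, so multiplying by $\ell^2$ preserves both the $2$-adic valuation of $\Delta_{L'}$ and the residue class appearing in each case of Lemma~\ref{2-torsion}. Thus $\tau_{L'} = \tau_0$ for all such $L'$, and by Proposition~\ref{fixed_count} each level $L'$ contains exactly $n := 2^b \tau_0$ vertices fixed by $\sigma$.

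Next comes the parity step: given a $\sigma$-fixed vertex $v'$ at level $L' < L$, the $\ell$ descending edges out of $v'$ are permuted by the involution $\sigma$, and since $\ell$ is odd the number of $\sigma$-fixed descending edges out of $v'$ is at least $1$. Conversely, any $\sigma$-fixed vertex $w$ at level $L'+1$ has a unique ascending edge, whose endpoint at level $L'$ is then necessarily also $\sigma$-fixed. Summing the number of $\sigma$-fixed children over all $\sigma$-fixed level-$L'$ vertices therefore counts exactly the $\sigma$-fixed level-$(L'+1)$ vertices, producing a sum of $n$ with $n$ summands each at least $1$; equality forces every summand to equal $1$, as required. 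The only subtle point is the mod-$32$ verification in the second paragraph, but this reduces to the elementary congruence $\ell^2 \equiv 1 \pmod 8$ for odd $\ell$.
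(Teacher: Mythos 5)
Your proposal is correct and follows essentially the same route as the paper's proof: an odd-$\ell$ parity argument shows each fixed vertex has at least one fixed descendant, and the equality $\tau_{L'} = \tau_{L'+1}$ from Lemma \ref{2-torsion} (via Proposition \ref{fixed_count}) forces exactly one. Your explicit verification that multiplication by $\ell^2$ preserves the case distinctions in Lemma \ref{2-torsion}, and your bookkeeping via unique ascending edges, simply spell out details the paper leaves implicit.
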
 

\begin{proof}
Each vertex in level $L'$ has $\ell$ descendants in level $L'+1$. A descendant of $v'$ must be sent to another descendant of $v'$ by $\sigma$, by virtue of $v'$ being fixed by $\sigma$. At least one descendant must be fixed by $\sigma$ by the assumption that $\ell$ is odd. Lemma \ref{2-torsion} gives that $\tau_{L'} = \tau_{L'+1}$, and so there must be exactly one fixed descendant of $v'$. 
\end{proof}

\begin{figure}[H] 
\includegraphics[scale=0.7]{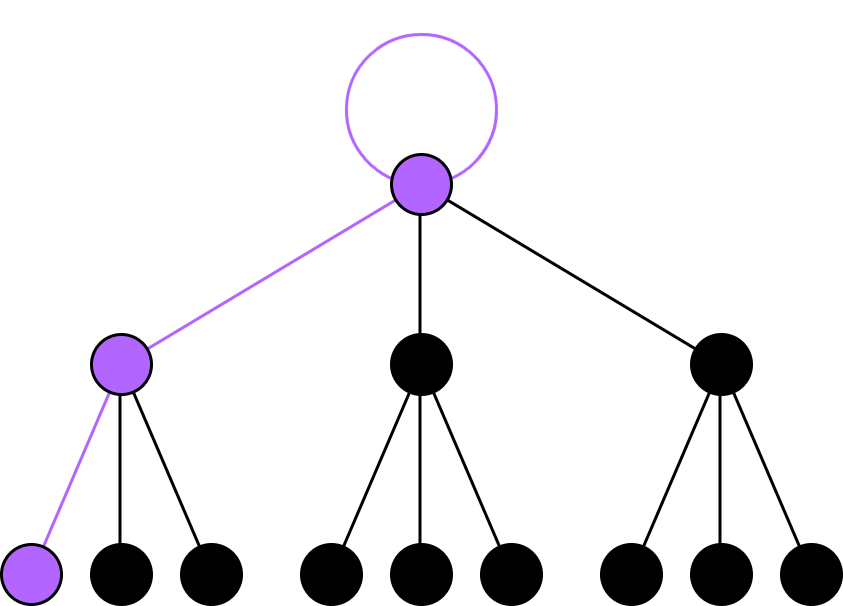}
\setlength{\abovecaptionskip}{10pt plus 0pt minus 0pt}
\caption{$\ell = 3$ ramified in $K$ with $|V_0| = 1$ and $L = 2$}\label{l_odd_ram}
\end{figure}

\begin{figure}[H]
\includegraphics[scale=0.7]{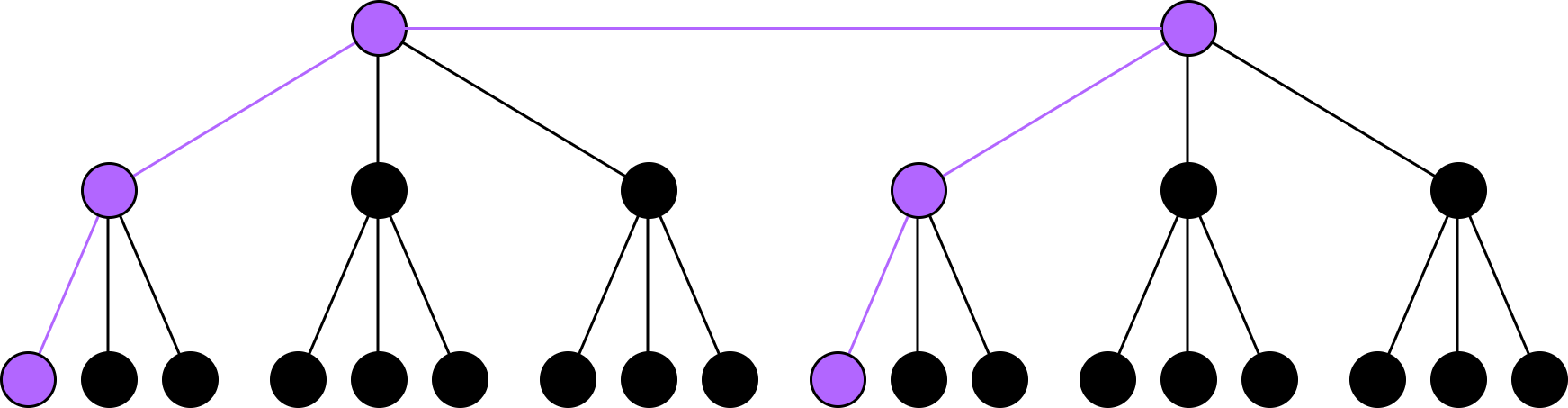}
\setlength{\abovecaptionskip}{10pt plus 0pt minus 0pt}
\caption{$\ell = 3$ ramified in $K$ with $|V_0| = 2$ and $L = 2$}\label{l_odd_ram2} 
\end{figure}

\begin{lemma}\label{2-unram-lemma}
Suppose that $\ell = 2$ is unramified in $K$ and that $\ff_0^2\Delta_K \neq -3$. Let $v, L$ and $\sigma$ be as above with $L \geq 1$, and consider the action of $\sigma$ on
\[\bigsqcup_{i=0}^{L} V_i \subseteq \mathcal{G}^D_{K,2,\ff_0}. \]
\begin{enumerate}
\item Every surface vertex fixed by $\sigma$ has a unique fixed descendant in level $1$.
\item Suppose $L \geq 2$. Each vertex in level $1$ which is fixed by $\sigma$ has all of its descendants in levels $2$ to $\text{min}(L,3)$ fixed by $\sigma$. 
\item Let $3 \leq L' < L$. If $v'$ is a vertex in level $L'$ fixed by $\sigma$, then the vertex $w$ in level $L'$ which shares a neighbor in level $L'-1$ with $v'$ is also fixed by $\sigma$, and exactly one of $v'$ and $w$ has its two descendants in level $L'+1$ fixed by $\sigma$. 
\end{enumerate}
\end{lemma}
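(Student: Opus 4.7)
The plan is to count fixed vertices at each level via Proposition \ref{fixed_count} and Lemma \ref{2-torsion}, and then to deduce the structure claimed in parts (1)--(3) from these counts together with the tree structure of $\mathcal{G}^D_{K,2,\ff_0}$ below the surface (each non-surface vertex has a unique ascending neighbor).

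The first step is to unpack the values of $\tau_{L'}$. Since $\ell = 2$ is unramified in $K$ we have $\Delta_K$ odd, and since $\gcd(\ff_0,2) = 1$ we have $\ff_0$ odd as well, so $\ff_0^2\Delta_K$ is odd. Letting $r$ denote the number of distinct odd prime divisors of $\ff_0^2\Delta_K$, the discriminant $4^{L'}\ff_0^2\Delta_K$ of $\mathfrak{o}(2^{L'}\ff_0)$ is congruent to $1 \pmod 4$, $4 \pmod{16}$, $16 \pmod{32}$, and $0 \pmod{32}$ for $L' = 0$, $1$, $2$, and $L' \geq 3$, respectively. Lemma \ref{2-torsion} then yields $\tau_0 = \tau_1 = 2^{r-1}$, $\tau_2 = 2^r$, and $\tau_{L'} = 2^{r+1}$ for all $L' \geq 3$; equivalently, $\tau_2 = 2\tau_1$, $\tau_3 = 2\tau_2$, and $\tau_{L'+1} = \tau_{L'}$ for $L' \geq 3$.

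Parts (1) and (2) then follow by pigeonhole-style arguments parallel to those used in Lemma \ref{l_odd_unram}. For (1), the equality $\tau_0 = \tau_1$ combined with the fact that each fixed level-$1$ vertex has a unique (and automatically fixed) parent at the surface, plus the observation that each fixed surface vertex has at least one fixed descendant by parity (there are $2 - \legendre{\Delta_K}{2} \in \{1,3\}$ descending edges, an odd count) via the argument of \cite[Cor.~5.5]{CS22}, forces the ``parent map'' on fixed vertices to be a bijection. For (2), the doublings $\tau_2 = 2\tau_1$ and $\tau_3 = 2\tau_2$, together with the fact that every non-surface vertex has exactly two descendants and a unique parent, force all descendants of fixed level-$1$ vertices to be fixed in levels $2$ and $3$.

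For part (3), the ``$w$ is fixed'' claim is immediate: $v'$ and $w$ are the two descendants of their common parent $p \in V_{L'-1}$, which must be fixed by $\sigma$ as the unique parent of the fixed vertex $v'$, so $\sigma$ permutes $\{v',w\}$. The alternation claim is the main content, and I anticipate it being the main obstacle. My approach is to combine the count $\tau_{L'+1} = \tau_{L'}$ with a class-group analysis of the four grandchildren of $p$. Concretely, parameterizing the descendants via Artin classes and using the description $\sigma \cdot [\mathfrak{c}] = [\mathfrak{b}_0]^{-1}[\mathfrak{c}]^{-1}$ coming from Theorem \ref{JGR}, one first checks that $\sigma$ either fixes both descendants of any given fixed level-$L'$ vertex or swaps them, so each fixed level-$L'$ vertex is of ``type F'' or ``type S''; the count $\tau_{L'+1} = \tau_{L'}$ then forces exactly half of them to be type F. The sibling pair $(v',w)$ is produced by the kernel element $\epsilon \in \textnormal{Pic}(\mathfrak{o}(2^{L'}\ff_0))$ generating the kernel of reduction to level $L'-1$, via $[\mathfrak{c}_w] = [\mathfrak{c}_{v'}]\epsilon$. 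Lifting $\epsilon$ to $\widetilde{\epsilon} \in \textnormal{Pic}(\mathfrak{o}(2^{L'+1}\ff_0))$, a short computation shows that the alternation between $v'$ and $w$ is equivalent to $\widetilde{\epsilon}^2 \neq 1$, i.e., to the structural statement that $\ker(\textnormal{Pic}(\mathfrak{o}(2^{L'+1}\ff_0)) \to \textnormal{Pic}(\mathfrak{o}(2^{L'-1}\ff_0)))$ is cyclic of order $4$ rather than isomorphic to $(\mathbb{Z}/2)^2$. This $2$-primary structural fact I would in turn deduce from the classical presentation of Picard groups of quadratic orders via local units $(\mathfrak{o}_K/2^k\mathfrak{o}_K)^\times$, exploiting the split/inert structure of $2$ in $K$.
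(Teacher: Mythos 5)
Your proposal is correct, and while parts (1) and (2) proceed exactly as the paper does (the same $\tau$-values from Lemma \ref{2-torsion} plus parity/pigeonhole through the unique-parent structure), your treatment of part (3) takes a genuinely different route for the key alternation step. The paper argues by contradiction: if both siblings $v'$ and $w$ had $\sigma$-fixed descendants, concatenating the two descending edges with the two ascending edges through the common parent would give a $\sigma$-fixed non-backtracking length-$4$ path, i.e.\ a ``real'' QM-cyclic $2^4$-isogeny, which upon restriction to the CM elliptic curve factor yields a real cyclic $2^4$-isogeny and hence a primitive, proper, conjugation-stable $\mathfrak{o}(2^{L'+1}\ff_0)$-ideal of index $16$ --- which does not exist (this is the transfer to \cite[Lemma 5.6 c]{Cl22}). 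You instead work directly in the Picard torsor: using anti-equivariance of $\sigma$ you show each fixed level-$L'$ vertex has its two children either both fixed or swapped, and your computation reducing the alternation to $\widetilde{\epsilon}^2 = \epsilon_{L'+1}$, i.e.\ to cyclicity of $\ker(\textnormal{Pic}(\mathfrak{o}(2^{L'+1}\ff_0)) \to \textnormal{Pic}(\mathfrak{o}(2^{L'-1}\ff_0)))$, is correct; note it even makes the global count $\tau_{L'+1} = \tau_{L'}$ unnecessary for part (3). The structural fact you need is true and provable exactly as you indicate: via the conductor exact sequence one finds this kernel is isomorphic to the image of $1 + t(\mathbb{Z}/4)$ in $\bigl((\mathbb{Z}/4)[t]/(t^2)\bigr)^\times$ modulo $(\mathbb{Z}/4)^\times$ once $4 \mid 2^{L'-1}\ff_0$, which is cyclic of order $4$; when $2$ exactly divides the conductor the kernel is instead Klein four, which transparently explains why levels $2$ and $3$ behave as in part (2) rather than part (3) --- a nice structural bonus of your approach. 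What each route buys: the paper's is shorter given the $D=1$ volcano machinery of \cite{Cl22} and keeps the argument parallel to the elliptic modular case, while yours is self-contained, purely class-group-theoretic, and avoids the ``real ideal'' formalism; the only caveat is that your final $2$-adic unit computation is sketched rather than carried out, but the reduction is right and the fact it rests on is correct.
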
 

\begin{proof}
(1) Lemma \ref{2-torsion} provides $\tau_1 = \tau_0$. If $2$ is inert in $K$, then each fixed surface vertex has three neighbors in level $1$ and hence at least one must be fixed. The count then implies exactly one of these neighbors must be fixed. If $2$ splits in $K$, then each fixed surface vertex has exactly one neighbor in level $1$ which then must be fixed. 

(2) Lemma \ref{2-torsion} provides $\tau_3 = 2\tau_2$ and $\tau_2 = 2\tau_1$. As each non-surface vertex has two immediate descendants in the next level, the claim follows. 

(3) For $3 \leq L' < L$, we have $\tau_{L'+1} = \tau_{L}$. Let $v_{L'}$ be a fixed vertex in level $L'$ having a fixed neighbor vertex in level $L'-1$. By a parity argument, there must then be another fixed vertex $w_{L'}$ in level $L'$ with the same neighbor in level $L'-1$ as $v_{L'}$. By the count, it suffices to show that $v_{L'}$ and $w_{L'}$ cannot both have descendants fixed by $\sigma$. 

Suppose to the contrary that $v_{L'+1}$ and $w_{L'+1}$ are $\sigma$-fixed neighbors of $v_{L'}$ and $w_{L'}$, respectively, in level $L'+1$. We find that this cannot be the case as in \cite[Lemma 5.6 c]{Cl22}; this would imply that we have a QM-cyclic $2^4$-isogeny which , upon restriction, would provide a cyclic, real $2^4$-isogeny of elliptic curves with CM by $\Delta = 2^{2L+2}\ff_0^2\Delta_K$. This in turn implies the existence of a primitive, proper real $\mathfrak{o}(2^{L+1}\ff_0)$-ideal of index $16$, which does not exist. 
\end{proof}

\begin{figure}[H]
\includegraphics[scale=0.7]{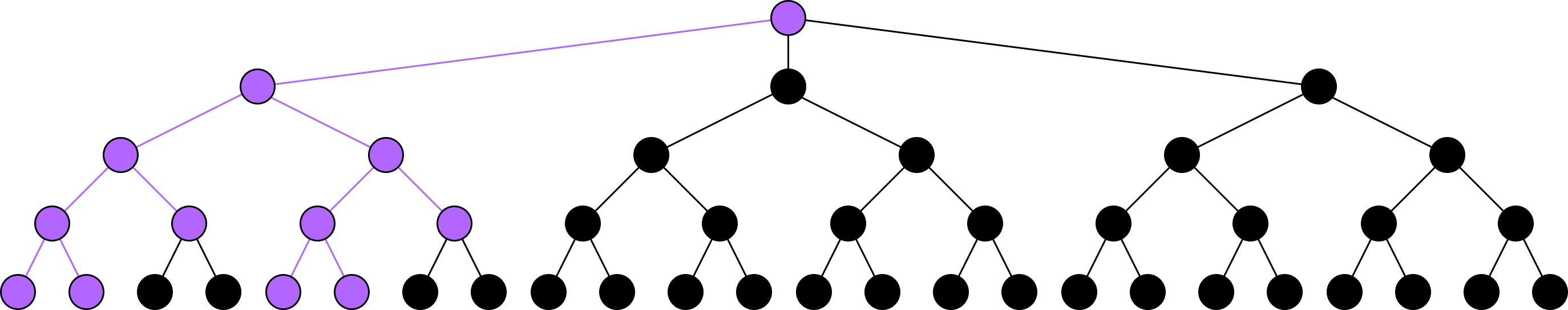}
\setlength{\abovecaptionskip}{10pt plus 0pt minus 0pt}
\caption{$\ell = 2$ inert with $L = 4$}\label{2_inert} 
\end{figure}

\begin{figure}[H]
\includegraphics[scale=0.7]{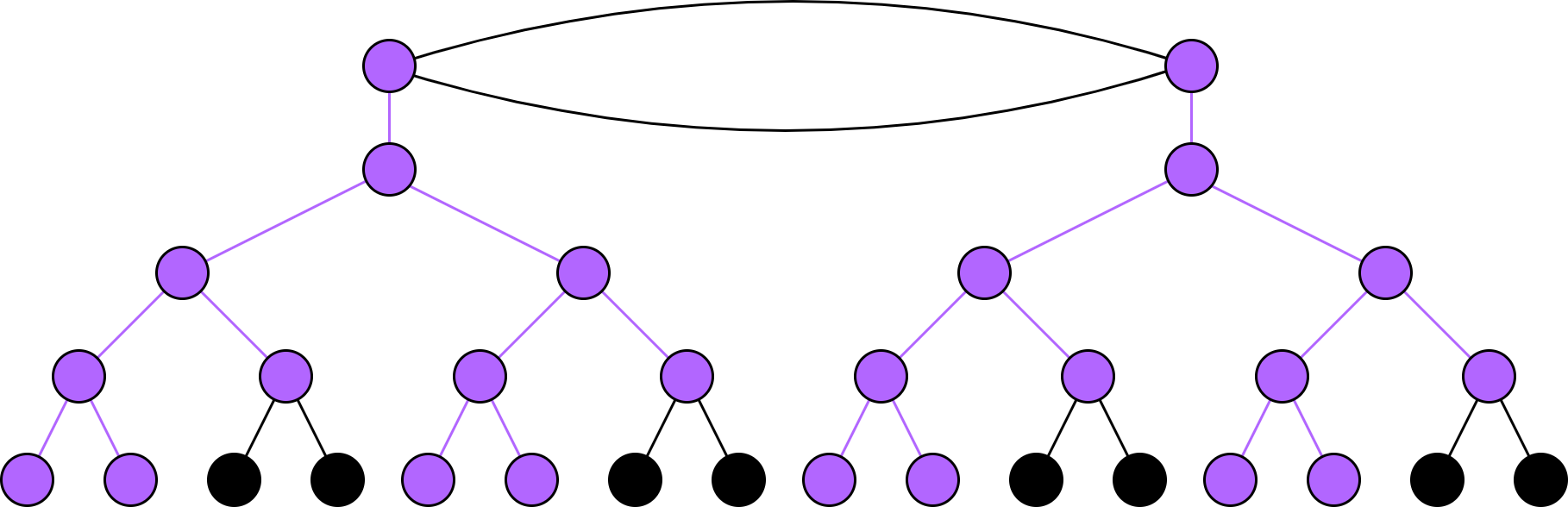}
\setlength{\abovecaptionskip}{10pt plus 0pt minus 0pt}
\caption{$\ell = 2$ split with $L = 4$}\label{2_split} 
\end{figure}

In the case of $\ell = 2$ ramifying in $K$, the discriminant of $K$ must be of the form $\Delta_K = 4m$ for $m \equiv 2$ or $3 \pmod{4}$, and so $\Delta_K \equiv 8$ or $12 \pmod{16}$. Hence, the discriminant of the order $\mathfrak{o}(\ff_0)$ corresponding to the surface of $\mathcal{G}^D_{K,2,\ff_0}$ will also lie in one of these congruence classes mod $16$. Whether these components have a surface loop is answered by the following lemma.

\begin{lemma}\label{2-ram-loops}
Consider a component of $\mathcal{G}^D_{K,2,\ff_0}$ with $2$ ramified in $K$. The surface $V_0$ of this component consists of a single vertex with a single self-loop if and only if $\Delta_K \in \{-4,-8\}$ and $\ff_0 = 1$.
\end{lemma}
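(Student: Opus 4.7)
The plan is to reduce the lemma to the question of when the unique prime $\mathfrak{p}$ of $\mathfrak{o}(\ff_0)$ above $2$ is principal, and then to settle that by a direct norm computation.

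Since $2$ ramifies in $K$ and is coprime to $\ff_0$, there is a unique prime $\mathfrak{p} \subset \mathfrak{o}(\ff_0)$ above $2$ with $\mathfrak{p}^2 = (2)$, so $[\mathfrak{p}]$ has order $1$ or $2$ in $\textnormal{Pic}(\mathfrak{o}(\ff_0))$. The preceding lemma on horizontal-edge counts gives exactly one horizontal edge emanating from each surface vertex. Using Theorem \ref{fake-ell-curves-ell-curves-correspondence} to decompose the QM abelian surface at a surface vertex $v$ as $\mathbb{C}/\mathfrak{o}(\ff_0) \times E_v$, the proof of that lemma identifies this edge with the degree-$2$ isogeny on the $E_v$-factor associated to the ideal $\mathfrak{p}$, so the edge realizes the action of $[\mathfrak{p}] \in \textnormal{Pic}(\mathfrak{o}(\ff_0))$ on $v$. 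By the simple transitivity of $W \times \textnormal{Pic}(\mathfrak{o}(\ff_0))$ on the surface (used in the proof of Theorem \ref{fake-ell-curves-ell-curves-correspondence}), the $\textnormal{Pic}$-stabilizer of $v$ is trivial, so the horizontal edge at $v$ is a self-loop if and only if $\mathfrak{p}$ is principal in $\mathfrak{o}(\ff_0)$.

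Any two surface vertices in the same connected component of $\mathcal{G}^D_{K,2,\ff_0}$ must be connected along the surface itself, since in an $\ell$-volcano the unique ascending path from any non-surface vertex terminates at a single surface vertex. Hence the surface of a component is a connected component of the degree-$1$ graph of horizontal edges, and so is either a pair of distinct vertices joined by one edge (when $[\mathfrak{p}] \neq 1$) or a single vertex with a self-loop (when $[\mathfrak{p}] = 1$). The lemma is thus reduced to showing: $\mathfrak{p}$ is principal in $\mathfrak{o}(\ff_0)$ if and only if $\Delta_K \in \{-4, -8\}$ and $\ff_0 = 1$.

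For this last step, $\mathfrak{p}$ is principal iff $\mathfrak{o}(\ff_0)$ contains an element of norm $2$. Ramification of $2$ in $K$ forces $\Delta_K = -4d$ for some squarefree $d \geq 1$ with $d \equiv 1, 2 \pmod{4}$, and $\mathfrak{o}(\ff_0) = \mathbb{Z} + \ff_0 \sqrt{-d}\,\mathbb{Z}$; a general element $a + \ff_0 b \sqrt{-d}$ has norm $a^2 + d \ff_0^2 b^2$. Solving $a^2 + d \ff_0^2 b^2 = 2$ forces $b \neq 0$ and hence $d \ff_0^2 \leq 2$, and since $\gcd(\ff_0, 2) = 1$ makes $\ff_0$ odd, this leaves $\ff_0 = 1$ with $d \in \{1, 2\}$, i.e., $\Delta_K \in \{-4, -8\}$; in those two cases $1+i$ and $\sqrt{-2}$ are explicit elements of norm $2$. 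The main obstacle is the identification in the first paragraph of the horizontal edge at $v$ with the $[\mathfrak{p}]$-action on $v$; once this is in place, the rest is bookkeeping with ideal class groups and small integer norms.
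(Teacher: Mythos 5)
Your argument is correct and is essentially the paper's own: the proof there simply defers to the analogous \cite[Lemma 5.7]{Cl22}, i.e.\ exactly this reduction to whether the unique (norm $2$) ideal $\mathfrak{p}$ of $\mathfrak{o}(\ff_0)$ above $2$ is principal, settled by the norm equation $a^2 + d\ff_0^2 b^2 = 2$. You merely fill in the details the paper leaves to that citation (the identification of the single horizontal edge with the $[\mathfrak{p}]$-action, trivial stabilizers via the simple transitivity of $W \times \textnormal{Pic}(\mathfrak{o}(\ff_0))$, and connectivity of the surface within a component), so no further changes are needed.
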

\begin{proof}
This proof comes down to a simple argument about ideals of norm $2$ in $\mathfrak{o}(f_0)$, as in \cite[Lemma 5.7]{Cl22}
\end{proof}

\noindent The following lemmas therefore cover all possible cases.

\begin{lemma}
Let $\Delta_K = -8$ and $\ell = 2$, and let $v, L$ and $\sigma$ be as above with $L \geq 1$. Consider the action of $\sigma$ on 
\[\bigsqcup_{i=0}^{L} V_i \subseteq \mathcal{G}^D_{K,2,1}. \]
\begin{enumerate}
    \item The two descendants in level $1$ of the single surface vertex are fixed by $\sigma$.
    \item For $1 < L' < L$, there are $2$ vertices in level $L'$ fixed by $\sigma$ and they have a common neighbor vertex in level $L'-1$. One of these must have both descendants in level $L'+1$ fixed by $\sigma$, while the other has its direct descendants swapped by $\sigma$. 
\end{enumerate}
\end{lemma}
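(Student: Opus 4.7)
The plan is to reduce the result to pure counting by applying Proposition~\ref{fixed_count} together with Lemma~\ref{2-torsion}, and then to use parity/disjointness in the volcano to pin down the geometric arrangement. Because a fixing involution $\sigma$ exists, Theorem~\ref{JGR} forces $D(K)=1$, so no prime divisor of $D$ is inert in $K$ and $b=0$. For $\Delta_K = -8 \equiv 8 \pmod{16}$, Lemma~\ref{2-torsion} gives $\tau_0 = 2^{r} = 1$ (there are no odd prime divisors of the discriminant). For $L' \geq 1$ the order at level $L'$ has discriminant $-2^{2L'+3} \equiv 0 \pmod{32}$, so $\tau_{L'} = 2^{r+1} = 2$. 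Hence by Proposition~\ref{fixed_count}, $\sigma$ fixes exactly $\tau_{L'}$ vertices in $V_{L'}$.

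For part (1), Lemma~\ref{2-ram-loops} gives that the surface consists of a single vertex with a self-loop, so all $\ell - \legendre{\Delta_K}{\ell} = 2$ descending edges from the surface go to level $1$. A direct class number computation (or the count above) yields $h(\mathfrak{o}(2)) = 2$, so level $1$ contains exactly two vertices, both of which are descendants of the unique surface vertex. Since $\tau_1 = 2$, both of these must be fixed by $\sigma$.

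For part (2), the key observation is that for any $\sigma$-fixed vertex $w \in V_{L'-1}$, the involution $\sigma$ acts on the two descendants of $w$ in $V_{L'}$ as a permutation of order at most two, so either both descendants are fixed or both are swapped. Because each non-surface vertex has a unique ascending edge, distinct vertices in $V_{L'-1}$ have disjoint sets of descendants in $V_{L'}$. So for $1 < L' < L$, the $\sigma$-fixed vertices of $V_{L'}$ are partitioned by their unique ancestors among the two $\sigma$-fixed vertices of $V_{L'-1}$, with each ancestor contributing either $0$ or $2$ fixed descendants. Since $\tau_{L'-1} = \tau_{L'} = 2$, the only way to obtain exactly two fixed vertices in $V_{L'}$ is for exactly one of the fixed ancestors in $V_{L'-1}$ to have both descendants fixed and the other to have its descendants swapped. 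This proves simultaneously that the two $\sigma$-fixed vertices of $V_{L'}$ share a common neighbor in $V_{L'-1}$, and, applying the identical argument one level higher with $\tau_{L'} = \tau_{L'+1} = 2$, that exactly one of the two fixed vertices in $V_{L'}$ has both its descendants in $V_{L'+1}$ fixed while the other has its descendants swapped.

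The only real obstacle is the bookkeeping to ensure the right case of Lemma~\ref{2-torsion} is applied at each level and that $b=0$ follows from the existence of $\sigma$; once these are in hand, the structure is dictated entirely by the divisibility $2 \mid \tau_{L'}$ against the parity of the number of descending edges ($=2$), exactly as in the analogous $D=1$ argument of \cite[Lemma~5.8]{Cl22}.
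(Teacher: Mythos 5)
Your proof is correct and follows essentially the same route as the paper: count the $\sigma$-fixed vertices at each level via Proposition \ref{fixed_count} and Lemma \ref{2-torsion} (giving $\tau_0=1$, $\tau_{L'}=2$ for $L'\geq 1$), then use the fact that each fixed vertex's two descendants are either both fixed or swapped, together with disjointness of descendant sets, to force exactly one fixed vertex per level to have both descendants fixed. The paper's proof is terser and leaves the common-ancestor/parity bookkeeping implicit, but the underlying argument is the same.
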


\begin{proof}
There is a single vertex on the surface, as the class number of $K$ is $1$. Lemma \ref{2-torsion} tells us that $\tau_1 = 2\tau_0$ in this case, so both descendants of the surface vertex are fixed by $\sigma$. For $1 \leq L' < L$, we have 
\[ \tau_{L'+1} = \tau_{L'} = 2, \]
so one of the fixed vertices in level $L'$ must have both descendants in level $L'+1$ fixed by $\sigma$, while the other has its vertices swapped by $\sigma$. 
\end{proof}

\begin{figure}[H]
\includegraphics[scale=0.7]{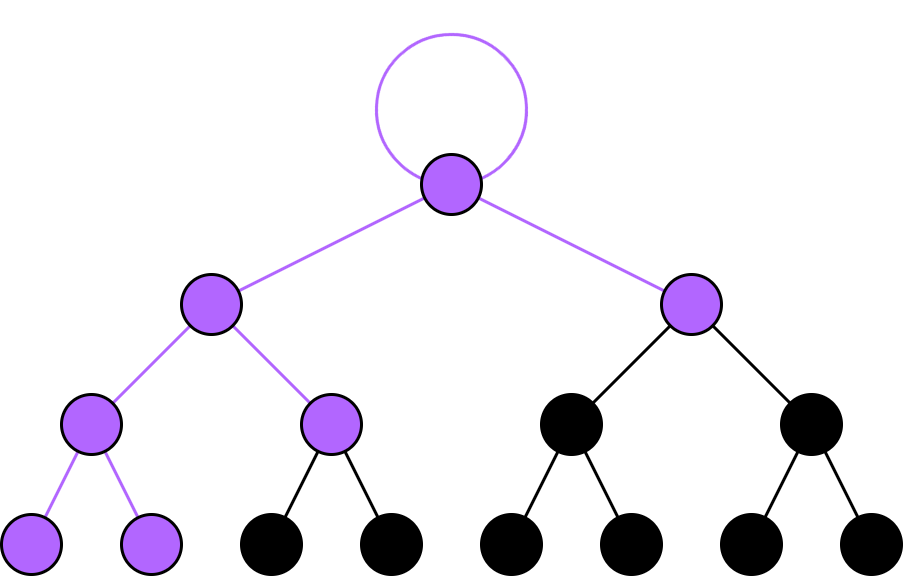}
\setlength{\abovecaptionskip}{10pt plus 0pt minus 0pt}
\caption{$\ff_0^2\Delta_K = -8$ and $\ell = 2$ with $L = 3$}\label{-8_ram} 
\end{figure}

\begin{lemma}\label{2-ram-12mod16}
Suppose that $\Delta_K \equiv 12 \pmod{16}$ and $\ff_0^2\Delta_K \neq -4$ with $\ell = 2$. Let $v, L$ and $\sigma$ be as above with $L \geq 1$. Consider the action of $\sigma$ on 
\[\bigsqcup_{i=0}^{L} V_i \subseteq \mathcal{G}^D_{K,2,\ff_0}. \]
\begin{enumerate}
    \item There are two surface vertices, both fixed by $\sigma$. One surface vertex, which we will denote by $v_0$, has both descendants in level $1$ fixed by $\sigma$, while the other has its level $1$ descendants swapped by $\sigma$.
    \item If $L \geq 2$ (such that the action of $\sigma$ is defined at level $2$), then each of the $4$ vertices in level $2$ which descend from $v_0$ are fixed by $\sigma$. 
    \item For $2 \leq L' < L$ and for a vertex $v'$ in level $L'$ fixed by $\sigma$, let $w$ denote the other level $L'$ vertex sharing a neighbor vertex in level $L'-1$ with $v'$ (which must also be fixed by $\sigma$). Exactly one of $v'$ or $w$ has both descendants in level $L'+1$ fixed by $\sigma$, while the other vertex has its direct descendants swapped by $\sigma$. 
\end{enumerate}
\end{lemma}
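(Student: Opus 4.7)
The plan is to mirror the proof strategy of Lemma \ref{2-unram-lemma}, combining a parity/counting argument against Lemma \ref{2-torsion} with a ``no real primitive ideal of the required norm'' obstruction of the type appearing in \cite[Lemma 5.6 c]{Cl22}. First I compute the relevant $2$-torsion counts. Writing $r$ for the number of distinct odd prime divisors of the discriminants $(2^{L'}\ff_0)^2\Delta_K$ (which is independent of $L'$), and observing that
\[ \ff_0^2\Delta_K \equiv 12 \pmod{16}, \qquad 4\ff_0^2\Delta_K \equiv 16 \pmod{32}, \qquad (2^{L'}\ff_0)^2\Delta_K \equiv 0 \pmod{32} \text{ for } L' \geq 2, \]
Lemma \ref{2-torsion} then gives $\tau_0 = \tau_1 = 2^r$ and $\tau_{L'} = 2^{r+1}$ for $L' \geq 2$.

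For part (1), the assumption $\ff_0^2\Delta_K \neq -4$ together with $\Delta_K \equiv 12 \pmod{16}$ (which excludes $\Delta_K = -8$) and Lemma \ref{2-ram-loops} rules out a surface self-loop, so the surface of the component of $v$ consists of two distinct vertices $s_0, s_1$ joined by a single horizontal edge. The unique ascending chain from $v$ to the surface is preserved by $\sigma$, yielding a $\sigma$-fixed surface vertex $s_0$; the unique horizontal neighbor $s_1$ of $s_0$ is then also forced to be $\sigma$-fixed. Any fixed surface vertex has its two level-$1$ descendants partitioned by $\sigma$ into orbits of size $1$ or $2$, contributing $0$ or $2$ to $\tau_1$. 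The ascending chain from $v$ also passes through a $\sigma$-fixed level-$1$ vertex, so at least one of $s_0, s_1$ must contribute $2$. To see not both do, I argue as in \cite[Lemma 5.6 c]{Cl22}: if both $s_0$ and $s_1$ had two $\sigma$-fixed level-$1$ descendants, then concatenating a descending edge from $s_0$ to such a descendant, the horizontal edge $s_0 \leftrightarrow s_1$, and a descending edge from $s_1$ to such a descendant produces a $\sigma$-fixed, non-backtracking length-$3$ path, hence a QM-cyclic $2^3$-isogeny fixed by complex conjugation. Restricting to an elliptic-curve factor via Theorem \ref{fake-ell-curves-ell-curves-correspondence} gives a cyclic real $2^3$-isogeny of $\mathfrak{o}(2\ff_0)$-CM elliptic curves, corresponding to a primitive proper real $\mathfrak{o}(2\ff_0)$-ideal of norm $8$, which does not exist. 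Label $v_0$ the surface vertex with two fixed descendants.

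For part (2), $v_0$'s level-$1$ descendants $u_1, u_2$ are $\sigma$-fixed by part (1). Each fixed level-$1$ vertex contributes $0$ or $2$ to $\tau_2$, and since the descendant sets of distinct fixed level-$1$ vertices are disjoint, the identity $\tau_2 = 2\tau_1$ forces every fixed level-$1$ vertex to contribute exactly $2$ (any vertex contributing $0$ would have to be offset by one contributing more than $2$, which is impossible). In particular, $u_1$ and $u_2$ each have both level-$2$ descendants fixed, producing the four $\sigma$-fixed level-$2$ descendants of $v_0$. For part (3), a fixed vertex $v'$ at level $L' \geq 2$ has a unique, hence $\sigma$-fixed, ascending neighbor at level $L'-1$; since $\sigma$ permutes the two level-$L'$ descendants of this neighbor and fixes $v'$, the sibling $w$ is also $\sigma$-fixed. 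Partitioning the fixed level-$L'$ vertices into sibling pairs and using $\tau_{L'+1} = \tau_{L'}$ gives that the average contribution per pair to $\tau_{L'+1}$ is $2$. Since each of $v', w$ contributes $0$ or $2$, the pair contributes $0, 2,$ or $4$; exactly as in the proof of Lemma \ref{2-unram-lemma}(3), the \cite[Lemma 5.6 c]{Cl22}-style obstruction — a fixed length-$4$ path $v'_{L'+1} \to v' \to \mathrm{parent} \to w \to w_{L'+1}$ would restrict to a cyclic real $2^4$-isogeny of $\mathfrak{o}(2^{L'+1}\ff_0)$-CM elliptic curves, forcing a nonexistent real primitive ideal of norm $16$ — rules out the $4$ case, and the average-$2$ identity then forces every pair to contribute exactly $2$.

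The main obstacle is verifying the ideal non-existence step at the ramified prime. The argument of \cite[Lemma 5.6 c]{Cl22} is formulated for $\ell = 2$ unramified in $K$, whereas here $\ell = 2$ ramifies and the surface discriminant is $\equiv 12 \pmod{16}$. I would therefore need to check carefully that proper ideals of norms $8$ and $16$ in the non-maximal orders $\mathfrak{o}(2\ff_0)$ and $\mathfrak{o}(2^{L'+1}\ff_0)$ which are fixed by complex conjugation and primitive genuinely fail to exist, given the interaction of the conductor with the unique prime above $2$.
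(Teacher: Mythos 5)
Your proposal is correct and takes essentially the same route as the paper: the counts $\tau_0=\tau_1$, $\tau_2=2\tau_1$, and $\tau_{L'+1}=\tau_{L'}$ for $L'\geq 2$ from Lemma \ref{2-torsion}, a parity/sibling argument, and the real-isogeny-to-real-ideal obstruction that the paper itself imports for part (3) via part (3) of Lemma \ref{2-unram-lemma}; the paper is merely terser, deducing parts (1) and (2) from the counts alone, and your extra norm-$8$ step is a reasonable way to make the per-component statement in (1) explicit. The verification you flag at the end does go through: a primitive real ideal of norm $2^a$ in an order of discriminant $\Delta$ corresponds to an integral form $(2^a,b,c)$ with $2^a\mid b$ and $b^2\equiv \Delta \pmod{2^{a+2}}$, and it is proper exactly when $\gcd(2^a,b,c)=1$. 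For norm $8$ this forces $\Delta\equiv 0 \pmod{32}$, whereas $\operatorname{disc}(\mathfrak{o}(2\ff_0))=4\ff_0^2\Delta_K\equiv 16\pmod{32}$ here, so no such ideal (proper or not) exists; for norm $16$ it forces $\Delta\equiv 0\pmod{64}$ with $c=(b^2-\Delta)/64$ odd, i.e.\ $\operatorname{ord}_2(\Delta)=6$, whereas the level-$(L'+1)$ order has $\operatorname{ord}_2$ of its discriminant equal to $2L'+4\geq 8$ for $L'\geq 2$, so no proper primitive real ideal of norm $16$ exists. Thus the ramification of $2$ in $K$ causes no trouble, and your argument is complete.
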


\begin{proof}
In this case the surface has two $\sigma$-fixed vertices with a single edge between them. We have 
\[ \tau_1 = \tau_0 \quad \text{ and } \quad \tau_2 = 2\tau_1 \] 
by Lemma \ref{2-torsion}, giving parts (1) and (2).
For $2 \leq L' < L$, we have
\[ \tau_{L'} = \tau_{L'-1}, \]
so half of the $\sigma$-fixed vertices in level $L'-1$ must have both descendants in level $L'$ fixed by $\sigma$, while the other half have their descendants in level $L'$ swapped by $\sigma$. That there must be exactly one pair of fixed vertices in level $L'$ descending from a given fixed vertex in level $L'-2$ follows as in part (3) of Lemma \ref{2-unram-lemma}.
\end{proof}

\begin{figure}[H]
\includegraphics[scale=0.7]{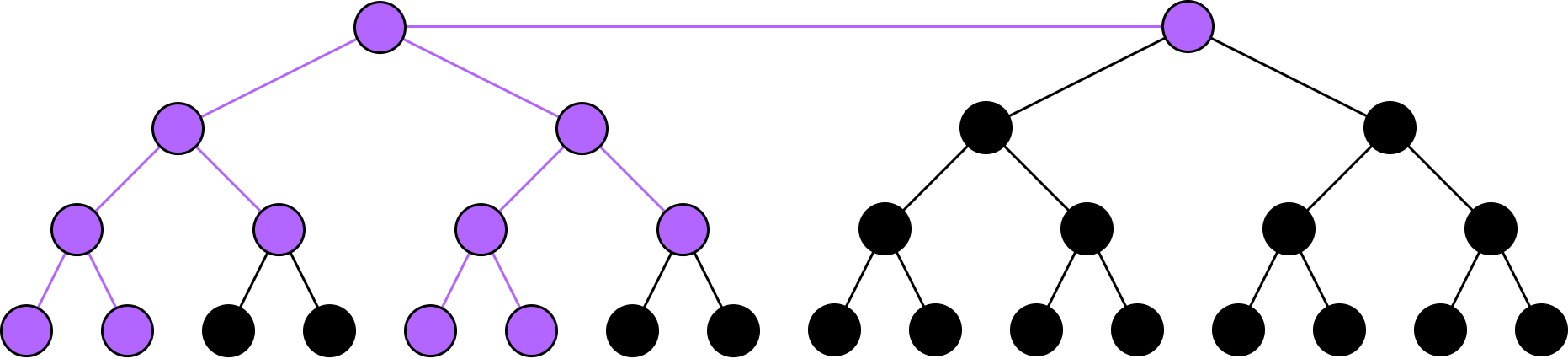}
\setlength{\abovecaptionskip}{10pt plus 0pt minus 0pt}
\caption{$\Delta_K \neq -4$ with $\ell = 2,  \text{ord}_2(\Delta_K) = 2$ and $L = 3$}\label{12_2ram} 
\end{figure}

\begin{lemma}\label{2-ram-8mod16}
Suppose that $\Delta_K \equiv 8 \pmod{16}$ with $\Delta_K < -8$ and $\ell = 2$. Let $v, L$ and $\sigma$ be as above with $L \geq 1$. Consider the action of $\sigma$ on 
\[\bigsqcup_{i=0}^{L} V_i \subseteq \mathcal{G}^D_{K,2,\ff_0}. \]
\begin{enumerate}
    \item There are two surface vertices, both fixed by $\sigma$, and all $4$ vertices in level $1$ are fixed by $\sigma$. 
    \item For $1 \leq L' < L$ and for a vertex $v'$ in level $L'$ fixed by $\sigma$, let $w$ denote the other level $L'$ vertex sharing a neighbor vertex in level $L'-1$ with $v'$. Exactly one of $v'$ or $w$ has both descendants in level $L'+1$ fixed by $\sigma$, while the other vertex has its direct descendants swapped by $\sigma$. 
\end{enumerate}
\end{lemma}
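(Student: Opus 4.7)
The plan is to mirror the proof strategy of Lemma~\ref{2-ram-12mod16}, combining Proposition~\ref{fixed_count} and Lemma~\ref{2-torsion} to count $\sigma$-fixed vertices at each level, together with structural properties of the component (surface shape from Lemma~\ref{2-ram-loops} and the involution action on pairs of siblings).

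First I would compute the relevant $2$-torsion ranks. Since $\gcd(\ff_0, 2) = 1$, $\ff_0$ is odd, so $\Delta_0 := \ff_0^2\Delta_K \equiv 8 \pmod{16}$. Lemma~\ref{2-torsion} then gives $\mu_0 = r$, where $r$ denotes the number of distinct odd prime divisors of $\Delta_0$. For $L' \geq 1$, $\text{ord}_2(\Delta_{L'}) = 3 + 2L' \geq 5$, so $\Delta_{L'} \equiv 0 \pmod{32}$ and $\mu_{L'} = r+1$. Hence $\tau_1 = 2\tau_0$ and $\tau_{L'+1} = \tau_{L'}$ for all $L' \geq 1$.

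For part~(1), since $\Delta_K < -8$, Lemma~\ref{2-ram-loops} rules out the single-loop surface; combined with the fact that each surface vertex has exactly one horizontal edge emanating from it ($\ell = 2$ being ramified in $K$), the surface of our component must consist of two distinct vertices $u_1, u_2$ joined by a horizontal edge. Since $\sigma$ fixes $v$, it fixes the unique surface ancestor $u_1$ of $v$, and since $u_1$ has a unique surface neighbor, $\sigma$ must fix $u_2$ as well. The unique ancestor of $v$ at level $1$ lies below $u_1$ and is fixed by $\sigma$, so both descendants of $u_1$ are $\sigma$-fixed by the involution-on-a-$2$-element-set argument. A symmetry argument --- using that the class of the ramified prime $\mathfrak{p}$ above $2$ is $2$-torsion in $\text{Pic}(\mathfrak{o}(\ff_0))$, so that it commutes with the action of $\sigma = \tau \circ \sigma_{\mathfrak{a}}$ --- transfers this conclusion to the descendants of $u_2$, yielding all four level-$1$ descendants fixed, consistent with $\tau_1 = 2\tau_0$.

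For part~(2), given a $\sigma$-fixed $v' \in V_{L'}$ with $1 \leq L' < L$, its unique parent $p \in V_{L'-1}$ is fixed (Galois preserves the unique ascending edge from $v'$), and hence so is its sibling $w$ (the other descendant of $p$), by the involution argument. Each of $v'$ and $w$ has two descendants in $V_{L'+1}$, on which $\sigma$ acts either by fixing both or swapping both. Since $\tau_{L'+1} = \tau_{L'}$, the average contribution per sibling pair to the fixed-vertex count at level $L'+1$ is exactly two. I would then invoke the cyclic real ideal non-existence argument from Lemma~\ref{2-unram-lemma}(3): if both $v'$ and $w$ had their descendants fixed, we would obtain a QM-cyclic $2^4$-isogeny yielding, upon restriction to an elliptic curve factor, a primitive proper real $\mathfrak{o}(2^{L'+1}\ff_0)$-ideal of index $16$, which does not exist. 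Ruling this case out forces each sibling pair to have exactly one member with both descendants fixed and one with both descendants swapped, giving the claim.

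The main obstacle is the symmetry step in part~(1): justifying rigorously that $\sigma$ commutes with the $[\mathfrak{p}]$-translation relating the descendant pairs below $u_1$ and $u_2$. Concretely, if $\sigma$ acts on the Picard torsor of level-$1$ vertices by $x \mapsto \bar{\mathfrak{b}} \cdot x^{-1}$ for an ideal $\mathfrak{b}$ adapted from Theorem~\ref{JGR}, then the $2$-torsion identity $[\mathfrak{p}]^{-1} = [\mathfrak{p}]$ at the surface must be shown to propagate compatibly to $\text{Pic}(\mathfrak{o}(2\ff_0))$, so that $\sigma([\mathfrak{p}]\cdot x) = [\mathfrak{p}]\cdot \sigma(x)$ and fixed points below $u_1$ are carried to fixed points below $u_2$.
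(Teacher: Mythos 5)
Your part (2) is correct and is essentially the paper's argument: the computation $\tau_{L'+1}=\tau_{L'}$ for $L'\geq 1$, the observation that fixed vertices come in sibling pairs below a common fixed parent, and the exclusion of the case where both siblings have fixed descendants via the QM-cyclic $2^4$-isogeny / non-existence of a primitive proper real $\mathfrak{o}(2^{L'+1}\ff_0)$-ideal of index $16$ (the argument of Lemma \ref{2-unram-lemma}(3), which is what the paper cites through Lemma \ref{2-ram-12mod16}(3)).

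The gap is where you suspected it: in part (1) you get ``both descendants of $u_1$ are fixed'' correctly (fixed ancestors of $v$ plus the involution acting on the two descendants of $u_1$), but the transfer to $u_2$ via translation by the class of the ramified prime $\mathfrak{p}$ is not justified and is genuinely problematic. The two level-$1$ fibers are not obviously related by an element of $\textnormal{Pic}(\mathfrak{o}(2\ff_0))$ commuting with $\sigma$: since $2$ divides the conductor at level $1$, $\mathfrak{p}$ does not give a proper (invertible) $\mathfrak{o}(2\ff_0)$-ideal, so there is no canonical lift of $[\mathfrak{p}]$ through which to propagate the $2$-torsion identity, and $\sigma=\tau\circ\sigma_{\mathfrak{a}}$ is an anti-automorphism of the torsor ($x\mapsto \overline{\mathfrak{b}}x^{-1}$), so even with a lift the claimed commutation would need an argument. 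This step is also unnecessary: you already computed $\tau_1=2\tau_0$, so the number of $\sigma$-fixed level-$1$ vertices is exactly twice the number of $\sigma$-fixed surface vertices; every fixed level-$1$ vertex has its (unique) parent fixed, and each fixed surface vertex has exactly two level-$1$ descendants, so the count forces \emph{every} fixed surface vertex, in particular $u_2$, to have both descendants fixed. That global counting argument is precisely how the paper disposes of part (1), so replacing your symmetry step by it closes the gap.
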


\begin{proof}
In this case again we have two $\sigma$-fixed vertices comprising our surface. Here Lemma \ref{2-torsion} gives $\tau_1 = 2\tau_0$, providing part (1). For $1 \leq L' < L$, Lemma $\ref{2-torsion}$ gives $\tau_{L'} = \tau_{L'-1}$. The same argument as in part (3) of Lemma \ref{2-ram-12mod16} then provides part (2). 
\end{proof}

\begin{figure}[H]
\includegraphics[scale=0.7]{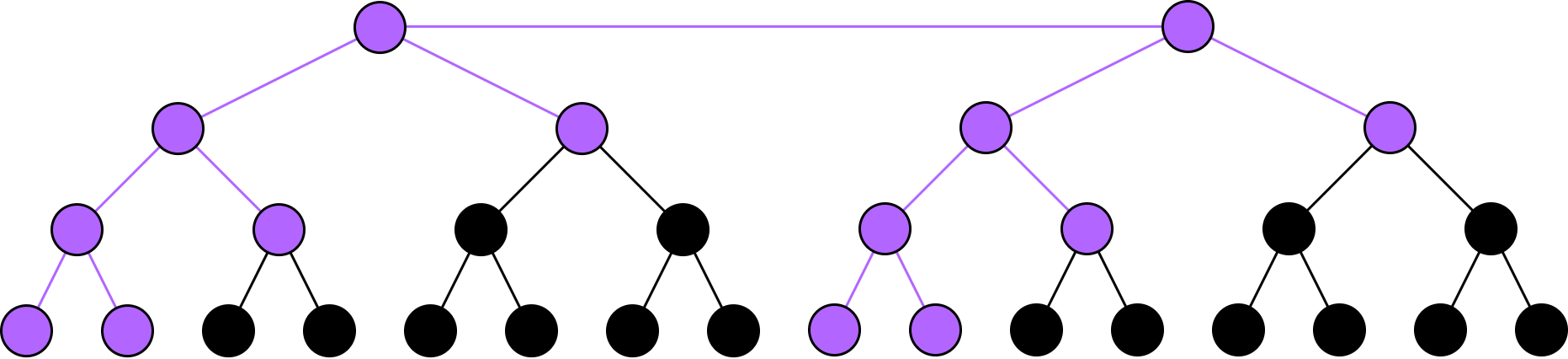}
\setlength{\abovecaptionskip}{10pt plus 0pt minus 0pt}
\caption{$\Delta_K < -8$ with $\ell = 2,  \text{ord}_2(\Delta_K) = 3$ and $L = 3$}\label{8_2ram} 
\end{figure}

\subsection{Explicit description part II: $\ff_0^2\Delta_K \in \{-3,-4\}$}

Keeping our notation from the previous section, we now assume $\ff_0=1$ and $\Delta_K \in \{-3,-4\}$. As mentioned earlier in this section, we always have $D(K) \neq 1$ in this case. Therefore, the action of $\text{Gal}(K(\ell^L \ff)/\mathbb{Q})$ on $V_L$ is free for all $L \geq 0$. This is splendid news for us; while the CM fields $\mathbb{Q}(i)$ and $\mathbb{Q}(\sqrt{-3})$ require extra attention at other points in this study, they cause absolutely no difficulties as far as determining the explicit Galois action on $\mathcal{G}^D_{K,\ell,1}$. This is to be compared with the $D=1$ case of \cite[\S 4]{CS22}, wherein much care goes into defining and explicitly describing a meaningful action of complex conjugation on CM components of isogeny graphs in these cases. 

Still, we provide here example figures of components of $\mathcal{G}_{K,\ell,1}$ (up to finite level $L$) for each case as reference for the reader for the path type analysis and enumeration done in \S \ref{CM_points_prime_power_section}. In these cases, edges from level $0$ to $1$ have multiplicity as exposited in \cite[\S 3]{CS22} due to the presence of automorphisms that do not fix kernels of isogenies. We therefore \emph{do not} have a one-to-one identification between edges and ``dual'' edges in this case, and so as in the referenced study we clearly denote edges with orientation and multiplicity between levels $0$ and $1$.

\begin{figure}[H]\label{-4ram} 
\includegraphics[scale=0.7]{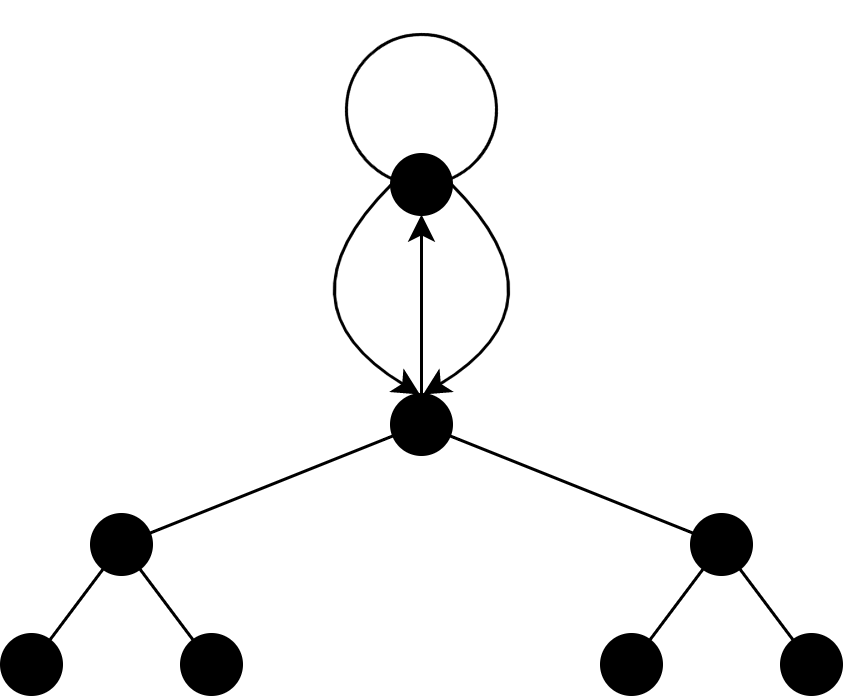}
\setlength{\abovecaptionskip}{10pt plus 0pt minus 0pt}
\caption{$\ff_0^2\Delta_K = -4, \ell = 2$ up to level $3$} 
\end{figure}

\vspace{-2em}

\begin{figure}[H]\label{-4split_and_inert} 
\includegraphics[scale=0.7]{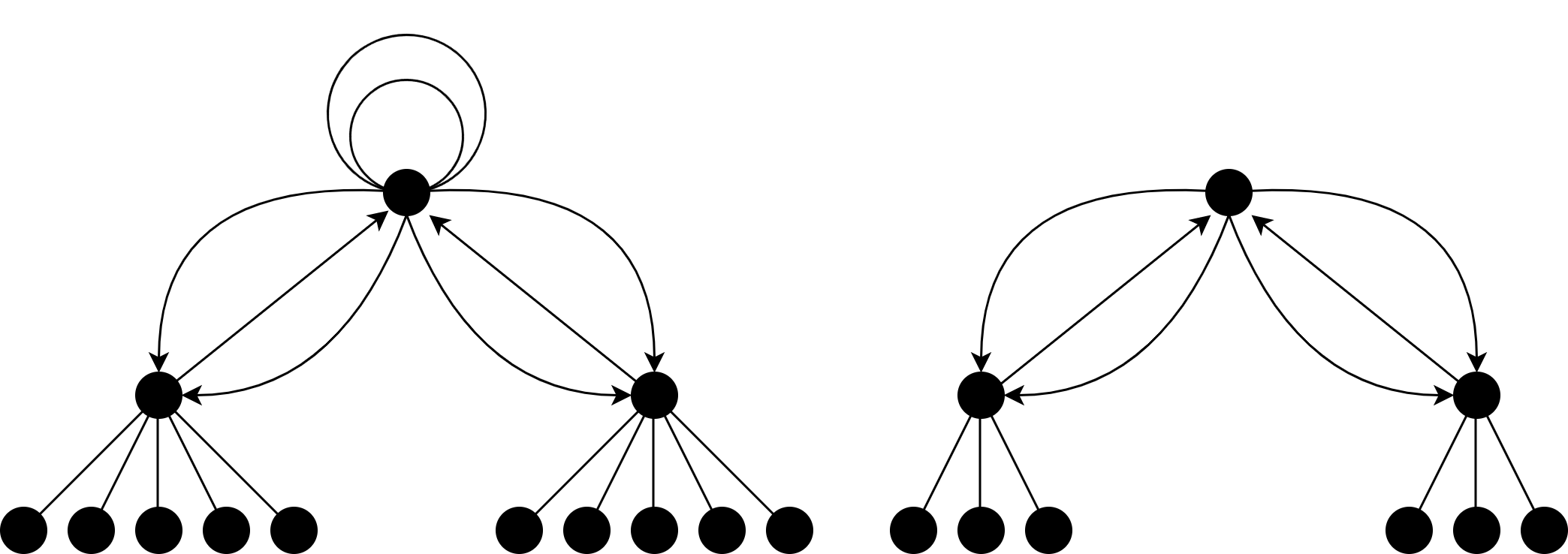}
\setlength{\abovecaptionskip}{10pt plus 0pt minus 0pt}
\caption{$\ff_0^2\Delta_K = -4$, $\ell$ split ($\ell=5$, left) and inert ($\ell=3$, right) up to level $2$} 
\end{figure}

\begin{figure}[H]\label{-3ram_and_2} 
\includegraphics[scale=0.7]{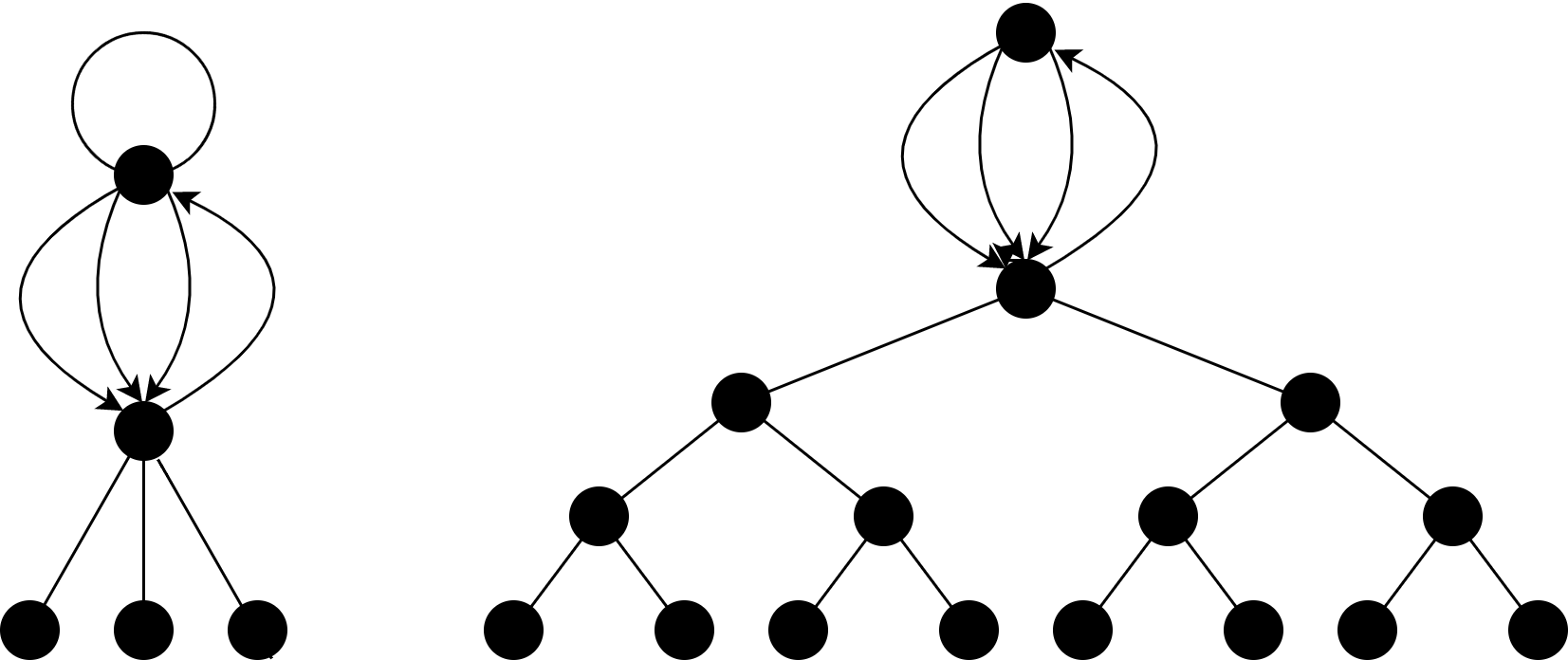}
\setlength{\abovecaptionskip}{10pt plus 0pt minus 0pt}
\caption{$\ff_0^2\Delta_K = -3, \ell = 3$ up to level $2$ (left) and $\ell=2$ up to level $3$ (right)} 
\end{figure}

\begin{figure}[H]\label{-3split_and_inert} 
\includegraphics[scale=0.7]{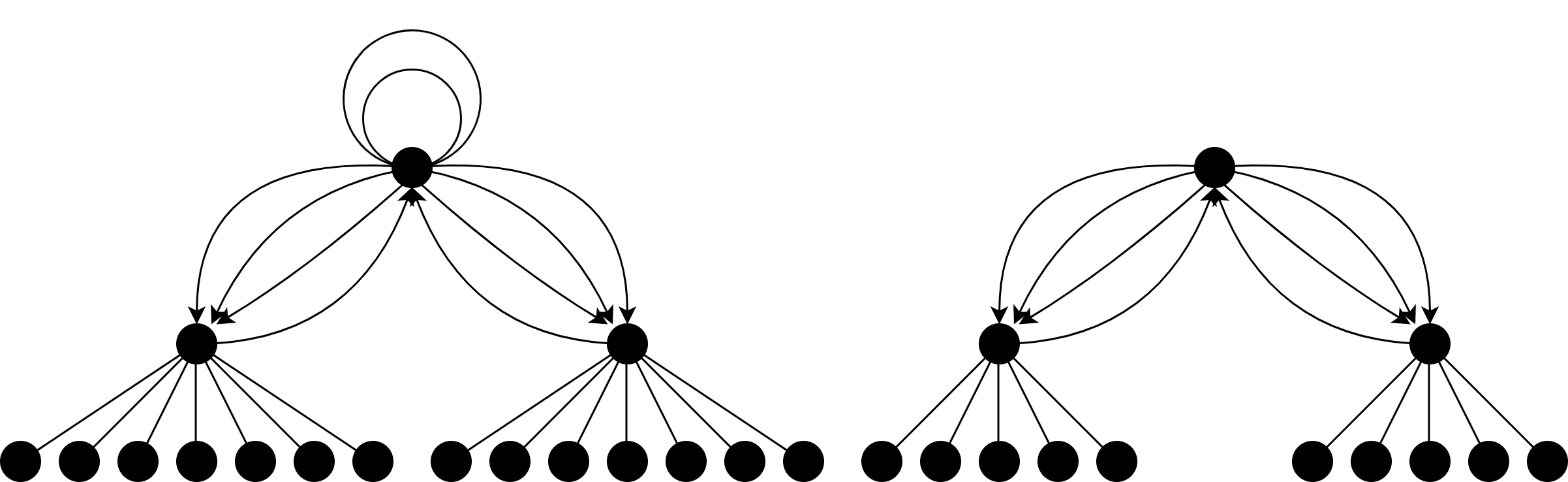}
\setlength{\abovecaptionskip}{10pt plus 0pt minus 0pt}
\caption{$\ff_0^2\Delta_K = -3$, $\ell$ split ($\ell=7$, left) and inert ($\ell=5$, right) up to level $2$} 
\end{figure}


\section{CM points on $X_0^D(\ell^a)_{/\mathbb{Q}}$}\label{CM_points_prime_power_section}

We fix $\ell^a$ a prime power and $\Delta = \ff^2\Delta_K = \ell^{2L} \ff_0^2\Delta_K$, with $\text{gcd}(\ff_0,\ell) = 1$, an imaginary quadratic discriminant. In this section, we describe the $\Delta$-CM locus on $X_0^D(\ell^a)_{/\mathbb{Q}}$. To this aim, we fully classify all closed point equivalence classes, by which we mean $\textnormal{Gal}(\overline{\mathbb{Q}}/\mathbb{Q})$ orbits, of non-backtracking, length $a$ paths in $\mathcal{G}^D_{K,\ell,\ff_0}$. We record the number of classes of each type with each possible residue field (up to isomorphism). 

In the $\ff_0^2\Delta_K \in \{-3,-4\}$ cases, the notion of backtracking in $\mathcal{G}^D_{K,\ell,1}$ has subtlety between levels $0$ and $1$ that is not present in isogeny volcanoes. We address this now: traversing \emph{any} edge from a vertex $v$ in level $0$ to a vertex $w$ in level $1$ followed by the single edge from $w$ to $v$ corresponds to a composition of dual isogenies, and thus is backtracking. On the other hand, for a given isogeny $\varphi$ corresponding to the edge $e$ from $w$ to $v$, there is a single edge from $v$ to $w$ corresponding to its dual $\widehat{\varphi}$. Therefore, traversing $e$ followed by the other edge (respectively, either of the two other edges) from $v$ to $w$ \emph{does not} count as backtracking in the case of $\ff_0^2\Delta_K = -4$ (respectively, $\ff_0^2\Delta_K = -3$). 

With $b$ denoting the number of prime divisors of $D$ which are inert in $K$, we have $2^b$ closed $\Delta$-CM points on $X^D(1)_{/\mathbb{Q}}$, with the fibers over each under the natural map from $X_0^D(\ell^a)_{/\mathbb{Q}}$ to $X^D(1)_{/\mathbb{Q}}$ being isomorphic via Atkin-Lehner involutions. In all cases, we then have 
\[ \sum_{C(\varphi)} e_\varphi d_\varphi = 2^b\text{deg}(X_0(\ell^a) \rightarrow X(1)) =  2^b \psi(\ell^a) = 2^b(\ell^a + \ell^{a-1}), \]
where our sum is over closed-point equivalence classes $C(\varphi)$ of QM-cyclic $\ell^a$ isogenies $\varphi$ with corresponding CM discriminant $\Delta$. 

The map $X_0^D(\ell^a)_{/\mathbb{Q}} \rightarrow X^D(1)_{/\mathbb{Q}}$ has nontrivial ramification over a closed $\Delta$-CM point if and only if $\Delta \in \{-3,-4\}$. For $\Delta \in \{-3,-4\}$ and path length $a$, we have that a closed point equivalence class has ramification, of index $2$ or $3$ in the respective cases of $\Delta = -4$ and $-3$, if and only if the paths in the class include a descending edge from level $0$ to level $1$. This allows for a check on the classifications and counts that we provide. 

If $D(K) = 1$, then the path types showing up in our analysis of each $\mathcal{G}^D_{K,\ell,\ff_0}$ are exactly those appearing in \cite{Cl22} and \cite{CS22}. In this case, each graph $\mathcal{G}^D_{K,\ell,\ff_0}$ consists of $2^b$ copies of the analogous graph $\mathcal{G}_{K,\ell,\ff_0}$ from the $D=1$ modular curve case. Moreover, we have shown that the action of relevant involutions on each component is identical to the action of complex conjugation in the $D=1$ case, up to symmetry of our graphs. In each place where the isomorphism class of a residue field in the referenced $D=1$ analysis is a rational ring class field, we have in its place here some totally complex, index $2$ subfield of a ring class field as described in Theorem \ref{JGR}. 

If at least one prime dividing $D$ is inert in $K$, i.e., if $D(K) > 1$, then all of the residue fields of $\Delta$-CM points on $X^D(1)_{/\mathbb{Q}}$, and hence on $X_0^D(\ell^a)_{/\mathbb{Q}}$, are ring class fields. The path types showing up are exactly those in \cite{CS22}, but the counts will in general differ from the case of the previous paragraph. Specifically, a given path type in our analysis in the case of $D(K) = 1$ consists of $m$ classes with corresponding residue field $K(\ff')$ and $n$ classes with corresponding residue field an index $2$ subfield of $K(\ff')$ for some $\ff' \in \mathbb{Z}^+$ and $m, n \geq 0$. In the case of $D(K)>1$, the same path type then consists of $2m+n$ classes, each with corresponding residue field $K(\ff')$. 

\begin{example}
Suppose that $K = \mathbb{Q}(i)$ splits $B$, and consider the case of $\Delta = -4$ and $\ell^a = 2$. We have $2^{b+1}$ closed-point equivalence classes of QM-cyclic $2$-isogenies of QM abelian surfaces with $-4$-CM. Each corresponding point on $X_0^D(2)_{/\mathbb{Q}}$ has residue degree $1$ over its image on $X^D(1)_{/\mathbb{Q}}$, having residue field $K$. Half of these classes, corresponding to self-loop edges at the surface, have no ramification, while each of the $2^b$ classes $C(\varphi)$ corresponding to a pair of descending edges to level $1$ has $e_\varphi = 2$.
\end{example}

A non-backtracking length $a$ path in $\mathcal{G}_{K,\ell,1}$ starting in level $L$ consists of $c$ ascending edges, followed by $h$ horizontal edges, followed by $d$ descending edges for some $c,h,d \geq 0$ with $c+h+d = a$. We denote this decomposition type of the path with the ordered triple $(c,h,d)$. 

\subsection{Path type analysis: general case}\label{path-analysis-general}
We begin here by considering the portion of the path type analysis that is independent of $\ell$ and $\Delta_K$. \\

{\bf{I.}} There are classes consisting of strictly descending paths, i.e., with $(c,h,d) = (0,0,a)$. If $D(K) \neq 1$, then there are $2^b$ such classes, each with residue field $K(\ell^{a}\ff)$. Otherwise, there are $2^{b}$ such classes, each with corresponding residue field an index $2$ subfield of $K(\ell^a \ff)$. 

{\bf{II.}} If $a \leq L$, there are classes of strictly ascending paths, i.e., with $(c,h,d) = (a,0,0)$. If $D(K) \neq 1$, then there are $2^b$ such classes, each with corresponding residue field $K(\ff)$. Otherwise, there are $2^b$ such classes, each with corresponding residue field an index $2$ subfield of $K(\ff)$. 

{\bf{III.}} If $L = 0$ and $\legendre{\Delta_K}{\ell} = 0$, then there classes of paths with $(c,h,d) = (0,1,a-1)$. If $D(K) \neq 1$, then there are $2^b$ such classes, each with corresponding residue field $K(\ell^{a-1} \ff)$. Otherwise, there are $2^{b}$ such classes, each with corresponding residue field an index $2$ subfield of $K(\ell^{a-1} \ff)$. 

{\bf{IV.}} If $L = 0$ and $\legendre{\Delta_K}{\ell} = 1$, then for each $h$ with $1 \leq h \leq a$ there are classes of paths with $(c,h,d) = (0,h,a-h)$ and residue field $K(\ell^{a-h} \ff)$. There are $2^{b+1}$ such classes if $D(K) \neq 1$, and there are $2^{b}$ such classes otherwise. 

{\bf{X.}} If $a > L \geq 1$ and $\legendre{\Delta_K}{\ell} = 1$, then there are classes of paths with $(c,h,d) = (L,a-L,0)$ and residue field $K(\ff)$. There are $2^{b+1}$ such classes if $D(K) \neq 1$, and there are $2^{b}$ such classes otherwise. 

\subsection{Path type analysis: $\ell > 2$}\label{path-analysis-lodd}
Here we assume that $\ell$ is an odd prime. 

{\bf{V.}} If $L \geq 2$, then for each $c$ with $1 \leq c \leq \text{min}\{a-1,L-1\}$ there are paths which ascend at least one edge but not all the way to the surface, and then immediately descend at least one edge, with $(c,h,d) = (c,0,a-c)$. Each such class has corresponding residue field $K(\ell^{\text{max}\{a-2c,0\}} \ff)$. There are $2^{b}(\ell-1) \ell^{\text{min}\{c,a-c\}-1}$ such paths if $D(K) \neq 1$, and $2^{b-1}(\ell-1) \ell^{\text{min}\{c,a-c\}-1}$ such paths otherwise. 

{\bf{VI.}} If $a \geq L+1 \geq 2$ and $\legendre{\Delta_K}{\ell} = -1$, then there are paths which ascend to the surface and then immediately descend at least one edge, with $(c,h,d) = (L,0,a-L)$. If $D(K) \neq 1$, then there are $2^{b} \ell^{\text{min}\{L,a-L\}}$ classes of such paths with corresponding residue field $K(\ell^{\text{max}\{a-2L,0\}}\ff)$. Otherwise, there are $2^{b-1}\left(\ell^{\text{min}\{L,a-L\}}-1\right)$ classes of such paths with corresponding residue field $K(\ell^{\text{max}\{a-2L,0\}}\ff)$, and $2^b$ classes of such paths with corresponding residue field an index $2$ subfield of $K(\ell^{\text{max}\{a-2L,0\}}\ff)$. 

{\bf{VII.}} If $a \geq L+1 \geq 2$ and $\legendre{\Delta_K}{\ell} = 0$, then there are paths which ascend to the surface and then immediately descend at least one edge, with $(c,h,d) = (L,0,a-L)$. Each such path has corresponding residue field $K(\ell^{\text{max}\{a-2L,0\}}\ff)$. If $D(K) \neq 1$, then there are $2^{b}(\ell-1)\ell^{\text{min}\{L,a-L\}-1}$ classes of such paths. Otherwise, there are $2^{b-1}(\ell-1)\ell^{\text{min}\{L,a-L\}-1}$ classes. 

{\bf{VIII.}} If $a \geq L+1 \geq 2$ and $\legendre{\Delta_K}{\ell} = 0$, then there are paths which ascend to the surface, follow one surface edge, and then possibly descend, with $(c,h,d) = (L,1,a-L-1)$. If $D(K) \neq 1$, then there are $2^{b} \ell^{\text{min}\{L,a-L-1\}}$ classes of such paths with corresponding residue field $K(\ell^{\text{max}\{a-2L-1,0\}}\ff)$. Otherwise, there are $2^{b-1}\left(\ell^{\text{min}\{L,a-L-1\}}-1\right)$ classes of such paths with corresponding residue field $K(\ell^{\text{max}\{a-2L-1,0\}}\ff)$, and $2^b$ classes of such paths with corresponding residue field an index $2$ subfield of $K(\ell^{\text{max}\{a-2L-1,0\}}\ff)$. 

{\bf{IX.}} If $a \geq L+1 \geq 2$ and $\legendre{\Delta_K}{\ell} = 1$, then there are paths which ascend to the surface and then immediately descend at least one edge, with $(c,h,d) = (L,0,a-L)$. If $D(K) \neq 1$, then there are $2^{b} (\ell-2)\ell^{\text{min}\{L,a-L\}-1}$ classes of such paths with corresponding residue field $K(\ell^{\text{max}\{a-2L,0\}}\ff)$. Otherwise, there are $2^{b-1}\left((\ell-2)\ell^{\text{min}\{L,a-L\}-1}-1\right)$ classes of such paths with corresponding residue field $K(\ell^{\text{max}\{a-2L,0\}}\ff)$, and $2^b$ classes of such paths with corresponding residue field an index $2$ subfield of $K(\ell^{\text{max}\{a-2L,0\}}\ff)$. 

{\bf{XI.}} If $a \geq L+2 \geq 3$ and $\legendre{\Delta_K}{\ell} = 1$, then for each $1 \leq h \leq a-L-1$ there are paths which ascend to the surface, traverse $h$ edges on the surface, and then descend at least one edge, with $(c,h,d) = (L,h,a-L-h)$. Each such path has corresponding residue field $K(\ell^{\text{max}\{a-2L-h,0\}}\ff)$. If $D(K) \neq 1$, then there are $2^{b+1}(\ell-1)\ell^{\text{min}\{L,a-L-h\}-1}$ classes of such paths. Otherwise, there are $2^b(\ell-1)\ell^{\text{min}\{L,a-L-h\}-1}$ classes.

\subsection{Path type analysis: $\ell = 2, \legendre{\Delta_K}{2} \neq 0$}\label{path-analysis-2-ord0}

Here we assume that $\ell = 2$ with $\Delta_K$ odd. 

{\bf{V.}} If $L \geq 2$, we have classes consisting of paths which ascend at least one edge but not all the way to the surface, and then immediately descend at least one edge. We have the following types:

\begin{itemize}
    \item[V$_1$.] If $a \geq 2$, then there are classes with $(c,h,d) = (1,0,a-1)$. If $D(K) \neq 1$, then there are $2^{b}$ such classes, each with corresponding residue field $K(2^{a-2}\ff)$. Otherwise, there are $2^b$ such classes, each with corresponding residue field an index $2$ subfield of $K(2^{a-2}\ff)$. 
    
    \item[V$_2$.] If $L \geq a \geq 3$, then there are classes with $(c,h,d) = (a-1,0,1)$. If $D(K) \neq 1$, then there are $2^{b}$ such classes, each with corresponding residue field $K(2^{a-2}\ff)$. Otherwise, there are $2^b$ such classes, each with corresponding residue field an index $2$ subfield of $K(2^{a-2}\ff)$. 
    
    \item[V$_3$.] If $a \geq L+1 \geq 4$, then there are paths with $(c,h,d) = (L-1,0,a-L+1)$. If $D(K) \neq 1$, there are $2^{\text{min}\{a-L+1,L-1\}+b-1}$ classes of such paths with corresponding residue field $K(2^{\text{max}\{a-2L+2,0\}}\ff)$. Otherwise, there are $2^b\left(2^{\text{min}\{a-L+1,L-1\}-2}-1\right)$ classes of such paths with corresponding residue field $K(2^{\text{max}\{a-2L+2,0\}}\ff)$, and $2^{b+1}$ classes of such paths with corresponding residue field an index $2$ subfield of $K(2^{\text{max}\{a-2L+2,0\}}\ff)$. 
    
    \item[V$_4$.] For each $c$ with $2 \leq c \leq \text{min}\{L-2, a-2\}$, there are paths with $(c,h,d) = (c,0,a-c)$. Each such path has corresponding residue field $K(2^{\text{max}\{a-2c,0\}} \ff)$. There are $2^{\text{min}\{c,a-c\}+b-1}$ equivalence classes of such paths if $D(K) \neq 1$. Otherwise, there are $2^{\text{min}\{c,a-c\}+b-2}$ such classes. 
    
\end{itemize}

{\bf{VI.}} If $a \geq L+1 \geq 2$ and $\legendre{\Delta_K}{2} = -1$, there are paths that ascend to the surface and then immediately descend at least one edge, with $(c,h,d) = (L,0,a-L)$. Each such class has corresponding residue field $K(2^{\text{max}\{a-2L,0\}}\ff)$. If $D(K) \neq 1$, then there are $2^{\text{min}\{L,a-L\}+b}$ classes of such paths. Otherwise, there are $2^{\text{min}\{L,a-L\}-1+b}$ such classes. 

{\bf{XI.}} If $a \geq L+2 \geq 3$ and $\legendre{\Delta_K}{2} = 1$, then for all $1 \leq h \leq a-L-1$ there are paths which ascend to the surface, traverse $h$ horizontal edges, and then descend at least once, with $(c,h,d) = (L,h,a-L-h)$. Each such class has corresponding residue field $K(2^{\text{max}\{a-2L-h,0\}}\ff)$. If $D(K) \neq 1$, then there are $2^{\text{min}\{L,a-L-h\}+b}$ classes of such paths. Otherwise, there are $2^{\text{min}\{L,a-L-h\}+b-1}$ such classes.

\subsection{Path type analysis: $\ell = 2, \textnormal{ord}_2(\Delta_K) = 2$}\label{path-analysis-2-ord2}

Here we assume that $\ell = 2$ with $\text{ord}_2(\Delta_K) = 2$. 

{\bf{V.}} If $L \geq 2$, we have classes consisting of paths which ascend at least one edge but not all the way to the surface, and then immediately descend at least one edge. We have the following types:

\begin{itemize}
    \item[V$_1$.] If $a \geq 2$, then there are classes with $(c,h,d) = (1,0,a-1)$. If $D(K) \neq 1$, then there are $2^{b}$ such classes, each with corresponding residue field $K(2^{a-2}\ff)$. Otherwise, there are $2^{b}$ such classes, each with corresponding residue field an index $2$ subfield of $K(2^{a-2}\ff)$. 
    
    \item[V$_2$.] If $L \geq a \geq 3$, then there are classes with $(c,h,d) = (a-1,0,1)$. If $D(K) \neq 1$, then there are $2^{b}$ classes of such paths, each with corresponding residue field $K(\ff)$. Otherwise, there are $2^{b}$ classes of such paths, each with corresponding residue field an index $2$ subfield of $K(\ff)$. 
    
    \item[V$_3$.] For each $c$ with $2 \leq c \leq \text{min}\{L-1, a-2\}$, there are paths $(c,h,d) = (c,0,a-c)$. Each such class has corresponding residue field $K(2^{\text{max}\{a-2c,0\}} \ff)$. If $D(K) \neq 1$, then there are $2^{\text{min}\{c,a-c\}+b-1}$ classes of such paths. Otherwise, there are $2^{\text{min}\{c,a-c\}+b-2}$ such classes. 
    
\end{itemize}

{\bf{VI.}} If $L \geq 1$, then we have paths which ascend to the surface and then immediately descend at least one edge, with $(c,h,d) = (L,0,a-L)$. We have the following cases:

\begin{itemize}
    \item[VI$_1$.] Suppose $L=1$ and $a \ge 2$. If $D(K) \neq 1$, then there are $2^{b}$ classes of such paths, each with corresponding residue field $K(2^{a-2}\ff)$. Otherwise, there are $2^b$ such classes, each with corresponding residue field an index $2$ subfield of $K(2^{a-2}\ff)$. 
    
    \item[VI$_2$.] Suppose $a = L+1 \geq 3$. If $D(K) \neq 1$, then there are $2^{b}$ classes of such paths, each with corresponding residue field $K(\ff)$. Otherwise, there are $2^b$ such classes, each with corresponding residue field an index $2$ subfield of $K(\ff)$. 
    
    \item[VI$_3$.] Suppose $a \geq L+2 \geq 4$. If $D(K) \neq 1$, then there are $2^{\text{min}\{L,a-L\}+b-1}$ classes of such paths, each with corresponding residue field $K(2^{\text{max}\{a-2L,0\}}\ff)$. Otherwise, there are $2^b\left(2^{\text{min}\{L,a-L\}-2}-1\right)$ classes of such paths with corresponding residue field $K(2^{\text{max}\{a-2L,0\}}\ff)$, and $2^{b+1}$ classes of such paths with corresponding residue field an index $2$ subfield of $K(2^{\text{max}\{a-2L,0\}}\ff)$. 
    
\end{itemize}

{\bf{VIII.}} If $a \geq L+1 \geq 2$, then we have paths which ascend to the surface, and then traverse the unique surface edge, and then possibly descend, with $(c,h,d) = (L,1,a-L-1)$. We have the following cases:

\begin{itemize}
    \item[VIII$_1$.] Suppose $a = L+1$. If $D(K) \neq 1$, then there are $2^{b}$ classes of such paths, each with corresponding residue field $K(\ff)$. Otherwise, there are $2^b$ such classes, each with corresponding residue field an index $2$ subfield of $K(\ff)$. 
    
    \item[VIII$_2$.] Suppose $a \geq L+2$. Each such path has corresponding residue field $K(2^{\text{max}\{a-2L-1,0\}}\ff)$. If $D(K) \neq 1$, then there are $2^{\text{min}\{L,a-1-L\}+b}$ classes of such paths. Otherwise, there are $2^{\text{min}\{L,a-1-L\}+b-1}$ such classes. 
\end{itemize}

\subsection{Path type analysis: $\ell = 2, \textnormal{ord}_2(\Delta_K) = 3$}

Here we assume that $\ell = 2$ with $\text{ord}_2(\Delta_K) = 3$. The types of paths occurring here are the same as in the previous section, owing to the fact that the structure of $\mathcal{G}^D_{K,\ell,\ff_0}$ here is the same as therein. The corresponding residue field counts may differ, though, as the Galois action differs. 

{\bf{V.}} The analysis of this type is exactly as in \S \ref{path-analysis-2-ord2}.

{\bf{VI.}} If $L \geq 1$, then we have paths which ascend to the surface and then immediately descend at least one edge, with $(c,h,d) = (L,0,a-L)$. We have the following cases:

\begin{itemize}
    \item[VI$_1$.] Suppose $L=1$ and $a \ge 2$. If $D(K) \neq 1$, then there are $2^{b}$ classes of such paths, each with corresponding residue field $K(2^{\text{a-2}}\ff)$. Otherwise, there are $2^{b}$ such classes, each with corresponding residue field an index $2$ subfield of $K(2^{\text{a-2}}\ff)$.
    
    \item[VI$_2$.] Suppose $a = L+1 \geq 3$. If $D(K) \neq 1$, then there are $2^{b}$ classes of such paths, each with corresponding residue field $K(\ff)$. Otherwise, there are $2^b$ such classes, each with corresponding residue field an index $2$ subfield of $K(\ff)$. 
    
    \item[VI$_3$.] If $a \geq L+2 \geq 4$, then each such class has corresponding residue field $K(2^{\text{max}\{a-2L,0\}}\ff)$. If $D(K) \neq 1$, then there are $2^{\text{min}\{L,a-L\}+b-1}$ such classes. Otherwise, there are $2^{\text{min}\{L,a-L\}+b-2}$ such classes.
    
\end{itemize}

{\bf{VIII.}} If $a \geq L+1 \geq 2$, then we have paths which ascend to the surface, and then traverse the unique surface edge, and then possibly descend, with $(c,h,d) = (L,1,a-L-1)$. We have the following cases:

\begin{itemize}
    \item[VIII$_1$.] Suppose $a = L+1$. If $D(K) \neq 1$, then there are $2^{b}$ classes of such paths, each with corresponding residue field $K(\ff)$. Otherwise, there are $2^{b}$ such classes, each with corresponding residue field an index $2$ subfield of $K(\ff)$. 
    
    \item[VIII$_2$.] Suppose that $a \geq L+2$. If $D(K) \neq -1$, then there are $2^{\text{min}\{L,a-1-L\}+b}$ classes of such paths, each with corresponding residue field $K(2^{\text{max}\{a-2L-1,0\}}\ff)$. Otherwise, there are $2^b\left(2^{\text{min}\{L,a-1-L\}-1}-1\right)$ classes of such paths with corresponding residue field $K(2^{\text{max}\{a-2L-1,0\}}\ff)$, and $2^{b+1}$ classes with corresponding residue field an index $2$ subfield of $K(2^{\text{max}\{a-2L-1,0\}}\ff)$. 
   
\end{itemize}

\subsection{Primitive residue fields of CM points on $X_0^D(\ell^a)_{/\mathbb{Q}}$} 

Fixing $\Delta$ an imaginary quadratic discriminant and $N \in \mathbb{Z}^+$ relatively prime to $D$, we say that a field $F$ is a \textbf{primitive residue field of a $\Delta$-CM point on $X_0^D(N)_{/\mathbb{Q}}$} if 
\begin{itemize}
\item there is a $\Delta$-CM point $x \in X_0^D(N)_{/\mathbb{Q}}$ with $\mathbb{Q}(x) \cong F$, and
\item there does not exists a $\Delta$-CM point $y \in X_0^D(N)_{/\mathbb{Q}}$ with $\mathbb{Q}(y) \cong L$ with $L \subsetneq F$. 
\end{itemize} 
The preceding path type analysis in this section allows us to determine primitive residue fields for prime power levels $N = \ell^a$. It follows from this analysis that, In all cases, there are at most $2$ primitive residue fields, and that each primitive residue field is either a ring classes field or an index $2$ subfield of a ring class field. 

The cases occurring here are in line with those in \cite{Cl22} and \cite{CS22}, though here the primitive residue fields depend on whether $D(K) = 1$. In particular, if some prime dividing $D$ is inert in $K$, then all residue fields of CM points on $X_0^D(\ell^a)$ are ring class fields and hence there can only be one primitive residue field. This necessarily happens, for instance, if the class number of $K$ is odd. We provide Case 1.5b with the alternative title of ``The dreaded case,'' as in \cite{Cl22}, to warn the reader that it will have an important role in later results on primitive residue fields and degrees. \\

\noindent \textbf{Case 1.1.} Suppose $\ell^a = 2$. 
\begin{itemize}[leftmargin=7em]
\item[\textbf{Case 1.1a.}] Suppose $\legendre{\Delta}{2} \neq -1$. If $D(K) = 1$, then the only primitive residue field is an index $2$ subfield of $K(\ff)$. Otherwise, the only primitive residue field is $K(\ff)$. 

\item[\textbf{Case 1.1b.}] Suppose $\legendre{\Delta}{2} = -1$. If $D(K) = 1$, then the only primitive residue field is an index $2$ subfield of $K(2\ff)$. Otherwise, the only primitive residue field is $K(2\ff)$. 
\end{itemize}

\noindent \textbf{Case 1.2.} Suppose $\ell^a > 2, L = 0$ and $\legendre{\Delta}{\ell} = 1$. If $D(K) = 1$, then the primitive residue fields are $K(\ff)$ and an index $2$ subfield of $K(\ell^a \ff)$. Otherwise, the only primitive residue field is $K(\ff)$. 

\noindent \textbf{Case 1.3.} Suppose $\ell^a > 2, L = 0$ and $\legendre{\Delta}{\ell} = -1$. If $D(K) = 1$, then the only primitive residue field is an index $2$ subfield of $K(\ell^a \ff)$. Otherwise, the only primitive residue field is $K(\ell^a \ff)$. 

\noindent \textbf{Case 1.4.} Suppose $\ell^a > 2, L = 0$ and $\legendre{\Delta}{\ell} = 0$. If $D(K) = 1$, then the only primitive residue field is an index $2$ subfield of $K(\ell^{a-1} \ff)$. Otherwise, the only primitive residue field is $K(\ell^{a-1} \ff)$. 

\noindent \textbf{Case 1.5.} Suppose $\ell > 2, L \geq 1$ and $\legendre{\Delta_K}{\ell} = 1$. 
\begin{itemize}[leftmargin=7em]
\item[\textbf{Case 1.5a.}] Suppose $a \leq 2L$. If $D(K) = 1$, then the only primitive residue field is an index $2$ subfield of $K(\ff)$. Otherwise, the only primitive residue field is $K(\ff)$. 

\item[\textbf{Case 1.5b\phantom{.}}] \textbf{(The dreaded case).} Suppose $a \geq 2L+1$. If $D(K) = 1$, then the primitive residue fields are $K(\ff)$ and an index $2$ subfield of $K(\ell^{a-2L}\ff)$. Otherwise, the only primitive residue field is $K(\ff)$. 
\end{itemize}

\noindent \textbf{Case 1.6.} Suppose $\ell > 2, L \geq 1$ and $\legendre{\Delta_K}{\ell} = -1$. 
\begin{itemize}[leftmargin=7em]
\item[\textbf{Case 1.6a.}] Suppose $a \leq 2L$. If $D(K) = 1$, then the only primitive residue field is an index $2$ subfield of $K(\ff)$. Otherwise, the only primitive residue field is $K(\ff)$. 

\item[\textbf{Case 1.6b.}] Suppose $a \geq 2L+1$. If $D(K) = 1$, then the only primitive residue field is an index $2$ subfield of $K(\ell^{a-2L}\ff)$. Otherwise, the only primitive residue field is $K(\ell^{a-2L}\ff)$.
\end{itemize}

\noindent \textbf{Case 1.7.} Suppose $\ell > 2, L \geq 1$ and $\legendre{\Delta_K}{\ell} = 0$. 
\begin{itemize}[leftmargin=7em]
\item[\textbf{Case 1.7a.}] Suppose $a \leq 2L+1$. If $D(K) = 1$, then the only primitive residue field is an index $2$ subfield of $K(\ff)$. Otherwise, the only primitive residue field is $K(\ff)$. 

\item[\textbf{Case 1.7b.}] Suppose $a \geq 2L+2$. If $D(K) = 1$, then the only primitive residue field is an index $2$ subfield of $K(\ell^{a-2L-1}\ff)$. Otherwise, the only primitive residue field is $K(\ell^{a-2L-1}\ff)$.
\end{itemize}

\noindent \textbf{Case 1.8.} Suppose $\ell = 2, a \geq 2, L \geq 1$ and $\legendre{\Delta_K}{2} = 1$. 
\begin{itemize}[leftmargin=7em]
\item[\textbf{Case 1.8a.}] Suppose $L = 1$. If $D(K) = 1$, then the primitive residue fields are $K(\ff)$ and an index $2$ subfield of $K(2^af)$. Otherwise, the only primitive residue field is $K(\ff)$.

\item[\textbf{Case 1.8b.}] Suppose $L \geq 2$ and $a \leq 2L-2$. If $D(K) = 1$, then the only primitive residue field is an index $2$ subfield of $K(\ff)$. Otherwise, the only primitive residue field is $K(\ff)$. 

\item[\textbf{Case 1.8c.}] Suppose $L \geq 2$ and $a \geq 2L-1$. If $D(K) = 1$, then the primitive residue fields are $K(\ff)$ and an index $2$ subfield of $K(2^{a-2L+2}\ff)$. Otherwise, the only primitive residue field is $K(\ff)$. 
\end{itemize} 

\noindent \textbf{Case 1.9} Suppose $\ell = 2, a \geq 2, L \geq 1$ and $\legendre{\Delta_K}{2} = -1$. 

\begin{itemize}[leftmargin=7em]
\item[\textbf{Case 1.9a.}] Suppose $L = 1$. If $D(K) = 1$, then the primitive residue fields are $K(2^{a-2}\ff)$ and an index $2$ subfield of $K(2^{a}\ff)$. Otherwise, the only primitive residue field is $K(2^{a-2}\ff)$. 

\item[\textbf{Case 1.9b.}] Suppose $L \geq 2$ and $a \leq 2L-2$. If $D(K) = 1$, then the only primitive residue field is an index $2$ subfield of $K(\ff)$. Otherwise, the only primitive residue field is $K(\ff)$. 

\item[\textbf{Case 1.9c.}] Suppose $L \geq 2$ and $a \geq 2L-1$. If $D(K) \neq 1$, then the primitive residue fields are $K(2^{\text{max}\{a-2L,0\}}\ff)$ and an index $2$ subfield of $K(2^{a-2L+2}\ff)$. Otherwise, the only primitive residue field is $K(2^{\text{max}\{a-2L,0\}}\ff)$. 
\end{itemize}

\noindent \textbf{Case 1.10} Suppose $\ell = 2, a \geq 2, L \geq 1, \legendre{\Delta_K}{2} = 0$ and $\text{ord}_2(\Delta_K) = 2$. 

\begin{itemize}[leftmargin=7em]
\item[\textbf{Case 1.10a.}] Suppose $a \leq 2L$. If $D(K) \neq 1$, then the only primitive residue field is an index $2$ subfield of $K(\ff)$. Otherwise, the only primitive residue field is $K(\ff)$. 

\item[\textbf{Case 1.10b.}] Suppose $a \geq 2L+1$. If $D(K) \neq 1$, then the primitive residue fields are $K(2^{a-2L-1}\ff)$ and an index $2$ subfield of $K(2^{a-2L}\ff)$. Otherwise, the only primitive residue field is $K(2^{a-2L-1}\ff)$. 
\end{itemize}

\noindent \textbf{Case 1.11} Suppose $\ell = 2, a \geq 2, L \geq 1, \legendre{\Delta_K}{2} = 0$ and $\text{ord}_2(\Delta_K) = 3$. 

\begin{itemize}[leftmargin=7em]
\item[\textbf{Case 1.11a.}] Suppose $a \leq 2L+1$. If $D(K) = 1$, then the only primitive residue field is an index $2$ subfield of $K(\ff)$. Otherwise, the only primitive residue field is $K(\ff)$. 

\item[\textbf{Case 1.11b.}] Suppose $a \geq 2L+2$. If $D(K) = 1$, then the only primitive residue field is an index $2$ subfield of $K(2^{a-2L-1}\ff)$. Otherwise, the only primitive residue field is $K(2^{a-2L-1}\ff)$. 
\end{itemize}

\subsection{Primitive degrees of CM points on $X_0^D(\ell^a)_{/\mathbb{Q}}$.}

A positive integer $d$ is a \textbf{primitive degree for a $\Delta$-CM point on $X_0^D(N)_{/\mathbb{Q}}$} if 
\begin{itemize}
\item there is a a $\Delta$-CM point of degree $d$ on $X_0^D(N)_{/\mathbb{Q}}$, and
\item there does not exist a $\Delta$-CM point on $X_0^D(N)_{/\mathbb{Q}}$ of degree properly dividing $d$. 
\end{itemize}
If $d$ is such a degree, then the residue field of a degree $d$ point on $X_0^D(N)_{/\mathbb{Q}}$ is a primitive residue field of a $\Delta$-CM point on $X_0^D(N)_{/\mathbb{Q}}$. For $N = \ell^a$ a prime power, we then have from the previous section that there are at most two primitive degrees. 

While there are several cases that admit two primitive residue fields when $D(K) = 1$, the only case admitting two primitive degrees is Case 1.5b (The dreaded case). In Case 1.5b, our two primitive residue fields are $K(\ff)$ and an index $2$ subfield $L$ of $K(\ell^{a-2L}\ff)$, with respective degrees $[K(\ff) : \mathbb{Q}] = 2h(\mathfrak{o}(\ff) )$ and $[L : \mathbb{Q}] = \ell^{a-2L} h(\mathfrak{o}(\ff) )$. As $\ell$ is odd, we indeed have two primitive degrees in this case.


\section{Algebraic results on residue fields of CM points on $X^D(1)_{/\mathbb{Q}}$}\label{algebraic-results-section}

We develop here algebraic number theoretic results on fields which arise as residue fields of CM points on $X^D(1)_{/\mathbb{Q}}$ which will feed into our main results. In particular, a determination of composita and tensor products of such fields will be needed in determining information about the CM locus on $X_0^D(N)_{/\mathbb{Q}}$ for general $N$ from information at prime-power levels. 

For an imaginary quadratic field $K$, we let $K(\ff)$ denote the ring class field corresponding to the imaginary quadratic order $\mathfrak{o}(\ff)$ of conductor $\ff$ in $K$, i.e., that of discriminant $\ff^2\Delta_K$. 

\begin{proposition}\label{composita}
Let $K$ denote an imaginary quadratic field of discriminant $\Delta_K$. 
\begin{enumerate}
\item If $\Delta_K \not \in \{-3,-4\}$, then for any $\ff_1, \ff_2 \in \mathbb{Z}^+$ we have
\[ K(\ff_1) \cdot K(\ff_2) = K(\textnormal{lcm}(\ff_1,\ff_2)). \]

\item Suppose $\Delta_K \in \{-3,-4\}$. 

\begin{enumerate}
\item For any $\ff_1, \ff_2 \in \mathbb{Z}^+$ with $\text{gcd}(\ff_1,\ff_2) > 1$, we have
\[ K(\ff_1) \cdot K(\ff_2) = K(\textnormal{lcm}(\ff_1,\ff_2)). \]

\item If the class number of the order of discriminant $\ff_1^2\Delta_K$ is $1$, i.e., if $\ff_1^2\Delta_K \in S = \{-3,-4,-12,-16,-27\}$, then 
\[ K(\ff_1) \cdot K(\ff_2) = K(\ff_2). \]

\item Suppose we have positive integers $\ff_1, \ldots ,f_r$ which are all pairwise relatively prime and not in the $S$ defined above. Then $K(\ff_1) \cdots K(f_r) \subsetneq K(\ff_1 \cdots f_r)$, with 

\[ [K(\ff_1 \cdots f_r) : K(\ff_1) \cdots K(f_r)] = \begin{cases} 2^{r-1} \text{ if } \Delta_K = -4 \\
3^{r-1} \text{ if } \Delta_K = -3 \end{cases}. \]

\end{enumerate}

\item In all cases, $K(\ff_1)$ and $K(\ff_2)$ are linearly disjoint over $K(\textnormal{gcd}(\ff_1,\ff_2))$. 

\end{enumerate}
\end{proposition}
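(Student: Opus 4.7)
The strategy is to reduce everything to the class number formula for orders,
\[ h(\mathfrak{o}(\ff)) = \frac{\ff \cdot h_K}{[\mathfrak{o}_K^* : \mathfrak{o}(\ff)^*]} \prod_{p\mid \ff} \left(1 - \legendre{\Delta_K}{p} \cdot \frac{1}{p}\right), \]
together with the identification $[K(\ff):K] = h(\mathfrak{o}(\ff))$. Set $m := \textnormal{lcm}(\ff_1,\ff_2)$, $d := \gcd(\ff_1,\ff_2)$, and $c(\ff) := \prod_{p\mid\ff}\bigl(1 - \legendre{\Delta_K}{p}/p\bigr)$. The key arithmetic observation is $c(m)\,c(d) = c(\ff_1)\,c(\ff_2)$: for each rational prime $p$ one has $[p\mid\ff_1] + [p\mid\ff_2] = [p\mid m] + [p\mid d]$, and $c$ depends only on the radical. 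Combined with $\ff_1\ff_2 = md$, this yields
\[ h(\mathfrak{o}(\ff_1))\, h(\mathfrak{o}(\ff_2)) = h(\mathfrak{o}(m))\, h(\mathfrak{o}(d)) \]
whenever the unit index $[\mathfrak{o}_K^*:\mathfrak{o}(\cdot)^*]$ is constant across the four orders involved. This holds in case (1) (index always $1$) and in case (2a) (where $d>1$ forces the index to equal $w_K/2$ for all four).

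When the class-number identity holds, I would work inside $K(m)$ and consider the tower
\[ K \subseteq K(d) \subseteq K(\ff_1)\cap K(\ff_2) \subseteq K(\ff_1)\cdot K(\ff_2) \subseteq K(m). \]
Since $K(\ff_1),K(\ff_2)$ are Galois over $K$, the standard identity $[K(\ff_1)\cdot K(\ff_2):K] \cdot [K(\ff_1)\cap K(\ff_2):K] = h(\mathfrak{o}(\ff_1))\,h(\mathfrak{o}(\ff_2))$, combined with $h(\ff_1)h(\ff_2) = h(m)h(d)$ and the containments of the tower, forces equalities throughout. Parts (1) and (2a), as well as the corresponding instance of part (3), drop out simultaneously.

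Case (2b) is immediate: $K(\ff_1) = K$, and $\gcd(\ff_1,\ff_2)\mid\ff_1$ gives $K(\gcd)\subseteq K(\ff_1) = K$, so part (3) holds too. The remaining subcase (which covers both the $r=2$ instance of (2c) and the last open instance of part (3)) is $\Delta_K \in \{-3,-4\}$ with $\gcd(\ff_1,\ff_2) = 1$ and $\ff_i^2\Delta_K \notin S$ for each $i$. Here the class-number identity fails: instead
\[ h(\mathfrak{o}(m)) = (w_K/2)\cdot h(\mathfrak{o}(\ff_1))\,h(\mathfrak{o}(\ff_2)), \]
owing to the jump in unit index between $d=1$ and the others. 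Now degree bookkeeping only gives $[K(m):K(\ff_1)\cdot K(\ff_2)] \geq w_K/2$, with equality if and only if $K(\ff_1)\cap K(\ff_2) = K$. Establishing this linear disjointness is the main obstacle, and the only point at which I leave the ring-class-number computation: by class field theory, $K(\ff_i)/K$ is contained in the ray class field of modulus $\ff_i\mathfrak{o}_K$ and is therefore unramified over $K$ outside primes dividing $\ff_i$; since $\gcd(\ff_1,\ff_2)=1$, the intersection $K(\ff_1)\cap K(\ff_2)$ is unramified at every prime of $K$, hence contained in the Hilbert class field of $K$. As $h_K=1$ for $\Delta_K\in\{-3,-4\}$, this Hilbert class field is $K$ itself, giving the desired linear disjointness and the index $[K(m):K(\ff_1)\cdot K(\ff_2)] = w_K/2$, which is $2$ when $\Delta_K=-4$ and $3$ when $\Delta_K=-3$. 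For $r>2$ in (2c), I would then induct: setting $N := \ff_1\cdots\ff_{r-1}$ and applying the $r=2$ result to the coprime pair $(N,\ff_r)$ yields $[K(N\ff_r):K(N)\cdot K(\ff_r)] = w_K/2$, while the same ramification argument gives $K(N)\cap K(\ff_r) = K$, so $[K(N)\cdot K(\ff_r):K(\ff_1)\cdots K(\ff_r)] = [K(N):K(\ff_1)\cdots K(\ff_{r-1})] = (w_K/2)^{r-2}$ by induction; multiplying produces the claimed $(w_K/2)^{r-1}$.
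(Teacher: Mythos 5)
Several pieces of your proposal are sound: the class-number identity $h(\mathfrak{o}(\ff_1))h(\mathfrak{o}(\ff_2)) = h(\mathfrak{o}(m))h(\mathfrak{o}(d))$ (with $m = \mathrm{lcm}$, $d = \gcd$) is correct when the unit indices agree; case (2b) is indeed immediate; your ramification argument for the coprime $\Delta_K \in \{-3,-4\}$ case (ring class fields sit inside ray class fields, so $K(\ff_1) \cap K(\ff_2)$ is unramified everywhere, hence lies in the Hilbert class field, which is $K$ because $h_K = 1$) correctly yields the $r=2$ instance of (2c), and the induction to general $r$ goes through. The genuine gap is in cases (1) and (2a), at the sentence claiming that the tower $K \subseteq K(d) \subseteq K(\ff_1)\cap K(\ff_2) \subseteq K(\ff_1)K(\ff_2) \subseteq K(m)$ together with $[K(\ff_1)K(\ff_2):K]\cdot[K(\ff_1)\cap K(\ff_2):K] = h(\mathfrak{o}(m))h(\mathfrak{o}(d))$ ``forces equalities throughout.'' It does not: the containments only give $[K(\ff_1)K(\ff_2):K] \le h(\mathfrak{o}(m))$ and $[K(\ff_1)\cap K(\ff_2):K] \ge h(\mathfrak{o}(d))$, which are inequalities in compensating directions, so the product identity is consistent with the intersection being strictly larger than $K(d)$ and the compositum strictly smaller than $K(m)$ by the same factor. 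This is not a hypothetical worry — it is exactly what happens when $\Delta_K \in \{-3,-4\}$ and $\gcd(\ff_1,\ff_2) = 1$, where the compositum has index $w_K/2$ in $K(m)$ even though an analogous numerical bookkeeping is available. So the equality in (1) and (2a), and with it part (3) in those cases, cannot come from degree counting alone; some arithmetic input must distinguish these cases from the coprime $\{-3,-4\}$ case.

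To close the gap you must prove one of the two equalities directly, after which the counting gives the other: either $K(\ff_1)\cap K(\ff_2) = K(d)$ or $K(\ff_1)\cdot K(\ff_2) = K(m)$. The route taken by the results the paper cites (\cite[Prop.~2.10]{Cl22}, \cite[Prop.~2.1]{CS22}) is class field theory at the level of congruence subgroups: $K(\ff)$ corresponds to the group $P_{K,\mathbb{Z}}(\ff)$ of principal ideals admitting a generator congruent to a rational integer modulo $\ff\mathfrak{o}_K$, the compositum corresponds to $P_{K,\mathbb{Z}}(\ff_1)\cap P_{K,\mathbb{Z}}(\ff_2)$, and one checks via the Chinese Remainder Theorem that this intersection is $P_{K,\mathbb{Z}}(\mathrm{lcm}(\ff_1,\ff_2))$. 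The delicate point is that membership in $P_{K,\mathbb{Z}}(\ff_i)$ may be witnessed by different unit multiples of a generator for $i=1$ and $i=2$; when $\mathfrak{o}_K^* = \{\pm 1\}$ this is harmless, and when $\Delta_K \in \{-3,-4\}$ one uses $\gcd(\ff_1,\ff_2) > 1$ to rule out a nontrivial unit discrepancy. Your proposal has no step playing this role, and your ramification argument does not adapt to $d > 1$: knowing that $K(\ff_1)\cap K(\ff_2)$ is unramified outside primes dividing $d$ does not place it inside $K(d)$, since there are abelian extensions ramified only at those primes whose conductors are divisible by higher powers of them.
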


\begin{proof}
Part (1) is \cite[Prop. 2.10]{Cl22}, while part (2) is \cite[Prop. 2.1]{CS22} and part (3) follows from the combination of these two propositions. 
\end{proof}

We now use Proposition \ref{composita} to get analogs of \cite[Prop. 2.10]{Cl22} and \cite[Prop. 2.2]{CS22}, in which ``rational ring class fields'' are exchanged for those index $2$ subfields of rings class fields which arise as residue fields of CM points on $X^D(1)_{/\mathbb{Q}}$. 

\begin{corollary}\label{tensor_products1}
Suppose that $x_1 \in X_0^D(N_1)_{/\mathbb{Q}}$ and $x_2 \in X_0^D(N_2)_{/\mathbb{Q}}$ are $\mathfrak{o}(\ff) $-CM points, where $\mathfrak{o}(\ff) $ is an imaginary quadratic order in $K$. For $i = 1, 2$, let $\ff_i \in \mathbb{Z}^+$ such that 
\[ K \cdot \mathbb{Q}(x_i) \cong K(\ff_i). \]
Let $M = \textnormal{gcd}(N_1,N_2)$ and $m = \textnormal{lcm}(N_1,N_2)$, and suppose that $x \in X_0^D(M)$ is a point lying above $x_1$ and $x_2$ which is fixed by an involution $\sigma \in \textnormal{Gal}(K(M)/K)$. Let $\pi : X_0^D(M)_{/\mathbb{Q}} \rightarrow X^D(1)_{/\mathbb{Q}}$ denote the natural map. Then 
\begin{enumerate}
\item The fields $\mathbb{Q}(x_1)$ and $\mathbb{Q}(x_2)$ are linearly disjoint over $\mathbb{Q}(\pi(x))$. 
\item We have 
\[ \mathbb{Q}(x_1) \otimes_{\mathbb{Q}(\pi(x))} \mathbb{Q}(x_2) \cong \mathbb{Q}(x). \]
\item  We have 
\[ \mathbb{Q}(x_1) \otimes_{\mathbb{Q}(\pi(x))} K(x_2) \cong K(x). \]
\end{enumerate}
\end{corollary}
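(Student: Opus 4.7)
The plan is to reduce to the $K$-rational statement using Shimura's Theorem~\ref{Shimura} and Proposition~\ref{composita}, then descend to $\mathbb{Q}$ using Galois theory and the involution hypothesis on $x$. Write $F = \mathbb{Q}(\pi(x))$ and $F_i = \mathbb{Q}(x_i)$. By Theorem~\ref{Shimura}, $K \cdot F_i = K(\ff_i)$, and $K \cdot F = K(\ff')$ for some $\ff'$ where the path-type analysis of \S\ref{CM_points_prime_power_section} together with Proposition~\ref{red_to_prime_powers} identifies $\ff'$ with $\gcd(\ff_1,\ff_2)$ and $K \cdot \mathbb{Q}(x)$ with $K(\mathrm{lcm}(\ff_1,\ff_2))$. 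Proposition~\ref{composita}(3) then yields that $K(\ff_1)$ and $K(\ff_2)$ are linearly disjoint over $K(\ff')$ with compositum $K \cdot \mathbb{Q}(x)$, which proves part~(3) directly after identifying the tensor product of linearly disjoint fields with their compositum.

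For part~(1), I would split into cases based on Theorem~\ref{JGR}. If $D(K) > 1$, every residue field in sight is a ring class field, so $F_i = K(\ff_i)$ and $F = K(\ff')$, and linear disjointness is immediate from Proposition~\ref{composita}(3). Otherwise, each of $F, F_1, F_2$ is either equal to or an index $2$ totally complex subfield of the associated ring class field. The hypothesis that $x$ is fixed by an involution $\sigma$ is precisely what ensures that the involutions cutting out the index $2$ subfields $F_1$ and $F_2$ (where they occur) are compatible restrictions of a single involution on the compositum $K(\mathrm{lcm}(\ff_1,\ff_2))$. With this compatibility, a degree count using that $\mathrm{Gal}(K(\mathrm{lcm}(\ff_1,\ff_2))/F)$ is generated by $\mathrm{Gal}(K(\mathrm{lcm}(\ff_1,\ff_2))/K)$ together with $\langle \sigma \rangle$ yields $[F_1 \cdot F_2 : F] = [F_1 : F]\cdot[F_2 : F]$, which is linear disjointness.

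Part~(2) then follows from part~(1) by standard nonsense: the tensor product is a field isomorphic to the compositum $F_1 \cdot F_2 \subseteq \mathbb{Q}(x)$, and a degree comparison obtained by base-changing to $K$ (appealing to part~(3)) forces equality with $\mathbb{Q}(x)$.

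The main obstacle will be the bookkeeping in the Galois descent of part~(1) when some or all of $F, F_1, F_2$ are strict index $2$ subfields of their $K$-enlargements. Verifying that the involutions on $F_1$ and $F_2$ descend compatibly from $\sigma$ is the crux of the argument; this is also where the fixing hypothesis on $x$ is truly used, since without it one could have $F_1 \cap F_2$ strictly larger than $F$, violating linear disjointness. The exceptional CM discriminants $\Delta \in \{-3,-4\}$ cause no extra trouble here since $D(K) \neq 1$ is automatic in those cases, placing us in the clean ring-class-field regime and bypassing the descent entirely.
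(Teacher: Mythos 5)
Your overall route is the paper's: reduce everything to linear disjointness of the ring class fields $K(\ff_1)$, $K(\ff_2)$ over $K(\ff)$ via Proposition~\ref{composita}, identify $K\cdot\mathbb{Q}(x)\cong K(\ff_1)\cdot K(\ff_2)$ via Proposition~\ref{red_to_prime_powers}, and then descend to $\mathbb{Q}$ using the compatibility of the involution $\sigma$ with its restrictions to $\mathbb{Q}(x_1)$, $\mathbb{Q}(x_2)$, $\mathbb{Q}(\pi(x))$ (the paper outsources this descent to the rational-ring-class-field arguments of \cite{Cl22, CS22} and proves (3) by a multiplication-map/dimension count). Two points, however, are genuinely off. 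First, part (3) does not follow ``directly'' from Proposition~\ref{composita}(3): the tensor product there is taken over $\mathbb{Q}(\pi(x))$ and one factor is $\mathbb{Q}(x_1)$, which (in the relevant case $D(K)=1$) does not contain $K$. To identify $\mathbb{Q}(x_1)\otimes_{\mathbb{Q}(\pi(x))}K(x_2)$ with $K(x)$ you need exactly the degree compatibility $[\mathbb{Q}(x_1):\mathbb{Q}(\pi(x))]=[K(\ff_1):K(\ff)]$ and the surjectivity of the multiplication map onto $K(x_1)\cdot K(x_2)$ --- i.e.\ the same involution-compatibility bookkeeping you defer to part (1). The paper's proof of (3) is precisely a bilinear map plus dimension count built on these facts; as your plan stands, (3) is asserted before the input it needs, and (2) then cites (3), so the logical ordering has to be repaired.

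Second, and more seriously, your closing remark about $\Delta\in\{-3,-4\}$ (and likewise your $D(K)>1$ branch) misreads where the hypothesis on $x$ bites. An involution of the kind in Theorem~\ref{JGR} can only fix a CM point when $D(K)=1$, and since $X^D(1)$ has no real points this forces $h(\mathfrak{o}(\ff))>1$, hence $\ff^2\Delta_K<-4$; this is exactly the observation the paper makes, and it is what legitimizes the appeal to Proposition~\ref{red_to_prime_powers} (which excludes $-3$ and $-4$ CM) in your identification $K\cdot\mathbb{Q}(x)\cong K(\mathrm{lcm}(\ff_1,\ff_2))$. So the exceptional discriminants are not ``handled in the clean ring-class-field regime'' --- they are excluded by the hypothesis. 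This matters: if they were not excluded, parts (2) and (3) would actually be false for $\Delta_K\in\{-3,-4\}$ with coprime conductors $\ff_1,\ff_2$ of class number $>1$, since by Proposition~\ref{composita}(2)(c) the compositum $K(\ff_1)\cdot K(\ff_2)$ has index $2$ or $3$ in $K(\ff_1\ff_2)$, whereas $\mathbb{Q}(x)$ is the full ring class field (this is why the paper treats $\Delta\in\{-3,-4\}$ by a separate argument in \S\ref{compiling_section2}). Your proof goes through once you replace that remark by the correct observation that the fixed-by-$\sigma$ hypothesis forces $D(K)=1$ and $\ff^2\Delta_K<-4$.
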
 

\begin{proof}
    The ring class fields $K(\ff_1)$ and $K(\ff_2)$ are linearly disjoint over $K(m)$ by Proposition \ref{composita}. That $\mathbb{Q}(x_1)$ and $\mathbb{Q}(x_2)$ are linearly disjoint over $\mathbb{Q}(\pi(x))$, and that
    \[ [\mathbb{Q}(x) : \mathbb{Q}(x_1) \cdot \mathbb{Q}(x_2)] = [K(x) : K(x_1) \cdot K(x_2] = [K(m) : K(\ff_1) \cdot K(\ff_2)], \]  
follow by the same type of arguments as in the analogous case of rational ring class fields in \cite[Prop. 2.10]{Cl22} and \cite[Prop. 2.2]{CS22}, using that $K(x) \cong K(x_1)\cdot K(x_2) \cong K(\ff_1)\cdot K(\ff_2)$ via Proposition \ref{red_to_prime_powers} (note that the assumption that $x$ is fixed by $\sigma$ forces $\ff^2\Delta_K < -4$, so this proposition applies.). 

Part (2) now follows from the preceding remarks, combined with Proposition \ref{composita}. As for part (3), first note that the fact that $\mathbb{Q}(x)$ is fixed by some involution $\sigma \in \textnormal{Gal}(K(M)/K)$ immediately implies that $h(\mathfrak{o}(\ff)) > 1$ (as $X^D(1)_{/\mathbb{Q}}$ has no real points). We note that the map
        \begin{align*} 
        \mathbb{Q}(x_1) \times K(x_2) &\longrightarrow K(x_1)\cdot K(x_2) \\
        (x_1,x_2) &\longmapsto x_1\cdot x_2
        \end{align*}
        is $\mathbb{Q}(\pi(x))$-bilinear, and the induced map on the tensor product over $\mathbb{Q}(\pi(x))$ must be an isomorphism 
        \[ \mathbb{Q}(x_1) \otimes_{\mathbb{Q}(\pi(x))}  K(x_2) \cong K(x_1) \cdot K(x_2) \]
        as the two finite $\mathbb{Q}(\ff)$-algebras here have the same dimension. The result then follows as $K(x_1) \cdot K(x_2) \cong K(x)$. 
\end{proof}

\begin{corollary}\label{tensor_products2}
Suppose that $x_1, x_2, \ldots, x_r$ are $\mathfrak{o}(\ff) $-CM points with $x_i \in X_0^D(N_i)_{/\mathbb{Q}}$ for each $i=1,\ldots, r$, where $\mathfrak{o}(\ff) $ is an imaginary quadratic order in $K$. For each $i = 1, \ldots, r$, let $\ff_i \in \mathbb{Z}^+$ such that 
\[ K \cdot \mathbb{Q}(x_i) \cong K(\ff_i). \]
Let $M = \textnormal{gcd}(N_1,\ldots, N_r)$ and $m = \textnormal{lcm}(N_1,\ldots, N_r)$. Let $\pi : X_0^D(M)_{/\mathbb{Q}} \rightarrow X^D(1)_{/\mathbb{Q}}$ denote the natural map. Let $S = \{-3, -4, -12, -16, -27\}$ be the set of discriminants of imaginary quadratic orders of class number $1$ with $\Delta_K \in \{-3,-4\}$. 
\begin{enumerate}
\item Suppose that $r=2$. If $\ff_1 \in S$, then we have
\[ K(x_1) \otimes_{\mathbb{Q}(\pi(x))} K(x_2) \cong K(x_2) \times K(x_2). \]
Now assuming $\ff_1, \ff_2 \not \in S$, if $\Delta_K < -4$ or if $\text{gcd}(\ff_1,\ff_2) > 1$ then 
\[ K(x_1) \otimes_{\mathbb{Q}(\pi(x))} K(x_2) \cong K(M) \times K(M). \]
\item Suppose that $\Delta_K \in \{-3,-4\}$, that $\ff_1, \ldots, f_r \not \in S$, and that $\ff_1, \ldots, f_r$ are all pairwise relatively prime. We then have
\[ \mathbb{Q}(x_1) \otimes_{\mathbb{Q}(\pi(x))} \ldots \otimes_{\mathbb{Q}(\pi(x))} \mathbb{Q}(x_r) \cong  K(x_1) \otimes_{\mathbb{Q}(\pi(x))} \ldots \otimes_{\mathbb{Q}(\pi(x))} K(x_r) \cong L^r, \]
with $L$ a subfield of $K(M)$ of index $2^{r-1}$ if $\Delta_K = -4$ and index $3^{r-1}$ if $\Delta_K = -3$. 
\end{enumerate} 
\end{corollary}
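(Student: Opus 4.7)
The proof relies on Proposition \ref{composita} together with the standard algebraic fact that if $L_1/F$ is finite Galois and $L_2/F$ is any finite separable extension contained in a common ambient field, then
\[ L_1 \otimes_F L_2 \cong (L_1 \cdot L_2)^{[L_1 \cap L_2 : F]}, \]
where the right-hand side denotes a product of $[L_1 \cap L_2 : F]$ copies of the compositum. A preliminary observation is that when $\Delta_K \in \{-3, -4\}$ and $D > 1$, we necessarily have $D(K) > 1$, since $D$ has at least two distinct prime factors while at most one prime ramifies in $K$. Consequently $K \subseteq \mathbb{Q}(\pi(x))$ and $\mathbb{Q}(x_i) = K \cdot \mathbb{Q}(x_i) = K(\ff_i)$, which collapses the first isomorphism in part (2) to a tautology and removes many subtleties about fields of moduli.

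For part (1), the case $\ff_1 \in S$ is handled by observing that $K(\ff_1) = K$, so $K(x_1) \otimes_{\mathbb{Q}(\pi(x))} K(x_2) = K \otimes_{\mathbb{Q}(\pi(x))} K(x_2)$. When $\mathbb{Q}(\pi(x))$ sits as an index $2$ subfield of $K$, one can write $K \cong \mathbb{Q}(\pi(x))[T]/(T^2 - c)$ for a suitable $c$, and since $K \subseteq K(x_2)$ the polynomial $T^2 - c$ splits over $K(x_2)$, yielding the product of two copies of $K(x_2)$. The remaining subcase ($\ff_1,\ff_2 \not\in S$ with either $\Delta_K < -4$ or $\gcd(\ff_1,\ff_2) > 1$) is handled using Proposition \ref{composita}(1) and (2)(a), which in both cases give $K(\ff_1) \cdot K(\ff_2) = K(\textnormal{lcm}(\ff_1,\ff_2)) = K(M)$, combined with the linear disjointness statement of Proposition \ref{composita}(3) over $K(\gcd(\ff_1,\ff_2))$ to evaluate the tensor product via the displayed formula above.

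Part (2) proceeds by induction on $r$. Pairwise coprimality together with $\ff \mid \ff_i$ for all $i$ forces $\ff = 1$, so $\mathbb{Q}(\pi(x)) = K$. The base case $r=2$ follows from Corollary \ref{tensor_products1} combined with the index formula of Proposition \ref{composita}(2)(c), which identifies the field $L$ as the compositum $K(\ff_1) \cdots K(\ff_r)$ sitting as an index $2^{r-1}$ (respectively $3^{r-1}$) subfield of $K(M) = K(\ff_1 \cdots \ff_r)$. For the inductive step, one uses associativity to write
\[ K(\ff_1) \otimes_K \cdots \otimes_K K(\ff_r) \cong \bigl(K(\ff_1) \otimes_K \cdots \otimes_K K(\ff_{r-1})\bigr) \otimes_K K(\ff_r), \]
and applies the induction hypothesis together with an additional application of Proposition \ref{composita}(2)(c) measuring the index jump when adjoining $K(\ff_r)$ to the running compositum.

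The principal subtlety is tracking how the nontrivial index discrepancies $[K(\ff_1 \cdots \ff_r) : K(\ff_1) \cdots K(\ff_r)] = 2^{r-1}$ or $3^{r-1}$ from Proposition \ref{composita}(2)(c) propagate through the iterated tensor product. These discrepancies arise from the extra units in $\mathfrak{o}_K^\times$ when $\Delta_K \in \{-3,-4\}$ and produce ring class field extensions strictly larger than the compositum of the factor-wise ring class fields; reconciling this with the $K$-dimension count $\prod_i h(\mathfrak{o}(\ff_i))$ on the tensor side, and pinning down the resulting product of copies of $L$, is the only step where the argument departs materially from the analogous proof for $\Delta_K < -4$, where linear disjointness makes all compositum computations collapse into the single-field case of Corollary \ref{tensor_products1}.
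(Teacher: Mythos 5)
The multiplicities are the entire content of this corollary, and your argument never actually produces them. For part (1) with $\ff_1 \in S$, you extract the two copies from the scenario in which $\mathbb{Q}(\pi(x))$ is an index $2$ subfield of $K$; since $[K:\mathbb{Q}]=2$ this would force $\mathbb{Q}(\pi(x)) = \mathbb{Q}$, which is impossible ($X^D(1)_{/\mathbb{Q}}$ has no real points for $D>1$) and also contradicts your own preliminary observation that $K \subseteq \mathbb{Q}(\pi(x))$ whenever $\Delta_K \in \{-3,-4\}$ --- a condition which $\ff_1 \in S$ forces. In the configuration you actually set up, $\mathbb{Q}(\pi(x))$ is the ring class field $K(\ff) \supseteq K$, and your formula $L_1 \otimes_F L_2 \cong (L_1L_2)^{[L_1\cap L_2:F]}$ then returns a \emph{single} copy, not $K(x_2)\times K(x_2)$. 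The same issue is buried in your treatment of the second half of part (1): with both factors Galois ring class fields, the number of copies equals $[K(\gcd(\ff_1,\ff_2)):\mathbb{Q}(\pi(x))]$ by Proposition \ref{composita}(3), so obtaining exactly two requires both $\gcd(\ff_1,\ff_2)=\ff$ and $K \not\subseteq \mathbb{Q}(\pi(x))$, and you verify neither. The paper's proof manufactures the factor of two by routing the computation through part (3) of Corollary \ref{tensor_products1}, i.e.\ through the standing hypotheses of that corollary (a point fixed by an involution in $\mathrm{Gal}(K(M)/K)$, so that $K\cdot\mathbb{Q}(\pi(x))$ is quadratic over $\mathbb{Q}(\pi(x))$ and tensoring it with itself splits into two factors); by replacing that input with the opposite assumption you discard the only mechanism available for creating extra copies.

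Part (2) shows the gap even more plainly: granting your (and the paper's) identification $\mathbb{Q}(\pi(x)) \cong K$, pairwise coprimality together with Proposition \ref{composita}(3) makes the successive factors linearly disjoint over $K$, so your induction yields one copy of the compositum $L = K(\ff_1)\cdots K(\ff_r)$, with the index statement supplied by Proposition \ref{composita}(2)(c); nothing in your argument produces the exponent $r$ in $L^r$, and you concede in your closing paragraph that ``pinning down the resulting product of copies of $L$'' remains to be done. That is not a residual subtlety to be reconciled later --- it is precisely the assertion to be proved. The tension you ran into is genuine: the statement silently inherits the setup of Corollary \ref{tensor_products1}, and its multiplicities do not follow from the observation $K \subseteq \mathbb{Q}(\pi(x))$ alone. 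But a complete proof must either invoke those hypotheses, as the paper's argument does, or honestly recompute the number of copies in the $K \subseteq \mathbb{Q}(\pi(x))$ regime; your draft does neither, so the claimed isomorphisms $K(x_2)\times K(x_2)$ and $L^r$ are not established.
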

\begin{proof}
\begin{enumerate}
\item Using part $(3)$ of Corollary \ref{tensor_products1}, we have
        \begin{align*} 
        K(x_1) \otimes_{\mathbb{Q}(\pi(x))} K(x_2) &\cong \left(\mathbb{Q}(x_1) \otimes_{\mathbb{Q}(\pi(x))} K(x)\right) \otimes_{\mathbb{Q}(\pi(x))} \left(\mathbb{Q}(x_2) \otimes_{\mathbb{Q}(\pi(x))} K(x)\right) \\
        &\cong \left(\mathbb{Q}(x_1) \otimes_{\mathbb{Q}(\pi(x))} \mathbb{Q}(x_2)\right) \otimes_{\mathbb{Q}(\pi(x))} \left( K(x) \otimes_{\mathbb{Q}(\pi(x))} K(x) \right) \\
        &\cong \left(\mathbb{Q}(x_1) \otimes_{\mathbb{Q}(\pi(x))} \mathbb{Q}(x_2)\right) \otimes_{\mathbb{Q}(\pi(x))} \left( K(x) \times K(x) \right) \\
        &\cong \left(\mathbb{Q}(x_1) \otimes_{\mathbb{Q}(\pi(x))} K(x_2)\right) \times \left(\mathbb{Q}(x_1) \otimes_{\mathbb{Q}(\pi(x))} K(x_2)\right). 
        \end{align*}
The stated result then follows from another use of Corollary \ref{tensor_products1} part (3) if $\mathbb{Q}(x_1)$ properly embeds in the ring class field $K(\ff_1)$. Otherwise, $\mathbb{Q}(x_i) \cong K(\ff_i)$ for $i = 1, 2$ and $\mathbb{Q}(\pi(x)) \cong K(\ff)$. The case of $\ff_1 \in S$ is then clear, so assume $\ff_1, \ff_2 \not \in S$ and at least one of $\Delta_K < -4$ or $\text{gcd}(\ff_1,\ff_2) > 1$ holds. It then follows from Proposition \ref{composita} that 
\begin{align*} 
K(x_1) \otimes_{\mathbb{Q}(\pi(x))} K(x_2) &\cong \left( K(\ff_1) \otimes_{K(\ff)} K(\ff_2)\right) \times \left(K(x_1) \otimes_{K(\ff)} K(\ff_2)\right) \\
&\cong K(M) \times K(M).
\end{align*}
\item This result follows similarly to the above argument using Proposition \ref{composita} once more. Note that our assumption that the $\ff_i$ are relatively prime forces $\mathbb{Q}(x_i)$ to be a ring class field for each $i$; this assumption gives $K \cdot \mathbb{Q}(\pi(x)) \cong K(1) = K$ as $\Delta_K \in \{-3,-4\}$, and our Shimura curves have no real points so indeed $\mathbb{Q}(\pi(x)) \cong K$. \qedhere
\end{enumerate}
\end{proof}


\section{CM points on $X_0^D(N)_{/\mathbb{Q}}$}\label{CM_points_level_N}

In this section, we describe the $\Delta$-CM locus on $X_0^D(N)_{/\mathbb{Q}}$ for any $N \in \mathbb{Z}^+$ relatively prime to $D$ and any imaginary quadratic discriminant $\Delta$. For $\Delta < -4$, this description is possible using the foundations we have built thus far, specifically Propositions \ref{red_to_prime_powers} and \ref{fom-prime-power}, along with the path type analysis in \S \ref{CM_points_prime_power_section}. For $\Delta = \Delta_K \in \{-3,-4\}$, however, Proposition \ref{red_to_prime_powers} does not apply. 

We first elaborate on the description in former case, and then provide a result for compiling across prime powers in the case of $\Delta \in \{-3,-4\}$. Following this, we discuss primitive residue fields and degrees of $\Delta$-CM points on $X_0^D(N)_{/\mathbb{Q}}$. 

\subsection{Compiling across prime powers: $\Delta < -4$}\label{compiling_section1}

For a fixed prime $\ell$ relatively prime to $D$, let $\Delta = \ell^{2L}\ff_0^2\Delta_K$ with $\text{gcd}(\ff_0,\ell) = 1$ be an imaginary quadratic discriminant. Fixing $a \in \mathbb{Z}^+$, consider the natural map $\pi :X^D_0(\ell^a)_{/\mathbb{Q}} \rightarrow X^D(1)_{/\mathbb{Q}}$ and the fiber $\pi^{-1}(x)$ over a $\Delta$-CM point $x \in X_0^D(1)_{/\mathbb{Q}}$. Tthere are $2^b$ such fibers by Theorem \ref{fake-ell-curves-ell-curves-correspondence}, and any two are isomorphic via an Atkin--Lehner involution $w_p$ for some prime $p \mid D$ which is inert in $K$. We then have $\pi^{-1}(x) \cong \spec A$ with 
\begin{equation}\label{A_finite_etale_form}
A = \prod_{j=0}^{a} {L_j}^{b_j} \times \prod_{k=0}^{a} K(\ell^k \ff)^{c_k}
\end{equation}
for some non-negative integers $b_j, c_k$, where $L_j$ is an index $2$ subfield of $K(\ell^j \ff)$ for all $0 \leq j \leq a$. The explicit values $b_j$ and $c_k$, based on $\ell^a$ and $\Delta$, are determined by our path type analysis in \S \ref{CM_points_prime_power_section}. 

Next assume $\Delta < -4$, let $N$ denote a positive integer relatively prime to $D$, and consider the fiber $\pi^{-1}(x)$ of the map $\pi: X^D_0(N)_{/\mathbb{Q}} \rightarrow X(1)_{/\mathbb{Q}}$ over a $\Delta$-CM point $x \in X^D(1)_{/\mathbb{Q}}$. Let $N = \ell_1^{a_1} \cdots \ell_r^{a_r}$ be the prime-power factorization of $N$, and for each $1 \leq i \leq r$ consider the fiber $\pi_i^{-1}(x)$ of $\pi_i : X^D_0(\ell_i^{a_i})_{\mathbb{Q}} \rightarrow X^D(1)_{/\mathbb{Q}}$ over $x$. We then have 
\[ \pi_i^{-1}(x) \cong \spec A_i ,\] 
with each $A_i$ of the form given in (\ref{A_finite_etale_form}). Proposition \ref{red_to_prime_powers} then provides that $\pi^{-1}(x) \cong \spec A$ with 
\[ A = A_1 \otimes_{\mathbb{Q}(x)} \cdots \otimes_{\mathbb{Q}(x)} A_r. \]
It follows that $A$ is a direct sum of terms of the form 
\[ M = M_1 \otimes_{\mathbb{Q}(x)} \cdots \otimes_{\mathbb{Q}(x)} M_r, \]
where for each $1 \leq i \leq r$ we have that $M_i$ is isomorphic to $K(\ell_i^{j_i} \ff)$, or a totally complex index $2$ subfield thereof, for some $0 \leq j_i \leq a_i$. 

Let $s$ be the number of indices $1 \leq i \leq r$ such that $K$ is contained in $M_i$, i.e., such that $M_i \cong K(\ell_i^{j_i})$ is a ring class field. The results of \S \ref{algebraic-results-section} then tell us that
\[ M \cong \begin{cases} L \quad &\text{ if } s = 0 \\
K(\ell_1^{j_1} \cdots \ell_r^{j_r} \ff)^{2^{s-1}} \quad &\text{ otherwise,} \end{cases}  \]
where $L \subsetneq K(\ell_1^{j_1} \cdots \ell_r^{j_r})$ is a totally complex, index $2$ subfield in the $s = 0$ case. (Note that $\ell_i^{j_i} \Delta \in \{-12,-16,-27\}$ can only occur, due to the $\Delta < -4$ assumption, if $j_i = 0$, so these possibilities do not require special attention here.) 

\subsection{Compiling across prime powers: $\Delta \in \{-3,-4\}$}\label{compiling_section2}

Here, we determine how to compile residue field information across prime-power level for $\Delta \in \{-3,-4\}$. Our result here should be compared to \cite[Prop. 82, Thm. 8.3]{CS22}, wherein more work is required due to the fact that residue fields of $-3$ and $-4$-CM points on $X_0(N)_{/\mathbb{Q}}$ do not always contain the CM field $K$. 

\begin{theorem}\label{compiling_thm_extended}
Let $N \in \mathbb{Z}^+$ coprime to $D$ with prime-power factorization $N = \ell_1^{a_1} \cdots \ell_r^{a_r}$, and suppose $x \in X_0^D(N)_{/\mathbb{Q}}$ is a $\Delta$-CM point with $\Delta \in \{-3,-4\}$. Let $\pi_i: X_0^D(N)_{/\mathbb{Q}} \rightarrow X_0^D(\ell_1^{a_1})_{/\mathbb{Q}}$ denote the natural map and let $x_i = \pi_i(x)$ for each $1 \leq i \leq r$. Let $P_i$ be any path in the closed-point equivalence class of paths in $\mathcal{G}^D_{K,\ell_i,1}$ corresponding to $x_i$, and let $d_i \geq 0$ be the number of descending edges in $P_i$ (which is independent of the representative path). We then have 
\[ \mathbb{Q}(x) \cong K(\ell_1^{d_1} \cdots \ell_r^{d_r}). \] 
\end{theorem}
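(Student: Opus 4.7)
The plan is to show $\mathbb{Q}(x) = K(\ff)$ for $\ff := \ell_1^{d_1} \cdots \ell_r^{d_r}$, establishing containment in both directions. Since we do not have Proposition \ref{red_to_prime_powers} at our disposal here, we cannot reduce to prime-power levels via tensor products, and an argument tailored to the $\{-3,-4\}$-CM case is needed.

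For the lower bound $\mathbb{Q}(x) \supseteq K(\ff)$, I would use the ``target map'' $\rho : X_0^D(N)_{/\mathbb{Q}} \to X^D(1)_{/\mathbb{Q}}$ sending a class $[(A,\iota,Q)]$ to $[(A/Q, \iota_{A/Q})]$ (which is $\mathbb{Q}$-rational, being a composition of the source map with the Atkin--Lehner involution $w_N$). For each $i$, the path $P_i$ in $\mathcal{G}^D_{K,\ell_i,1}$ starts at the level-$0$ vertex $[(A,\iota)]$ and ends at a level-$d_i$ vertex, so the $\ell_i$-part of the target central conductor is $\ell_i^{d_i}$; by multiplicativity over coprime primes the target has central conductor exactly $\ff$. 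Thus $\rho(x)$ is an $\mathfrak{o}(\ff)$-CM point on $X^D(1)_{/\mathbb{Q}}$. Since $\Delta_K \in \{-3,-4\}$ has only one prime divisor while $D > 1$ has at least two, we automatically have $D(K) \neq 1$; Theorem \ref{JGR} then gives $\mathbb{Q}(\rho(x)) = K(\ff)$, and the $\mathbb{Q}$-rationality of $\rho$ yields $\mathbb{Q}(x) \supseteq K(\ff)$.

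For the upper bound $\mathbb{Q}(x) \subseteq K(\ff)$, I would show that every $\sigma \in \textnormal{Gal}(\overline{\mathbb{Q}}/K(\ff))$ fixes $x$. By Theorem \ref{fake-ell-curves-ell-curves-correspondence} we may write $A \cong (\mathbb{C}/\mathfrak{o}_K)^2$, and by Morita (Remark \ref{kernel-to-cyclic-subgroup-remark}) the kernel $Q$ corresponds to a cyclic order-$N$ subgroup $C \subset E_K[N]$ with $E_K = \mathbb{C}/\mathfrak{o}_K$; two QM-cyclic isogenies are isomorphic iff their cyclic subgroups are related by an element of $\textnormal{Aut}(A,\iota) = \mathfrak{o}_K^{\times}$. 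Decomposing $C = \bigoplus_i C_i$ into $\ell_i$-primary parts, the extension of Proposition \ref{fom-prime-power} to the $\ff_0^2\Delta_K \in \{-3,-4\}$ case provides, for each $\sigma' \in \textnormal{Gal}(\overline{K}/K(\ell_i^{d_i}))$, a unit $u_i(\sigma') \in \mathfrak{o}_K^{\times}$ with $\sigma'(C_i) = u_i(\sigma') C_i$. The key claim is that when $\sigma \in \textnormal{Gal}(\overline{K}/K(\ff))$, a single $u \in \mathfrak{o}_K^{\times}$ can be chosen uniformly in $i$. This follows from Shimura reciprocity applied to $E_K$: such $\sigma$ is of the form $\sigma_{\mathfrak{a}}$ with $\mathfrak{a}$ principal, $\mathfrak{a} = \alpha \mathfrak{o}_K$ and $\alpha \equiv 1 \pmod{\ff \mathfrak{o}_K}$, and the action of $\sigma_\mathfrak{a}$ on the $N$-torsion of $E_K$ is by multiplication by $\alpha$, which preserves every order-$\ell_i^{a_i}$ cyclic subgroup $C_i$ up to the single scalar coming from $\alpha \bmod \mathfrak{o}(\ff)^\times$.

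The main obstacle is making the ``consistency'' argument rigorous, since the individual stabilizers $\textnormal{Gal}(\overline{K}/K(\ell_i^{d_i}))$ are in general properly larger than $\textnormal{Gal}(\overline{K}/K(\ff))$; each $x_i$ is fixed by \emph{some} unit in $\mathfrak{o}_K^\times$, and a priori those units may differ across $i$, yielding extra Galois elements which fix all $x_i$ but not $x$. A backup strategy is a direct degree count: combining the lower bound with the path-type enumerations of \S\ref{path-analysis-general}--\ref{path-analysis-2-ord2} applied across prime powers and summing the contributions against $\sum_{y} e_y f_y = 2^b \psi(N)$ forces $[\mathbb{Q}(x) : \mathbb{Q}] \leq [K(\ff) : \mathbb{Q}]$, and combined with the lower bound yields equality.
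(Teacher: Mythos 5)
Your lower bound is correct and is essentially the paper's: the target of a QM-cyclic $N$-isogeny inducing $x$ has central conductor exactly $\ff := \ell_1^{d_1}\cdots\ell_r^{d_r}$, and since $\Delta_K \in \{-3,-4\}$ has a single prime divisor while $D$ has at least two, $D(K) \neq 1$ holds automatically, so Theorem \ref{JGR} gives $K(\ff) \subseteq \mathbb{Q}(x)$. The upper bound, however, is where the content of the theorem lies, and neither of your routes closes it. In your primary route the ``key claim'' (a single unit $u \in \mathfrak{o}_K^{\times}$ working simultaneously for all $\ell_i$-primary pieces) is precisely the issue, and the Shimura-reciprocity sketch does not establish it: the elements of $\textnormal{Gal}(\overline{K}/K(\ff))$ seen on $N$-torsion are Artin symbols of principal ideals $\alpha\mathfrak{o}_K$ with $\alpha$ congruent to a \emph{rational integer} modulo $\ff\mathfrak{o}_K$, not $\alpha \equiv 1$ (your condition only captures the subgroup fixing the ray class field, which is larger than $K(\ff)$, i.e. a smaller Galois group); and, more seriously, passing from ``multiplication by $\alpha$ on the torsion of a chosen model of $E_K$'' to the Galois action on the moduli point $x$ needs justification here, since $\textnormal{Aut}(A,\iota) = \mathfrak{o}_K^{\times}$ is not $\{\pm 1\}$, fields of moduli need not be fields of definition, and the identification $A \cong E_K \times E_K$ is only given over $\mathbb{C}$. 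The backup degree count is circular: $\sum_y e_y f_y = \psi(N)$ together with the lower bounds $f_y \geq [K(\ff_y):K]$ forces equality only if you already know how many closed points of each type $(d_1,\ldots,d_r)$ occur, i.e. the sizes of the Galois orbits of geometric points — which is exactly what is being proved. You cannot extract those counts by combining the prime-power fibers, because Proposition \ref{red_to_prime_powers} is precisely what fails for $-3,-4$-CM points: in the presence of extra automorphisms the level-$N$ fiber is not the tensor product of the prime-power fibers (the paper's case analysis shows, e.g., five closed points where a naive product would predict four).

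The paper's proof avoids all of this with one trick you are missing: after the lower-bound step it replaces $x$ by the point $x'$ induced by the dual isogeny (the Atkin--Lehner image at level $N$, a $\mathbb{Q}$-rational involution), so $\mathbb{Q}(x') \cong \mathbb{Q}(x)$, while $x'$ has CM by the order of discriminant $\ff^2\Delta_K < -4$ whenever some $d_i > 0$; the case all $d_i = 0$ is handled directly, since then $\varphi$ is, up to isomorphism of the target, a QM-equivariant endomorphism and $\mathbb{Q}(x) \subseteq K$. Because $x'$ has no extra automorphisms, Proposition \ref{red_to_prime_powers} and the composita results of \S\ref{algebraic-results-section} apply to $x'$ and yield $\mathbb{Q}(x) \cong \mathbb{Q}(x') \cong K(\ff)$. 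If you wish to salvage your class-field-theoretic route, the uniformity claim must actually be proved (in effect redoing a González--Rotger-style computation with level structure), not asserted; as written, the proposal has a genuine gap at the upper bound.
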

\begin{proof}
Because $\Delta \in \{-3,-4\}$, we know that the residue field of the image of $x$ under the natural map to $X^D(1)_{/\mathbb{Q}}$ is necessarily $K$. Therefore, $K \subseteq \mathbb{Q}(x_i)$ for each $i$ and hence $\mathbb{Q}(x_i) \cong K(\ell_i^{d_i})$ for each $1 \leq i \leq r$. 

Let $\varphi: (A,\iota) \rightarrow (A',\iota')$ be a QM-cyclic $N$-isogeny over $\mathbb{Q}(x)$ inducing $x$ (necessarily there is such an isogeny, as $K \subseteq \mathbb{Q}(x)$). Let $Q = \text{ker}(\varphi)$, let $C = e_1(Q)$, and for each $1 \leq i \leq r$ let $C_i \leq C$ be the Sylow $\ell_i$ subgroup of $C$. Let $\varphi_i : (A,\iota) \rightarrow (A/(\mathcal{O}\cdot C_i), \iota_i)$ be the $\ell_i$-primary part of $\varphi$, and let $\ff_i$ denote the central conductor of $(A/(\mathcal{O} \cdot C_i), \iota)$ (where by $\iota$ here we really mean the induced QM structure on the quotient). Put
\[ \mathcal{I} := \{i \mid d_i > 0\} = \{i \mid  \text{ord}_{\ell_i}(\ff_i) > 0 \} \subseteq \{1, \ldots, r\} \]
and 
\[ Q' := \left \langle \left \{\mathcal{O} \cdot C_i \right \}_{i \in \mathcal{I}} \right \rangle \leq Q. \]
Our original isogeny $\varphi$ then factors as $\varphi = \varphi'' \circ \varphi'$ where $\varphi' : (A,\iota) \rightarrow (A/Q', \iota)$. Because a QM-cyclic $\ell_i$-isogeny preserves the prime-to-$\ell_i$ part of the central conductor, the central conductor of $(A/Q', \iota)$ must be divisible by $\ell_1^{d_1} \cdots \ell_r^{d_r}$. We then have 
\[ K(\ell_1^{d_1} \cdots \ell_r^{d_r}) \subseteq \mathbb{Q}(\varphi') \subseteq \mathbb{Q}(\varphi), \]
and it remains to show the reverse containment. If the central conductor of $(A',\iota')$ is also $1$, then (up to isomorphism on the target) $\varphi$ is a QM-equivariant endomorphism of $(A,\iota)$ and therefore $\mathbb{Q}(\varphi) \subseteq K$ as desired. Otherwise, the dual isogeny $\varphi^\vee$ induces a $\Delta'$-CM point $x' \in X_0^D(N)_{/\mathbb{Q}}$ with $\Delta' < -4$. We have $\mathbb{Q}(x) \cong \mathbb{Q}(x')$, and the claim then holds via an application of Proposition \ref{red_to_prime_powers} to $x'$. 
\end{proof}

\subsection{The main algorithm}\label{algorithm_proof}

We have now built up all we need to prove our main result, Theorem \ref{algorithm_thm}. 

\begin{proof}[proof of Theorem \ref{algorithm_thm}]
The existence and structure of this algorithm follows from our prior results. We summarize the steps of the algorithm with appropriate references for individual steps here: 

\begin{algorithm}[The $\mathfrak{o}$-CM-locus on $X_0^D(N)_{/\mathbb{Q}}$]\label{algorithm}\ \\

\noindent \textnormal{\textbf{Input:} an indefinite quaternion discriminant $D$ over $\mathbb{Q}$, a positive integer $N$ coprime to $D$, an imaginary quadratic discriminant $\Delta_K$ and a positive integer $f$}  

\noindent \textnormal{\textbf{Output:} the complete list of tuples $(\texttt{is{\_}fixed},f',e,c)$, consisting of a boolean $\texttt{is{\_}fixed}$, a positive integer $f'$, an integer $e \in \{1,2,3\}$ and a positive integer $c$, such that there exist exactly $c$ closed $f^2\Delta_K$-CM points $x$ on $X_0^D(N)_{/\mathbb{Q}}$ with $K(x) \cong K(f')$, with $\mathbb{Q}(x) \cong K(f')$ if $\texttt{is{\_}fixed}$ is False and with $[K(f') : \mathbb{Q}(x)] = 2$ otherwise and with ramification index $e$ with respect to the natural map to $X^D(1)_{/\mathbb{Q}}$.}

\noindent \textnormal{\textbf{Steps:}}
\begin{itemize}
\item \textnormal{Compute the prime-power factorization $N = \ell_1^{a_1} \cdots \ell_r^{a_r}$ of $N$. }
\item \textnormal{For each index $i \in \{1,\ldots, r\}$, compute using the path type enumeration results of \S \ref{CM_points_prime_power_section} information on all $f^2\Delta_K$-CM points on $X_0^D(\ell_i^{a_i})_{/\mathbb{Q}}$. This information is stored as a list of lists $(\texttt{is{\_}fixed}_i,f_i,e_i,c_i)$ as in our desired output at general level. (If $D=1$, this information is originally obtained in the path-type analysis at prime-power level given in \cite{Cl22} and \cite{CS22}.)}
\item \textnormal{For each tuple $(P_1, \ldots, P_r)$, in which each $P_i$ is the information of an $f^2\Delta_K$-CM point on $X_0^D(\ell_i^{a_i})_{/\mathbb{Q}}$ of the form $(\texttt{is{\_}fixed}_i,f_i,e_i,c_i)$ as computed in the previous part, compute the information $(\texttt{is{\_}fixed},f',e,c)$ of all $f^2\Delta_K$-CM points on $X_0^D(N)_{/\mathbb{Q}}$ with image a point with information given by $P_i$ under the natural map to $X_0^D(\ell_i^{a_i})_{/\mathbb{Q}}$ for all $i \in \{1,\ldots, r\}$. This is done as follows:}
\begin{itemize}
\item \textnormal{The boolean $\texttt{is{\_}fixed}$ is true if and only if the boolean $\texttt{is{\_}fixed}_i$ is true for all $i \in \{1,\ldots, r\}$ by Proposition \ref{red_to_prime_powers} and the results of \S \ref{algebraic-results-section}.} 
\item \textnormal{The CM conductor $f'$ of such a point is equal to the least common multiple of the conductors $f_1,\ldots, f_r$ at each prime-power level. This is by Proposition \ref{red_to_prime_powers} and the results of \S \ref{algebraic-results-section}, as also spelt out at the start of \S \ref{compiling_section1}, if $\Delta_K < -4$, and is Theorem \ref{compiling_thm_extended} in the case of $\Delta_K \in \{-3,-4\}$.}
\item \textnormal{The ramification index $e$ is equal to the maximum among the indices $e_i$ (so in particular is $2$ or $3$ if and only if $f_0^2\Delta_K \in \{-3,-4\}$ and at least one of the $P_i$ has $e_i = 2$ or $e_i = 3$).}
\item \textnormal{If $\Delta < -4$, then the count $c$ is given by the results of \S \ref{compiling_section1}. If $\Delta \in \{-3,-4\}$, then this count is given by the results of \S \ref{compiling_section2} if $D>1$ and is given by the results of \cite{CS22} if $D=1$.} \qedhere
\end{itemize}
\end{itemize}
\end{algorithm}
\end{proof}

This algorithm has been implemented, and is available as the function $\texttt{CM{\_}points{\_}XD0}$ in the file $\texttt{shimura{\_}curve{\_}CM{\_}locus.m}$ in \cite{Rep}.

\subsection{Primitive residue fields of CM points on $X^D_0(N)_{/\mathbb{Q}}$}\label{prim_res_flds_N}

The preceding results imply that the residue field of any $\Delta$-CM point on $X^D_0(N)_{/\mathbb{Q}}$ is isomorphic to either a ring class field or a totally complex, index $2$ subfield of a ring class field as described in Theorem \ref{JGR}. As a result, there are at most two primitive residue fields of $\Delta$-CM points on $X^D_0(N)_{/\mathbb{Q}}$. Moreover, there exists a positive integer $C$ such that an index $2$ subfield of $K(Cf)$ is a primitive residue field of a $\Delta$-CM point on $X^D_0(N)_{/\mathbb{Q}}$ if and only if for each $1 \leq i \leq r$ there exists a positive integer $C_i$ such that an index $2$ subfield of $K(C_i \ff)$ is a primitive residue field of a $\Delta$-CM point on $X^D_0(\ell_i^{a_i})$. 

We begin by investigating the cases in which we do have such a field as a primitive residue field, determining when we have two primitive residue fields and, if so, whether we have two primitive degrees of residue fields. Note that this assumption requires $D(K) = 1$, and hence $\Delta < -4$. Let $H_i = \ell_i^{h_i} \mid \ell_i^{a_i}$ be the unique positive integer such that an index $2$ subfield $L_i$ of $K(H_i \ff)$ is a primitive residue field of a $\Delta$-CM point on $X^D_0(\ell_i^{a_i})_{/\mathbb{Q}}$ for each $1 \leq i \leq r$. Setting
\[ H = H_1\cdots H_r ,\]
we have that a totally complex, index $2$ subfield $L$ of $K(H \ff)$ is a primitive residue field of a $\Delta$-CM point on $X^D_0(N)_{/\mathbb{Q}}$ by the results of \S \ref{compiling_section1}. 

If $L_i$ is the unique primitive residue field of a $\Delta$-CM point on $X^D_0(\ell_i^{a_i})_{/\mathbb{Q}}$ for each $1 \leq i \leq r$, then $L$ is the unique primitive residue field for $X^D_0(N)_{/\mathbb{Q}}$. Otherwise, let $C_i = \ell_i^{c_i} \mid \ell_i^{a_i}$ be the smallest positive integer such that there is a $\Delta$-CM point on $X^D_0(\ell_i^{a_i})_{/\mathbb{Q}}$ with residue field isomorphic to either $K(C_i \ff)$ or an index $2$ subfield thereof for each $1 \leq i \leq r$. Setting 
\[ C = C_1 \cdots C_r, \]
we then have that $K(Cf)$ is also a primitive residue field for $X^D_0(N)_{/\mathbb{Q}}$. 

Now assume that we have two primitive residue fields, $L \subsetneq K(Hf)$ with $[K(Hf): L] = 2$ and $K(C \ff)$, of $\Delta$-CM points on $X^D_0(N)_{/\mathbb{Q}}$. Set
\[ d_1 := [L : \mathbb{Q}] \quad \text{ and } \quad d_2 := [K(Cf) : \mathbb{Q}]. \]
We note $C_i \leq H_i$ for each $1 \leq i \leq r$ by the definitions of these quantities. Further, by assumption we have at least one value of $i$ such that $K(C_i \ff)$ is a primitive residue field for $X^D_0(\ell_i^{a_i})_{/\mathbb{Q}}$, and thus
\[ [K(C_i \ff) :  \mathbb{Q}] \leq \frac{[K(H_i) : \mathbb{Q}]}{2} = [L_i: \mathbb{Q}]. \]
It follows that $d_2 \leq d_1$. Therefore, we have a unique primitive degree of a $\Delta$-CM point on $X^D_0(N)_{/\mathbb{Q}}$ if and only if $d_2 \mid d_1$, in which case $d_2$ is the unique primitive degree. The following result determines when this occurs:

\begin{theorem}\label{prim_res_flds_thm}
With the setup and notation as above, let $s$ be the number of indices $1 \leq i \leq r$ such that $K(H_i \ff)$ is a primitive residue field of a $\Delta$-CM point on $X^D_0(\ell_i^{a_i})$ (or equivalently, such that $C_i < H_i$). 

\begin{enumerate}
\item If $s=0$, then $L$ is the unique primitive residue field of a $\Delta$-CM point on $X^D_0(N)_{/\mathbb{Q}}$, and $d_1$ is the unique primitive degree. 
\item Suppose that $s \geq 1$ and that for some $1 \leq i \leq r$ with $C_i < H_i$ we are not in Case 1.5b (The dreaded case) with respect to $\Delta$ and the prime power $\ell_i^{a_i}$. We then have that $L$ and $K(Cf)$ are the two primitive residue fields of $\Delta$-CM points on $X^D_0(N)_{/\mathbb{Q}}$, while $d_2$ is the unique primitive degree. 
\item Suppose that $s \geq 1$ and that for all $1 \leq i \leq r$ with $C_i < H_i$ we are in Case 1.5b (The dreaded case) with respect to $\Delta$ and the prime power $\ell_i^{a_i}$. We then have that $L$ and $K(Cf)$ are the two primitive residue fields of $\Delta$-CM points on $X^D_0(N)_{/\mathbb{Q}}$, and that $d_1$ and $d_2$ are the two primitive degrees of such points. 
\end{enumerate}
\end{theorem}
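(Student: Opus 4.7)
My plan is to build on the compilation results of \S \ref{compiling_section1} together with the algebraic results of \S \ref{algebraic-results-section} and the casework of \S \ref{CM_points_prime_power_section}. The hypothesis that an index-$2$ subfield of a ring class field arises as a primitive residue field forces $D(K) = 1$, which in turn forces $\Delta < -4$, since $X^D(1)_{/\mathbb{Q}}$ has no real points for $D > 1$ and the $\Delta_K \in \{-3,-4\}$ case requires $D(K) \neq 1$. Thus Proposition \ref{red_to_prime_powers} applies, and the residue field of any $\Delta$-CM point $x \in X_0^D(N)_{/\mathbb{Q}}$ is the tensor product over $\mathbb{Q}(\pi(x))$ of the residue fields of its images $\pi_i(x) \in X_0^D(\ell_i^{a_i})_{/\mathbb{Q}}$. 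Invoking Corollaries \ref{tensor_products1} and \ref{tensor_products2}, each such tensor product is either a ring class field $K(B'\ff)$ or a totally complex index-$2$ subfield thereof, where $B'$ is a product of local exponents appearing in primitive residue fields at prime-power level.

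For the identification of primitive residue fields, one notes that replacing any local choice of primitive residue field $L_i$ or $K(C_i \ff)$ by a strictly larger primitive residue field at level $\ell_i^{a_i}$ strictly enlarges the global compositum. Hence the primitive residue fields on $X_0^D(N)_{/\mathbb{Q}}$ are precisely those obtained from primitive local choices. Choosing $L_i$ at every $i$ and assembling via Proposition \ref{composita} and Corollary \ref{tensor_products1} produces the index-$2$ subfield $L \subsetneq K(B\ff)$; this establishes the $s = 0$ case. When $s \geq 1$, choosing $K(C_i\ff)$ at every $i$ with $C_i < B_i$ (and $L_i$ otherwise) yields, again by the composition results of \S \ref{algebraic-results-section}, the ring class field $K(C\ff)$. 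Mixed choices produce strictly larger fields containing either $L$ or $K(C\ff)$, so the two candidates $L$ and $K(C\ff)$ exhaust the primitive residue fields in (2) and (3).

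The degree analysis reduces to computing
\[ \frac{d_1}{d_2} = \frac{h(B\ff)}{2\,h(C\ff)} = \frac{1}{2}\prod_{\substack{1 \leq i \leq r \\ C_i < B_i}} \frac{h(\ell_i^{b_i+L_i}\ff_{0,i})}{h(\ell_i^{c_i+L_i}\ff_{0,i})} \]
via the standard class number formula for imaginary quadratic orders of discriminant $\Delta < -4$, where $\ff = \prod \ell_i^{L_i} \cdot \ff_{0}$ and $\ff_{0,i}$ denotes the prime-to-$\ell_i$ part of $\ff$. Each local factor equals $\ell_i^{b_i-c_i}$ if $\ell_i \mid \ff$ in $\mathfrak{o}(C_i\ff)$ and $\ell_i^{b_i-c_i-1}(\ell_i - \legendre{\Delta_K}{\ell_i})$ otherwise. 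Inspecting the seven prime-power cases with two primitive residue fields from \S \ref{CM_points_prime_power_section} (namely Cases 1.2, 1.5b, 1.8a, 1.8c, 1.9a, 1.9c, and 1.10b), exactly Case 1.5b yields a local factor $\ell_i^{a_i - 2L_i}$ that is an odd power of an odd prime: here $L_i \geq 1$ so no Euler correction $\ell_i - 1$ is picked up, and the transition is purely a power of $\ell_i$. In every other case the local factor is even, either via an even Euler correction $\ell_i - \legendre{\Delta_K}{\ell_i}$ at an odd split prime with $L_i = 0$ or via a factor of $2$ when $\ell_i = 2$. Hence in case (2) the numerator $h(B\ff)/h(C\ff)$ is even and $d_2 \mid d_1$, whereas in case (3) it is odd and the factor $2$ in the denominator of $d_1/d_2$ is not cancelled, so neither of $d_1$ and $d_2$ divides the other.

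The main obstacle is the case-by-case bookkeeping required for the local factor computation: one must distinguish whether $\ell_i$ already divides the base conductor $\ff$ (which collapses the Euler factor to $1$), keep track of the parity of $\ell_i - \legendre{\Delta_K}{\ell_i}$, and handle the $\ell_i = 2$ situations separately according to $\text{ord}_2(\Delta_K) \in \{0,2,3\}$. Once this bookkeeping is performed uniformly across the relevant subcases of \S \ref{CM_points_prime_power_section}, the parity dichotomy between Case 1.5b and all other cases yielding two primitive residue fields falls out, completing the proof.
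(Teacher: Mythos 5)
Your argument is correct and is in substance the proof the paper intends: the paper's own proof simply defers to the $D=1$ case \cite[Thm. 9.2]{Cl22} after observing that the degrees of the totally complex index $2$ subfields here equal those of the rational ring class fields there, and your compilation of local primitive residue fields via \S \ref{compiling_section1} together with the class-number parity computation (Case 1.5b being the unique source of an odd local factor $\ell_i^{a_i-2L}$, every other two-field case contributing an even factor) is exactly that argument written out. Two harmless quibbles: in your displayed degree formula the symbol $L_i$ silently means $\operatorname{ord}_{\ell_i}(\ff)$ rather than the field $L_i$, and the claim that replacing a local primitive choice ``strictly enlarges the global compositum'' is not literally true when an index $2$ subfield is swapped for its ring class field alongside a component already containing $K$ — but neither issue affects the identification of $L$ and $K(C\ff)$ or the divisibility dichotomy.
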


\begin{proof}
The proof follows exactly as in \cite[Thm. 9.2]{Cl22}; the main inputs here are the degrees of our residue fields, which are the same for our totally complex index $2$ subfields of ring class fields as they are for the rational ring class fields appearing in the $D=1$ modular curve study. 
\end{proof}


\section{CM points on $X_1^D(N)_{/\mathbb{Q}}$}\label{X_1_section}

In this section, we prove Theorem \ref{inertness_thm_general}, showing that there is a very close relationship between CM points on the Shimura curves $X_0^D(N)_{/\mathbb{Q}}$ and $X_1^D(N)_{/\mathbb{Q}}$. This is a generalization of \cite[Thm. 1.2]{CS22}, which was specific to the $D=1$ case, and allows us to go from our understanding of the $\Delta$-CM locus on $X_0^D(N)_{/\mathbb{Q}}$ based on \S \ref{CM_points_level_N} to an understanding of, at the very least, degrees of CM points on $X_1^D(N)_{/\mathbb{Q}}$.

\begin{proof}[Proof of Theorem \ref{inertness_thm_general}]
We first recall some relevant facts about ramification under the natural map $\pi : X_1^D(N)_{/\mathbb{Q}} \to X_0^D(N)_{/\mathbb{Q}}$. All points on $X_1^D(N)_{/\mathbb{Q}}$ not having CM by discriminant $\Delta \in \{-3,-4\}$ are unramified over their image on $X^D(1)_{/\mathbb{Q}}$. For $N \geq 4$, just as in the $D=1$ case, the curve $X_1^D(N)$ over $\mathbb{C}$ has no elliptic points of periods $2$ or $3$, from which it follows that all $-4$ and $-3$-CM points on $X_1^D(N)_{/\mathbb{Q}}$ are ramified with ramification index $2$ or $3$, respectively. The curve $X_1^D(2)_{/\mathbb{Q}}$ has a single elliptic point of period $2$, unramified with respect to $\pi$, lying over each of the $2^b$ points on $X^D(1)_{/\mathbb{Q}}$ with $-4$-CM. The curve $X_1^D(3)_{/\mathbb{Q}}$ has a single elliptic point of period $3$, unramified with respect to $\pi$, lying over each of the $2^b$ points on $X^D(1)_{/\mathbb{Q}}$ with $-3$-CM. (One can see these claims regarding elliptic points and ramification from elementary arguments involving congruence subgroups. For example, for $D=1$ this is \cite[Exc. 2.3.7]{DS05}.)

First, suppose that $\Delta < -4$. If $N \leq 2$ then the map $\pi$ is an isomorphism, so assume $N \geq 3$ in which case it is a $(\mathbb{Z}/N\mathbb{Z})^*/\{\pm 1\}$-Galois covering, hence has degree $\phi(N)/2$. Let $\ff$ be the conductor of $\Delta$, such that $\Delta = \ff^2\Delta_K$, and consider a point $\tilde{x} \in \pi^{-1}(x)$. It suffices to show that $[K(\tilde{x}) : K(x)] = \frac{\phi(N)}{2}$, viewing $\pi$ as a morphism over $K$.

Take $\varphi : (A, \iota) \rightarrow (A',\iota')$ to be a QM-cyclic $N$-isogeny over $K(x)$ inducing $x \in X_0^D(N)_{/K}$. We know such an isogeny exists over $K(x)$ by Theorem \ref{Jordan_curves_to_surfaces}, because $K(x)$ contains $\mathbb{Q}(x)$ and splits $B$. By Theorem \ref{Shimura} we have $K(x) = K(\ff)$. We have a well defined $\pm 1$ Galois representation
\[ \overline{\rho}_{N} : \text{Gal}(\overline{K}/K(\ff)) \rightarrow \text{GL}_2(\mathbb{Z}/N\mathbb{Z})/\{\pm 1\} \]
not depending on our choice of representative for $x$, as $\textnormal{Aut}(A,\iota) = \{\pm 1\}$. 
Let $Q = \text{ker}(\varphi) \leq A[N]$ and letting $P \in Q$ be a choice of generator (of $e_1(Q)$ as an abelian group, or equivalently of $Q$ as an $\mathcal{O}$-module). The action of $\text{Gal}(\overline{K}/K(\ff))$ on $P$ is then tracked by an isogeny character
\[ \lambda: \text{Gal}(\overline{K}/K(\ff)) \rightarrow \left(\mathbb{Z}/N\mathbb{Z}\right) / \{\pm 1\}. \]
Theorem \ref{fake-ell-curves-ell-curves-correspondence} gives that $A_{\mathbb{C}} := A \otimes_{\spec K(x)} \spec \mathbb{C}$ has a decomposition $\psi: A_{\mathbb{C}} \xrightarrow{\sim} \mathbb{C}/\mathfrak{o}(\ff)  \times E_A$, where $E_A$ is a $\Delta$-CM elliptic curve over $\mathbb{C}$. The elliptic curves in this decomposition both have models over $K(\ff)$, as moreover they both have models over $\mathbb{Q}(\ff) \cong \mathbb{Q}(j_\Delta)$ where $j_{\Delta}$ is the $j$-invariant of a $\Delta$-CM elliptic curve. Hence, a $K(\ff)$-rational model for this product is a twist of $A$.

It then suffices, as our representation is independent of the choice of $K(\ff)$-rational model, to consider the case $A = E \times E'$ with $E$ and $E'$ being $\Delta$-CM elliptic curves over $K(\ff)$. Here, our QM-stable subgroup $Q \leq A[N]$ corresponds to a cyclic subgroup of $E[N]$, and $\lambda$ is induced by the Galois action on this cyclic subgroup. This $\pm 1$ character $\lambda$ is surjective by \cite[Thm. 1.4]{BC20} (in which the authors state a result of \cite{St01} in this form). Therefore, if $\{P,-P\}$ is stable over an extension $L$ of $K(\ff)$, such that $\text{Gal}(\overline{\mathbb{Q}}/L)$ is in the kernel of $\lambda$, we have 
\[ \frac{\phi(N)}{2} \mid [L : K(\ff)], \]
and so indeed we have $[K(\tilde{x}):K(x)] = \phi(N)/2$. 

We next tackle case (2)(a), assuming that $x$ is a ramified point of the map $X_0^D(N)_{/\mathbb{Q}} \rightarrow X^D(1)_{/\mathbb{Q}}$. In this case, we have that a representative  $(A,\iota,Q)_{/K(x)}$ inducing $x$, where $Q \leq A[N]$ is a QM-cyclic subgroup, is well-defined up to quadratic twist, as all models for $(A,\iota)$ are defined over $K(x)$. This is because, working geometrically for a second, a $-3$ or $-4$ CM point $x \in X_0^D(N)$ over $\mathbb{C}$ is ramified with respect to the natural map to $X^D(1)$ if and only if it is non-elliptic; it has the trivial stabilizer $\{\pm 1\}$, while its image is an elliptic point of order $3$ or $4$. The same argument as in the $\Delta < -4$ case above then applies. 

We now assume that $x$ is a $\Delta$-CM point on $X_0^D(N)_{/\mathbb{Q}}$ with $\Delta \in \{-3,-4\}$ which is unramified with respect to the map to $X^D(1)_{/\mathbb{Q}}$. If $N = 2$, then $\pi$ is an isomorphism, so the claim is trivial. If $N=3$, the fact mentioned above that there is one point lying over each elliptic point on $X^D(1)$ is exactly the inertness claim. For $N \geq 4$, we know that every point in $\pi^{-1}(x)$ is ramified with respect to the map $X_1^D(N)_{/\mathbb{Q}} \rightarrow X^D(1)_{/\mathbb{Q}}$, giving the claimed ramification index. The residue degree is therefore \emph{at most} the claimed residue degree in each case. 

To provide the lower bound on the residue degree, we modify the argument of the $\Delta < -4$ case slightly in a predictable way. If $\Delta = -4$, then a representative for $x$ is well-defined up to quartic twist. We consider a representative of the form $(E_1 \times E_2, \iota, \mathcal{O}\cdot C)$ where $E_1, E_2$ are $\mathfrak{o}(\ff) $-CM elliptic curves and $C \leq E_1[N]$ is a cyclic order $N$ subgroup (again, via the type of argument as in the $\Delta < -4$ case using Theorem \ref{fake-ell-curves-ell-curves-correspondence}). Let $q_N : \mathfrak{o}_K \rightarrow \mathfrak{o}_K/N\mathfrak{o}_K$ denote the quotient map. By tracking the action of Galois on a generator $P$ of $C$ we get a well-defined reduced mod $N$ Galois representation 
\[ \overline{\rho_N} : \text{Gal}(\mathbb{Q}(x)/\mathbb{Q}) \rightarrow \left(\mathfrak{o}_K / N\mathfrak{o}_K\right)^\times / q_N(\mathfrak{o}_K^\times) \]
which is surjective (see \cite[\S 1.3]{BC20}). As the set $\{P,-P,iP,-iP\}$ is stable under the action of $\text{Gal}(\mathbb{Q}(\tilde{x})/\mathbb{Q})$ for $\tilde{x} \in \pi^{-1}(x)$, we must have 
\[ \frac{\phi(N)}{4} = \# \left( \overline{\rho_N}\left(\text{Gal}(\mathbb{Q}(\tilde{x})/\mathbb{Q})\right)\right) \mid [\mathbb{Q}(\tilde{x}) : \mathbb{Q}(x)], \]
giving the result for $\Delta = -4$. For $\Delta = -3$, exchanging ``quartic'' for ``cubic'' and $\mu_4$ for $\mu_3$ results in the required divisibility $\frac{\phi(N)}{6} \mid f_\pi(x)$. 
\end{proof}

\section{Sporadic CM points on Shimura Curves }\label{least-degrees-section}

Fix $D > 1$ an indefinite quaternion discriminant over $\mathbb{Q}$ and $N \in \mathbb{Z}^+$ relatively prime to $D$. In analogy to prior work on degrees of CM points on certain classical families of modular curves \cite{CGPS22}, we may consider the least degree $d_{\text{CM}}(X)$ of a CM-point on a Shimura curve $X$ for the modular Shimura curves $X = X_0^D(N)_{/\mathbb{Q}}$ and $X = X_1^D(N)_{/\mathbb{Q}}$. For an imaginary quadratic order $\mathfrak{o}$, the results of \S \ref{prim_res_flds_N} allow us to compute all primitive residue fields and degrees of $\mathfrak{o}$-CM points on $X_0^D(N)_{/\mathbb{Q}}$, and hence to compute the least degree $\text{d}_{\mathfrak{o},\text{CM}}(X_0^D(N))$ of an $\mathfrak{o}$-CM point on $X_0^D(N)_{/\mathbb{Q}}$. Note that the least degree of an $\mathfrak{o}$-CM point on $X_0^D(N)_{/\mathbb{Q}}$ always satisfies 
\[ h(\mathfrak{o}) \mid d_{\mathfrak{o},\text{CM}}(X_0^D(N)). \]
Using a complete list of all imaginary quadratic orders $\mathfrak{o}$ of class number up to $100$, it then follows that if we have some order $\mathfrak{o}_0$ with 
\[ d_{\mathfrak{o}_0,\text{CM}}(X_0^D(N)) \leq 100, \]
then we can solve the minimization over orders problem to compute the least degree of a CM point on $X_0^D(N)_{/\mathbb{Q}}$:
\[ d_{\text{CM}}(X_0^D(N)) = \text{min} \left\{ d_{\mathfrak{o},\text{CM}}(X_0^D(N)) \mid h(\mathfrak{o}) \leq 100 \right\}. \]
We have implemented an algorithm to compute least degrees over specified orders and, when possible, to compute $d_{\mathfrak{o},\text{CM}}(X_0^D(N))$ exactly as described above. The relevant code, along with all other code used for the computational tasks described in this section, can be found at the repository \cite{Rep}. One may also find there a list of computed exact values of $d_{\text{CM}}(X_0^D(N))$, along with an order minimizing the degree, for all relevant pairs $(D,N)$ with $DN < 10^5$. All computations described in this section are performed using \cite{Magma}. 

Theorem \ref{inertness_thm_general} provides all of the information we need to go from least degrees of CM points on $X_0^D(N)_{/\mathbb{Q}}$ to least degrees of CM points on $X_1^D(N)_{/\mathbb{Q}}$. For ease of the relevant statement, we first generalize some terminology from \cite{CGPS22}: we will call a pair $(D,N)$ with $N \geq 4$ 
\begin{itemize}
\item \textbf{Type I} if $D$ splits $\mathbb{Q}(\sqrt{-3})$, we have $\text{ord}_3(N) \leq 1$, and $N$ is not divisible by any prime $\ell \equiv 2 \pmod{3}$, and
\item \textbf{Type II} if $D$ splits $\mathbb{Q}(\sqrt{-1})$, we have $\text{ord}_2(N) \leq 1$, and $N$ is not divisible by any prime $\ell \equiv 3 \pmod{4}$. 
\end{itemize}

\begin{proposition}\label{least_deg_X1_prop}
Let $D>1$ be a quaternion discriminant over $\mathbb{Q}$ and $N \in \mathbb{Z}^+$ coprime to $D$. 
\begin{enumerate}
\item If $(D,N)$ is Type I, then 
\[ d_{\textnormal{CM}}(X_1^D(N)) = \frac{\phi(N)}{3} .\]
\item If $(D,N)$ is not Type I and is Type II, then 
\[ d_{\textnormal{CM}}(X_1^D(N)) = \frac{\phi(N)}{2}. \]
\item If $(D,N)$ is not Type I or Type II, then
\[ d_{\textnormal{CM}}(X_1^D(N)) = \frac{\phi(N)}{2} \cdot d_{\textnormal{CM}}(X_0^D(N)). \]
\end{enumerate}
\end{proposition}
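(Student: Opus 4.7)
The plan is to apply Theorem \ref{inertness_thm_general}, which gives $\deg(\tilde{x}) = f_\pi(x) \deg(x)$ for any CM point $\tilde{x} \in X_1^D(N)_{/\mathbb{Q}}$ lying over $x \in X_0^D(N)_{/\mathbb{Q}}$, with $f_\pi(x) = \phi(N)/2$ unless $x$ is an elliptic $\Delta$-CM point on $X_0^D(N)_{/\mathbb{Q}}$ for $\Delta \in \{-3,-4\}$, in which case $f_\pi(x) = \phi(N)/6$ or $\phi(N)/4$ respectively. The cases $N \leq 2$ are trivial since $\pi$ is an isomorphism, so assume $N \geq 3$. Because $X^D(1)_{/\mathbb{Q}}$ has no real points for $D>1$, every CM point has degree at least $2$. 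Minimizing $\deg(\tilde{x})$ thus reduces to comparing the three potential contributions $\phi(N)/3$, $\phi(N)/2$, and $(\phi(N)/2)\, d_{\mathrm{CM}}(X_0^D(N))$.

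The heart of the argument is to identify precisely when an elliptic $K$-CM point of degree $2$ exists on $X_0^D(N)_{/\mathbb{Q}}$ for $K \in \{\mathbb{Q}(\sqrt{-3}), \mathbb{Q}(i)\}$. I would argue this via the path-type analysis of Section \ref{CM_points_prime_power_section}: such a point corresponds, for each prime power $\ell_i^{a_i} \| N$, to a non-backtracking length-$a_i$ path in $\mathcal{G}^D_{K,\ell_i,1}$, and being elliptic (i.e. unramified over $X^D(1)_{/\mathbb{Q}}$) is equivalent to none of these paths containing a descending edge between levels $0$ and $1$. Inert primes admit no horizontal edges, ramified primes admit a single surface loop (which cannot be traversed twice without backtracking), and split primes admit horizontal paths of arbitrary length. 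Together with the requirement that $K$ split $B$ so that $K$-CM points on $X^D(1)_{/\mathbb{Q}}$ exist at all, these constraints are precisely the Type I condition for $K = \mathbb{Q}(\sqrt{-3})$ and the Type II condition for $K = \mathbb{Q}(i)$. When such a path exists it is purely horizontal, so $x$ has central conductor $1$; and since $D>1$ forces $D(K) > 1$ (as $\Delta_K$ has only one ramified prime while $D$ has at least two prime divisors), Theorem \ref{JGR} places us in the ring-class-field branch with $\mathbb{Q}(x) \cong K(1) = K$ of degree $2$.

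Parts (1) and (2) then follow by plugging $\deg(x) = 2$ into $f_\pi(x) = \phi(N)/6$ or $\phi(N)/4$ respectively, saturating the lower bound from the first paragraph; when both Type I and Type II apply, $\phi(N)/3 < \phi(N)/2$ so case (1) governs the overlap. For (3), neither Type I nor Type II holds, so the path-theoretic analysis just described rules out elliptic $\{-3,-4\}$-CM points on $X_0^D(N)_{/\mathbb{Q}}$; hence $\pi$ is inert over every CM point, and $\deg(\tilde{x}) = (\phi(N)/2)\deg(x)$ for all CM $\tilde{x}$, so minimizing over $x$ gives the stated formula. The main obstacle is the careful horizontal-path enumeration, in particular verifying that the $\mathrm{ord}_\ell(N) \leq 1$ restriction at the ramified prime $\ell \in \{2,3\}$ is genuinely forced — i.e.\ that any two successive traversals of the unique surface loop at a ramified prime are backtracking in $\mathcal{G}^D_{K,\ell,1}$ — and dually that arbitrary-length purely horizontal paths are available at split primes without incurring field-of-moduli growth.
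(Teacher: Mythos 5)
Your proof is correct and follows essentially the same route as the paper: the paper's own argument likewise combines the fiber description of Theorem \ref{inertness_thm_general} with the fact that $\pi$ has non-trivial ramification (equivalently, $X_0^D(N)_{/\mathbb{Q}}$ carries a degree-$2$ elliptic $-3$- or $-4$-CM point) precisely when $(D,N)$ is Type I or Type II, and your horizontal-path enumeration in $\mathcal{G}^D_{K,\ell,1}$ simply supplies the verification of that equivalence which the paper asserts without further comment. One shared caveat: the residual degrees $\phi(N)/6$ and $\phi(N)/4$ in Theorem \ref{inertness_thm_general} only apply for $N \geq 4$, so in cases (1) and (2) the displayed formulas implicitly assume $N \geq 4$ (for Type I or II pairs with $N \leq 3$ the map $\pi$ is inert over all CM points and $d_{\textnormal{CM}}(X_1^D(N)) = 2$), a boundary case that your reduction to ``$N \geq 3$'' does not quite close but which the paper's proof passes over as well.
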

\begin{proof}
The natural map $X_1^D(N)_{/\mathbb{Q}} \rightarrow X_0^D(N)_{/\mathbb{Q}}$ has non-trivial ramification exactly when $(D,N)$ is either Type I or Type II. In these cases, we have $d_{\text{CM}}(X_0^D(N)) = 2$, which is as small as possible as the $D>1$ assumption implies these curves have no rational points. The statements then follow immediately from the residue degrees with respect to this map provided by Theorem \ref{inertness_thm_general}. 
\end{proof}

For a curve $X_{/\mathbb{Q}}$, let $\delta(X)$ denote the least positive integer $d$ such that $X$ has infinitely points of degree $d$. We call a point $x \in X$ \textbf{sporadic} if 
\[ \text{deg}(x) := [\mathbb{Q}(x) : \mathbb{Q}] < \delta(X). \]
That is, $x$ is a sporadic point if there are only finitely points $y \in X$ with $\text{deg}(y) \leq \text{deg}(x)$. Sporadic points on modular curves have been objects of interest in several recent works, including \cite{Naj16, BELOV19, BN21, CGPS22, Smi23, BGRW24}. 

In the remainder of this section, we apply our least degree computations towards the question of whether the curves $X_0^D(N)_{/\mathbb{Q}}$ and $X_1^D(N)_{/\mathbb{Q}}$ have sporadic CM points.

\subsection{An explicit upper bound on $d_{\text{CM}}(X_0^D(N))$}

In analogy to the Heegner hypothesis of the modular curve case, we make the following definition:

\begin{definition}
Let $D$ be an indefinite quaternion discriminant and $N$ a positive integer relatively prime to $D$. We will say that an imaginary quadratic discriminant $\Delta$ \textbf{satisfies the $(D,N)$ Heegner hypothesis} if 
\begin{enumerate}
    \item for all primes $\ell \mid D$, we have $\legendre{\Delta}{\ell} = -1$, and
    \item for all primes $\ell \mid N$, we have $\legendre{\Delta}{\ell} = 1$,
\end{enumerate}
\end{definition}

\noindent If $\Delta$ satisfies the $(D,N)$ Heegner hypothesis, this implies the existence of a $\Delta$-CM point on $X_0^D(N)_{/\mathbb{Q}}$ which is rational over $K(\ff)$, the ring class field of conductor $\ff$ where $\Delta = \ff^2\Delta_K$. This point therefore has degree at most $2\cdot h(\mathfrak{o}(\ff) )$.  

We provide an upper bound on the least degree of a CM point on $X_0^D(N)_{/\mathbb{Q}}$ as follows: let $L$ be the least positive integer such that
\begin{itemize}
    \item $\legendre{L}{p} = -1$ for all odd primes $p \mid D$,
    \item $\legendre{L}{p} = 1$ for all odd primes $p \mid N$, and
    \item we have \[L \equiv \begin{cases} 5 \pmod{8} \text{ if } 2 \mid D \\
    1 \pmod{8} \text{ otherwise}. \end{cases}\]
\end{itemize}
Then $0 < L < 8DN$, and so $d_0 = L - 16DN$ is an imaginary quadratic discriminant satisfying the $(D,N)$ Heegner hypothesis with $-16DN < d_0 < -8DN$. It follows that there exists a fundamental discriminant $\Delta_K$ of an imaginary quadratic field $K$ satisfying the $(D,N)$ Heegner hypothesis with $|\Delta_K| < 16DN$; take $K$ such that $d_0$ corresponds to an order in $K$ and hence $d_0 = \ff^2\Delta_K$ for some positive integer $\ff$. 

For an imaginary quadratic field $K$ of discriminant $\Delta_K < -4$, we have 
\[ h_K = h(\mathfrak{o}(\Delta_K)) \leq \frac{e}{2\pi} \sqrt{|d|} \log{|d|} \]
(see, e.g., \cite[Appendix]{CCS13}), such that the above provides
\begin{equation} \label{degree_bound} d_{\text{CM}}(X_0^D(N)) \leq 2\cdot  h_K \leq \frac{4e}{\pi} \sqrt{DN} \log{(16DN)}. \end{equation}

\subsection{Shimura curves with infinitely many points of degree 2}\label{inf_deg_2_section}

If $\delta(X_0^D(N)) = 2$, then as $X_0^D(N)_{/\mathbb{Q}}$ has no real points it certainly does not have a sporadic point. We mention here all pairs $(D,N)$ for which we know $\delta(X_0^D(N)) = 2$ based on the existing literature. 

All genus $0$ and $1$ cases necessarily have $\delta(X_0^D(N)) = 2$, as we have no degree $1$ points. Voight \cite{Voi09} lists all $(D,N)$ for which $X_0^D(N)_{/\mathbb{Q}}$ has genus zero:
\[ \{(6,1), (10,1), (22,1)  \}, \]
and genus one:
\[ \{(6,5), (6,7), (6,13), (10,3), (10,7), (14,1), (15,1), (21,1), (33,1), (34,1), (46,1)   \} .\]
By a result of Abramovich--Harris \cite{AH91},  a nice curve $X$ defined over $\mathbb{Q}$ of genus at least $2$ with $\delta(X) = 2$ is either hyperelliptic over $\mathbb{Q}$, or is bielliptic and emits a degree $2$ map to an elliptic curve over $\mathbb{Q}$ with positive rank. The pairs $(D,N)$ for which $X_0^D(N)_{/\mathbb{Q}}$ is hyperelliptic of genus at least $2$ were determined by Ogg\footnote{Actually, for the pairs $(10,19)$ and $(14,5)$, the referenced work of Ogg says that the corresponding curves are hyperelliptic over $\mathbb{R}$. Ogg does not say whether that is the case over $\mathbb{Q}$, but work of Guo-Yang \cite{GY17} answers negatively for the former pair and positively for the latter.} \cite{Ogg83}:
\begin{align*} 
\{ &(6,11), (6,19), (6,29), (6,31), (6,37), (10,11),(10,23), (14,5), (15,2), \\
& (22,3), (22,5), (26,1), (35,1), (38,1), (39,1), (39,2) , (51,1), (55,1), (58,1), (62,1), \\
& (69,1), (74,1), (86,1), (87,1), (94,1), (95,1), (111,1), (119,1), (134,1), \\
& (146,1), (159,1), (194,1), (206,1) \}. 
\end{align*}
As for the bielliptic case, Rotger \cite{Rot02} has determined all discriminants $D$ such that $X^D(1) = X_0^D(1)$ is bielliptic, and further determines those for which $X^D(1)_{/\mathbb{Q}}$ is bielliptc over $\mathbb{Q}$ and maps to a positive rank elliptic curve. All such discriminants $D$ with $g(X_0^D(1)) \geq 2$ and with $X_0^D(1)_{/\mathbb{Q}}$ not hyperelliptic are as follows:
\begin{align*}
 D \in \{57,65,77,82,106,118 ,122,129,143,166,210,215,314, 330,390,510,546\}. 
\end{align*}

\subsection{Sporadic CM points}

In order to declare the existence of a sporadic CM point on a Shimura curve $X_0^D(N)_{/\mathbb{Q}}$, a main tool for us will be the following result of Frey \cite[Prop. 2]{Frey94} on the least degree $\delta(X)$ over which a nice curve $X_{/F}$ has infinitely many closed points:

\begin{theorem}[Frey 1994] \label{Frey}
For a nice curve $X$ defined over a number field $F$, we have
\[ \frac{\gamma_F(X)}{2} \leq \delta(X) \leq \gamma_F(X), \]
where $\gamma_F(X)$ denotes the $F$-gonality of $X$, i.e., is the least degree of a non-constant $F$-rational map to the projective line. 
\end{theorem}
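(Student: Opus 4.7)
The plan is to prove the two inequalities separately, with the upper bound essentially formal and the lower bound resting on Faltings' theorem on rational points of subvarieties of abelian varieties.

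For the upper bound $\delta(X) \leq \gamma_F(X)$, fix an $F$-rational non-constant morphism $f : X \to \mathbb{P}^1_F$ of degree $g := \gamma_F(X)$. For each $t \in \mathbb{P}^1(F)$ the fiber $f^{-1}(t)$ is a zero-dimensional $F$-scheme of degree $g$, so it contains at least one closed point of $X$ of residue degree at most $g$. Each closed point of $X$ meets only finitely many such fibers while $\mathbb{P}^1(F)$ is infinite, producing infinitely many closed points of $X$ of degree at most $g$, so $\delta(X) \leq g$.

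For the lower bound $\gamma_F(X) \leq 2\delta(X)$, the plan is to apply Faltings' theorem to a Brill--Noether locus. Write $d := \delta(X)$ and assemble the infinitely many closed points of degree at most $d$ into infinitely many $F$-rational points of the symmetric product $X^{(d)}$. Choose one such $F$-rational effective divisor $D_0$ of degree $d$; via the Abel--Jacobi map $D \mapsto [D - D_0]$, identify the image of $X^{(d)}$ in $\text{Pic}^d(X) \cong J(X)$ with a closed subvariety $W_d$ of the Jacobian $J := J(X)$ that contains infinitely many $F$-rational points. Faltings' theorem on subvarieties of abelian varieties then produces a positive-dimensional abelian subvariety $B \subseteq J$ defined over $F$ and a point $c \in W_d(F)$ with $c + B \subseteq W_d$.

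The decisive step is to convert this abelian subvariety into a pencil of low degree on $X$. For each $b \in B$ choose an effective divisor $D_b$ of degree $d$ with $[D_b] = [D_0] + c + b$; since $b \mapsto -b$ is an $F$-morphism on $B$, the divisor $D_b + D_{-b}$ has the constant class $2[D_0] + 2c$, so every such sum lies in a single complete linear system of degree $2d$ on $X$. This linear system has projective dimension at least $\dim B \geq 1$, is defined over $F$ by Galois descent, and yields (after removing any fixed part, which only decreases the degree) an $F$-rational morphism $X \to \mathbb{P}^1$ of degree at most $2d$, so $\gamma_F(X) \leq 2d$.

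The main obstacle is the Faltings input together with the ensuing descent: one must invoke a formulation of Faltings' theorem guaranteeing that $B$ is $F$-rational (the Zariski closure of $W_d(F)$ is Galois-stable, so its components come in Galois orbits of $F$-rational translates of $F$-rational abelian subvarieties), and verify that the pencil produced actually descends to an $F$-rational map rather than one defined over some extension. Once these verifications are in place, the two inequalities combine to give Frey's theorem.
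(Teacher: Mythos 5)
The paper does not prove this statement; it is quoted from Frey's 1994 paper, so the comparison is with the argument in the literature rather than with a proof in the text. Your upper bound is fine, and your lower-bound strategy (Faltings' theorem on rational points of subvarieties of abelian varieties plus the symmetrization $D_b + D_{-b}$ inside a fixed class of degree $2d$) is exactly the standard route, essentially Frey's own. But as written there are two genuine gaps. First, the assertion that $W_d$ ``contains infinitely many $F$-rational points'' does not follow from having infinitely many $F$-points on $X^{(d)}$: the fibers of $X^{(d)} \to \operatorname{Pic}^d(X)$ are linear systems, and infinitely many of your degree-$d$ points may lie in a single positive-dimensional fiber while the image in $\operatorname{Pic}^d(X)$ stays finite. (Take $X$ hyperelliptic of genus at least $2$ over $\mathbb{Q}$ with finite Mordell--Weil group: $\delta(X)=2$, yet the image of the quadratic points in $\operatorname{Pic}^2$ is finite, so Faltings produces no positive-dimensional translate.) You must split into two cases: if the image is finite, some complete linear system of degree $d$ contains infinitely many $F$-rational effective divisors, hence has positive dimension and already yields an $F$-rational pencil of degree at most $d$; only when the image is infinite does the Faltings step apply.

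Second, your descent fix is aimed at the wrong object. The rationality of $B$ is essentially irrelevant; what you need is that the constant class $2[D_0]+2c$ is represented by an $F$-rational divisor, for otherwise the ``complete linear system'' attached to a merely Galois-invariant class is a Brauer--Severi variety, and an $F$-rational pencil with target $\mathbb{P}^1_F$ need not exist. The standard repair: apply Faltings to the set $S$ of classes of your $F$-rational effective degree-$d$ divisors; since $S$ is infinite and is covered by finitely many translates contained in $W_d$, some positive-dimensional translate contains infinitely many elements of $S$, and you may then take $c \in S$, i.e. $c = [D_x - D_0]$ with $D_x$ effective and $F$-rational, so the fixed class is $[2D_x]$ and the system $|2D_x|$ is an honest $\mathbb{P}^r_F$. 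That $r \ge 1$ needs the finiteness argument you omitted: if the system were a single divisor, all the pairwise linearly inequivalent divisors $D_b$ would be subdivisors of one fixed divisor of degree $2d$, of which there are only finitely many. (Your stronger claim that the dimension is at least $\dim B$ also requires such a finite-decomposition argument, but $r\ge 1$ is all you need.) An $F$-rational line in $|2D_x|$, minus its fixed part, then gives the map of degree at most $2d$. With these two repairs your sketch becomes a complete proof along Frey's lines.
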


It follows from Theorem \ref{Frey} that if 
\begin{equation} \label{sporadic} d_{\text{CM}}(X_0^D(N)) < \frac{\gamma_\mathbb{Q}(X_0^D(N))}{2}, 
\end{equation}
then there exists a sporadic CM point on $X_0^D(N)_{/\mathbb{Q}}$. To complement this, a result of Abramovich provides a lower bound on the gonality of a Shimura curve. Our cases of interest in applying this result are $X_0^D(N) = X_{\Gamma^D_0(N)}$ and $X_1^D(N) = X_{\Gamma^D_1(N)}$ (or, equivalently, $X_0^D(N) = X_{\mathcal{O}_N^1}$, where $\mathcal{O}_N$ is an Eichler order of level $N$ in $B$, for the former curve). 

\begin{theorem}[Abramovich 1996] \label{Abr}
Let $X_\Gamma$ be the Shimura curve corresponding to $\Gamma \leq \mathcal{O}^1$ a subgroup of the units of norm $1$ in an order $\mathcal{O}$ of $B$. Then
\[ \frac{975}{8192}(g(X_\Gamma)-1) \leq \gamma_\mathbb{C}(X_\Gamma) \leq \gamma_\mathbb{Q}(X_\Gamma). \]
\end{theorem}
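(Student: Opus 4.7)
The plan is to deduce Theorem \ref{Abr} from the combination of a spectral/geometric inequality due to Li--Yau and a spectral bound on the smallest positive Laplace eigenvalue of arithmetic congruence quotients. The trivial half $\gamma_{\mathbb{C}}(X_\Gamma) \leq \gamma_{\mathbb{Q}}(X_\Gamma)$ follows immediately since any $\mathbb{Q}$-rational non-constant map $X_\Gamma \to \mathbb{P}^1$ is in particular a $\mathbb{C}$-rational one. The substance is therefore the lower bound on $\gamma_{\mathbb{C}}(X_\Gamma)$ in terms of the genus.

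First I would invoke the Li--Yau inequality, which asserts that for a compact Riemann surface $X$ equipped with a conformal metric and admitting a non-constant holomorphic map $f : X \to \mathbb{P}^1$, one has
\[ \deg(f) \;\geq\; \frac{\lambda_1(X) \cdot \mathrm{Vol}(X)}{8\pi}, \]
where $\lambda_1(X)$ denotes the first positive eigenvalue of the Laplacian on $X$ (here with respect to a metric in the appropriate conformal class, e.g.\ the hyperbolic metric coming from $\mathbb{H}$). Minimizing over all non-constant $f$ yields
\[ \gamma_{\mathbb{C}}(X) \;\geq\; \frac{\lambda_1(X) \cdot \mathrm{Vol}(X)}{8\pi}. \]
For $X = X_\Gamma$ a Shimura curve coming from a quaternion order, I would equip $X_\Gamma$ with the metric induced from the hyperbolic metric on $\mathbb{H}$. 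By Gauss--Bonnet (in the cocompact case, which is the case $D>1$ of primary interest; the noncompact modular case would require separate handling via cusps) the hyperbolic area satisfies
\[ \mathrm{Vol}(X_\Gamma) \;=\; 2\pi \cdot (2g(X_\Gamma) - 2). \]

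The second ingredient is the lower bound $\lambda_1(X_\Gamma) \geq 21/100$, valid for $\Gamma$ an arithmetic congruence subgroup coming from units in a quaternion order over $\mathbb{Q}$. This is the Luo--Rudnick--Sarnak improvement over Selberg's classical $3/16$ bound, and is the deep analytic input of the argument. It requires a non-trivial transfer between the spectral theory of the congruence quotient and bounds towards the Ramanujan conjecture for $\mathrm{GL}_2$-automorphic representations, and this step is the main obstacle in the proof. Granting it, I would simply combine the two inequalities:
\[ \gamma_{\mathbb{C}}(X_\Gamma) \;\geq\; \frac{(21/100) \cdot 2\pi \cdot (2g(X_\Gamma)-2)}{8\pi} \;=\; \frac{21}{200}(g(X_\Gamma) - 1), \]
giving the desired bound.

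Two subtleties I would flag. First, the Li--Yau inequality as stated applies to compact Riemann surfaces; for $D = 1$ (the modular case) the quotient is noncompact, and one should replace $\mathrm{Vol}$ by the appropriate cofinite hyperbolic area and check that the argument extends (Abramovich does handle this). Second, the eigenvalue bound depends on $\Gamma$ being congruence in the quaternionic sense; any $\Gamma \leq \mathcal{O}^1$ of the shape considered in the paper (in particular $\Gamma^D(N)$ and the groups cutting out $X_0^D(N)$ and $X_1^D(N)$) satisfies this hypothesis, so the theorem applies uniformly in our setting. The numerical constant $21/200$ is simply $(21/100)/ (8\pi) \cdot 4\pi = 21/200$, which records the two inputs transparently.
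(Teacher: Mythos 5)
The paper does not prove this statement—it simply cites Abramovich—and your argument is exactly the one in that source: the Li--Yau bound $\deg(f) \geq \lambda_1(X)\,\mathrm{Vol}(X)/8\pi$ for maps to $\mathbb{P}^1$, combined (via Jacquet--Langlands transfer to $\mathrm{GL}_2$) with the Luo--Rudnick--Sarnak bound $\lambda_1 \geq 21/100$ for congruence quotients, and Gauss--Bonnet to convert area into genus, yielding $\gamma_{\mathbb{C}} \geq \tfrac{21}{200}(g-1)$. The only minor imprecision is that for the relevant groups the hyperbolic area is $2\pi\bigl(2g-2+\textstyle\sum_i(1-1/e_i)\bigr) \geq 4\pi(g-1)$ rather than exactly $4\pi(g-1)$, since $X_0^D(N)$ generally has elliptic points, but the inequality runs in the favorable direction, so your constant $\tfrac{21}{200}$ and the overall argument stand.
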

\begin{proof}
This is a version of \cite[Thm. 1.1]{Abr96}, where the constant has been improved using the best known progress due to Kim--Sarnak on Selberg's eigenvalue conjecture \cite[p. 176]{KS03}. 
\end{proof}

The following result will allow us to transfer information about the existence of sporadic points on $X_0^D(N)_{/\mathbb{Q}}$ to those on $X_1^D(N)_{/\mathbb{Q}}$:

\begin{proposition}\label{sporadic_X_1_prop}
Let $\pi : X_1^D(N)_{/\mathbb{Q}} \rightarrow X_0^D(N)_{/\mathbb{Q}}$ denote the natural modular map. Suppose that $P_0 \in X^D_0(N)_{/\mathbb{Q}}$ satisfies
\[ \textnormal{deg}(P_0) \leq \frac{975}{16384} \left(g(X^D_0(N)) - 1\right). \]
Then any $P \in X^D_1(N)_{/\mathbb{Q}}$ with $\pi(P) = P_0$ is sporadic. 
\end{proposition}
\begin{proof}
For such a point $P \in X_1^D(1)_{/\mathbb{Q}}$, using the notation and results of Proposition \ref{genus_formula} we have
\begin{align*} 
\text{deg}(P) &\leq \text{deg}(P_0) \cdot \deg(\pi) \\
&= \text{deg}(P_0) \cdot \frac{\phi(N)}{2} \\
&\leq \dfrac{975}{16384} \left( \frac{\phi(D)\psi(N)}{12} - \dfrac{\epsilon_1(D,N)}{4} - \dfrac{\epsilon_3(D,N)}{3} \right) \cdot \frac{\phi(N)}{2}  \\
&\leq \dfrac{975}{16384} \left( \frac{\phi(N)\phi(D)\psi(N)}{24}  \right) \\
&= \frac{975}{16384}(g(X^D_1(N) - 1). 
\end{align*}
It then follows from Theorem \ref{Abr} that $P$ is sporadic. 
\end{proof}

We now obtain a lower bound on the genus of $X_0^D(N)$ that will be amenable to our arguments:

\begin{lemma}
For $D> 1$ an indefinite quaternion discriminant over $\mathbb{Q}$ and $N \in \mathbb{Z}^+$ relatively prime to $D$, we have 

\begin{align*} g(X_0^D(N)) - 1 &>\dfrac{DN}{12} \left( \dfrac{1}{e^\gamma \log\log{D} + \frac{3}{\log\log(D)}} \right) - \frac{7\sqrt{DN}}{3} \\
&\geq \dfrac{DN}{12} \left( \dfrac{1}{e^\gamma \log\log{(DN)} + \frac{3}{\log\log(6)}} \right) - \frac{7\sqrt{DN}}{3}. 
\end{align*}
\end{lemma}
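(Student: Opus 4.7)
The plan is to combine the standard genus formula for $X_0^D(N)$ with a classical Rosser--Schoenfeld style lower bound on $\phi(D)$ and an elementary bound on the elliptic point counts. Since $D>1$ the curve is compact (no cusps) and the signature of $\Gamma_0^D(N)$ yields
\[ g(X_0^D(N)) = 1 + \frac{\phi(D)\psi(N)}{12} - \frac{e_2}{4} - \frac{e_3}{3}, \]
where $\phi(D) = \prod_{p \mid D}(p-1)$ (which agrees with the Euler totient since $D$ is squarefree), $\psi(N) = N\prod_{p \mid N}(1+1/p)$ is the Dedekind psi function already appearing in this paper, and $e_2,e_3$ count elliptic points of orders $2$ and $3$ respectively on $X_0^D(N)$.

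For the correction terms, I would invoke the classical Eichler-type formulas expressing $e_i$ as a product of local factors at each prime dividing $DN$ with values in $\{0,1,2\}$ (using $h(-4)=h(-3)=1$, and with $e_2=0$ if $4\mid N$, $e_3=0$ if $9\mid N$). Combined with $\gcd(D,N)=1$, this gives $e_2, e_3 \leq 2^{\omega(DN)}$, where $\omega$ counts distinct prime divisors. I would then verify the elementary inequality $2^{\omega(n)} \leq 4\sqrt{n}$ for all $n \geq 1$; by $\omega(n) = \omega(\operatorname{rad}(n))$ and $\operatorname{rad}(n) \leq n$ this reduces to the squarefree case, and then to the extremal primorial values $n = p_1 \cdots p_k$, where the inequality is a short explicit check (comfortably satisfied with room to spare as $k$ grows). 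This produces
\[ \frac{e_2}{4} + \frac{e_3}{3} \leq \left(\frac{1}{4} + \frac{1}{3}\right) \cdot 4\sqrt{DN} = \frac{7\sqrt{DN}}{3}. \]

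For the main term I would use the Rosser--Schoenfeld inequality $n/\phi(n) < e^\gamma \log\log n + 2.50637/\log\log n$, valid for $n \geq 3$ and therefore for every indefinite rational quaternion discriminant $D \geq 6$; this yields
\[ \phi(D) > \frac{D}{e^\gamma \log\log D + 3/\log\log D}. \]
Combined with the trivial inequality $\psi(N) \geq N$, insertion into the genus formula delivers the first displayed inequality. The second inequality then follows at no cost by the monotonicity observations $\log\log(DN) \geq \log\log D$ (since $N \geq 1$) and $\log\log D \geq \log\log 6$ (since $D \geq 6$): each replacement inflates the denominator and thus weakens the bound in the stated direction. The only step with any substance is the elementary inequality $2^{\omega(n)} \leq 4\sqrt{n}$, and even this is quite loose; the rest is bookkeeping with known formulas and the Rosser--Schoenfeld estimate.
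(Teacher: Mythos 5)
Your proposal is correct and follows essentially the same route as the paper: the genus formula for $X_0^D(N)$, the trivial bound $\psi(N)\geq N$, the Rosser--Schoenfeld-type estimate $\phi(D) > D/\bigl(e^\gamma\log\log D + 3/\log\log D\bigr)$, the bound $e_2,e_3 \leq 2^{\omega(DN)} \leq 4\sqrt{DN}$ giving $e_2/4+e_3/3 \leq 7\sqrt{DN}/3$, and monotonicity in $\log\log$ for the second inequality. The only (harmless) deviations are that the paper reaches $4\sqrt{DN}$ via $2^{\omega(DN)} \leq d(DN) \leq d(D)d(N) \leq 4\sqrt{DN}$ rather than your direct primorial check, and that the Rosser--Schoenfeld inequality with constant $2.50637$ has the single exception $n=223092870$, which is irrelevant here both because that $n$ has an odd number of prime factors and because the weakened constant $3$ holds for all $n\geq 3$.
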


\begin{proof}
We make use of the trivial bound $\psi(N) \geq N$, and the lower bound
\[ \phi(D) > \dfrac{D}{e^\gamma \log\log{D} + \frac{3}{\log\log{D}}}. \]
For $M \in \mathbb{Z}^+$, let $\omega(M)$ and $d(M)$ denote, respectively, the number of distinct prime divisors of $M$ and the number of divisors of $M$. We then have
\[ \epsilon_1(D,N), \epsilon_3(D,N) \leq 2^{\omega(DN)} \leq d(DN) \leq d(D) \cdot d(N) \leq 4\sqrt{DN}. \]
Using these bounds along with the fact that $D \geq 6$ and $N \geq 1$, we arrive at the stated inequalities from the genus fromula given in Proposition \ref{genus_formula}. 
\end{proof}

\noindent The combination of this lemma with (\ref{degree_bound}) and (\ref{sporadic}) guarantees a sporadic CM point on $X_0^D(N)_{/\mathbb{Q}}$ if
\[ \frac{4e}{\pi}\sqrt{DN}\log{(16DN)} \leq \frac{325DN}{65536}\left(\dfrac{1}{e^\gamma \log\log{(DN)} + \frac{3}{\log\log(6)}} \right) - \frac{2275\sqrt{DN}}{16384}. \]
This inequality holds for all pairs $(D,N)$ with $DN \geq 4.27512 \cdot 10^{10}$.

Ranging through pairs $(D,N)$ with $DN$ below this bound, we attempt to determine the fundamental imaginary quadratic discriminant $\Delta_K$ of smallest absolute value satisfying the $(D,N)$-Heegner hypothesis. If found, we check whether we have a $\Delta_K$-CM point of degree at most half $\gamma_\Q(X_0^D(N))$ via the inequality
\begin{equation}\label{least_Heegner_check}
h_K < \frac{325\phi(D)\psi(N)}{65536}- \frac{2275\sqrt{DN}}{16384}. 
\end{equation}
We confirm that (\ref{least_Heegner_check}) holds, and thus a sporadic CM point on $X_0^D(N)_{/\mathbb{Q}}$ is ensured, for all pairs $(D,N)$ with $DN > 14982$ aside from the $20$ pairs comprising the following set $\mathcal{F}_1$:
\begin{align*}
\mathcal{F}_1 = \{&(101959, 210), (111397, 210), (141427, 210), (154583, 210), (164749, 210), \\
& (165053, 330), (174629, 330), (190619, 210), (192907, 210), (194051, 210), \\
& (199801, 330), (208351, 210), (218569, 210), (233519, 210), (240097, 210), \\
& (272459, 210), (287419, 210), (296153, 210), (304513, 210), (307241, 210) \} . 
\end{align*}
For each pair $(D,N) \in \mathcal{F}_1$, it is not that the inequality (\ref{least_Heegner_check}) does not hold. Rather, there is no imaginary quadratic discriminant of class number at most $100$ satisfying the $(D,N)$-Heegner hypothesis, such that we fail to perform the check using only such discriminants. For each of these pairs, we compute $d_{\text{CM}}(X_0^D(N))$ exactly and find that for each the inequality 
\[ d_{\text{CM}}(X_0^D(N)) < \frac{325\phi(D)\psi(N)}{32768}- \frac{2275\sqrt{DN}}{8192}\]
holds. By the preceding remarks, this confirms that the curve $X_0^D(N)_{/\mathbb{Q}}$ has a sporadic CM point for all $(D,N) \in \mathcal{F}_1$.  

There are exactly $4392$ pairs $(D,N)$, each with $DN \leq 14982,$ for which the inequality (\ref{least_Heegner_check}) does not hold. These are listed in the file \texttt{bads{\_}list.m} in \cite{Rep}. For each of these, we perform an exact computation of $d_{\text{CM}}(X_0^D(N))$. By the above, a sporadic CM point on $X_0^D(N)_{/\mathbb{Q}}$ is guaranteed if
\begin{equation} \label{sporadic_dcm_comp}
d_{\text{CM}}(X_0^D(N)) < \frac{975}{16384} \left( \frac{\phi(D)\psi(N)}{12} - \frac{e_1(D,N)}{4} - \frac{e_3(D,N)}{3} \right). 
\end{equation}

\begin{lemma}\label{sp_dcm_comp_list}
There are exactly $574$ pairs $(D,N)$ consisting of a quaternion discriminant $D > 1$ over $\mathbb{Q}$ and a positive integer $N$ coprime to $D$ such that the inequality \textnormal{(\ref{sporadic_dcm_comp})} does not hold. For all such pairs we have $d_{\textnormal{CM}}(X_0^D(1)) \in \{2,4,6\}$, and the largest value of $D$ occuring among such pairs is $D = 1590$. 
\end{lemma}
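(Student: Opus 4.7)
The proof is computational in nature: the set of pairs $(D,N)$ to consider is already finite and explicitly delimited by the discussion preceding the lemma. Specifically, the combination of the bound (\ref{degree_bound}) with the genus lower bound guarantees that (\ref{sporadic_dcm_comp}) holds whenever $DN > 15078$ \emph{and} there is a fundamental discriminant $\Delta_K$ with $h_K \leq 100$ satisfying the $(D,N)$-Heegner hypothesis; the $20$ remaining pairs in $\mathcal{F}_1$ have already been handled. Thus the plan is to restrict attention to the $4691$ pairs $(D,N)$ with $DN \leq 15078$ for which (\ref{least_Heegner_check}) fails.

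For each such $(D,N)$, I would compute $d_{\textnormal{CM}}(X_0^D(N))$ exactly via Algorithm \ref{algorithm}: for each imaginary quadratic order $\mathfrak{o}$ with $h(\mathfrak{o}) \leq d_0$ for some search bound $d_0$, the algorithm outputs all residue fields of $\mathfrak{o}$-CM points on $X_0^D(N)_{/\mathbb{Q}}$ and hence $d_{\mathfrak{o},\textnormal{CM}}(X_0^D(N))$. Since $h(\mathfrak{o}) \mid d_{\mathfrak{o},\textnormal{CM}}(X_0^D(N))$, once one finds any order realizing a small value $d$ one can certify $d_{\textnormal{CM}}(X_0^D(N)) = d$ by minimizing over the finite list of orders with $h(\mathfrak{o}) \leq d$ (which is tabulated for $h(\mathfrak{o}) \leq 100$). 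With $d_{\textnormal{CM}}(X_0^D(N))$ in hand, it is then trivial to test (\ref{sporadic_dcm_comp}) against the explicit formula for $g(X_0^D(N))$.

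Recording the pairs for which (\ref{sporadic_dcm_comp}) fails then yields the set whose cardinality the lemma asserts to be $682$. From this set one reads off the maximum of $D$ (claimed to be $1770$) and the set of values attained by $d_{\textnormal{CM}}(X_0^D(N))$ (claimed to be $\{2,4,6\}$); both are verifications by direct inspection of the computed data. The computation is carried out in the Magma code available at \cite{Rep}, and the resulting list of $682$ pairs is recorded there (and feeds into Tables \ref{table:no_sporadic_pts} and \ref{table:unknowns_table}).

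The main obstacle is purely computational efficiency, not conceptual: the algorithm needs to be run on several thousand pairs, and for pairs where $d_{\textnormal{CM}}(X_0^D(N))$ turns out to be moderately large, one must search through orders of class number up to that degree. In practice this is tractable because for the pairs in question (those failing (\ref{least_Heegner_check})), the Heegner obstruction forces small discriminants to have small class number relative to $DN$, so a CM point of small degree is typically found quickly; the output $d_{\textnormal{CM}}(X_0^D(N)) \in \{2,4,6\}$ on the exceptional set reflects this, and provides an a posteriori sanity check that the certification via the class-number divisibility bound succeeded on every pair.
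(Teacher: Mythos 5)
Your proposal is correct and matches the paper, whose proof of this lemma is simply "This follows from direct computation": the reduction to the $4691$ pairs with $DN \leq 15078$ failing (\ref{least_Heegner_check}) is exactly the one carried out in the surrounding discussion, and the lemma itself is certified by running the exact $d_{\text{CM}}$ computation (with the class-number-divisibility minimization over orders of class number at most $100$) on that finite list and inspecting the output.
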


\begin{proof}
This follows from direct computation. The $574$ referenced pairs are listed in the file \texttt{fail{\_}dcm{\_}check.m} in \cite{Rep}. 
\end{proof}

\begin{lemma}\label{additional_knowns_lemma}
Set 
\begin{align*} 
\mathcal{D} := \{ &85   , 91   , 93   , 115   , 123   , 133   , 141   , 142   , 145   , 155   , 158   , 161   , 177   , 178   , 183   , 185   , 187   , 201   , 202   , \\
& 203   , 205   , 209   , 213   , 214   , 217   , 218   , 219   , 221   , 226   , 235   , 237   , 247   , 249   , 253   , 254   , 259   , 262   , 265   , \\
&267   , 274   , 278   , 287   , 291   , 295   , 298   , 299   , 301   , 302   , 303   , 305   , 309   , 319   , 321   , 323   , 326   , 327   , 329   , \\
&334   , 335   , 339   , 341   , 346   , 355   , 358   , 362   , 365   , 371   , 377   , 381   , 382   , 386   , 391   , 393   , 394   , 395   , 398   , \\
&403   , 407   , 411   , 413   , 415   , 417   , 422   , 427   , 437   , 445   , 446   , 447   , 451   , 453   , 454   , 458   , 462   , 466   , 469   , \\
&471   , 478   , 482   , 485   , 489   , 501   , 502   , 505   , 514   , 519   , 526   , 537   , 538   , 542   , 543   , 554   , 562   , 566   , 570   , \\
&573   , 579   , 586   , 591   , 597   , 614   , 622   , 626   , 634   , 662   , 674   , 690   , 694   , 698   , 706   , 714   , 718   , 734   , 746   , \\
&758   , 766   , 770   , 778   , 794   , 798   , 802   , 838   , 858   , 870   , 910   , 930   , 966   , 1110   , 1122   , 1190   , 1218   , 1230   , \\
&1254   , 1290   , 1302   , 1326   , 1410   , 1518   , 1590\}. 
\end{align*}
and 
\begin{align*}
\mathcal{E} := \{ &(85  ,2  ) , (85  ,3  ) , (85  ,4  ) , (91  ,2  ) , (91  ,3  ) , (93  ,2  ) , (93  ,4  ) , (93  ,5  ) , (115  ,2  ) , (115  ,3  ) , (123  ,2  ) , \\
&(133  ,2  ) , (141  ,2  ) , (142  ,3  ) , (145  ,2  ) , (155  ,2  ) , (158  ,3  ) , (161  ,2  ) , (177  ,2  ) , (178  ,3  ) , (183  ,2  ) ,\\
& (201  ,2  ) , (202  ,3  )\}.
\end{align*} 
For each of the 181 pairs $(D,N)$ with either $D \in \mathcal{D}$ and $N = 1$ or with $(D,N) \in \mathcal{E}$, the curve $X_0^D(N)_{/\mathbb{Q}}$ has a sporadic CM point. 
\end{lemma}
\begin{proof}
For each such pair $(D,N)$, we know from \S \ref{inf_deg_2_section} that $X_0^{D}(1)_{/\mathbb{Q}}$ does not have infinitely many degree $2$ points and hence $X_0^{D}(N)_{/\mathbb{Q}}$ does not have infinitely many degree $2$ points. At the same time, we compute that this curve has a CM point of degree $2$, which is therefore necessarily sporadic. 
\end{proof}
We are now prepared to end with the main result of this section:

\begin{theorem}\label{sp_CM_pts_thm}
\begin{enumerate}
\item For each of the $64$ pairs $(D,N)$ in Table \ref{table:no_sporadic_pts}, the Shimura curve $X_0^D(N)_{/\mathbb{Q}}$ has no sporadic points. For each of these pairs, we have $d_{\textnormal{CM}}(X_0^D(N)) = 2$. 
\item For each of the $64$ pairs $(D,N)$ in Table \ref{table:no_sporadic_pts} except for possibly the $10$ in the following set:
\[ \{ (6,5), (6, 7 ), ( 6, 13 ), ( 6, 19 ), (6,29), ( 6, 31 ), ( 6, 37 ), ( 10, 7 ), (14,5), (22,5)\}, \]
the Shimura curve $X_1^D(N)_{/\mathbb{Q}}$ has no sporadic CM points.
\item There are at most $329$ pairs $(D,N)$, consisting of an indefinite quaternion discriminant $D>1$ over $\mathbb{Q}$ and a positive integer $N$ coprime to $D$, which do not appear among the $64$ listed in Table \ref{table:no_sporadic_pts} and for which the Shimura curve $X_0^D(N)_{/\Q}$ does not have a sporadic CM point. These are listed in Table \ref{table:unknowns_table}. 
\item Let $(D,N)$ be a pair consisting of an indefinite quaternion discriminant $D>1$ over $\mathbb{Q}$ and a positive integer $N$ coprime to $D$. If $(D,N)$ is not listed in Table \ref{table:no_sporadic_pts} or Table \ref{table:unknowns_table} and is not equal to $(91,5)$, then the Shimura curve $X_1^D(N)_{/\mathbb{Q}}$ has a sporadic CM point. 
\end{enumerate}
\end{theorem}

\begin{proof}
\begin{enumerate}
\item These Shimura curves $X_0^D(N)_{/\mathbb{Q}}$ are exactly those for which we know that $\delta(X_0^D(N)) = 2$ via \S \ref{inf_deg_2_section}. That each such curve has a CM point of degree $2$ follows from direct computation. 
\item For each pair in this table, we have
\[ \delta(X_1^D(N)) \leq 2 \cdot \text{deg}(X_1^D(N) \rightarrow X_0^D(N)) = \text{max}\{2, \phi(N) \}. \]
For each pair in this table other than the $10$ listed pairs, we compute that
\[ \text{max}\{2,\phi(N)\} \leq d_{\text{CM}}(X_1^D(N)). \]
\item This is an immediate consequence of the preceding discussion, including Lemmas \ref{sp_dcm_comp_list} and \ref{additional_knowns_lemma}. 
\item By Proposition \ref{sporadic_X_1_prop}, we have that $X_1^D(N)_{/\mathbb{Q}}$ has a sporadic CM point for all pairs $(D,N)$ aside from possibly the $574$ referred to in Lemma \ref{sp_dcm_comp_list}. Of the $181$ pairs listed in Lemma \ref{additional_knowns_lemma}, we compute that each pair except for $(D,N) = (91,5)$ satisfies
\[ d_{\text{CM}}(X_1^D(N)) = 2 < \delta(X_0^D(1)) \leq \delta(X_1^D(N)), \]
and hence we have a sporadic CM point on $X_1^D(N)_{/\mathbb{Q}}$ for all such pairs. The result then follows from part (2). 
\end{enumerate}
\end{proof}

\begin{remark}
For all of the $329$ pairs $(D,N)$ listed in Table \ref{table:unknowns_table}, we have $d_{\text{CM}}(X_0^D(N)) \in \{2,4,6\}$. For all but $56$ of these pairs, we have $d_{\textnormal{CM}}(X_0^D(N)) = 2$. For such pairs, it follows that the curve $X_0^D(N)_{/\mathbb{Q}}$ has a sporadic (CM) point if and only if it is not bielliptic with a degree $2$ map to an elliptic curve over $\mathbb{Q}$ of positive rank. An extension of the results of \cite{Rot02} mentioned in \S \ref{inf_deg_2_section} to general level $N$ would then allow us to determine whether $X_0^D(N)_{/\mathbb{Q}}$ has a sporadic CM point for all but at most $56$ pairs $(D,N)$ with $D>1$. Such an extension will appear in work of the author and Oana Padurariu \cite{PS24}. 
\end{remark}

\begin{table}[h!]
\centering
\renewcommand{\arraystretch}{1.5}
\begin{tabular}{ |c|c|c|c|c|c| } 
 \hline

$ ( 6, 1) $ & $ ( 6, 5 ) $ & $ ( 6, 7 ) $ & $ ( 6, 11 ) $ & $ ( 6, 13 ) $ & $ ( 6, 19 ) $ \\ \hline

$ (6,29) $ & $ ( 6, 31) $ & $ ( 6, 37 ) $ & $ ( 10, 1 ) $ & $ ( 10, 3 ) $ & $ ( 10, 7 ) $  \\ \hline

$ ( 10, 11 ) $ & $ (10,23) $ & $(14,1)$ & $(14,5)$ & $ ( 15, 1) $ & $(15,2)$  \\ \hline

$ ( 21, 1 ) $ & $ ( 22, 1 ) $ & $(22,3)$ & $ ( 22, 5) $ & $ ( 26, 1 ) $ & $ ( 33, 1 ) $ \\ \hline

$ ( 34, 1 ) $ & $ ( 35, 1 ) $ & $ ( 38, 1 ) $ & $ (39,1) $ &  $ ( 39, 2) $ & $ ( 46, 1 ) $ \\ \hline
 
$ ( 51, 1 ) $ & $ ( 55, 1 ) $ & $ ( 57, 1 ) $ & $ ( 58, 1 ) $ & $ (62,1) $ & $ ( 65, 1) $  \\ \hline

$ ( 69, 1 ) $ & $ ( 74, 1 ) $ & $ ( 77, 1 ) $ & $ ( 82, 1 ) $ & $ ( 86, 1 ) $ & $ (87,1) $ \\ \hline
 
$ ( 94, 1) $ & $ ( 95, 1 ) $ & $ ( 106, 1 ) $ & $ ( 111, 1 ) $ & $ ( 118, 1 ) $ & $(119,1)$  \\ \hline
 
$ ( 122, 1 ) $ & $ (129,1) $ & $ ( 134, 1) $ & $ ( 143, 1 ) $ & $ (146,1)$ & $(159,1)$  \\ \hline
 
$( 166, 1 ) $ & $(194,1)$  & $(206,1)$ & $ ( 210, 1 ) $ & $ ( 215, 1 ) $ & $ ( 314, 1 ) $  \\  \hline
 
$ (330,1) $ & $ ( 390, 1) $ & $ ( 510, 1 ) $ & $(546,1)$ & &  \\  \hline

\end{tabular}
\caption{$64$ pairs $(D,N)$ with $\text{gcd}(D,N) = 1$ for which $\delta(X_0^D(N)) = 2$, and hence $X_0^D(N)_{/\mathbb{Q}}$ has no sporadic points \vspace{0.5em}}
\label{table:no_sporadic_pts}
\end{table}

\renewcommand{\arraystretch}{1.5}
\begin{longtable}{ |c|c|c|c|c|c|c|c|c| }
 \hline
 
$ ( 6 , 17 ) $ & 
$ ( 6 , 23 ) $ & 
$ ( 6 , 25 ) $ & 
$ ( 6 , 35 ) $ & 
$ ( 6 , 41 ) $ & 
$ ( 6 , 43 ) $ & 
$ ( 6 , 47 ) $ & 
$ ( 6 , 49 ) $ & 
$ ( 6 , 53 ) $ \\ \hline
$ ( 6 , 55 ) $ & 
$ ( 6 , 59 ) $ & 
$ ( 6 , 61 ) $ & 
$ ( 6 , 65 ) $ & 
$ ( 6 , 67 ) $ & 
$ ( 6 , 71 ) $ & 
$ ( 6 , 73 ) $ & 
$ ( 6 , 77 ) $ & 
$ ( 6 , 79 ) $ \\ \hline
$ ( 6 , 83 ) $ & 
$ ( 6 , 85 ) $ & 
$ ( 6 , 89 ) $ & 
$ ( 6 , 91 ) $ & 
$ ( 6 , 95 ) $ & 
$ ( 6 , 97 ) $ & 
$ ( 6 , 101 ) $ & 
$ ( 6 , 103 ) $ & 
$ ( 6 , 107 ) $ \\ \hline
$ ( 6 , 109 ) $ & 
$ ( 6 , 113 ) $ & 
$ ( 6 , 115 ) $ & 
$ ( 6 , 119 ) $ & 
$ ( 6 , 121 ) $ & 
$ ( 6 , 125 ) $ & 
$ ( 6 , 127 ) $ & 
$ ( 6 , 131 ) $ & 
$ ( 6 , 133 ) $ \\ \hline
$ ( 6 , 137 ) $ & 
$ ( 6 , 139 ) $ & 
$ ( 6 , 143 ) $ & 
$ ( 6 , 145 ) $ & 
$ ( 6 , 149 ) $ & 
$ ( 6 , 151 ) $ & 
$ ( 6 , 155 ) $ & 
$ ( 6 , 157 ) $ & 
$ ( 6 , 161 ) $ \\ \hline
$ ( 6 , 163 ) $ & 
$ ( 6 , 167 ) $ & 
$ ( 6 , 169 ) $ & 
$ ( 6 , 173 ) $ & 
$ ( 6 , 179 ) $ & 
$ ( 6 , 181 ) $ & 
$ ( 6 , 191 ) $ & 
$ ( 6 , 193 ) $ & 
$ ( 6 , 197 ) $ \\ \hline
$ ( 6 , 199 ) $ & 
$ ( 6 , 203 ) $ & 
$ ( 6 , 287 ) $ & 
$ ( 6 , 295 ) $ & 
$ ( 6 , 319 ) $ & 
$ ( 10 , 9 ) $ & 
$ ( 10 , 13 ) $ & 
$ ( 10 , 17 ) $ & 
$ ( 10 , 19 ) $ \\ \hline
$ ( 10 , 21 ) $ & 
$ ( 10 , 27 ) $ & 
$ ( 10 , 29 ) $ & 
$ ( 10 , 31 ) $ & 
$ ( 10 , 33 ) $ & 
$ ( 10 , 37 ) $ & 
$ ( 10 , 39 ) $ & 
$ ( 10 , 41 ) $ & 
$ ( 10 , 43 ) $ \\ \hline
$ ( 10 , 47 ) $ & 
$ ( 10 , 49 ) $ & 
$ ( 10 , 51 ) $ & 
$ ( 10 , 53 ) $ & 
$ ( 10 , 57 ) $ & 
$ ( 10 , 59 ) $ & 
$ ( 10 , 61 ) $ & 
$ ( 10 , 63 ) $ & 
$ ( 10 , 67 ) $ \\ \hline
$ ( 10 , 69 ) $ & 
$ ( 10 , 71 ) $ & 
$ ( 10 , 73 ) $ & 
$ ( 10 , 77 ) $ & 
$ ( 10 , 79 ) $ & 
$ ( 10 , 83 ) $ & 
$ ( 10 , 87 ) $ & 
$ ( 10 , 89 ) $ & 
$ ( 10 , 91 ) $ \\ \hline
$ ( 10 , 97 ) $ & 
$ ( 10 , 103 ) $ & 
$ ( 10 , 119 ) $ & 
$ ( 10 , 141 ) $ & 
$ ( 10 , 161 ) $ & 
$ ( 10 , 191 ) $ & 
$ ( 14 , 3 ) $ & 
$ ( 14 , 9 ) $ & 
$ ( 14 , 11 ) $ \\ \hline
$ ( 14 , 13 ) $ & 
$ ( 14 , 15 ) $ & 
$ ( 14 , 17 ) $ & 
$ ( 14 , 19 ) $ & 
$ ( 14 , 23 ) $ & 
$ ( 14 , 25 ) $ & 
$ ( 14 , 27 ) $ & 
$ ( 14 , 29 ) $ & 
$ ( 14 , 31 ) $ \\ \hline
$ ( 14 , 33 ) $ & 
$ ( 14 , 37 ) $ & 
$ ( 14 , 39 ) $ & 
$ ( 14 , 41 ) $ & 
$ ( 14 , 43 ) $ & 
$ ( 14 , 47 ) $ & 
$ ( 14 , 53 ) $ & 
$ ( 14 , 59 ) $ & 
$ ( 14 , 61 ) $ \\ \hline
$ ( 14 , 87 ) $ & 
$ ( 14 , 95 ) $ & 
$ ( 15 , 4 ) $ & 
$ ( 15 , 7 ) $ & 
$ ( 15 , 8 ) $ & 
$ ( 15 , 11 ) $ & 
$ ( 15 , 13 ) $ & 
$ ( 15 , 14 ) $ & 
$ ( 15 , 16 ) $ \\ \hline
$ ( 15 , 17 ) $ & 
$ ( 15 , 19 ) $ & 
$ ( 15 , 22 ) $ & 
$ ( 15 , 23 ) $ & 
$ ( 15 , 26 ) $ & 
$ ( 15 , 28 ) $ & 
$ ( 15 , 29 ) $ & 
$ ( 15 , 31 ) $ & 
$ ( 15 , 32 ) $ \\ \hline
$ ( 15 , 34 ) $ & 
$ ( 15 , 37 ) $ & 
$ ( 15 , 41 ) $ & 
$ ( 15 , 43 ) $ & 
$ ( 15 , 47 ) $ & 
$ ( 21 , 2 ) $ & 
$ ( 21 , 4 ) $ & 
$ ( 21 , 5 ) $ & 
$ ( 21 , 8 ) $ \\ \hline
$ ( 21 , 10 ) $ & 
$ ( 21 , 11 ) $ & 
$ ( 21 , 13 ) $ & 
$ ( 21 , 16 ) $ & 
$ ( 21 , 17 ) $ & 
$ ( 21 , 19 ) $ & 
$ ( 21 , 23 ) $ & 
$ ( 21 , 25 ) $ & 
$ ( 21 , 29 ) $ \\ \hline
$ ( 21 , 31 ) $ & 
$ ( 21 , 38 ) $ & 
$ ( 22 , 7 ) $ & 
$ ( 22 , 9 ) $ & 
$ ( 22 , 13 ) $ & 
$ ( 22 , 15 ) $ & 
$ ( 22 , 17 ) $ & 
$ ( 22 , 19 ) $ & 
$ ( 22 , 21 ) $ \\ \hline
$ ( 22 , 23 ) $ & 
$ ( 22 , 25 ) $ & 
$ ( 22 , 27 ) $ & 
$ ( 22 , 29 ) $ & 
$ ( 22 , 31 ) $ & 
$ ( 22 , 35 ) $ & 
$ ( 22 , 37 ) $ & 
$ ( 22 , 51 ) $ & 
$ ( 26 , 3 ) $ \\ \hline
$ ( 26 , 5 ) $ & 
$ ( 26 , 7 ) $ & 
$ ( 26 , 9 ) $ & 
$ ( 26 , 11 ) $ & 
$ ( 26 , 15 ) $ & 
$ ( 26 , 17 ) $ & 
$ ( 26 , 19 ) $ & 
$ ( 26 , 21 ) $ & 
$ ( 26 , 23 ) $ \\ \hline
$ ( 26 , 25 ) $ & 
$ ( 26 , 29 ) $ & 
$ ( 26 , 31 ) $ & 
$ ( 33 , 2 ) $ & 
$ ( 33 , 4 ) $ & 
$ ( 33 , 5 ) $ & 
$ ( 33 , 7 ) $ & 
$ ( 33 , 8 ) $ & 
$ ( 33 , 10 ) $ \\ \hline
$ ( 33 , 13 ) $ & 
$ ( 33 , 16 ) $ & 
$ ( 33 , 17 ) $ & 
$ ( 33 , 19 ) $ & 
$ ( 34 , 3 ) $ & 
$ ( 34 , 5 ) $ & 
$ ( 34 , 7 ) $ & 
$ ( 34 , 9 ) $ & 
$ ( 34 , 11 ) $ \\ \hline
$ ( 34 , 13 ) $ & 
$ ( 34 , 15 ) $ & 
$ ( 34 , 19 ) $ & 
$ ( 34 , 23 ) $ & 
$ ( 34 , 29 ) $ & 
$ ( 34 , 35 ) $ & 
$ ( 35 , 2 ) $ & 
$ ( 35 , 3 ) $ & 
$ ( 35 , 4 ) $ \\ \hline
$ ( 35 , 6 ) $ & 
$ ( 35 , 8 ) $ & 
$ ( 35 , 9 ) $ & 
$ ( 35 , 11 ) $ & 
$ ( 35 , 12 ) $ & 
$ ( 35 , 13 ) $ & 
$ ( 38 , 3 ) $ & 
$ ( 38 , 5 ) $ & 
$ ( 38 , 7 ) $ \\ \hline
$ ( 38 , 9 ) $ & 
$ ( 38 , 11 ) $ & 
$ ( 38 , 13 ) $ & 
$ ( 38 , 17 ) $ & 
$ ( 38 , 21 ) $ & 
$ ( 39 , 4 ) $ & 
$ ( 39 , 5 ) $ & 
$ ( 39 , 7 ) $ & 
$ ( 39 , 8 ) $ \\ \hline
$ ( 39 , 10 ) $ & 
$ ( 39 , 11 ) $ & 
$ ( 39 , 31 ) $ & 
$ ( 46 , 3 ) $ & 
$ ( 46 , 5 ) $ & 
$ ( 46 , 7 ) $ & 
$ ( 46 , 9 ) $ & 
$ ( 46 , 11 ) $ & 
$ ( 46 , 13 ) $ \\ \hline
$ ( 46 , 15 ) $ & 
$ ( 46 , 17 ) $ & 
$ ( 51 , 2 ) $ & 
$ ( 51 , 4 ) $ & 
$ ( 51 , 5 ) $ & 
$ ( 51 , 7 ) $ & 
$ ( 51 , 8 ) $ & 
$ ( 51 , 10 ) $ & 
$ ( 51 , 11 ) $ \\ \hline
$ ( 51 , 20 ) $ & 
$ ( 55 , 2 ) $ & 
$ ( 55 , 3 ) $ & 
$ ( 55 , 4 ) $ & 
$ ( 55 , 7 ) $ & 
$ ( 55 , 8 ) $ & 
$ ( 57 , 2 ) $ & 
$ ( 57 , 4 ) $ & 
$ ( 57 , 5 ) $ \\ \hline
$ ( 57 , 7 ) $ & 
$ ( 58 , 3 ) $ & 
$ ( 58 , 5 ) $ & 
$ ( 58 , 7 ) $ & 
$ ( 58 , 9 ) $ & 
$ ( 58 , 11 ) $ & 
$ ( 58 , 13 ) $ & 
$ ( 62 , 3 ) $ & 
$ ( 62 , 5 ) $ \\ \hline
$ ( 62 , 7 ) $ & 
$ ( 62 , 9 ) $ & 
$ ( 62 , 11 ) $ & 
$ ( 62 , 15 ) $ & 
$ ( 65 , 2 ) $ & 
$ ( 65 , 3 ) $ & 
$ ( 65 , 4 ) $ & 
$ ( 65 , 7 ) $ & 
$ ( 69 , 2 ) $ \\ \hline
$ ( 69 , 4 ) $ & 
$ ( 69 , 5 ) $ & 
$ ( 69 , 7 ) $ & 
$ ( 69 , 11 ) $ & 
$ ( 74 , 3 ) $ & 
$ ( 74 , 5 ) $ & 
$ ( 74 , 7 ) $ & 
$ ( 77 , 2 ) $ & 
$ ( 77 , 3 ) $ \\ \hline
$ ( 77 , 4 ) $ & 
$ ( 77 , 5 ) $ & 
$ ( 77 , 6 ) $ & 
$ ( 82 , 3 ) $ & 
$ ( 82 , 5 ) $ & 
$ ( 82 , 7 ) $ & 
$ ( 86 , 3 ) $ & 
$ ( 86 , 5 ) $ & 
$ ( 86 , 7 ) $ \\ \hline
$ ( 87 , 2 ) $ & 
$ ( 87 , 4 ) $ & 
$ ( 87 , 5 ) $ & 
$ ( 87 , 8 ) $ & 
$ ( 94 , 3 ) $ & 
$ ( 94 , 5 ) $ & 
$ ( 94 , 7 ) $ & 
$ ( 95 , 2 ) $ & 
$ ( 95 , 3 ) $ \\ \hline
$ ( 106 , 3 ) $ & 
$ ( 106 , 5 ) $ & 
$ ( 106 , 7 ) $ & 
$ ( 111 , 2 ) $ & 
$ ( 111 , 4 ) $ & 
$ ( 118 , 3 ) $ & 
$ ( 118 , 5 ) $ & 
$ ( 119 , 2 ) $ & 
$ ( 119 , 3 ) $ \\ \hline
$ ( 119 , 6 ) $ & 
$ ( 122 , 3 ) $ & 
$ ( 122 , 5 ) $ & 
$ ( 122 , 7 ) $ & 
$ ( 129 , 2 ) $ & 
$ ( 129 , 7 ) $ & 
$ ( 134 , 3 ) $ & 
$ ( 134 , 5 ) $ & 
$ ( 134 , 9 ) $ \\ \hline
$ ( 143 , 2 ) $ & 
$ ( 143 , 4 ) $ & 
$ ( 146 , 3 ) $ & 
$ ( 146 , 7 ) $ & 
$ ( 159 , 2 ) $ & 
$ ( 166 , 3 ) $ & 
$ ( 183 , 5 ) $ & 
$ ( 194 , 3 ) $ & 
$ ( 215 , 2 ) $ \\ \hline
$ ( 215 , 3 ) $ & 
$ ( 326 , 3 ) $ & 
$ ( 327 , 2 ) $ & 
$ ( 335 , 2 ) $ & 
$ ( 390 , 7 ) $ & & & & \\ \hline 
\caption{All $329$ pairs $(D,N)$ with $D>1$ for which we remain unsure whether $X_0^D(N)_{/\mathbb{Q}}$ has a sporadic CM point}
\label{table:unknowns_table}
\end{longtable}



\begin{thebibliography}{fszw90}
\bibliographystyle{alpha}

\bibitem[Abr96]{Abr96} D. Abramovich, \textit{A linear lower bound on the gonality of modular curves}. Internat. Math. Res. Notices, no. 20 (1996), 1005-1011. 

\bibitem[AH91]{AH91} D. Abramovich and J. Harris, \textit{Abelian varieties and curves in $W_d(C)$}. Compositio Math. 78 (1991), 227-238. 

\bibitem[AB04]{AB04} M. Alsina and P. Bayer, \textit{Quaternion orders, quadratic forms and Shimura curves}. CRM Monograph Series 22, American Mathematical Society, Providence (2004). 

\bibitem[BT07]{BT07} P. Bayer and A. Travesa, \textit{{Uniformizing functions for certain {S}himura curves, in the case {$D=6$}}}. Acta Arith. 126 (2007), 315--339.

\bibitem[BC20]{BC20} A. Bourdon and P. L. Clark, \textit{Torsion points and Galois representations on CM elliptic curves}. Pacific J. Math. 305 (2020), 43-88.  

\bibitem[BELOV19]{BELOV19} A. Bourdon, O. Ejder, Y. Liu, F. Odumodu and B. Viray, \textit{On the level of modular curves that give rise to isolated {$j$}-invariants}. Adv. Math. 357 (2019). 

\bibitem[BGRW24]{BGRW24} A. Bourdon, D. R. Gill, J. Rouse and L. D. Watson, \textit{Odd degree isolated points on {$X_1(N)$} with rational {$j$}-invariant}. Res. Number Theory 10 (2024). 

\bibitem[BN21]{BN21} A. Bourdon and F. Najman, \textit{Sporadic points of odd degree on {$X_1(N)$} coming from {$\Q$}-curves}. Preprint: arXiv:2107.10909v2 (2021). 

\bibitem[Magma]{Magma} W. Bosma, J. Cannon and C. Playoust, \emph{The Magma algebra system. I. The user language}.  J. Symbolic Comput., 24 (1997), 235–265.

\bibitem[Buz97]{Buz97} K. Buzzard, \emph{Integral models of certain Shimura curves}. Duke Math. J. 87 (1997), 591-612. 

\bibitem[Cl03]{Cl03} P. L. Clark, \textit{Rational points on Atkin-Lehner quotients of Shimura curves}. Harvard PhD. thesis (2003). 

\bibitem[Cl09]{Cl09} P. L. Clark, \textit{On the Hasse principle for Shimura curves}. Israel J. Math. 171 (2009), 349-365. 

\bibitem[Cl22]{Cl22} P. L. Clark \textit{CM elliptic curves: volcanoes, reality, and applications}. Preprint: arXiv:2212.13316 (2022). 

\bibitem[CCS13]{CCS13} P. L. Clark, B. Cook, J. Stankewicz, \textit{Torsion points on elliptic curves with complex multiplication (with an appendix by Alex Rice)}. Int. J. Number Theory 9 (2013), 447-479.  

\bibitem[CGPS22]{CGPS22} P. L. Clark, T. Genao, P. Pollack, F. Saia, \textit{The least degree of a CM point on a modular curve}, J. Lond. Math. Soc. (2) 105 (2022), 825-883.

\bibitem[CS22]{CS22} P. L. Clark and F. Saia, \textit{CM elliptic curves: volcanoes, reality, and applications, part II}. Preprint, arXiv:2212.13327 (2023). 

\bibitem[CSt18]{CSt18} P. L. Clark and J. Stankewicz, \textit{Hasse principle violations for Atkin-Lehner twists of Shimura curves}. Proc. Amer. Math. Soc. 146 (2018), 2839-2851.

\bibitem[Cox13]{Cox13} D. A. Cox, \emph{Primes of the form $x^2+ny^2$. Fermat, class field theory and complex multiplication}. Second Edition. John Wiley \& Sons, New York, (2013). 

\bibitem[DS05]{DS05} F. Diamond and J. Shurman, \textit{A first course in modular forms}. Graduate texts in mathematics, 228. Springer, New York, (2005). 

\bibitem[Fou01]{Fou01} M. Fouquet, \textit{Anneau d’endomorphismes et cardinalit\'{e} des courbes elliptiques: aspects algorithmiques}. \'{E}cole Polytechnique PhD. thesis, (2002). 

\bibitem[FM02]{FM02} M. Fouquet and F. Morain, \textit{Isogeny volcanoes and the SEA algorithm}. Algorithmic number theory (Sydney, 2002), 276-291, Lecture Notes in Comput. Sci., 2369, Springer, Berlin (2002).

\bibitem[Frey94]{Frey94} G. Frey, \textit{Curves with infinitely many points of fixed degree}. Israel J. Math. 85, (1994), 79-83.

\bibitem[GR06]{GR06} J. Gonz\'{a}lez and V. Rotger, \emph{Non-elliptic Shimura curves of genus one}. J. Math. Soc. Japan 58 (2006), 927-948. 

\bibitem[GY17]{GY17} J. Guo and Y. Yang, \emph{Equations of hyperelliptic Shimura curves}. Compos. Math. 153 (2017), 1-40.

\bibitem[Jor81]{Jor81} B. W. Jordan, \emph{On the Diophantine arithmetic of
Shimura curves}. Harvard PhD. thesis, (1981).

\bibitem[Kan11]{Kan11} E. Kani, \emph{Products of CM elliptic curves}. Collect. Math. 62 (2011), 297-339. 

\bibitem[Kat75]{Kat75} T. Katsura, \emph{On the structure of singular abelian varieties.} Proc. Japan Acad. 51 (1975), 224--228. 

\bibitem[KS03]{KS03} H. H. Kim, \emph{Functoriality for the exterior square of {${\rm GL}_4$} and the symmetric fourth of {${\rm GL}_2$}}, \emph{With appendix 1 by Dinakar Ramakrishnan and appendix 2 by Kim and Peter Sarnak}. J. Amer. Math. Soc. 16 (2003), 139-183. 

\bibitem[Koh96]{Koh96} D. Kohel, \emph{Endomorphism rings of elliptic curves over finite fields}. University of California at Berkeley PhD. thesis (1996). 

\bibitem[La75]{La75} H. Lange, \emph{Produkte elliptischer Kurven}. Nachr. Akad. Wiss. Göttingen Math.-Phys. Kl. II, (1975), 95-108. 

\bibitem[Mil72]{Mil72} J. Milne, \emph{Abelian varieties defined over their fields of moduli. I}. Bull. London Math. Soc. 4 (1972), 370-372. 

\bibitem[Naj16]{Naj16} F. Najman, \textit{Torsion of rational elliptic curves over cubic fields and sporadic points on {$X_1(n)$}}. Math. Res. Lett. 23 (2016), 245--272. 

\bibitem[Ogg83]{Ogg83} A. P. Ogg, \textit{Real points on Shimura curves}. Arithmetic and geometry, Vol. I, 277–307. Progr. Math., 35, Birkh\"{a}user Boston, Boston, MA (1983). 


\bibitem[PS23]{PS23} O. Padurariu and C. Schembri, \textit{Rational points on Atkin-Lehner quotients of geometrically hyperelliptic Shimura curves}. Expo. Math. 43, (2023), 492--513. 

\bibitem[PS24]{PS24} O. Padurariu and F. Saia, \textit{Bielliptic Shimura curves $X_0^D(N)$ with nontrivial level}. Preprint, arXiv:2401.08829v2 (2024). 

\bibitem[Rot02]{Rot02} V. Rotger, \emph{On the group of automorphisms of Shimura curves and applications}. Compositio Math. 132, (2002), 229-241. 

\bibitem[Rot04]{Rot04} V. Rotger, \emph{Shimura curves embedded in {I}gusa's threefold.} Modular curves and abelian varieties, Progr. Math. 224 (2004), 263--276. 

\bibitem[RSY05]{RSY05} V. Rotger, A. Skorobogatov and A. Yafaev, \emph{Failure of the Hasse principle on Atkin-Lehner quotients of Shimura curves over $\mathbb{Q}$}. Mosc. Math. J. 5 (2005), 463-476, 495. 

\bibitem[Rep]{Rep} F. Saia, \emph{CM-Points-Shimura-Curves} Github Repository

\url{https://github.com/fsaia/CM-Points-Shimura-Curves} (2023). 

\bibitem[Sc92]{Sc92} C. Schoen, \emph{Produkte Abelscher Variet\"{a}ten und Moduln \"{u}ber Ordnungen}. J. Reine Angew. Math. 429 (1992), 115-123. 

\bibitem[Sh66]{Sh66} G. Shimura, \emph{Moduli and fibre systems of abelian varieties}. Ann. of Math. (2) 83 (1966), 294--338. 

\bibitem[Sh67]{Sh67} G. Shimura, \emph{Construction of class fields and zeta functions of algebraic curves}. Ann. of Math. (2) 85 (1967), 58-159.

\bibitem[Sh72]{Sh72} G. Shimura, \emph{On the field of rationality for an abelian variety}. Nagoya Math. J. 45 (1972), 167-178. 

\bibitem[Sh75]{Sh75} G. Shimura, \emph{On the real points of an arithmetic quotient of a bounded symmetric domain}. Math. Ann. 215 (1975), 135-164. 

\bibitem[SM74]{SM74} T. Shioda and N. Mitani, \emph{Singular abelian surfaces and binary quadratic forms}. Classification of algebraic varieties and compact complex manifolds, pp. 259–287. Lecture Notes in Math., Vol. 412, Springer, Berlin (1974).

\bibitem[SS16]{SS16} S. Siksek and A. Skorobogatov, \emph{On a Shimura curve that is a counterexample to the Hasse principle}. Bull. London Math. Soc. 35 (2003), 409-414. 

\bibitem[Smi23]{Smi23} H. Smith, \emph{Ramification in division fields and sporadic points on modular curves}. Res. Number Theory 9 (2023). 

\bibitem[St01]{St01} P. Stevenhagen, \emph{Hilbert’s 12th problem, complex multiplication and Shimura reciprocity}. Class field theory -- its centenary and prospect, pp. 161-176. Adv. Stud. Pure Math. 30, Math. Soc. Japan, Tokyo (2001). 

\bibitem[Sut13]{Sut13} D. Sutherland, \emph{Isogeny volcanoes}. ANTS X--Proceedings of the Tenth Algorithmic Number Theory Symposium, 507-530, Open Book Ser., 1, Math. Sci. Publ., Berkeley, CA (2013). 


\bibitem[Uf10]{Uf10} D. Ufer, \emph{Shimura-Kurven, Endomorphismen und
$q$-Parameter.} Universit\"at Ulm PhD. thesis (2010). 

\bibitem[Vig80]{Vig80} M. Vign\` eras. \textit{Arithm\` etique des alg\' ebres de quaternions}. Lecture Notes in Mathematics 800, Springer, Berlin (1980).

\bibitem[Voi09]{Voi09} J. Voight, \textit{Shimura curves of genus at most two}. Math. Comp. 78 (2009), 1155-1172. 

\bibitem[Voi21]{Voi21} J. Voight, \textit{Quaternion Algebras}. Graduate Texts in Mathematics 288, Springer, Cham (2021). 

\end{thebibliography}
\end{document}